\documentclass[a4paper]{amsart}
\usepackage{amscd}
\usepackage{amsmath}
\usepackage{amssymb}
\usepackage{mathrsfs}
\usepackage{amsthm}
\usepackage{bbm}
\usepackage{stmaryrd}
\usepackage{tikz-cd}
\usepackage[colorlinks,citecolor=blue,urlcolor=blue]{hyperref}

\usepackage[T1]{fontenc}
\usepackage[utf8]{inputenc}

\usepackage{lipsum}

\makeatletter
\renewcommand\subsection{\@startsection{subsection}{2}%
  \z@{-.5\linespacing\@plus-.7\linespacing}{.5\linespacing}%
  {\normalfont\scshape}}
\renewcommand\subsubsection{\@startsection{subsubsection}{3}%
  \z@{.5\linespacing\@plus.7\linespacing}{-.5em}%
  {\normalfont\scshape}}
\makeatother

\makeatletter
\@namedef{subjclassname@2020}{%
  \textup{2020} Mathematics Subject Classification}
\makeatother

\numberwithin{equation}{section} \swapnumbers

\newtheorem{satz}{Satz}[section]

\newtheorem{theorem}[satz]{Theorem}
\newtheorem{proposition}[satz]{Proposition}
\newtheorem{corollary}[satz]{Corollary}
\newtheorem{lemma}[satz]{Lemma}

\newtheorem{definition}[satz]{Definition}

\newtheorem{remark}[satz]{Remark}

\newtheorem{exercise}[satz]{Exercise}
\newtheorem*{solution}{Solution}

\newcommand{\bbb}{\mathbb{B}}

\newcommand{\bbe}{\mathbb{E}}

\newcommand{\bbn}{\mathbb{N}}
\newcommand{\bbp}{\mathbb{P}}

\newcommand{\bbr}{\mathbb{R}}

\newcommand{\bbx}{\mathbb{X}}
\newcommand{\bby}{\mathbb{Y}}
\newcommand{\bbz}{\mathbb{Z}}

\newcommand{\calc}{\mathcal{C}}

\newcommand{\calf}{\mathcal{F}}

\newcommand{\cali}{\mathcal{I}}

\newcommand{\scrc}{\mathscr{C}}
\newcommand{\scrd}{\mathscr{D}}

\newcommand{\bfb}{\mathbf{B}}
\newcommand{\bfx}{\mathbf{X}}
\newcommand{\bfy}{\mathbf{Y}}

\newcommand{\Sym}{{\rm Sym}}
\newcommand{\An}{{\rm Anti}}
\newcommand{\Id}{{\rm Id}}
\newcommand{\Lip}{{\rm Lip}}
\newcommand{\tr}{{\rm tr}}

\begin{document}

\title[Rough path theory and rough partial differential equations]{Rough path theory and an introduction to rough partial differential equations}
\author{Stefan Tappe}
\address{Albert Ludwig University of Freiburg, Department of Mathematical Stochastics, Ernst-Zermelo-Stra\ss{}e 1, D-79104 Freiburg, Germany}
\email{stefan.tappe@math.uni-freiburg.de}
\date{11 May, 2026}
\thanks{The author gratefully acknowledges financial support from the Deutsche Forschungsgemeinschaft (DFG, German Research Foundation) -- project number 444121509.}
\begin{abstract}
The goal of these notes is to provide an introduction to rough partial differential equations. For this purpose, we will present the theory of rough paths to the extend as it is required. Applications to stochastic partial differential equations are presented as well.
\end{abstract}
\keywords{H\"{o}lder rough path, controlled rough path, Gubinelli derivative, rough integral, rough partial differential equation, Brownian motion}
\subjclass[2020]{60L20, 60L50, 60H10, 60H15, 60G22}

\maketitle\thispagestyle{empty}

\tableofcontents

\section{Introduction}

The theory of rough paths (see the textbook \cite{Friz-Hairer}, and also the lecture notes \cite{Allan}) provides a tool for defining integrals
\begin{align}\label{intro-integral}
\int_0^t Y_s d \mathbf{X}_s
\end{align}
for paths $\mathbf{X}$ which may be quite \emph{rough}, as opposed to the classical situation, where $\mathbf{X}$ is a smooth function. This in turn enables us to study differential equations
\begin{align}\label{intro-RDE}
\left\{
\begin{array}{rcl}
dY_t & = & f_0(t,Y_t) dt + f(t,Y_t) d \bfx_t
\\ Y_0 & = & \xi,
\end{array}
\right.
\end{align}
or even partial differential equations
\begin{align}\label{intro-RPDE}
\left\{
\begin{array}{rcl}
dY_t & = & (A Y_t + f_0(t,Y_t)) dt + f(t,Y_t) d \bfx_t
\\ Y_0 & = & \xi
\end{array}
\right.
\end{align}
with driving signals $\mathbf{X}$ which are allowed to be \emph{rough}. The essential idea is consider paths $\mathbf{X} = (X,\bbx)$ with a second order process $\bbx$ such that the paths $X$ and $\bbx$ satisfy certain algebraic constraints. The theory of rough paths is purely deterministic. However, as we will see, one prominent example of a rough driving signal $\mathbf{X}$ is given by typical sample paths of a Brownian motion, which can be used for the study of stochastic (partial) differential equations. We refer, e.g., to \cite{Gubinelli-Tindel, Hairer, Friz-Oberhauser, Teichmann, Friz-Oberhauser-2, Gerasimovics, Hesse-Neamtu-local, Gerasimovics-et-al, Hesse-Neamtu-global, Tappe-rough-manifolds, Tappe} for results about rough and stochastic partial differential equations.

The goal of these notes is to provide an introduction to rough partial differential equations. For this purpose, we will present the theory of rough paths to the extend as it is required. More precisely, we will introduce spaces of rough paths $\mathbf{X}$ and the definition of the \emph{rough integral} \eqref{intro-integral}, which is also called the \emph{Gubinelli integral}. After these and some further preparations, we will treat rough differential equations (RDEs) of the type \eqref{intro-RDE}.

Up to this point, the theory of rough paths is completely deterministic. In the next step, we will show that typical sample paths of a Brownian motion provide examples of rough paths, and use this for the study of stochastic differential equations (SDEs).

Afterwards, we will provide the announced introduction to rough \emph{partial} differential equations (RPDEs) of the type \eqref{intro-RPDE}, and draw consequences for stochastic \emph{partial} differential equations (SPDEs).

Let us provide a more detailed outlook of the upcoming sections of these notes. In Section \ref{sec-notation} we briefly explain some frequently used notation, and in Section \ref{sec-tensor-product} we recall the required results about the tensor product. After these preparations, in Section \ref{sec-rough-paths} we introduce spaces of rough paths, and in Section \ref{sec-integral} we review the Gubinelli integral. Further details about these topics can be found in \cite{Friz-Hairer} and \cite{Allan}. However, at some points we provide more details, so that readers may benefit from studying the aforementioned material of these notes.

Afterwards, in Section \ref{sec-composition} we provide some regularity results about compositions of paths with functions. Then, in Section \ref{sec-RDEs} we treat RDEs with time-inhomogeneous coefficients and present existence and uniqueness results. The material of these two sections is based on \cite{Tappe}. More precisely, Section \ref{sec-composition} essentially contains the material from \cite[Sec. 2.5, 2.6.1 and 2.6.2]{Tappe} and Section \ref{sec-RDEs} is essentially a simplified version of \cite[Sec. 2.9]{Tappe}, where RPDEs have been studied.

After these completely deterministic concepts and results, in Section \ref{sec-BB-rough-path} we show that typical sample paths of a Brownian motion provide examples of rough paths. Here we follow the standard material about rough path theory, as presented in \cite{Friz-Hairer} and \cite{Allan}, where further details can be found. However, also here we provide some more details at certain points, so that reading this section may be beneficial. Using these findings, in Section \ref{sec-SDEs} we briefly present existence and uniqueness results for SDEs.

Afterwards, we proceed with the announced introduction of RPDEs. More precisely, we present two approaches to RPDEs of the type \eqref{intro-RPDE} with an operator $A$ being the generator of a $C_0$-semigroup $(S_t)_{t \geq 0}$. In Section \ref{sec-RPDEs} we outline an approach which has often been used in the literature (see, e.g. \cite{Gubinelli-Tindel, Hairer, Gerasimovics, Hesse-Neamtu-local, Hesse-Neamtu-global}), and which is based on a mild sewing lemma. Even though we do not present all details, we point out the crucial differences to the concepts and results from our earlier Section \ref{sec-integral}, where we have treated the Gubinelli integral.

In Section \ref{sec-RPDEs-Tappe} we present another approach to RPDEs, which has been implemented in \cite{Tappe}. While the approach presented in Section \ref{sec-RPDEs} requires assumptions on the semigroup $(S_t)_{t \geq 0}$ like analyticity, the results from Section \ref{sec-RPDEs-Tappe} hold true for arbitrary $C_0$-semigroups $(S_t)_{t \geq 0}$, but on the other hand we have to impose additional conditions on the coefficients $f_0$ and $f$. For this approach we use and extend earlier results from these notes, in particular those from Sections \ref{sec-integral} and \ref{sec-RDEs}, and arrive at an existence and uniqueness result for RPDEs. We also draw consequences for SPDEs, which extends the material from Sections \ref{sec-BB-rough-path} and \ref{sec-SDEs}.

\section{Frequently used notation}\label{sec-notation}

In this section we briefly explain some frequently used notation. The symbols $V$ and $W$ typically denote Banach spaces endowed with their respective norms, always written as $| \cdot |$. The space $L(V,W)$ of all continuous linear operators $T : V \to W$ is also a Banach space. We will also use the notation $L(V) = L(V,V)$. On the product space $V \times W$ we will always consider the norm
\begin{align*}
|v,w| = |v| + |w|,
\end{align*}
which makes $V \times W$ a Banach space. For another Banach space $E$ we denote by $L^{(2)}(V \times W, E)$ the space of all continuous bilinear operators $T : V \times W \to E$.

For a bounded function $f : V \to W$ we denote by $\| f \|_{\infty}$ the supremum norm. For $n \in \bbn$ we denote by $C^n = C^n(V,W)$ the space of all mappings $f : V \to W$ which are $n$-times continuously Fr\'{e}chet differentiable. Furthermore, we denote by $C_b^n = C_b^n(V,W)$ the subspace of all $f \in C^n$ such that
\begin{align*}
\| f \|_{C_b^n} := \sum_{k=0}^n \| D^k f \|_{\infty} < \infty.
\end{align*}
The space $\Lip(V,W)$ denotes the space of all Lipschitz continuous functions; that is, the space of all functions $f : V \to W$ such that
\begin{align*}
| f |_{\Lip} := \sup_{\genfrac{}{}{0pt}{}{x,y \in V}{x \neq y}} \frac{|f(x) - f(y)|}{|x-y|} < \infty.
\end{align*}
Let $T \in \bbr_+$ and $\alpha \in (0,1]$ be arbitrary. We denote by $\calc^{\alpha}([0,T],V)$ the space of all $\alpha$-H\"{o}lder continuous functions; that is, the space of all functions $X : [0,T] \to V$ such that
\begin{align*}
\| X \|_{\alpha} := \sup_{\genfrac{}{}{0pt}{}{s,t \in [0,T]}{s \neq t}} \frac{|X_{s,t}|}{|t-s|^{\alpha}} < \infty,
\end{align*}
where we use the notation
\begin{align}\label{notation-increments}
X_{s,t} := X_t - X_s, \quad s,t \in [0,T]
\end{align}
for the increments. Furthermore, we denote by $\calc_2^{\alpha}([0,T]^2,V)$ the space of all continuous functions with on-diagonal $\alpha$-H\"{o}lder regularity; that is, the space of all continuous functions $\bbx : [0,T]^2 \to V$ such that
\begin{align*}
\| \bbx \|_{\alpha} := \sup_{\genfrac{}{}{0pt}{}{s,t \in [0,T]}{s \neq t}} \frac{| \bbx_{s,t}|}{|t-s|^{\alpha}} < \infty.
\end{align*}

\begin{remark}\label{rem-Hoelder}
Let $X : [0,T] \to V$ and $\bbx : [0,T]^2 \to V$ be two continuous functions such that
\begin{align*}
X_{s,t} = \bbx_{s,t} \quad \text{for all $s,t \in [0,T].$}
\end{align*}
Then we have $X \in \calc^{\alpha}([0,T],V)$ if and only if $\bbx \in \calc_2^{\alpha}([0,T]^2,V)$, and in this case $\| X \|_{\alpha} = \| \bbx \|_{\alpha}$.
\end{remark}

\section{The tensor product}\label{sec-tensor-product}

In this section we recall the required results about the tensor product.

\subsection{The tensor product of Banach spaces}

In this section we provide the required background about the tensor product of Banach spaces. The essential idea is to identify bilinear operators in Banach spaces with linear operators defined on another suitable Banach space. This is made precise by the following result.

\begin{theorem}\label{thm-tensor-product}
For two Banach spaces $V$ and $W$ there are another Banach space $V \otimes W$ and a continuous bilinear mapping $\otimes : V \times W \to V \otimes W$ such that:
\begin{enumerate}
\item $| v \otimes w | \leq | v | \cdot | w |$ for all $v \in V$ and $w \in W$.

\item $L^{(2)}(V \times W,X) \cong L(V \otimes W,X)$ for every further Banach space $X$.
\end{enumerate}
\end{theorem}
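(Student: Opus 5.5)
We construct $V \otimes W$ as the \emph{projective tensor product}. Start from the algebraic tensor product $V \otimes_{\rm alg} W$. It can be realised as the quotient of the free vector space over $V \times W$ by the subspace generated by the bilinearity relations. Equivalently, it is the linear span of the bilinear functionals $v \otimes w : (\varphi,\psi) \mapsto \varphi(v)\psi(w)$ on $V^* \times W^*$; we take this concrete realisation, since it makes nondegeneracy immediate via Hahn--Banach. On the algebraic tensor product define the projective seminorm
\begin{align*}
\pi(u) := \inf \Big\{ \sum_{i=1}^n |v_i| \, |w_i| : u = \sum_{i=1}^n v_i \otimes w_i \Big\}.
\end{align*}
Then $V \otimes W$ is the completion of $(V \otimes_{\rm alg} W, \pi)$, and the map $\otimes$ is the composition $V \times W \to V \otimes_{\rm alg} W \to V \otimes W$.

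The steps are as follows.

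\textbf{Step 1: $\pi$ is a norm.} Subadditivity and homogeneity follow directly from the definition via infima over representations. For definiteness, take $u = \sum_i v_i \otimes w_i$ and functionals $\varphi \in V^*$, $\psi \in W^*$ of norm at most one. Then
\begin{align*}
|(\varphi \otimes \psi)(u)| = \Big| \sum_i \varphi(v_i)\psi(w_i) \Big| \leq \sum_i |v_i|\,|w_i|,
\end{align*}
so $|(\varphi\otimes\psi)(u)| \leq \pi(u)$. If $\pi(u)=0$, then $u$ vanishes as a functional on $V^*\times W^*$, hence $u=0$ in our realisation.

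\textbf{Step 2: property (1) and continuity of $\otimes$.} The single-term representation gives $\pi(v\otimes w)\le |v|\,|w|$, which is property (1). Bilinearity is built into the construction, and the bound in (1) is exactly the continuity of the bilinear map $\otimes$.

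\textbf{Step 3: property (2).} Given $B \in L^{(2)}(V\times W, X)$, define $\tilde B$ on the algebraic tensor product by $\tilde B\big(\sum_i v_i\otimes w_i\big) = \sum_i B(v_i,w_i)$. This is well defined by the universal property of the algebraic tensor product. For any representation of $u$,
\begin{align*}
|\tilde B u| \le \|B\| \sum_i |v_i|\,|w_i|,
\end{align*}
so taking the infimum gives $|\tilde B u| \le \|B\|\,\pi(u)$. Hence $\tilde B$ extends uniquely by density to $V\otimes W$ with $\|\tilde B\|\le \|B\|$. Conversely, for $T \in L(V\otimes W,X)$, the map $T\circ\otimes$ is bilinear with norm at most $\|T\|$ by property (1). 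The two assignments are mutually inverse: on elementary tensors this is immediate, and elementary tensors span a dense subspace. Both assignments are linear, so the correspondence is an isometric isomorphism.

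\textbf{Main obstacle.} The only nontrivial point is well-definedness of the construction, namely that $\pi$ is a genuine norm. Without this, the completion would collapse, or the identification in (2) would fail to be injective. Step 1 disposes of it through the functional realisation combined with Hahn--Banach. Everything else is routine density and norm estimates.
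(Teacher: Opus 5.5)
The paper gives no proof of this theorem. It is quoted as standard background, so there is no argument of the paper's to compare with. Your construction, the completion of the algebraic tensor product under the projective norm $\pi$, is the standard one. Steps 2 and 3 are correct, including the isometry in (2), which you obtain from $\|\tilde B\|\le\|B\|$, $\|T\circ\otimes\|\le\|T\|$ and density of elementary tensors.

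Your account of where the difficulty lies is off, however. In the functional realisation, Step 1 uses no Hahn--Banach at all: $\pi(u)=0$ forces $u(\varphi,\psi)=0$ for all $\varphi,\psi$, which is $u=0$ by definition of the realisation. The Hahn--Banach content is hidden in your word \emph{Equivalently}, and it is needed in Step 3. To call $\tilde B$ well defined on the span of forms, you must know the following implication: if $\sum_i\varphi(v_i)\psi(w_i)=0$ for all $\varphi\in V^*$, $\psi\in W^*$, then $\sum_i B(v_i,w_i)=0$. In other words, the canonical map from the abstract algebraic tensor product onto your span of forms must be injective. The proof runs as follows:
\begin{enumerate}
\item Rewrite $u=\sum_{i=1}^n v_i\otimes w_i$ with $v_1,\dots,v_n$ linearly independent.
\item Choose, via Hahn--Banach, functionals $\varphi_j\in V^*$ with $\varphi_j(v_i)=\delta_{ij}$.
\item Conclude $\psi(w_j)=u(\varphi_j,\psi)=0$ for all $\psi\in W^*$, hence $w_j=0$ and $u=0$.
\end{enumerate}
With this lemma supplied or cited, your proof is complete.

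For the theorem \emph{as stated}, definiteness of $\pi$ is not even needed. Work with the abstract algebraic tensor product instead. There $\tilde B$ is well defined for free, and the bound $|\tilde B u|\le\|B\|\,\pi(u)$ shows that $\tilde B$ vanishes on $N:=\{u:\pi(u)=0\}$. The completion of the quotient by $N$ then already satisfies (1) and (2). Injectivity in (2) comes from density of elementary tensors, not from $N=\{0\}$. So your remark that without definiteness the identification in (2) would fail to be injective is not right. Hahn--Banach is only needed for extra information beyond the statement, such as $N=\{0\}$ or $\pi(v\otimes w)=|v|\,|w|$.
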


Let $V$ and $W$ be two Banach spaces. We call the Banach space $V \otimes W$ the \emph{tensor product} of $V$ and $W$, and the bilinear mapping $\otimes$ is called the \emph{tensor product mapping}. Moreover, elements of the form $v \otimes w$ with $v \in V$ and $w \in W$ are called \emph{tensors}.

Actually, the tensor product $V \otimes W$ is not unique, but it is unique up to an isomorphism, which is substantiated by the following result.

\begin{proposition}
Let $V \bar{\otimes} W$ be another Banach space, and let $\bar{\otimes} : V \times W \to V \bar{\otimes} W$ be another continuous bilinear mapping such that the properties from Theorem \ref{thm-tensor-product} are fulfilled. Them there exists a linear isomorphism $\Psi : V \otimes W \to V \bar{\otimes} W$ such that $\bar{\otimes} = \Psi \circ \otimes$.
\end{proposition}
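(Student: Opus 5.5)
The argument is the standard universal-property argument. The only delicate point is how to read the isomorphism in Theorem \ref{thm-tensor-product}(2). I will interpret it as the natural map: for each Banach space $X$, the assignment $\Phi \mapsto \Phi \circ \otimes$ is a bijection from $L(V \otimes W, X)$ onto $L^{(2)}(V \times W, X)$. The same is assumed for $\bar{\otimes}$. This reading is what is needed; an abstract isomorphism unrelated to $\otimes$ would not suffice. I would make it explicit at the start of the proof.

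First apply the universal property of $(V \otimes W, \otimes)$ with $X = V \bar{\otimes} W$ to the continuous bilinear map $\bar{\otimes}$. This gives a unique $\Psi \in L(V \otimes W, V \bar{\otimes} W)$ with $\Psi \circ \otimes = \bar{\otimes}$. Symmetrically, the universal property of $(V \bar{\otimes} W, \bar{\otimes})$ with $X = V \otimes W$, applied to $\otimes$, gives $\Theta \in L(V \bar{\otimes} W, V \otimes W)$ with $\Theta \circ \bar{\otimes} = \otimes$.

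Next, consider $\Theta \circ \Psi \in L(V \otimes W)$. It satisfies
\begin{align*}
(\Theta \circ \Psi) \circ \otimes = \Theta \circ \bar{\otimes} = \otimes = \Id_{V \otimes W} \circ \otimes .
\end{align*}
The universal property with $X = V \otimes W$ says that $\Phi \mapsto \Phi \circ \otimes$ is injective, so $\Theta \circ \Psi = \Id_{V \otimes W}$. In the same way, $\Psi \circ \Theta = \Id_{V \bar{\otimes} W}$. Hence $\Psi$ is a bijective continuous linear map with continuous inverse $\Theta$, and $\bar{\otimes} = \Psi \circ \otimes$ by construction.

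The main obstacle is this injectivity (uniqueness) step. If the identification in (2) is only given abstractly, I would instead use the concrete fact behind it: the linear span of the tensors $v \otimes w$ is dense in $V \otimes W$. Two continuous linear maps that agree on all tensors agree on their span, hence, by continuity, everywhere. This yields the uniqueness needed for $\Theta \circ \Psi = \Id$.

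Property (1) of Theorem \ref{thm-tensor-product} is not needed for the isomorphism. If an isometry is wanted, one can add a remark that isometry would hold for the projective norm, but I would not claim it here.
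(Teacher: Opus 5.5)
The paper states this proposition without proof, as a standard fact following Theorem \ref{thm-tensor-product}, so there is no argument to compare with. Your universal-property proof is the standard one and it is correct.

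Your reading of (2) as the natural bijection $\Phi \mapsto \Phi \circ \otimes$ is also the one the paper relies on later. In the solution of Exercise \ref{exercise-sewing-lemma}, $\Phi \in L(V,L(V,W))$ is identified with the $\Psi \in L(V \otimes V,W)$ satisfying $\Psi(v \otimes w) = (\Phi v)w$. This reading is not just convenient but necessary. Take $V \bar{\otimes} W := V \otimes W$ and $\bar{\otimes} := 0$. This pair satisfies (1) and an abstract version of (2). Yet $0 = \Psi \circ \otimes$ with $\Psi$ injective would force $\otimes = 0$.

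For the same reason, your fallback paragraph cannot work and should be dropped:
\begin{itemize}
\item Under an abstract reading of (2) you would not even obtain $\Psi$ and $\Theta$ in the first place.
\item Density of the span of the tensors $v \otimes w$ in $V \otimes W$ gives at most $\Theta \circ \Psi = \Id_{V \otimes W}$. For $\Psi \circ \Theta = \Id_{V \bar{\otimes} W}$ you would need density of the $\bar{\otimes}$-tensors in $V \bar{\otimes} W$, and nothing provides that for an abstractly given pair.
\item By Hahn--Banach, density of the tensors is equivalent to injectivity of $\Phi \mapsto \Phi \circ \otimes$. So it is not an independent fact you can fall back on.
\end{itemize}

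Finally, you are right that property (1) is not needed for the plain isomorphism. It is, however, exactly what yields an isometry if (2) is read isometrically. In that case $|\Psi| = |\bar{\otimes}| \leq 1$ and $|\Theta| = |\otimes| \leq 1$, so $|x| = |\Theta \Psi x| \leq |\Psi x| \leq |x|$ for all $x \in V \otimes W$.
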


We proceed with further well-known results about the tensor product.

\begin{proposition}\label{prop-identifications}
We have the identifications
\begin{align*}
L(V \otimes V,W) \cong L^{(2)}(V \times V, W) \cong L(V,L(V,W)).
\end{align*}
\end{proposition}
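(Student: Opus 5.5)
The first identification is Theorem \ref{thm-tensor-product}(2) with $W$ in both factors replaced by $V$ and the target space $X$ taken to be $W$. Given $T \in L(V \otimes V, W)$, the map $B := T \circ \otimes$ is a continuous bilinear map $V \times V \to W$. Conversely, every $B \in L^{(2)}(V \times V, W)$ factors uniquely as $B = T \circ \otimes$. So $L(V \otimes V,W) \cong L^{(2)}(V \times V, W)$ comes directly from the universal property. If we want the isomorphism to be isometric, we read Theorem \ref{thm-tensor-product} as asserting $\|T\| = \|B\|$, as in the projective tensor product construction. Property (1) already gives $\|B\| \le \|T\|$.

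The work is the second identification, which we prove directly. Define $\Phi : L^{(2)}(V \times V, W) \to L(V, L(V,W))$ by $\Phi(B)(v) := B(v,\cdot)$. The steps are as follows.

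(i) For fixed $v$, the map $B(v,\cdot)$ is linear, and $|B(v,u)| \le \|B\|\,|v|\,|u|$. Here $\|B\| := \sup_{|v|,|u|\le 1}|B(v,u)|$ is finite because continuous bilinear maps are bounded: continuity at $0$ gives a ball on which $|B|\le 1$, and we rescale. Hence $B(v,\cdot) \in L(V,W)$ with norm at most $\|B\|\,|v|$.

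(ii) The map $v \mapsto B(v,\cdot)$ is linear by linearity of $B$ in its first argument. It is bounded with $\|\Phi(B)\| \le \|B\|$.

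(iii) For the inverse, given $A \in L(V,L(V,W))$, put $\Psi(A)(v,u) := A(v)(u)$. This is bilinear, and $|\Psi(A)(v,u)| \le \|A\|\,|v|\,|u|$. Bilinear maps satisfying such a bound are continuous, since
\begin{align*}
\Psi(A)(v,u) - \Psi(A)(v',u') = \Psi(A)(v-v',u) + \Psi(A)(v',u-u').
\end{align*}

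(iv) The maps $\Phi$ and $\Psi$ are linear and mutually inverse. Both are norm-nonincreasing, so $\Phi$ is an isometric isomorphism.

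Composing the two identifications gives the chain in the statement. Explicitly, $T \in L(V \otimes V, W)$ corresponds to $v \mapsto \bigl(u \mapsto T(v \otimes u)\bigr)$.

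The only point needing care is the first step, namely interpreting "$\cong$" in Theorem \ref{thm-tensor-product}(2). We need the correspondence to be $T \mapsto T \circ \otimes$, and, for isometry, that $\|T\| = \|T\circ\otimes\|$. If only a linear topological isomorphism is intended, the open mapping theorem gives that automatically, because $T \mapsto T\circ \otimes$ is a continuous linear bijection between Banach spaces. The remaining steps are routine estimates.
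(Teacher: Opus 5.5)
Your proposal is correct. The paper states this proposition without proof as a well-known fact, and the argument it implicitly relies on is yours: Theorem \ref{thm-tensor-product}(2) (with both factors equal to $V$ and target $W$) for the first identification, and the currying $B \mapsto \bigl(v \mapsto B(v,\cdot)\bigr)$ for the second. Your resulting correspondence $T \mapsto \bigl(v \mapsto T(v \otimes \cdot)\bigr)$ is the identification the paper later uses in Exercise \ref{exercise-sewing-lemma}, where $\Phi \in L(V,L(V,W))$ is matched with $\Psi \in L(V \otimes V,W)$ via $\Psi(v \otimes w) = (\Phi v)w$.
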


\begin{proposition}
We have $| v \otimes w | = |w \otimes v|$ for all $v,w \in V$.
\end{proposition}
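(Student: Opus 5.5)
The plan is to build a \emph{flip operator} on $V \otimes V$ from the universal property in Theorem \ref{thm-tensor-product}, and to show that it is a norm-nonincreasing involution. Since Theorem \ref{thm-tensor-product} only determines $V \otimes V$ up to isomorphism, the statement depends on the norm. I will therefore work with the standard realization, the projective tensor product. There the norm is
\begin{align*}
|z| = \inf \Big\{ \sum_i |v_i| \, |w_i| : z = \sum_i v_i \otimes w_i \Big\},
\end{align*}
taken on the algebraic tensor product and then extended to the completion. For this realization the identification in Theorem \ref{thm-tensor-product}(2) is isometric: the induced linear map $\hat{T}$ satisfies $\| \hat{T} \| = \| T \|$, where
\begin{align*}
\| T \| = \sup_{|v|,|w| \leq 1} |T(v,w)|.
\end{align*}

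\textbf{Step 1.} Consider the bilinear map $B : V \times V \to V \otimes V$ given by $B(v,w) = w \otimes v$. It is continuous by Theorem \ref{thm-tensor-product}, and property (1) there gives
\begin{align*}
|B(v,w)| \leq |w| \, |v|,
\end{align*}
so $\| B \| \leq 1$. By Theorem \ref{thm-tensor-product}(2) (with $X = V \otimes V$) there is a unique $\tau \in L(V \otimes V)$ with $\tau(v \otimes w) = w \otimes v$. By the isometric identification, $\| \tau \| \leq 1$.

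\textbf{Step 2.} The composition $\tau \circ \tau$ is continuous and linear, and it fixes every elementary tensor $v \otimes w$. Hence it fixes their linear span, which is dense in $V \otimes V$ by construction of the completion. It follows that $\tau \circ \tau = \Id$.

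\textbf{Step 3.} Since $\| \tau \| \leq 1$, we get $|w \otimes v| = |\tau(v \otimes w)| \leq |v \otimes w|$. Exchanging the roles of $v$ and $w$ gives the reverse inequality, which proves the claim.

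An alternative, more direct route is to show that the projective norm is a cross norm, namely $|v \otimes w| = |v| \, |w|$; the symmetry of the right-hand side then gives the claim immediately. The upper bound is property (1) of Theorem \ref{thm-tensor-product}. For the lower bound, choose norming functionals $f,g \in V^*$ with $\|f\| = \|g\| = 1$, $f(v) = |v|$ and $g(w) = |w|$, which exist by Hahn--Banach. The bilinear form $(x,y) \mapsto f(x) g(y)$ has norm $1$, so the induced functional $\varphi$ on $V \otimes V$ has norm $1$ and satisfies $\varphi(v \otimes w) = |v| \, |w|$. Hence $|v \otimes w| \geq |v| \, |w|$.

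The main obstacle is conceptual rather than computational. Theorem \ref{thm-tensor-product} as stated guarantees only $|v \otimes w| \leq |v| \, |w|$ and an abstract isomorphism in (2). For a general Banach space satisfying those properties, the flip map might not be isometric. I will therefore state explicitly that the tensor product is realized as the projective one, or equivalently that the isomorphism in (2) is isometric, and use that fact in Step 1 or in the Hahn--Banach argument.
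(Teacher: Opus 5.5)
The paper gives no proof of this proposition. It is listed among the ``well-known results'' about the tensor product, and the concrete realization of $V \otimes V$ is left implicit, so there is no argument of the paper to compare with. Your proof is correct and fills that gap.

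For the projective tensor product, the identification in Theorem \ref{thm-tensor-product}(2) is isometric, so the flip satisfies $\|\tau\| \le 1$. Applying this to $v \otimes w$ and then to $w \otimes v$ gives equality. Your Step 2 is not needed here; it is the involution property $x^{**} = x$ of Proposition \ref{prop-symmetry-operator}. Your cross-norm route $|v \otimes w| = |v|\,|w|$ via Hahn--Banach is the most economical version. You can even avoid Hahn--Banach: swapping factors maps representations $\sum_i a_i \otimes b_i$ of $v \otimes w$ bijectively onto representations of $w \otimes v$ with the same cost $\sum_i |a_i|\,|b_i|$, so the two infima coincide.

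Your caveat is necessary, not pedantic. If $\cong$ in Theorem \ref{thm-tensor-product}(2) is only a non-isometric Banach space isomorphism, the statement is false. Take $V = \bbr^2$ with $|v| = |v_1| + |v_2|$, and realize $V \otimes V = \bbr^{2 \times 2}$ with $v \otimes w = v w^{\top}$ as in Proposition \ref{prop-tensor-product-Rn}. Equip it with the norm $N(A) := \frac{1}{2} \sum_{i,j} |A_{ij}| + \frac{1}{2} |A_{12}|$. Then $N(v \otimes w) = \frac{1}{2}|v|\,|w| + \frac{1}{2}|v_1|\,|w_2| \le |v|\,|w|$, and property (2) holds because everything is finite dimensional. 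Yet $N(e_1 \otimes e_2) = 1 \neq \frac{1}{2} = N(e_2 \otimes e_1)$. So the proposition does depend on fixing the projective (or another symmetric) tensor norm, exactly as you say.
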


\begin{proposition}\label{prop-symmetry-operator}
There is a unique linear operator $x \mapsto x^*$ from $L(V \otimes V)$ such that
\begin{align*}
(v \otimes w)^* = w \otimes v \quad \text{for all $v,w \in V$.}
\end{align*}
Furthermore, we have $x^{**} = x$ for all $x \in V \otimes V$.
\end{proposition}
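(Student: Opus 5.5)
The plan is to obtain the operator $x \mapsto x^*$ from the universal property in Theorem \ref{thm-tensor-product}(2), applied with $W = V$ and target space $X = V \otimes V$.

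\emph{Existence and continuity.} Define $\beta : V \times V \to V \otimes V$ by $\beta(v,w) := w \otimes v$. It is bilinear because $\otimes$ is bilinear, with the roles of the arguments swapped. It is continuous because, by Theorem \ref{thm-tensor-product}(1),
\begin{align*}
|\beta(v,w)| = |w \otimes v| \leq |w| \cdot |v|,
\end{align*}
so $\beta$ is a bounded bilinear map. Hence $\beta \in L^{(2)}(V \times V, V \otimes V)$. The identification $L^{(2)}(V \times V, V \otimes V) \cong L(V \otimes V)$ from Theorem \ref{thm-tensor-product}(2) is understood in the standard way: a bilinear map $\beta$ corresponds to the unique $\tilde\beta \in L(V \otimes V)$ with $\tilde\beta \circ \otimes = \beta$. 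Setting $x^* := \tilde\beta(x)$ gives a continuous linear operator with $(v \otimes w)^* = w \otimes v$.

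\emph{Uniqueness.} Let $T_1, T_2 \in L(V \otimes V)$ both satisfy $T_i(v \otimes w) = w \otimes v$. Then $T_1 \circ \otimes = T_2 \circ \otimes$ as bilinear maps. Since the correspondence in Theorem \ref{thm-tensor-product}(2) is a bijection, this forces $T_1 = T_2$. If one prefers to avoid reading injectivity off the isomorphism, the equivalent fact is that the linear span of the tensors $v \otimes w$ is dense in $V \otimes V$. Injectivity of $T \mapsto T \circ \otimes$ says precisely this: otherwise a nonzero functional vanishing on the closed span would yield a nonzero operator composing to zero.

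\emph{Involution.} The composition $x \mapsto x^{**}$ is a continuous linear operator on $V \otimes V$, and it satisfies $(v \otimes w)^{**} = (w \otimes v)^* = v \otimes w$. The identity $\Id$ has the same property. By the uniqueness argument above, applied to the bilinear map $\otimes$ itself, the two operators agree, so $x^{**} = x$ for all $x$.

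The only delicate point is interpreting the isomorphism in Theorem \ref{thm-tensor-product}(2) as being implemented by $T \mapsto T \circ \otimes$. Both uniqueness and the involution property rest on this, since they use injectivity of that map, equivalently density of the span of tensors. I would state this interpretation explicitly, since it is the standard meaning of the universal property.
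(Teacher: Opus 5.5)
Your proof is correct. The paper lists this proposition among ``well-known results'' and gives no proof. Your argument via the universal property in Theorem \ref{thm-tensor-product}(2) is the standard route that the paper's setup intends:
\begin{itemize}
\item existence comes from the bounded bilinear map $(v,w) \mapsto w \otimes v$;
\item uniqueness, within $L(V \otimes V)$ as the statement requires, comes from injectivity of $T \mapsto T \circ \otimes$;
\item $x^{**} = x$ also comes from that injectivity, since $x \mapsto x^{**}$ and the identity agree on all tensors $v \otimes w$.
\end{itemize}

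You are right to make that interpretation explicit. Read literally, Theorem \ref{thm-tensor-product} only asserts an abstract isomorphism. Under that reading it would be satisfied even by the zero map in place of $\otimes$, and then uniqueness and the involution property fail.

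One small remark on your density aside: it restates the injectivity assumption rather than giving an independent route, since you derive density from injectivity. It becomes independent only if you work with a concrete model, such as the completed projective tensor product. There the algebraic span of tensors is dense by construction.
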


We call the linear operator $x \mapsto x^*$ the \emph{symmetry operator}.

\begin{definition}
Let $x \in V \otimes V$ be arbitrary.
\begin{enumerate}
\item We call $x$ \emph{symmetric} if $x^* = x$.

\item We call $x$ \emph{antisymmetric} if $x^* = -x$.
\end{enumerate}
\end{definition}

\begin{definition}\label{def-sym-operator}
We define the two linear operators $\Sym, \An \in L(V \otimes V)$ as
\begin{align*}
\Sym(x) := \frac{x + x^*}{2} \quad \text{and} \quad \An(x) := \frac{x - x^*}{2}
\end{align*}
\end{definition}

\begin{proposition}
Every element $x \in V \otimes V$ admits a unique decomposition $x = y + z$ with a symmetric element $y \in V \otimes V$ and an antisymmetric element $z \in V \otimes V$; it is given by $y = \Sym(x)$ and $z = \An(x)$.
\end{proposition}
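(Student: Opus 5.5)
The proof splits into existence and uniqueness, and both parts only use Proposition \ref{prop-symmetry-operator}: the symmetry operator $x \mapsto x^*$ is linear and satisfies $x^{**} = x$ for all $x \in V \otimes V$.

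For existence, we set $y := \Sym(x)$ and $z := \An(x)$ as in Definition \ref{def-sym-operator}. Adding the two definitions gives
\begin{align*}
y + z = \frac{x + x^*}{2} + \frac{x - x^*}{2} = x.
\end{align*}
To see that $y$ is symmetric, we use linearity of the symmetry operator together with $x^{**} = x$:
\begin{align*}
y^* = \frac{x^* + x^{**}}{2} = \frac{x^* + x}{2} = y.
\end{align*}
In the same way we get
\begin{align*}
z^* = \frac{x^* - x^{**}}{2} = \frac{x^* - x}{2} = -z,
\end{align*}
so $z$ is antisymmetric.

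For uniqueness, suppose $x = y + z = y' + z'$ with $y, y'$ symmetric and $z, z'$ antisymmetric. Put $u := y - y' = z' - z$. By linearity of the symmetry operator, $u$ is the difference of two symmetric elements, so $u^* = u$. It is also the difference of two antisymmetric elements, so $u^* = -u$. Hence $2u = 0$, so $u = 0$, which gives $y = y'$ and $z = z'$.

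An alternative route to uniqueness is to apply the symmetry operator to $x = y + z$. This gives $x^* = y - z$, and solving the two equations yields $y = \Sym(x)$ and $z = \An(x)$ directly. This also proves the explicit formula in the statement.

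No step is a real obstacle. The only substantive point is that we need linearity of $x \mapsto x^*$ together with the involution property $x^{**} = x$ from Proposition \ref{prop-symmetry-operator}. Division by $2$ is harmless because $V \otimes V$ is a real (or complex) vector space.
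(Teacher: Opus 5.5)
Your proof is correct and complete: existence follows from the linearity of the symmetry operator together with $x^{**} = x$, and uniqueness follows because an element that is both symmetric and antisymmetric must vanish. The paper lists this proposition among well-known facts and gives no proof, and your argument is the standard one.
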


\subsection{The tensor product of finite dimensional spaces}

In this section we consider the finite dimensional situation. As the following result shows, in this case the tensor product is given by a space of matrices.

\begin{proposition}\label{prop-tensor-product-Rn}
We have $\bbr^m \otimes \bbr^n \cong \bbr^{m \times n}$, and the tensor product mapping is given by the dyadic product
\begin{align}\label{dyadic-product}
\otimes : \bbr^m \times \bbr^n \to \bbr^{m \times n}, \quad v \otimes w := v \cdot w^{\top}.
\end{align}
\end{proposition}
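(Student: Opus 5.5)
The plan is to check directly that $\bbr^{m \times n}$, equipped with a suitable norm and with the dyadic product \eqref{dyadic-product}, has the two properties listed in Theorem \ref{thm-tensor-product}. The uniqueness proposition stated after Theorem \ref{thm-tensor-product} then gives $\bbr^m \otimes \bbr^n \cong \bbr^{m \times n}$, and the isomorphism carries the abstract tensor product mapping to $v \otimes w = v w^{\top}$.

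First, the map $(v,w) \mapsto v w^{\top}$ is bilinear, and since $\bbr^{m \times n}$ is finite dimensional, continuity is automatic. For property (2), I will build the correspondence between bilinear maps on $\bbr^m \times \bbr^n$ and linear maps on $\bbr^{m\times n}$ explicitly. Let $e_i$ and $f_j$ be the standard bases of $\bbr^m$ and $\bbr^n$. Then the matrices $e_i f_j^{\top} = E_{ij}$ form a basis of $\bbr^{m \times n}$. Given $B \in L^{(2)}(\bbr^m \times \bbr^n, X)$, define the linear map $T_B : \bbr^{m\times n} \to X$ by $T_B(E_{ij}) := B(e_i,f_j)$, extended linearly. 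Expanding $v = \sum v_i e_i$ and $w = \sum w_j f_j$ gives $T_B(v w^{\top}) = \sum_{i,j} v_i w_j B(e_i,f_j) = B(v,w)$. In the other direction, $T \mapsto T \circ \otimes$ sends a linear map to a bilinear one. Because the $E_{ij}$ span the space, the two constructions are mutually inverse linear bijections. Every linear map out of a finite dimensional space is continuous, so no topological issue arises.

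For property (1) the norm must be chosen with some care. I will use the operator norm of the matrix as a map $(\bbr^n,|\cdot|) \to (\bbr^m,|\cdot|)$; any cross norm, such as the projective norm, would also work. The estimate is $|v w^{\top} u| = |v|\,|\langle w,u\rangle| \le |v|\,|w|\,|u|$ for Euclidean norms, so $|v \otimes w| \le |v|\,|w|$, and in fact equality holds.

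The only real subtlety is what the symbol $\cong$ is supposed to mean. If it means a linear isomorphism, which is how the uniqueness proposition is phrased, then all norms on $\bbr^{m\times n}$ are equivalent and the identification is topological as well. I do not see a genuine obstacle here. The one point that needs care is checking that $T_B$ is well defined, and that follows from the $E_{ij}$ being a basis rather than merely a spanning set.
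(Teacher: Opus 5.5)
The paper gives no proof of this proposition; it is quoted as a standard fact, so there is no argument of the paper's to compare yours against. Your route is correct and is the standard justification: you check properties (1) and (2) of Theorem \ref{thm-tensor-product} for $\bbr^{m\times n}$ with $v\otimes w = v w^{\top}$, realize the identification in (2) by $T \mapsto T\circ\otimes$, and then invoke the uniqueness proposition. Realizing (2) by composition with $\otimes$ is exactly what makes the uniqueness proposition applicable. The remaining ingredients are also in place. The matrices $E_{ij}=e_i f_j^{\top}$ form a basis, so $T_B$ is well defined and the correspondence is bijective. All (bi)linear maps out of finite-dimensional spaces are bounded, even into an infinite-dimensional $X$. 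The correspondence is bounded in both directions, since $|T\circ\otimes|\le |T|$ by (1) and $|T_B(A)|\le |B|\sum_{i,j}|A_{ij}|\le C|B|\,|A|$.

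One caveat concerns your remark that any cross norm would do. That is true if $\cong$ in (2) means a linear (topological) isomorphism, which is all the uniqueness proposition needs. It fails if (2) is read isometrically, as for the projective tensor product. For the bilinear form $B(v,w)=\langle v,w\rangle$ on $\bbr^n\times\bbr^n$ we have $|B|=1$, while the corresponding functional $T_B=\tr$ has norm $n$ on $\bbr^{n\times n}$ with the operator norm. Under the isometric reading you would need the trace (nuclear) norm $\sum_i \sigma_i(A)$, which is the projective norm for Euclidean spaces.
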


\begin{remark}
Consequently, for all $v \in \bbr^m$ and $w \in \bbr^n$ the components of the matrix $v \otimes w \in \bbr^{m \times n}$ are given by
\begin{align*}
(v \otimes w)_{ij} = v_i \cdot w_j \quad \text{for all $ = 1,\ldots,m$ and $j=1,\ldots,n$.}
\end{align*}
\end{remark}

The following well-known result shows that the tensor product may also be regarded as a space of linear operators.

\begin{proposition}\label{prop-lin-operator-matrix}
We have $L(\bbr^n,\bbr^m) \cong \bbr^{m \times n}$.
\end{proposition}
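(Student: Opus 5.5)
We construct the isomorphism explicitly: send a linear operator to its matrix with respect to the standard bases. Let $e_1,\ldots,e_n$ denote the standard basis of $\bbr^n$ and $e_1',\ldots,e_m'$ the standard basis of $\bbr^m$. Define $\Phi : L(\bbr^n,\bbr^m) \to \bbr^{m \times n}$ by $\Phi(T) := (a_{ij})$, where the entries are determined by
\begin{align*}
T e_j = \sum_{i=1}^m a_{ij} e_i', \quad j = 1,\ldots,n.
\end{align*}
In other words, the $j$-th column of $\Phi(T)$ is $Te_j$.

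Conversely, define $\Psi : \bbr^{m \times n} \to L(\bbr^n,\bbr^m)$ by $\Psi(A)v := A v$, the matrix-vector product. We then verify three things in order.
\begin{enumerate}
\item $\Psi(A)$ is linear and continuous. Linearity is immediate, and continuity holds because every linear map between finite dimensional normed spaces is continuous; alternatively, one has the explicit bound $|Av| \leq C |A| \, |v|$ for a suitable constant $C$.
\item $\Phi$ is linear. This follows from the uniqueness of coordinates with respect to the basis $e_1',\ldots,e_m'$.
\item $\Phi$ and $\Psi$ are mutually inverse. For any $A$, the vector $\Psi(A)e_j = Ae_j$ is the $j$-th column of $A$, so $\Phi(\Psi(A)) = A$. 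For any $T$, we have $T v = \sum_j v_j T e_j = \Phi(T) v$ by linearity, so $\Psi(\Phi(T)) = T$.
\end{enumerate}

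Since $\Phi$ is a linear bijection between finite dimensional spaces, it is automatically a topological isomorphism: all norms on a finite dimensional space are equivalent, so $\Phi$ and $\Phi^{-1}$ are bounded whatever norms are chosen on the two sides. This establishes $L(\bbr^n,\bbr^m) \cong \bbr^{m \times n}$.

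It is also worth recording how this fits with Proposition \ref{prop-tensor-product-Rn}. Combining the two results gives $\bbr^m \otimes \bbr^n \cong L(\bbr^n,\bbr^m)$. Under this identification the tensor $v \otimes w = v w^{\top}$ corresponds to the rank-one operator $u \mapsto \langle w,u \rangle v$.

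No step here is a genuine obstacle. The only point needing care is the meaning of $\cong$: it is a linear isomorphism, and it is bounded in both directions automatically by finite dimensionality. No isometry is claimed, since the norms on the two spaces have not been fixed.
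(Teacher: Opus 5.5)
Your proof is correct. The matrix-with-respect-to-standard-bases map and the matrix-vector product map are mutually inverse linear maps, and finite dimensionality makes the bijection continuous in both directions.

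The paper gives no proof of this proposition; it quotes it as a well-known fact. Your argument is the standard justification.

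Your caveat that $\cong$ means a linear homeomorphism rather than an isometry is apt. The operator norm on $L(\bbr^n,\bbr^m)$ and the Frobenius norm on $\bbr^{m \times n}$ (the norm the paper later uses for matrices) do not coincide in general.
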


In view of the following exercise, recall the symmetry operator from Proposition \ref{prop-symmetry-operator}.

\begin{exercise}
Show that in case $V = \bbr^m$ the symmetry operator $A \mapsto A^*$ from $L(\bbr^{m \times m})$ is given by the transpose matrix $A^* = A^{\top}$.
\end{exercise}

\begin{solution}
Recalling the definition \eqref{dyadic-product} of the dyadic product, for all $v,w \in \bbr^m$ we obtain
\begin{align*}
(v \otimes w)^{\top} = (v \cdot w^{\top})^{\top} = w \cdot v^{\top} = w \otimes v.
\end{align*}
\end{solution}

\section{Spaces of rough paths}\label{sec-rough-paths}

In this section we provide the required background about spaces of rough paths. Let $V$ be a Banach space, and let $T \in \bbr_+$ be a finite time horizon.

\subsection{H\"{o}lder rough paths}

In this section we introduce spaces of H\"{o}lder rough paths, and provide some of their properties. In the sequel, a function $X : [0,T] \to V$ will be called a \emph{path}. For the increments of a path $X : [0,T] \to V$ we will use the notation \eqref{notation-increments}.

\begin{definition}
Let $\alpha \in (\frac{1}{3},\frac{1}{2}]$ be arbitrary. We define the space $\scrc^{\alpha}([0,T],V)$ of all \emph{$\alpha$-H\"{o}lder rough paths} (over $V$) as the space of all pairs $\mathbf{X} = (X,\bbx)$ with paths $X : [0,T] \to V$ and $\bbx : [0,T]^2 \to V \otimes V$ such that the following three conditions are fulfilled:
\begin{enumerate}
\item We have $X \in \calc^{\alpha}([0,T],V)$; that is
\begin{align*}
\| X \|_{\alpha} := \sup_{\genfrac{}{}{0pt}{}{s,t \in [0,T]}{s \neq t}} \frac{|X_{s,t}|}{|t-s|^{\alpha}} < \infty.
\end{align*}
\item We have $\bbx \in \calc_2^{2\alpha}([0,T]^2,V \otimes V)$; that is
\begin{align*}
\| \bbx \|_{2\alpha} := \sup_{\genfrac{}{}{0pt}{}{s,t \in [0,T]}{s \neq t}} \frac{|\bbx_{s,t}|}{|t-s|^{2\alpha}} < \infty.
\end{align*}
\item The algebraic relation
\begin{align}\label{Chen-relation}
 \bbx_{s,t} - \bbx_{s,u} - \bbx_{u,t} = X_{s,u} \otimes X_{u,t} \quad \text{for all $s,u,t \in [0,T]$,}
\end{align}
which we call \emph{Chen's relation}, is satisfied.
\end{enumerate}
\end{definition}

\begin{remark}
The path $\bbx$ of a rough path $\mathbf{X} = (X,\bbx)$ is also called the associated \emph{second order process} or \emph{L\'{e}vy area}.
\end{remark}

\begin{remark}
Consider the product space
\begin{align}\label{product-Hoelder}
P^{\alpha}([0,T],V) := \calc^{\alpha}([0,T],V) \times \calc_2^{2\alpha}([0,T]^2, V \otimes V).
\end{align}
Then $\scrc^{\alpha}([0,T],V)$ consists of all $\mathbf{X} = (X,\bbx) \in P^{\alpha}([0,T],V)$ such that Chen's relation \eqref{Chen-relation} is fulfilled. In particular, it is not a vector space, because for $\mathbf{X},\mathbf{Y} \in \scrc^{\alpha}([0,T],V)$ the sum $\mathbf{X} + \mathbf{Y}$ may not satisfy Chen's relation \eqref{Chen-relation}.
\end{remark}

\begin{remark}
At this point, let us already mention some consequences of Chen's relation \eqref{Chen-relation}, which we will derive:
\begin{enumerate}
\item For every rough path $\mathbf{X} = (X,\bbx)$ we have $\bbx_{t,t} = 0$ for all $t \in [0,T]$, and the values of $\bbx_{t,s}$ for $s < t$ are determined by $X_{s,t}$ and $\bbx_{s,t}$; see identities \eqref{Chen-rule-1} and \eqref{Chen-rule-2}.

\item The values of $X_t$ and $\bbx_{0,t}$ for $t \in [0,T]$ completely determine the values of a rough path $\mathbf{X} = (X,\bbx)$; see identity \eqref{Chen-rule-3} and Corollary \ref{cor-determine-path}. This actually justifies to call $\bfx$ a rough \emph{path}.

\item The second order process $\bbx$ of a rough path $\mathbf{X} = (X,\bbx)$ is determined by $X$ up to the increments of an additional one-parameter function $F : [0,T] \to V \otimes V$; see Corollary \ref{cor-F-second-order}.

\item Let $\mathbf{X} = (X,\bbx)$ be a rough path such that $X$ is of finite variation. Then the second order process $\bbx$ is given by
\begin{align}\label{second-order-FV}
\bbx_{s,t} = \int_s^t X_{s,r} \otimes d X_r, \quad s,t \in [0,T],
\end{align}
of course, up to the increments of an additional one-parameter function $F : [0,T] \to V \otimes V$. We refer to Exercise \ref{exercise-FV-path} for more details.

\item To anticipate a bit, in Section \ref{sec-integral} we will introduce the rough integral as the limit \eqref{candidate}. Chen's relation \eqref{Chen-relation} is crucial in order to ensure that this limit actually exists, and that the rough integral has the desired properties such as estimate \eqref{estimate-third-order}. We refer to Exercise \ref{exercise-sewing-lemma} for the precise result.
\end{enumerate}
\end{remark}

In the following exercise we derive some consequences of Chen's relation \eqref{Chen-relation}.

\begin{exercise}
Let $\mathbf{X} = (X,\bbx)$ be a pair of paths $X : [0,T] \to V$ and $\bbx : [0,T]^2 \to V \otimes V$ such that Chen's relation \eqref{Chen-relation} is fulfilled.
\begin{enumerate}
\item[(a)] Show the identity
\begin{align}\label{Chen-rule-1}
\bbx_{t,t} = 0 \quad \text{for all $t \in [0,T]$.}
\end{align}
\item[(b)] Show the identity
\begin{align}\label{Chen-rule-2}
\bbx_{s,t} = X_{s,t} \otimes X_{s,t} - \bbx_{t,s} \quad \text{for all $s,t \in [0,T]$.}
\end{align}
\item[(c)] Show the identity
\begin{align}\label{Chen-rule-3}
\bbx_{s,t} = \bbx_{0,t} - \bbx_{0,s} - X_{0,s} \otimes X_{s,t} \quad \text{for all $s,t \in [0,T]$.}
\end{align}
\end{enumerate}
\end{exercise}

\begin{solution}
\begin{enumerate}
\item[(a)] By Chen's relation \eqref{Chen-relation} with $s = u = t$ we have
\begin{align*}
\bbx_{t,t} = - (\bbx_{t,t} - \bbx_{t,t} - \bbx_{t,t}) = X_{t,t} \otimes X_{t,t} = 0 \quad \text{for all $t \in [0,T]$.}
\end{align*}
\item[(b)] By Chen's relation \eqref{Chen-relation} with $s = t$ and part (a) we obtain
\begin{align*}
- \bbx_{t,u} - \bbx_{u,t} = \bbx_{t,t} - \bbx_{t,u} - \bbx_{u,t} = X_{t,u} \otimes X_{u,t} = - X_{u,t} \otimes X_{u,t},
\end{align*}
and hence
\begin{align*}
\bbx_{u,t} = X_{u,t} \otimes X_{u,t} - \bbx_{t,u} \quad \text{for all $u,t \in [0,T]$.}
\end{align*}
\item[(c)] By Chen's relation \eqref{Chen-relation} with $s = 0$ and $u=s$ we have
\begin{align*}
\bbx_{0,t} - \bbx_{0,s} - \bbx_{s,t} = X_{0,s} \otimes X_{s,t} \quad \text{for all $s,t \in [0,T]$.}
\end{align*}
\end{enumerate}
\end{solution}

As an immediate consequence of identity \eqref{Chen-rule-3}, we obtain that a path $\bfx = (X,\bbx)$ is already completely determined by its values $(X_t,\bbx_{0,t})$ for $t \in [0,T]$, provided Chen's relation \eqref{Chen-relation} is fulfilled.

\begin{corollary}\label{cor-determine-path}
Let $\mathbf{X} = (X,\bbx)$ and $\mathbf{Y} = (Y,\bby)$ be two pairs with paths $X,Y : [0,T] \to V$ and $\bbx,\bby : [0,T]^2 \to V \otimes V$, which both satisfy Chen's relation \eqref{Chen-relation}. If $(X_t,\bbx_{0,t}) = (Y_t,\bby_{0,t})$ for all $t \in [0,T]$, then we have $\bfx = \bfy$.
\end{corollary}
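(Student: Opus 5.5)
The plan is to compare the two components separately and to reduce everything to identity \eqref{Chen-rule-3}. That identity was derived in the preceding exercise using nothing but Chen's relation \eqref{Chen-relation}. It therefore applies to both $\mathbf{X}$ and $\mathbf{Y}$, and no Hölder regularity is needed.

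First, the assumption $X_t = Y_t$ for all $t \in [0,T]$ already gives $X = Y$ as paths. In particular, the increments agree:
\begin{align*}
X_{s,t} = X_t - X_s = Y_t - Y_s = Y_{s,t} \quad \text{for all $s,t \in [0,T]$.}
\end{align*}

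Second, for arbitrary $s,t \in [0,T]$ I will write out \eqref{Chen-rule-3} for both pairs:
\begin{align*}
\bbx_{s,t} = \bbx_{0,t} - \bbx_{0,s} - X_{0,s} \otimes X_{s,t}, \qquad \bby_{s,t} = \bby_{0,t} - \bby_{0,s} - Y_{0,s} \otimes Y_{s,t}.
\end{align*}
Next I substitute the hypothesis $\bbx_{0,r} = \bby_{0,r}$ for $r \in \{s,t\}$, together with the equality of increments from the first step. The right-hand sides then coincide term by term, which gives $\bbx_{s,t} = \bby_{s,t}$. Since $s,t$ were arbitrary, $\bbx = \bby$ on all of $[0,T]^2$, and hence $\mathbf{X} = \mathbf{Y}$.

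There is no genuine obstacle here. The only point needing care is that \eqref{Chen-rule-3} holds for all $s,t$, including $s > t$, which is exactly what its derivation from \eqref{Chen-relation} with $s=0$, $u=s$ provides. So the off-diagonal and "backward" values $\bbx_{s,t}$ are covered without a separate appeal to \eqref{Chen-rule-2}.
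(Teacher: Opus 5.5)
Your proposal is correct and follows the same route as the paper, which obtains the corollary as an immediate consequence of identity \eqref{Chen-rule-3} applied to both pairs. Your remark that \eqref{Chen-rule-3} holds for all $s,t \in [0,T]$, including $s > t$, is right: Chen's relation is imposed without any ordering constraint.
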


The following exercise shows that the seemingly weaker identity \eqref{Chen-rule-3} already implies Chen's relation \eqref{Chen-relation}. This is useful when verifying Chen's relation for given paths $X : [0,T] \to V$ and $\bbx : [0,T]^2 \to V \otimes V$; see, e.g., Exercise \ref{exercise-FV-path}.

\begin{exercise}\label{exercise-Chen-sufficient}
Let $\mathbf{X} = (X,\bbx)$ be a pair of paths $X : [0,T] \to V$ and $\bbx : [0,T]^2 \to V \otimes V$. Show that Chen's relation \eqref{Chen-relation} is fulfilled if and only if we have \eqref{Chen-rule-3}.
\end{exercise}

\begin{solution}
We only need to show that \eqref{Chen-rule-3} implies Chen's relation \eqref{Chen-relation}. Indeed, for all $s,u,t \in [0,T]$ we have
\begin{align*}
\bbx_{s,t} - \bbx_{s,u} - \bbx_{u,t} &= \bbx_{0,t} - \bbx_{0,s} - X_{0,s} \otimes X_{s,t}
\\ &\quad - \big( \bbx_{0,u} - \bbx_{0,s} - X_{0,s} \otimes X_{s,u} \big)
\\ &\quad - \big( \bbx_{0,t} - \bbx_{0,u} - X_{0,u} \otimes X_{u,t} \big)
\\ &= -X_{0,s} \otimes (X_t - X_s) + X_{0,s} \otimes (X_u - X_s) + X_{0,u} \otimes (X_t - X_u)
\\ &= X_{0,s} \otimes (X_u - X_t) + X_{0,u} \otimes (X_t - X_u)
\\ &= (X_{0,u} - X_{0,s}) \otimes (X_t - X_u) = X_{s,u} \otimes X_{u,t}.
\end{align*}
\end{solution}

The following exercise provides further identities for the second order process of a rough path.

\begin{exercise}
Let $\mathbf{X} = (X,\bbx)$ be a pair of paths $X : [0,T] \to V$ and $\bbx : [0,T]^2 \to V \otimes V$ such that Chen's relation \eqref{Chen-relation} is fulfilled. We fix time points $0 \leq s = \tau_0 < \tau_1 < \ldots < \tau_N = t \leq T$ for some $N \in \bbn$.
\begin{enumerate}
\item[(a)] Show the identity
\begin{align*}
\bbx_{s,t} = \sum_{i=0}^{N-1} \bbx_{\tau_i,\tau_{i+1}} + \sum_{\genfrac{}{}{0pt}{}{i,j = 0}{j < i}}^{N-1} X_{\tau_j,\tau_{j+1}} \otimes X_{\tau_i,\tau_{i+1}}.
\end{align*}
\item[(b)] Show the identity
\begin{align*}
\bbx_{s,t} = \sum_{i=0}^{N-1} \big( \bbx_{\tau_i,\tau_{i+1}} + X_{s,\tau_i} \otimes X_{\tau_i,\tau_{i+1}} \big).
\end{align*}
\end{enumerate}
\end{exercise}

\begin{solution}
\begin{enumerate}
\item[(a)] Using identity \eqref{Chen-rule-3} twice we obtain
\begin{align*}
\bbx_{s,t} &= \bbx_{\tau_0,\tau_N} = \bbx_{0,\tau_N} - \bbx_{0,\tau_0} - X_{0,\tau_0} \otimes X_{\tau_0,\tau_N}
\\ &= \sum_{i=0}^{N-1} ( \bbx_{0,\tau_{i+1}} - \bbx_{0,\tau_i} ) - X_{0,\tau_0} \otimes \sum_{i=0}^{N-1} X_{\tau_i,\tau_{i+1}}
\\ &= \sum_{i=0}^{N-1} ( \bbx_{0,\tau_{i+1}} - \bbx_{0,\tau_i} ) - \sum_{i=0}^{N-1} ( X_{0,\tau_i} - X_{\tau_0,\tau_i} ) \otimes X_{\tau_i,\tau_{i+1}}
\\ &= \sum_{i=0}^{N-1} ( \bbx_{0,\tau_{i+1}} - \bbx_{0,\tau_i} ) - \sum_{i=0}^{N-1} \bigg( X_{0,\tau_i} - \sum_{j=0}^{i-1} X_{\tau_j,\tau_{j+1}} \bigg) \otimes X_{\tau_i,\tau_{i+1}}
\\ &= \sum_{i=0}^{N-1} \big( \bbx_{0,\tau_{i+1}} - \bbx_{0,\tau_i} - X_{0,\tau_i} \otimes X_{\tau_i,\tau_{i+1}} \big) + \sum_{\genfrac{}{}{0pt}{}{i,j = 0}{j < i}}^{N-1} X_{\tau_j,\tau_{j+1}} \otimes X_{\tau_i,\tau_{i+1}}
\\ &= \sum_{i=0}^{N-1} \bbx_{\tau_i,\tau_{i+1}} + \sum_{\genfrac{}{}{0pt}{}{i,j = 0}{j < i}}^{N-1} X_{\tau_j,\tau_{j+1}} \otimes X_{\tau_i,\tau_{i+1}}.
\end{align*}
\item[(b)] By identity \eqref{Chen-rule-1} and Chen's relation \eqref{Chen-relation} we deduce that
\begin{align*}
\\ \bbx_{s,t} &= \bbx_{s,\tau_N} =  \bbx_{s,\tau_N} - \bbx_{s,s} = \bbx_{s,\tau_N} -  \bbx_{s,\tau_0}
\\ &= \sum_{i=0}^{N-1} \big( \bbx_{s,\tau_{i+1}} - \bbx_{s,\tau_i} \big) = \sum_{i=0}^{N-1} \big( \bbx_{\tau_i,\tau_{i+1}} + \bbx_{s,\tau_{i+1}} - \bbx_{s,\tau_i} - \bbx_{\tau_i,\tau_{i+1}} \big)
\\ &= \sum_{i=0}^{N-1} \big( \bbx_{\tau_i,\tau_{i+1}} + X_{s,\tau_i} \otimes X_{\tau_i,\tau_{i+1}} \big).
\end{align*}
\end{enumerate}
\end{solution}

It arises the question whether the second order process $\bbx$ of a rough path $\mathbf{X} = (X,\bbx)$ is uniquely determined $X$. This is not true, but, as the following exercise shows, due to Chen's relation \eqref{Chen-relation} it determines the second order process $\bbx$ up to the increments of an additional one-parameter function.

\begin{exercise}\label{exercise-F-second-order}
Let $\mathbf{X} = (X,\bbx)$ and $\bar{\mathbf{X}} = (X,\bar{\bbx})$ be two pairs of paths with the same first order process $X : [0,T] \to V$ and paths $\bbx,\bar{\bbx} : [0,T]^2 \to V \otimes V$ such that $\mathbf{X}$ satisfies Chen's relation \eqref{Chen-relation}. Show that the following statements are equivalent:
\begin{enumerate}
\item[(i)] $\bar{\mathbf{X}}$ satisfies Chen's relation \eqref{Chen-relation}.

\item[(ii)] There exists a function $F : [0,T] \to V \otimes V$ such that
\begin{align}\label{F-second-order}
\bar{\bbx}_{s,t} = \bbx_{s,t} + F_{s,t} \quad \text{for all $s,t \in [0,T]$.}
\end{align}
\end{enumerate}
\end{exercise}

\begin{solution}
(ii) $\Rightarrow$ (i): Using Chen's relation \eqref{Chen-relation} for $\mathbf{X}$, for all $s,u,t \in [0,T]$ we have
\begin{align*}
\bar{\bbx}_{s,t} - \bar{\bbx}_{s,u} - \bar{\bbx}_{u,t} &= (\bbx_{s,t} + F_t - F_s) - (\bbx_{s,u} + F_u - F_s) - (\bbx_{u,t} + F_t - F_u)
\\ &= \bbx_{s,t} - \bbx_{s,u} - \bbx_{u,t} = X_{s,u} \otimes X_{u,t}.
\end{align*}
(i) $\Rightarrow$ (ii): We define $G : [0,T]^2 \to V \otimes V$ as $G := \bbx - \bar{\bbx}$. By \eqref{Chen-rule-2} we have
\begin{align*}
G_{s,t} &= \bbx_{s,t} - \bar{\bbx}_{s,t} = X_{s,t} \otimes X_{s,t} - \bbx_{t,s} - ( X_{s,t} \otimes X_{s,t} - \bar{\bbx}_{t,s})
\\ &= - ( \bbx_{t,s} - \bar{\bbx}_{t,s} ) = - G_{t,s} \quad \text{for all $s,t \in [0,T]$.}
\end{align*}
By Chen's relation \eqref{Chen-relation}, for all $s,u,t \in [0,T]$ we have
\begin{align*}
\bbx_{s,t} - \bbx_{s,u} - \bbx_{u,t} = X_{s,u} \otimes X_{u,t} = \bar{\bbx}_{s,t} - \bar{\bbx}_{s,u} - \bar{\bbx}_{u,t},
\end{align*}
and hence
\begin{align*}
G_{s,t} &= \bbx_{s,t} - \bar{\bbx}_{s,t} = \bbx_{s,u} + \bbx_{u,t} - \bar{\bbx}_{s,u} - \bar{\bbx}_{u,t}
\\ &= ( \bbx_{s,u} - \bar{\bbx}_{s,u} ) + ( \bbx_{u,t} - \bar{\bbx}_{u,t} ) = G_{s,u} + G_{u,t} = G_{u,t} - G_{u,s}.
\end{align*}
Now, we define $F : [0,T] \to V \otimes V$ as $F_t := G_{0,t}$, $t \in [0,T]$. With $u=0$ the identity above gives us
\begin{align*}
\bar{\bbx}_{s,t} - \bbx_{s,t} = G_{s,t} = G_{0,t} - G_{0,s} = F_t - F_s \quad \text{for all $s,t \in [0,T]$.}
\end{align*}
\end{solution}

\begin{remark}\label{rem-F-second-order}
Identity \eqref{F-second-order} with $s=0$ shows that the function $F : [0,T] \to V \otimes V$ satisfies
\begin{align*}
F_t = F_0 + \bar{\bbx}_{0,t} - \bbx_{0,t}, \quad t \in [0,T].
\end{align*}
In particular, in case $F_0 = 0$ it is uniquely determined.
\end{remark}

\begin{corollary}\label{cor-F-second-order}
Let $\mathbf{X} = (X,\bbx)$ and $\bar{\mathbf{X}} = (X,\bar{\bbx})$ be two pairs of paths with the same first order process $X : [0,T] \to V$ and paths $\bbx,\bar{\bbx}: [0,T]^2 \to V \otimes V$ such that $\mathbf{X} \in \scrc^{\alpha}([0,T],V)$ for some $\alpha \in (\frac{1}{3},\frac{1}{2}]$. Then the following statements are equivalent:
\begin{enumerate}
\item[(i)] We have $\bar{\mathbf{X}} \in \scrc^{\alpha}([0,T],V)$.

\item[(ii)] There exists a function $F \in \calc^{2 \alpha}([0,T], V \otimes V)$ such that \eqref{F-second-order} is fulfilled.
\end{enumerate}
\end{corollary}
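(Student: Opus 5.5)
The plan is to reduce everything to Exercise \ref{exercise-F-second-order} and then compare the Hölder seminorms. That exercise already tells us when Chen's relation \eqref{Chen-relation} holds for $\bar{\mathbf{X}}$. What is left is to check that the $2\alpha$-Hölder regularity of $\bar{\bbx}$ corresponds exactly to the $2\alpha$-Hölder regularity of $F$.

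(ii) $\Rightarrow$ (i): Since $F$ exists, the implication (ii) $\Rightarrow$ (i) of Exercise \ref{exercise-F-second-order} gives Chen's relation for $\bar{\mathbf{X}}$. The first component $X$ is unchanged, so $X \in \calc^{\alpha}([0,T],V)$ as before. Continuity of $\bar{\bbx} = \bbx + (F_t - F_s)$ follows from continuity of $\bbx$ and of $F$, since Hölder functions are continuous. For the on-diagonal bound we use the triangle inequality:
\begin{align*}
|\bar{\bbx}_{s,t}| \leq |\bbx_{s,t}| + |F_{s,t}| \leq \big( \| \bbx \|_{2\alpha} + \| F \|_{2\alpha} \big) |t-s|^{2\alpha}.
\end{align*}
Hence $\bar{\bbx} \in \calc_2^{2\alpha}([0,T]^2,V \otimes V)$, and so $\bar{\mathbf{X}} \in \scrc^{\alpha}([0,T],V)$.

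(i) $\Rightarrow$ (ii): Now $\bar{\mathbf{X}}$ satisfies Chen's relation, so Exercise \ref{exercise-F-second-order} provides a function $F$ with \eqref{F-second-order}. Following Remark \ref{rem-F-second-order}, we fix the choice $F_t = \bar{\bbx}_{0,t} - \bbx_{0,t}$. The key observation is that the two-parameter function $G := \bar{\bbx} - \bbx$ is continuous and satisfies $G_{s,t} = F_t - F_s$, so Remark \ref{rem-Hoelder} applies with $2\alpha$ in place of $\alpha$. Note that $2\alpha \in (\frac{2}{3},1]$, so Remark \ref{rem-Hoelder} is legitimately applicable. Because $\bbx$ and $\bar{\bbx}$ both lie in $\calc_2^{2\alpha}$, the triangle inequality gives $\|G\|_{2\alpha} \leq \|\bbx\|_{2\alpha} + \|\bar{\bbx}\|_{2\alpha} < \infty$. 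Remark \ref{rem-Hoelder} then yields $F \in \calc^{2\alpha}([0,T],V \otimes V)$. One small point remains: $F$ must be continuous for that remark. This holds because $F_t = G_{0,t}$ and $G$ is continuous.

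There is no real obstacle in this argument. The only delicate point is to make the continuity hypotheses in Remark \ref{rem-Hoelder} explicit. With that done, the proof reduces to the algebra of Exercise \ref{exercise-F-second-order} together with the triangle inequality.
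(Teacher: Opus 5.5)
Your proof is correct and follows the paper's route. The paper likewise obtains Chen's relation from Exercise~\ref{exercise-F-second-order} and uses Remark~\ref{rem-Hoelder} to see that $\bar{\bbx} \in \calc_2^{2\alpha}([0,T]^2,V \otimes V)$ if and only if $F \in \calc^{2\alpha}([0,T],V \otimes V)$; you only spell out the triangle-inequality and continuity details, and your explicit choice $F_t = \bar{\bbx}_{0,t} - \bbx_{0,t}$ has the correct sign (the written solution of the exercise sets $G = \bbx - \bar{\bbx}$ but then writes $\bar{\bbx}_{s,t} - \bbx_{s,t} = G_{s,t}$, a harmless sign slip).
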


\begin{proof}
This is an immediate consequence of Exercise \ref{exercise-F-second-order}, where we note that by \eqref{F-second-order} we have $\bar{\bbx} \in \calc_2^{2 \alpha}([0,T]^2, V \otimes V)$ if and only if $F \in \calc^{2 \alpha}([0,T], V \otimes V)$, thanks to Remark \ref{rem-Hoelder}.
\end{proof}

The next exercise shows how we can construct a second order process $\bbx$ for a finite variation path $X$.

\begin{exercise}\label{exercise-FV-path}
Let $X : [0,T] \to V$ be a continuous path of finite variation. We consider $\mathbf{X} = (X,\bbx)$, where $\bbx : [0,T]^2 \to V \otimes V$ is defined as \eqref{second-order-FV}. Show that the following statements are true:
\begin{enumerate}
\item[(a)] Chen's relation \eqref{Chen-relation} is fulfilled.

\item[(b)] If $X \in \calc^{2\alpha}([0,T],V)$ for some $\alpha \in (\frac{1}{3},\frac{1}{2}]$, then we have $\mathbf{X} \in \scrc^{\alpha}([0,T],V)$.
\end{enumerate}
\end{exercise}

\begin{solution}
\begin{enumerate}
\item[(a)] In view of Exercise \ref{exercise-Chen-sufficient} it suffices to show \eqref{Chen-rule-3}. Indeed, for all $s,t \in [0,T]$ we have
\begin{align*}
\bbx_{s,t} &= \int_s^t X_{s,r} \otimes d X_r = \int_s^t ( X_{0,r} - X_{0,s} ) \otimes dX_r
\\ &= \int_s^t X_{0,r} \otimes dX_r - X_{0,s} \otimes (X_t - X_s)
\\ &= \int_0^t X_{0,r} \otimes dX_r - \int_0^s X_{0,r} \otimes dX_r - X_{0,s} \otimes X_{s,t}
\\ &= \bbx_{0,t} - \bbx_{0,s} - X_{0,s} \otimes X_{s,t}.
\end{align*}
\item[(b)] Of course, we also have $X \in \calc^{\alpha}([0,T],V)$. Let $s,t \in [0,T]$ be arbitrary. Then we have
\begin{align*}
\bbx_{s,t} = \lim_{|\Pi| \to 0} \sum_{[u,v] \in \Pi} X_{s,u} \otimes X_{u,v},
\end{align*}
where $\Pi$ denotes any partition of the interval $[s,t]$. Denoting by $V(X)$ the total variation of $X$ on $[0,T]$, for any partition $\Pi$ of $[s,t]$ we have
\begin{align*}
\sum_{[u,v] \in \Pi} |X_{s,u} \otimes X_{u,v}| &\leq \sum_{[u,v] \in \Pi} |X_{s,u}| \, |X_{u,v}| \leq \sum_{[u,v] \in \Pi} \| X \|_{2\alpha} |u-s|^{2\alpha} |X_{u,v}|
\\ &\leq \| X \|_{2\alpha} |t-s|^{2\alpha} \sum_{[u,v] \in \Pi} |X_{u,v}| \leq \| X \|_{2\alpha} V(X) |t-s|^{2\alpha},
\end{align*}
and hence $\bbx \in \calc_2^{2\alpha}([0,T],V \otimes V)$.
\end{enumerate}
\end{solution}

As an immediate consequence of Exercise \ref{exercise-FV-path} we obtain the following result:

\begin{corollary}
Let $X \in C^1([0,T],V)$ be a smooth path. We consider $\mathbf{X} = (X,\bbx)$, where $\bbx : [0,T]^2 \to V \otimes V$ is defined as
\begin{align*}
\bbx_{s,t} := \int_s^t X_{s,r} \otimes \dot{X}_r \, dr, \quad s,t \in [0,T].
\end{align*}
Then we have $(X,\bbx) \in \scrc^{\alpha}([0,T],V)$ for all $\alpha \in (\frac{1}{3}, \frac{1}{2}]$.
\end{corollary}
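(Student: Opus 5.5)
The plan is to reduce the statement directly to Exercise \ref{exercise-FV-path}. A path $X \in C^1([0,T],V)$ is continuous and of finite variation. Indeed, by the mean value inequality for Fr\'{e}chet differentiable maps we have
\begin{align*}
|X_{s,t}| \leq \sup_{r \in [0,T]} |\dot{X}_r| \, |t-s| =: K |t-s| \quad \text{for all $s,t \in [0,T]$,}
\end{align*}
where $K < \infty$ because $\dot{X}$ is continuous on the compact interval $[0,T]$. Consequently, the total variation satisfies $V(X) \leq K T$.

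The first step is to identify the second order process. For a $C^1$ path, the Riemann--Stieltjes integral $\int_s^t X_{s,r} \otimes dX_r$ from \eqref{second-order-FV} coincides with the Riemann integral $\int_s^t X_{s,r} \otimes \dot{X}_r \, dr$. To see this, write each increment as $X_{u,v} = \int_u^v \dot{X}_r \, dr$ and compare the Riemann--Stieltjes sums with the Riemann sums. The difference is controlled by the modulus of continuity of the integrand $r \mapsto X_{s,r} \otimes \dot{X}_r$, which is uniformly continuous on $[s,t]$; here we use the continuity of the bilinear map $\otimes$ together with $|v \otimes w| \leq |v| \, |w|$. Thus the $\bbx$ in the corollary is exactly the one from Exercise \ref{exercise-FV-path}, and part (a) of that exercise yields Chen's relation \eqref{Chen-relation}.

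The second step is the H\"{o}lder regularity. Fix $\alpha \in (\frac{1}{3},\frac{1}{2}]$, so that $2\alpha \leq 1$. For $s \neq t$ we have $|t-s| = |t-s|^{1-2\alpha} |t-s|^{2\alpha} \leq T^{1-2\alpha} |t-s|^{2\alpha}$. Hence
\begin{align*}
|X_{s,t}| \leq K T^{1-2\alpha} |t-s|^{2\alpha},
\end{align*}
which shows $X \in \calc^{2\alpha}([0,T],V)$. Part (b) of Exercise \ref{exercise-FV-path} then gives $(X,\bbx) \in \scrc^{\alpha}([0,T],V)$.

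As an alternative to invoking part (b), we can estimate directly:
\begin{align*}
|\bbx_{s,t}| \leq \int_s^t K |r-s| \, K \, dr = \frac{K^2}{2} |t-s|^2 \leq \frac{K^2}{2} T^{2-2\alpha} |t-s|^{2\alpha}.
\end{align*}
The bound for $t < s$ follows from the same estimate with the orientation of the integral reversed. Continuity of $\bbx$ on $[0,T]^2$ follows from the continuity of parameter-dependent integrals.

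The only mildly delicate point is the identification of the Stieltjes integral with the $dr$-integral in the Banach space setting. If one prefers to avoid it, the direct estimate above combined with part (a), applied to the $dr$-integral definition, is enough. Chen's relation can be checked verbatim via Exercise \ref{exercise-Chen-sufficient}, using linearity of the integral and $X_{s,r} = X_{0,r} - X_{0,s}$.
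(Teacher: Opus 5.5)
Your proposal is correct and follows the paper's route. The paper states this corollary as an immediate consequence of Exercise \ref{exercise-FV-path}: a $C^1$ path is continuous, of finite variation, and lies in $\calc^{2\alpha}([0,T],V)$ because $2\alpha \leq 1$, and you reduce to that exercise in exactly this way. Your identification of the Stieltjes integral with the $dr$-integral, the direct bound $|\bbx_{s,t}| \leq \frac{K^2}{2}|t-s|^2$, and the continuity check for $\bbx$ make explicit details the paper leaves implicit.
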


The next goal is to introduce a topology on the space of rough paths. Let $\alpha \in (\frac{1}{3},\frac{1}{2}]$ be arbitrary. Note that
\begin{align*}
\interleave \mathbf{X} \interleave_{\alpha} := \| X \|_{\alpha} + \| \bbx \|_{2 \alpha}, \quad \mathbf{X} = (X,\bbx)
\end{align*}
defines a seminorm on the product space $P^{\alpha}([0,T],V)$ introduced in \eqref{product-Hoelder}. Therefore, the mapping
\begin{align*}
\varrho_{\alpha}(\mathbf{X},\mathbf{Y}) = \interleave \mathbf{X} - \mathbf{Y} \interleave_{\alpha}
\end{align*}
provides a pseudometric on $\scrc^{\alpha}([0,T],V)$.

\begin{lemma}\label{lemma-alpha-beta}
Let $\beta \in (\frac{1}{3},\frac{1}{2}]$ and $\alpha \in (0,\beta)$ be arbitrary. Then for each rough path $\mathbf{X} = (X,\bbx) \in \scrc^{\beta}([0,T],V)$ we also have $\mathbf{X} \in \scrc^{\alpha}([0,T],V)$ and the estimate
\begin{align*}
\interleave \mathbf{X} \interleave_{\alpha} \leq \| X \|_{\beta} T^{\beta-\alpha} + \| \mathbb{X} \|_{2\beta} T^{2(\beta-\alpha)}.
\end{align*}
\end{lemma}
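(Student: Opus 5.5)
The argument is a direct check of the three defining conditions of $\scrc^{\alpha}([0,T],V)$, carried out pointwise on increments. Chen's relation \eqref{Chen-relation} is purely algebraic and does not involve the exponent, so it transfers unchanged from $\mathbf{X} \in \scrc^{\beta}([0,T],V)$. Continuity of $X$ and $\bbx$ is also inherited. It therefore remains to verify the two H\"{o}lder conditions with exponents $\alpha$ and $2\alpha$, together with the quantitative bounds.

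For the first order part, I fix $s,t \in [0,T]$ with $s \neq t$. Since $|t-s| \leq T$ and $\beta - \alpha > 0$, we have $|t-s|^{\beta} = |t-s|^{\alpha}|t-s|^{\beta-\alpha} \leq |t-s|^{\alpha} T^{\beta-\alpha}$. This gives
\begin{align*}
|X_{s,t}| \leq \| X \|_{\beta} |t-s|^{\beta} \leq \| X \|_{\beta} T^{\beta-\alpha} |t-s|^{\alpha},
\end{align*}
so $\| X \|_{\alpha} \leq \| X \|_{\beta} T^{\beta-\alpha}$.

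The second order part works in exactly the same way with exponents $2\beta$ and $2\alpha$. Here $|t-s|^{2\beta} \leq T^{2(\beta-\alpha)}|t-s|^{2\alpha}$, which yields $\| \bbx \|_{2\alpha} \leq \| \bbx \|_{2\beta} T^{2(\beta-\alpha)}$. Adding the two bounds gives the stated estimate for $\interleave \mathbf{X} \interleave_{\alpha}$.

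There is no real obstacle; the only point needing care is the range of $\alpha$. The definition of $\scrc^{\alpha}([0,T],V)$ was only given for $\alpha \in (\frac{1}{3},\frac{1}{2}]$, whereas the lemma allows any $\alpha \in (0,\beta)$. For $\alpha \leq \frac{1}{3}$, I read the membership statement as the same three conditions with exponents $\alpha$ and $2\alpha$, and the Hölder estimates above hold verbatim for every $\alpha \in (0,\beta)$.
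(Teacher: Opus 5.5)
Your proof is correct and follows the paper's argument. The paper asserts membership in $\scrc^{\alpha}([0,T],V)$ and the norm comparison in a single line, and you simply spell out the elementary bounds $|t-s|^{\beta} \leq T^{\beta-\alpha}|t-s|^{\alpha}$ and $|t-s|^{2\beta} \leq T^{2(\beta-\alpha)}|t-s|^{2\alpha}$. Your remark that for $\alpha \leq \frac{1}{3}$ membership must be read as the same three defining conditions is a fair clarification of a point the paper leaves implicit.
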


\begin{proof}
Since $\alpha < \beta$, we have $\mathbf{X} \in \scrc^{\alpha}([0,T],V)$. Hence, the calculation
\begin{align*}
\interleave \mathbf{X} \interleave_{\alpha} &= \| X \|_{\alpha} + \| \bbx \|_{2\alpha}
\\ &\leq \| X \|_{\beta} T^{\beta-\alpha} + \| \mathbb{X} \|_{2\beta} T^{2(\beta-\alpha)}
\end{align*}
establishes the proof.
\end{proof}

Note that the mapping
\begin{align*}
| \mathbf{X} |_{\alpha} := | X_0 | + \interleave \bfx \interleave_{\alpha} = | X_0 | + \| X \|_{\alpha} + \| \bbx \|_{2 \alpha}, \quad \mathbf{X} = (X,\bbx)
\end{align*}
defines a norm on the product space $P^{\alpha}([0,T],V)$. Indeed, for $\mathbf{X} = (X,\bbx) \in P^{\alpha}([0,T],V)$ with $| \mathbf{X} |_{\alpha} = 0$ we obtain
\begin{align*}
X_0 &= 0,
\\ X_t - X_0 &= 0 \quad \text{for all $t \in [0,T]$,}
\\ \bbx_{s,t} &= 0 \quad \text{for all $s,t \in [0,T]$,}
\end{align*}
and hence $\mathbf{X} = 0$. Consequently, the mapping
\begin{align}\label{metric-rough-paths}
d_{\alpha}(\mathbf{X},\mathbf{Y}) = | \mathbf{X} - \mathbf{Y} |_{\alpha}
\end{align}
provides a metric on $P^{\alpha}([0,T],V)$, and thus in particular on $\scrc^{\alpha}([0,T],V)$. The following auxiliary result shows that convergence with respect to the metric $d_{\alpha}$ implies uniform convergence of the two components.

\begin{lemma}\label{lemma-estimate-supremum}
For all $\mathbf{X} = (X,\bbx) \in P^{\alpha}([0,T],V)$ we have
\begin{align*}
\| X \|_{\infty} + \| \bbx \|_{\infty} \leq ( 1 + T^{\alpha} + T^{2 \alpha} ) | \mathbf{X} |_{\alpha}.
\end{align*}
\end{lemma}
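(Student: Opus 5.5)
The plan is to bound the two supremum norms separately, each by a quantity dominated by $|\mathbf{X}|_{\alpha} = |X_0| + \|X\|_{\alpha} + \|\bbx\|_{2\alpha}$, and then add the two bounds.

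For the first component, fix $t \in [0,T]$ and write $X_t = X_0 + X_{0,t}$. For $t \neq 0$, the definition of $\|X\|_{\alpha}$ gives $|X_{0,t}| \leq \|X\|_{\alpha} t^{\alpha} \leq \|X\|_{\alpha} T^{\alpha}$, and for $t = 0$ this bound holds trivially. Taking the supremum over $t$ yields
\begin{align*}
\|X\|_{\infty} \leq |X_0| + T^{\alpha} \|X\|_{\alpha}.
\end{align*}

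For the second component, the natural estimate is $|\bbx_{s,t}| \leq \|\bbx\|_{2\alpha} |t-s|^{2\alpha} \leq T^{2\alpha} \|\bbx\|_{2\alpha}$ whenever $s \neq t$. The diagonal $s = t$ needs a separate argument, and this is the only delicate point. An element of $P^{\alpha}([0,T],V)$ need not satisfy Chen's relation, so we cannot simply invoke \eqref{Chen-rule-1} to conclude $\bbx_{t,t} = 0$. Instead, I would use that $\bbx$ is continuous on $[0,T]^2$. If $T > 0$, pick $s_n \neq t$ with $s_n \to t$; then $|\bbx_{s_n,t}| \leq \|\bbx\|_{2\alpha} |t-s_n|^{2\alpha} \to 0$, and continuity forces $\bbx_{t,t} = 0$. (If $T = 0$, the Hölder seminorms are vacuous, and the statement should be read under the standing convention $T > 0$. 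Alternatively, for rough paths the diagonal case follows directly from \eqref{Chen-rule-1}.) Hence
\begin{align*}
\|\bbx\|_{\infty} \leq T^{2\alpha} \|\bbx\|_{2\alpha}.
\end{align*}

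Adding the two bounds gives
\begin{align*}
\|X\|_{\infty} + \|\bbx\|_{\infty} \leq |X_0| + T^{\alpha}\|X\|_{\alpha} + T^{2\alpha}\|\bbx\|_{2\alpha},
\end{align*}
and each coefficient is at most $1 + T^{\alpha} + T^{2\alpha}$. The right-hand side is therefore bounded by $(1 + T^{\alpha} + T^{2\alpha}) |\mathbf{X}|_{\alpha}$, which is the claimed estimate. The only step needing care is the diagonal vanishing of $\bbx$; the rest is direct from the definitions of the Hölder seminorms.
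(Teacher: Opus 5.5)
Your proof is correct and follows the paper's argument: you bound $\| X \|_{\infty} \leq |X_0| + T^{\alpha} \| X \|_{\alpha}$ and $\| \bbx \|_{\infty} \leq T^{2\alpha} \| \bbx \|_{2\alpha}$ separately and add the two bounds. Your extra justification that $\bbx_{t,t} = 0$ follows from continuity of $\bbx$ is needed because Chen's relation is not assumed on $P^{\alpha}([0,T],V)$, and it makes explicit a diagonal step that the paper's proof uses without comment.
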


\begin{proof}
For all $t \in [0,T]$ we have
\begin{align*}
| X_t | \leq |X_0| + |X_t - X_0| \leq |X_0| + \| X \|_{\alpha} t^{\alpha} \leq |X_0| + \| X \|_{\alpha} T^{\alpha}.
\end{align*}
Moreover, for all $s,t \in [0,T]$ we have
\begin{align*}
| \bbx_{s,t} | \leq \| \bbx \|_{2 \alpha} |t-s|^{2 \alpha} \leq \| \bbx \|_{2 \alpha} T^{2 \alpha}.
\end{align*}
These two inequalities give us
\begin{align*}
\| X \|_{\infty} + \| \bbx \|_{\infty} \leq |X_0| + T^{\alpha} \| X \|_{\alpha} + T^{2 \alpha} \| \bbx \|_{2 \alpha},
\end{align*}
completing the proof.
\end{proof}

After these preparations, we can show that the space of $\alpha$-H\"{o}lder rough paths is a complete metric space.

\begin{proposition}\label{prop-complete-metric-space}
The space $( \scrc^{\alpha}([0,T],V), d_{\alpha} )$ is a complete metric space.
\end{proposition}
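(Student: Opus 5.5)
We already know that $d_{\alpha}$ is a metric on $P^{\alpha}([0,T],V)$. The plan is to show two things: $(P^{\alpha}([0,T],V), d_{\alpha})$ is complete, and $\scrc^{\alpha}([0,T],V)$ is a closed subset of it. Completeness of $\scrc^{\alpha}([0,T],V)$ then follows, since a closed subset of a complete metric space is complete.

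For completeness of $P^{\alpha}([0,T],V)$, take a Cauchy sequence $(\mathbf{X}^n) = (X^n,\bbx^n)$ with respect to $d_{\alpha}$. By Lemma \ref{lemma-estimate-supremum}, applied to the differences $\mathbf{X}^n - \mathbf{X}^m \in P^{\alpha}([0,T],V)$, both $(X^n)$ and $(\bbx^n)$ are uniformly Cauchy. Since $V$ and $V \otimes V$ are Banach spaces, they converge uniformly to continuous limits $X : [0,T] \to V$ and $\bbx : [0,T]^2 \to V \otimes V$.

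Next we show that the Hölder seminorms converge. Fix $\epsilon > 0$ and choose $N$ such that $|\mathbf{X}^n - \mathbf{X}^m|_{\alpha} \leq \epsilon$ for all $n,m \geq N$. For fixed $s \neq t$ we have
\begin{align*}
|(X^n - X^m)_{s,t}| \leq \epsilon |t-s|^{\alpha} \quad \text{and} \quad |\bbx^n_{s,t} - \bbx^m_{s,t}| \leq \epsilon |t-s|^{2\alpha}.
\end{align*}
Letting $m \to \infty$ uses only pointwise convergence and gives the same bounds with $X$ and $\bbx$ in place of $X^m$ and $\bbx^m$. Taking the supremum over $s \neq t$, and noting that $|X^n_0 - X_0| \to 0$, yields $|\mathbf{X}^n - \mathbf{X}|_{\alpha} \leq 2\epsilon$ for $n \geq N$ once $n$ is large enough for the initial-value term. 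The triangle inequality then gives $\| X \|_{\alpha} \leq \| X^N \|_{\alpha} + \epsilon < \infty$, and similarly $\| \bbx \|_{2\alpha} < \infty$. Hence $\mathbf{X} \in P^{\alpha}([0,T],V)$ and $\mathbf{X}^n \to \mathbf{X}$ in $d_{\alpha}$.

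For closedness, suppose each $\mathbf{X}^n$ satisfies Chen's relation \eqref{Chen-relation} and $\mathbf{X}^n \to \mathbf{X}$ in $d_{\alpha}$. By Lemma \ref{lemma-estimate-supremum} the convergence is pointwise in both components. In Chen's relation the left-hand side is linear in $\bbx^n$, so it converges pointwise. On the right-hand side, the bound $|v \otimes w| \leq |v|\,|w|$ from Theorem \ref{thm-tensor-product} and bilinearity give
\begin{align*}
|X^n_{s,u} \otimes X^n_{u,t} - X_{s,u} \otimes X_{u,t}| \leq |X^n_{s,u} - X_{s,u}|\,|X^n_{u,t}| + |X_{s,u}|\,|X^n_{u,t} - X_{u,t}| \to 0.
\end{align*}
Passing to the limit shows that $\mathbf{X}$ satisfies Chen's relation, so $\mathbf{X} \in \scrc^{\alpha}([0,T],V)$.

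The main point needing care is the limit step in the Hölder estimate. The Hölder seminorm is not controlled by uniform convergence alone, so we must fix $s,t$ first, pass to the limit pointwise, and only afterwards take the supremum. This order is what makes the argument work.
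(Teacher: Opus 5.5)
Your proof is correct and follows the same route as the paper: $P^{\alpha}([0,T],V)$ is complete, and $\scrc^{\alpha}([0,T],V)$ is closed in it, which you show by passing to the limit in Chen's relation using Lemma \ref{lemma-estimate-supremum} and the continuity of $\otimes$. The only difference is that you actually prove completeness of $P^{\alpha}([0,T],V)$ (fix $s,t$, let $m \to \infty$, then take the supremum), whereas the paper simply asserts it.
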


\begin{proof}
Note that the product space $P^{\alpha}([0,T],V)$ endowed with the norm $| \cdot |_{\alpha}$ is a Banach space, and hence a complete metric space. Thus, we have to show that $\scrc^{\alpha}([0,T],V)$ is a closed subset. Let $(\mathbf{X}^n)_{n \in \bbn} = (X^n,\bbx^n)_{n \in \bbn} \subset \scrc^{\alpha}([0,T],V)$ be a sequence, and let $\mathbf{X} = (X,\bbx) \in P^{\alpha}([0,T],V)$ be such that $d_{\alpha}(\mathbf{X}^n,\mathbf{X}) \to 0$. Since $\otimes : V \times V \to V \otimes V$ is a continuous bilinear operator (see Theorem \ref{thm-tensor-product}), by Lemma \ref{lemma-estimate-supremum} for all $s,u,t \in [0,T]$ we obtain
\begin{align*}
\bbx_{s,t} - \bbx_{s,u} - \bbx_{u,t} &= \lim_{n \to \infty} \big( \bbx_{s,t}^n - \bbx_{s,u}^n - \bbx_{u,t}^n \big)
\\ &= \lim_{n \to \infty} \big( X_{s,u}^n \otimes X_{u,t}^n \big) = X_{s,u} \otimes X_{u,t},
\end{align*}
and hence $\bfx \in \scrc^{\alpha}([0,T],V)$.
\end{proof}

\subsection{The bracket of a rough path}

In this section we introduce the bracket of a rough path, which will be useful in order to characterize weakly geometric rough paths in the upcoming section. Let $\mathbf{X} = (X,\bbx) \in \scrc^{\alpha}([0,T],V)$ be a rough path for some index $\alpha \in ( \frac{1}{3}, \frac{1}{2} ]$. In view of the following definition, recall the linear operator $\Sym \in L(V \otimes V)$ from Definition \ref{def-sym-operator}, which assigns the symmetric part of an element $x \in V \otimes V$.

\begin{definition}
We introduce the following two paths:
\begin{enumerate}
\item We define the \emph{(one-parameter) bracket} $[ \mathbf{X} ] : [0,T] \to {\rm Sym}(V \otimes V)$ as
\begin{align}\label{bracket-def-1}
[ \mathbf{X} ]_t := X_{0,t} \otimes X_{0,t} - 2 \, {\rm Sym}(\bbx_{0,t}), \quad t \in [0,T].
\end{align}
\item We define the \emph{(two-parameter) bracket} $[ \mathbf{X} ] : [0,T]^2 \to {\rm Sym}(V \otimes V)$ as
\begin{align}\label{bracket-def-2}
[ \mathbf{X} ]_{s,t} := X_{s,t} \otimes X_{s,t} - 2 \, {\rm Sym}(\bbx_{s,t}), \quad s,t \in [0,T].
\end{align}
\end{enumerate}
\end{definition}

Note that $[ \mathbf{X} ]_0 = 0$ due to \eqref{Chen-rule-1}. The following exercise provides the connection between the two brackets.

\begin{exercise}\label{exercise-bracket}
Show that $[ \mathbf{X} ]_{s,t} = [ \mathbf{X} ]_t - [ \mathbf{X} ]_s$ for all $s,t \in [0,T]$.
\end{exercise}

\begin{solution}
Let $s,t \in [0,T]$ be arbitrary. Then we have
\begin{align*}
X_{s,t} = X_t - X_s = (X_t - X_0) - (X_s - X_0) = X_{0,t} - X_{0,s},
\end{align*}
Thus, by \eqref{Chen-rule-3} we obtain
\begin{align*}
\bbx_{s,t} &= \bbx_{0,t} - \bbx_{0,s} - X_{0,s} \otimes X_{s,t}
\\ &= \bbx_{0,t} - \bbx_{0,s} - X_{0,s} \otimes X_{0,t} + X_{0,s} \otimes X_{0,s}.
\end{align*}
Moreover, we have
\begin{align*}
{\rm Sym}(v \otimes w) = \frac{1}{2} \big( v \otimes w + w \otimes v \big) \quad \text{for all $v,w \in V$.}
\end{align*}
Therefore, we obtain
\begin{align*}
[ \mathbf{X} ]_{s,t} &= X_{s,t} \otimes X_{s,t} - 2 \, {\rm Sym}(\bbx_{s,t})
\\ &= (X_{0,t} - X_{0,s}) \otimes (X_{0,t} - X_{0,s})
\\ &\quad - 2 \, {\rm Sym} \big( \bbx_{0,t} - \bbx_{0,s} - X_{0,s} \otimes X_{0,t} + X_{0,s} \otimes X_{0,s} \big)
\\ &= X_{0,t} \otimes X_{0,t} - X_{0,t} \otimes X_{0,s} - X_{0,s} \otimes X_{0,t} + X_{0,s} \otimes X_{0,s}
\\ &\quad - 2 \, {\rm Sym}(\bbx_{0,t}) + 2 \, {\rm Sym}(\bbx_{0,s}) + 2 \, {\rm Sym}(X_{0,s} \otimes X_{0,t}) - 2 X_{0,s} \otimes X_{0,s}
\\ &= \big( X_{0,t} \otimes X_{0,t} - 2 \, {\rm Sym}(\bbx_{0,t}) \big) - \big( X_{0,s} \otimes X_{0,s} - 2 \, {\rm Sym}(\bbx_{0,s}) \big)
\\ &= [ \mathbf{X} ]_t - [ \mathbf{X} ]_s.
\end{align*}
\end{solution}

In view of the next exercise, recall Corollary \ref{cor-F-second-order}.

\begin{exercise}\label{exercise-bracket-F}
Let $\mathbf{X} = (X,\bbx) \in \scrc^{\alpha}([0,T],V)$ and $\bar{\mathbf{X}} = (X,\bar{\bbx}) \in \scrc^{\alpha}([0,T],V)$ be two rough paths with the same first order process $X$ such that for some function $F \in \calc^{2 \alpha}([0,T], V \otimes V)$ we have \eqref{F-second-order}. Show that
\begin{align*}
[ \bar{\mathbf{X}} ]_t = [ \mathbf{X} ]_t - 2 \, \Sym(F_{0,t}) \quad \text{for all $t \in [0,T]$.}
\end{align*}
\end{exercise}

\begin{solution}
Using \eqref{F-second-order}, for all $t \in [0,T]$ we have
\begin{align*}
[ \bar{\mathbf{X}} ]_t &= X_{0,t} \otimes X_{0,t} - 2 \, {\rm Sym}(\bar{\bbx}_{0,t}) = X_{0,t} \otimes X_{0,t} - 2 \, {\rm Sym}(\bbx_{0,t} + F_{0,t})
\\ &= X_{0,t} \otimes X_{0,t} - 2 \, {\rm Sym}(\bbx_{0,t}) - 2 \, \Sym (F_{0,t}) = [ \mathbf{X} ]_t - 2 \, \Sym(F_{0,t}).
\end{align*}
\end{solution}

\subsection{Weakly geometric rough paths}

In this section we introduce weakly geometric rough paths. Let us fix an index $\alpha \in ( \frac{1}{3}, \frac{1}{2} ]$.

\begin{definition}
We say that a rough path $\mathbf{X} = (X,\bbx) \in \scrc^{\alpha}([0,T],V)$ is \emph{weakly geometric} if the first order calculus condition
\begin{align}\label{chain-rule}
{\rm Sym}(\bbx_{s,t}) = \frac{1}{2} X_{s,t} \otimes X_{s,t} \quad \text{for all $s,t \in [0,T]$}
\end{align}
is fulfilled.
\end{definition}

Condition \eqref{chain-rule} is sometimes also called the \emph{chain rule}.

\begin{definition}
We denote by $\scrc_g^{\alpha}([0,T],V)$ the set of all weakly geometric rough paths.
\end{definition}

Let us recall from Proposition \ref{prop-complete-metric-space} that $\scrc^{\alpha}([0,T],V)$ endowed with the metric $d_{\alpha}$ given by \eqref{metric-rough-paths} is a complete metric space.

\begin{proposition}\label{prop-geometric-closed}
The set $\scrc_g^{\alpha}([0,T],V)$ is a closed subset of $\scrc^{\alpha}([0,T],V)$.
\end{proposition}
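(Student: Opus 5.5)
The plan mirrors the proof of Proposition \ref{prop-complete-metric-space}: I will show that the defining condition \eqref{chain-rule} survives pointwise limits. Let $(\mathbf{X}^n)_{n \in \bbn} = (X^n,\bbx^n)_{n \in \bbn} \subset \scrc_g^{\alpha}([0,T],V)$ and $\mathbf{X} = (X,\bbx) \in \scrc^{\alpha}([0,T],V)$ with $d_{\alpha}(\mathbf{X}^n,\mathbf{X}) \to 0$. I need to verify that $\mathbf{X}$ satisfies \eqref{chain-rule}.

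First, by Lemma \ref{lemma-estimate-supremum} we have $\| X^n - X \|_{\infty} + \| \bbx^n - \bbx \|_{\infty} \to 0$. Hence for fixed $s,t \in [0,T]$ we get $X^n_{s,t} \to X_{s,t}$ in $V$ and $\bbx^n_{s,t} \to \bbx_{s,t}$ in $V \otimes V$.

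Second, the two sides of \eqref{chain-rule} are continuous in these quantities. The operator $\Sym$ is a bounded linear operator on $V \otimes V$ (Definition \ref{def-sym-operator} and Proposition \ref{prop-symmetry-operator}), so $\Sym(\bbx^n_{s,t}) \to \Sym(\bbx_{s,t})$. The tensor product mapping is continuous and bilinear (Theorem \ref{thm-tensor-product}), so $X^n_{s,t} \otimes X^n_{s,t} \to X_{s,t} \otimes X_{s,t}$; concretely, $|a\otimes a - b \otimes b| \leq |a-b|(|a|+|b|)$. Passing to the limit in $\Sym(\bbx^n_{s,t}) = \frac12 X^n_{s,t} \otimes X^n_{s,t}$ then gives \eqref{chain-rule} for $\mathbf{X}$.

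The only point needing care is the continuity of the symmetry operator $x \mapsto x^*$. Proposition \ref{prop-symmetry-operator} places it in $L(V \otimes V)$, so it is bounded, and I take this as given. As an alternative route, \eqref{chain-rule} is equivalent to $[\mathbf{X}]_{s,t} = 0$ for all $s,t$, and the bracket \eqref{bracket-def-2} is evidently continuous under $d_{\alpha}$-convergence by the same two facts.
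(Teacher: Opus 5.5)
Your proposal is correct and follows essentially the same route as the paper's proof: uniform convergence of both components via Lemma \ref{lemma-estimate-supremum}, then continuity of $\Sym$ and of the tensor product mapping to pass to the limit in \eqref{chain-rule} for each fixed $s,t$. Your explicit bound $|a \otimes a - b \otimes b| \leq |a-b|(|a|+|b|)$ and the remark that $|X^n_{s,t} - X_{s,t}| \leq 2 \| X^n - X \|_{\infty}$ just spell out details that the paper leaves implicit.
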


\begin{proof}
Let $(\bfx^n)_{n \in \bbn} = (X^n,\bbx^n)_{n \in \bbn} \subset \scrc_g^{\alpha}([0,T],V)$ be a sequence, and let $\bfx = (X,\bbx) \in \scrc^{\alpha}([0,T],V)$ be such that $d_{\alpha}(\bfx^n,\bfx) \to 0$. Since the operators $\Sym \in L(V \otimes V)$ and $\otimes : V \times V \to V \otimes V$ are continuous, by Lemma \ref{lemma-estimate-supremum} for all $s,t \in [0,T]$ we obtain
\begin{align*}
\Sym(\bbx_{s,t}) &= \Sym \Big( \lim_{n \to \infty} \bbx_{s,t}^n \Big) = \lim_{n \to \infty} \Sym ( \bbx_{s,t}^n )
\\ &= \frac{1}{2} \lim_{n \to \infty} \big( X_{s,t}^n \otimes X_{s,t}^n \big) = \frac{1}{2} X_{s,t} \otimes X_{s,t},
\end{align*}
and hence $\bfx \in \scrc_g^{\alpha}([0,T],V)$.
\end{proof}

As an immediate consequence of Proposition \ref{prop-complete-metric-space} and Proposition \ref{prop-geometric-closed}, we obtain that the space of weakly geometric rough paths is a complete metric space:

\begin{corollary}
The space $\scrc_g^{\alpha}([0,T],V)$ is a complete metric space.
\end{corollary}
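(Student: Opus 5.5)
The plan is to deduce the statement directly from the two preceding results, with no new estimates. By Proposition \ref{prop-complete-metric-space}, the space $(\scrc^{\alpha}([0,T],V), d_{\alpha})$ is a complete metric space. By Proposition \ref{prop-geometric-closed}, the set $\scrc_g^{\alpha}([0,T],V)$ is a closed subset of it. I will therefore equip $\scrc_g^{\alpha}([0,T],V)$ with the restriction of the metric $d_{\alpha}$ from \eqref{metric-rough-paths}. The proof then reduces to the standard fact that a closed subset of a complete metric space is complete in the induced metric.

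To make this explicit, I take a Cauchy sequence $(\bfx^n)_{n \in \bbn} \subset \scrc_g^{\alpha}([0,T],V)$ with respect to $d_{\alpha}$. Since the metric is the same one, the sequence is also Cauchy in $\scrc^{\alpha}([0,T],V)$. By Proposition \ref{prop-complete-metric-space} it converges there to some $\bfx \in \scrc^{\alpha}([0,T],V)$. By Proposition \ref{prop-geometric-closed}, applied to this sequence and its limit, we get $\bfx \in \scrc_g^{\alpha}([0,T],V)$. Hence every Cauchy sequence in $\scrc_g^{\alpha}([0,T],V)$ has a limit inside $\scrc_g^{\alpha}([0,T],V)$, which is exactly completeness.

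I expect no real obstacle here. The only point to state clearly is that the metric on $\scrc_g^{\alpha}([0,T],V)$ is the restriction of $d_{\alpha}$, so that Cauchy sequences and limits mean the same thing in the subspace as in the ambient space.
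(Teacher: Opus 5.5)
Your proposal is correct and matches the paper's argument. The paper states the corollary as an immediate consequence of Proposition \ref{prop-complete-metric-space} and Proposition \ref{prop-geometric-closed}: a closed subset of a complete metric space is complete under the restricted metric $d_{\alpha}$, and your Cauchy-sequence argument is the standard proof of that fact.
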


Recall from Exercise \ref{exercise-FV-path} that continuous paths of finite variation can always be enhanced to a rough path by means of \eqref{second-order-FV}. In this situation, the following exercise characterizes weakly geometric rough paths as those rough paths for which the integration by parts formula holds true.

\begin{exercise}
We consider the rough path $\mathbf{X} = (X,\bbx) \in \scrc^{\alpha}([0,T],V)$, where $X \in \calc^{2 \alpha}([0,T],V)$ is a continuous path of finite variation, and $\bbx : [0,T]^2 \to V \otimes V$ is defined by \eqref{second-order-FV}. Show that the following statements are equivalent:
\begin{enumerate}
\item[(i)] The integration by parts formula
\begin{align}\label{int-parts}
{\rm Sym} \bigg( \int_s^t X_r \otimes dX_r \bigg) = \frac{1}{2} \big( X_t \otimes X_t - X_s \otimes X_s \big), \quad s,t \in [0,T]
\end{align}
holds true.

\item[(ii)] We have $\mathbf{X} \in \scrc_g^{\alpha}([0,T],V)$; that is, the rough path $\mathbf{X}$ is weakly geometric.
\end{enumerate}
\end{exercise}

\begin{solution}
Let $s,t \in [0,T]$ be arbitrary. Then we have
\begin{align*}
\bbx_{s,t} &= \int_s^t X_{s,r} \otimes d X_r = \int_s^t (X_r - X_s) \otimes d X_r
\\ &= \int_s^t X_r \otimes d X_r - X_s \otimes (X_t - X_s) = \int_s^t X_r \otimes d X_r - X_s \otimes X_{s,t}.
\end{align*}
Therefore, we obtain
\begin{align*}
\Sym(\bbx_{s,t}) = \Sym \bigg( \int_s^t X_r \otimes dX_r \bigg) - \frac{1}{2} \big( X_s \otimes X_{s,t} + X_{s,t} \otimes X_s \big),
\end{align*}
and hence \eqref{chain-rule} is equivalent to
\begin{align*}
\Sym \bigg( \int_s^t X_r \otimes dX_r \bigg) = \frac{1}{2} \big( X_s \otimes X_{s,t} + X_{s,t} \otimes X_s + X_{s,t} \otimes X_{s,t} \big).
\end{align*}
Moreover, we have
\begin{align*}
&X_s \otimes X_{s,t} + X_{s,t} \otimes X_s + X_{s,t} \otimes X_{s,t}
\\ &= X_s \otimes X_{s,t} + X_{s,t} \otimes X_t
\\ &= X_s \otimes (X_t - X_s) + (X_t - X_s) \otimes X_t = X_t \otimes X_t - X_s \otimes X_s,
\end{align*}
showing that \eqref{chain-rule} and \eqref{int-parts} are equivalent.
\end{solution}

Recall that from Proposition \ref{prop-tensor-product-Rn} that in case $V = \bbr^d$ we can identify $\bbr^d \otimes \bbr^d = \bbr^{d \times d}$, and that the tensor product mapping is given by
\begin{align*}
\otimes : \bbr^d \times \bbr^d \to \bbr^{d \times d}, \quad v \otimes w = v \cdot w^{\top}.
\end{align*}
The upcoming exercise demonstrates why \eqref{int-parts} is actually called \emph{integration by parts formula}.

\begin{exercise}
Show that in case $V = \bbr^d$ the integration by parts formula \eqref{int-parts} is satisfied if and only if
\begin{align*}
\int_s^t X_r^i d X_r^j + \int_s^t X_r^j d X_r^i = X_t^i X_t^j - X_s^i X_s^j, \quad s,t \in [0,T]
\end{align*}
for all $i,j=1,\ldots,d$.
\end{exercise}

\begin{solution}
Let $s,t \in [0,T]$ and $i,j=1,\ldots,d$ be arbitrary. Then we have
\begin{align*}
\bigg( \int_s^t X_r \otimes dX_r \bigg)_{ij} = \int_s^t X_r^i d X_r^j,
\end{align*}
and hence the left-hand side of \eqref{int-parts} is given by
\begin{align*}
\Sym \bigg( \int_s^t X_r \otimes dX_r \bigg)_{ij} = \frac{1}{2} \bigg( \int_s^t X_r^i d X_r^j + \int_s^t X_r^j d X_r^i \bigg).
\end{align*}
Moreover, the right-hand side of \eqref{int-parts} is given by
\begin{align*}
\frac{1}{2} \big( X_t \otimes X_t - X_s \otimes X_s \big)_{ij} = \frac{1}{2} \big( X_t^i X_t^j - X_s^i X_s^j \big).
\end{align*}
\end{solution}

The following auxiliary result provides a connection between weakly geometric rough paths and the bracket. It is an immediate consequence of \eqref{bracket-def-2}.

\begin{lemma}\label{lemma-geometric}
For a rough path $\bfx = (X,\bbx) \in \scrc^{\alpha}([0,T],V)$ the following statements are equivalent:
\begin{enumerate}
\item[(i)] We have $\mathbf{X} \in \scrc_g^{\alpha}([0,T],V)$; that is, the rough path $\mathbf{X}$ is weakly geometric.

\item[(ii)] We have $[ \mathbf{X} ] = 0$.
\end{enumerate}
\end{lemma}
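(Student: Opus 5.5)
The plan is to unwind the definitions and compare them directly; no estimates are needed. One point needs care at the start: ``$[\mathbf{X}] = 0$'' could refer to the one-parameter bracket \eqref{bracket-def-1} or to the two-parameter bracket \eqref{bracket-def-2}. So I will first show that the two readings are equivalent, and then relate the two-parameter bracket to the chain rule \eqref{chain-rule}.

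\emph{Equivalence of the two vanishing conditions.} By Exercise \ref{exercise-bracket} we have $[\mathbf{X}]_{s,t} = [\mathbf{X}]_t - [\mathbf{X}]_s$ for all $s,t \in [0,T]$.
\begin{itemize}
\item If the one-parameter bracket vanishes identically, this identity shows that the two-parameter bracket vanishes as well.
\item Conversely, suppose the two-parameter bracket vanishes. Taking $s = 0$ gives $[\mathbf{X}]_t = [\mathbf{X}]_{0,t} = 0$, where we use $[\mathbf{X}]_0 = 0$. The latter follows from \eqref{Chen-rule-1} and $X_{0,0} = 0$.
\end{itemize}

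\emph{Relation to the chain rule.} By \eqref{bracket-def-2},
\begin{align*}
[\mathbf{X}]_{s,t} = 0 \quad \Longleftrightarrow \quad 2\,\Sym(\bbx_{s,t}) = X_{s,t} \otimes X_{s,t}.
\end{align*}
Dividing by $2$, which is legitimate since $\Sym$ is linear and $V \otimes V$ is a real vector space, the right-hand condition is exactly \eqref{chain-rule} at the pair $(s,t)$. Requiring this for all $s,t \in [0,T]$ gives (i) $\Leftrightarrow$ (ii).

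There is no real obstacle. The only point needing care is the notational ambiguity of $[\mathbf{X}]$, and Exercise \ref{exercise-bracket} resolves it.
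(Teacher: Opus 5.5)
Your proof is correct and is essentially the paper's argument: the paper just notes that the equivalence is immediate from comparing the definition \eqref{bracket-def-2} of the two-parameter bracket with the chain rule \eqref{chain-rule}. Your extra step reconciling the one- and two-parameter readings via Exercise \ref{exercise-bracket} is harmless, and it can be shortened by observing directly from \eqref{bracket-def-1} and \eqref{bracket-def-2} that $[\mathbf{X}]_t = [\mathbf{X}]_{0,t}$ for all $t$.
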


\section{Integration against rough paths}\label{sec-integral}

In this section we recall the required results about the rough integral. Let $W$ be a Banach space, and let $T \in \bbr_+$ be a finite time horizon.

\subsection{The sewing lemma}

In this section we present the sewing lemma; it is one of the key results for the construction of the rough integral. Before providing this result, let us prepare some notation.

\begin{definition}
For every $n \in \bbn$ we denote by $\Delta_T^n \subset \bbr^n$ the simplex
\begin{align*}
\Delta_T^n := \{ t \in \bbr^n : 0 \leq t_1 \leq \ldots \leq t_n \leq T \}.
\end{align*}
\end{definition}

\begin{definition}\label{def-space-for-sewing}
Let $\alpha, \beta > 0$ be arbitrary. We denote by $\calc_2^{\alpha,\beta}([0,T],W)$ the space of all functions $\Xi : \Delta_T^2 \to W$ such that $\Xi_{t,t} = 0$ for all $t \in [0,T]$ and
\begin{align}\label{norm-alpha-beta-finite}
\| \Xi \|_{\alpha,\beta} := \| \Xi \|_{\alpha} + \| \delta \Xi \|_{\beta} < \infty.
\end{align}
Here $\| \Xi \|_{\alpha}$ denotes the H\"older norm
\begin{align}\label{Hoelder-norm-sewing-1}
\| \Xi \|_{\alpha} := \sup_{\genfrac{}{}{0pt}{}{(s,t) \in \Delta_T^2}{s < t}} \frac{|\Xi_{s,t}|}{|t-s|^{\alpha}}.
\end{align}
Furthermore, the function $\delta \Xi : \Delta_T^3 \to W$ is defined as
\begin{align}\label{delta-Xi-definition}
\delta \Xi_{s,u,t} := \Xi_{s,t} - \Xi_{s,u} - \Xi_{u,t}, \quad (s,u,t) \in \Delta_T^3,
\end{align}
and its H\"older norm is defined as
\begin{align}\label{delta-Xi-norm}
\| \delta \Xi \|_{\beta} := \sup_{\genfrac{}{}{0pt}{}{(s,u,t) \in \Delta_T^3}{s < u < t}} \frac{|\delta \Xi_{s,u,t}|}{|t-s|^{\beta}}.
\end{align}
\end{definition}

After introducing the space $\calc_2^{\alpha,\beta}([0,T],W)$, we are ready to present the sewing lemma; see \cite[Lemma 4.2]{Friz-Hairer} and its proof.

\begin{lemma}[Sewing lemma]\label{lemma-sewing}
Let $\alpha,\beta \in \bbr$ be such that $0 < \alpha \leq 1 < \beta$. Then there exists a unique mapping $\cali : \calc_2^{\alpha,\beta}([0,T],W) \to \calc^{\alpha}([0,T],W)$, which is characterized as follows:
\begin{enumerate}
\item We have $(\cali \Xi)_0 = 0$ for all $\Xi \in \calc_2^{\alpha,\beta}([0,T],W)$.

\item For every $\Xi \in \calc_2^{\alpha,\beta}([0,T],W)$ there exists a constant $C = C(\beta,\| \delta \Xi \|_{\beta}) > 0$ such that
\begin{align*}
|(\cali \Xi)_{s,t} - \Xi_{s,t}| \leq C|t-s|^{\beta} \quad \text{for all $(s,t) \in \Delta_T^2$.}
\end{align*}
\end{enumerate}
Moreover, we have $\cali \in L(\calc_2^{\alpha,\beta}([0,T],W), \calc^{\alpha}([0,T],W))$, and $\cali$ is given by
\begin{align*}
(\cali \Xi)_{s,t} = \lim_{|\Pi| \to 0} \sum_{[u,v] \in \Pi} \Xi_{u,v} \quad \text{for all $\Xi \in \calc_2^{\alpha,\beta}([0,T],W)$,}
\end{align*}
where $\Pi$ denotes any partition of the interval $[s,t]$.
\end{lemma}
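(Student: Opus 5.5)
We follow the standard dyadic-partition argument of \cite[Lemma 4.2]{Friz-Hairer}, splitting the proof into uniqueness, existence with the local estimate, and the limit along arbitrary partitions.

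\emph{Uniqueness.} Suppose $\cali\Xi$ and $\tilde{\cali}\Xi$ both satisfy (1) and (2), and set $D := \cali\Xi - \tilde{\cali}\Xi$. Then $D$ is a path with $D_0 = 0$ and $|D_{s,t}| \leq 2C|t-s|^{\beta}$. For a uniform partition of $[s,t]$ into $n$ pieces, additivity of increments gives
\begin{align*}
|D_{s,t}| \leq \sum_k |D_{t_k,t_{k+1}}| \leq 2C n (|t-s|/n)^{\beta} \to 0,
\end{align*}
because $\beta>1$. Hence $D\equiv 0$.

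\emph{Existence and local estimate.} Fix $(s,t)\in\Delta_T^2$. Let $\Pi_n$ be the dyadic partition of $[s,t]$ into $2^n$ intervals and put $I_n := \sum_{[u,v]\in\Pi_n}\Xi_{u,v}$. Passing from $\Pi_n$ to $\Pi_{n+1}$ inserts the midpoint $m$ of each $[u,v]$, so by \eqref{delta-Xi-definition}
\begin{align*}
I_n - I_{n+1} = \sum_{[u,v]\in\Pi_n}\delta\Xi_{u,m,v}.
\end{align*}
Therefore $|I_n - I_{n+1}| \leq 2^n \|\delta\Xi\|_{\beta}(2^{-n}|t-s|)^{\beta} = 2^{n(1-\beta)}\|\delta\Xi\|_\beta|t-s|^\beta$. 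This bound is summable, so $I_n\to I_{s,t}$ with
\begin{align*}
|I_{s,t}-\Xi_{s,t}| \leq (1-2^{1-\beta})^{-1}\|\delta\Xi\|_\beta |t-s|^\beta.
\end{align*}

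\emph{Limit along arbitrary partitions.} This is the main obstacle. I would use the Young-type removal argument. For a partition $\Pi$ of $[s,t]$ with $N$ intervals, there is an interior point $u$ whose removal merges two adjacent intervals of total length at most $2|t-s|/(N-1)$. Removing it changes the Riemann sum by $|\delta\Xi|\le \|\delta\Xi\|_\beta(2|t-s|/(N-1))^\beta$. Iterating down to the trivial partition yields the maximal inequality
\begin{align*}
\Big|\sum_{\Pi}\Xi_{u,v}-\Xi_{s,t}\Big| \leq 2^\beta\zeta(\beta)\|\delta\Xi\|_\beta|t-s|^\beta.
\end{align*}
Now take two fine partitions $\Pi,\Pi'$ and compare both with their common refinement. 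Applying the maximal inequality on each interval $[u,v]$ of $\Pi$ gives
\begin{align*}
\Big|\sum_\Pi\Xi - \sum_{\Pi\cup\Pi'}\Xi\Big| \leq C\sum_{[u,v]\in\Pi}|v-u|^\beta \leq C|\Pi|^{\beta-1}|t-s| \to 0.
\end{align*}
So the sums form a Cauchy net as $|\Pi|\to0$, and the limit coincides with $I_{s,t}$ from the dyadic construction.

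\emph{Additivity, regularity and linearity.} Additivity $I_{s,t} = I_{s,u} + I_{u,t}$ follows by taking partitions that contain $u$. Hence $(\cali\Xi)_t := I_{0,t}$ defines a path with $(\cali\Xi)_{s,t} = I_{s,t}$ and $(\cali\Xi)_0 = 0$, and property (2) holds. For the regularity, the local estimate gives
\begin{align*}
|(\cali\Xi)_{s,t}| \leq \|\Xi\|_\alpha|t-s|^\alpha + C\|\delta\Xi\|_\beta T^{\beta-\alpha}|t-s|^\alpha,
\end{align*}
so $\cali\Xi\in\calc^\alpha$ and $\|\cali\Xi\|_\alpha\le \max(1,CT^{\beta-\alpha})\|\Xi\|_{\alpha,\beta}$. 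Finally, linearity of $\cali$ is clear from the Riemann-sum formula, and together with this bound it gives $\cali\in L(\calc_2^{\alpha,\beta},\calc^\alpha)$.
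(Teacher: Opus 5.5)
Your proof is correct. It is essentially the standard argument of \cite[Lemma 4.2]{Friz-Hairer}, which the paper cites in place of giving its own proof: uniqueness via $\beta>1$, dyadic approximation, Young's point-removal maximal inequality with comparison through the common refinement, then additivity and the H\"older bound. Two cosmetic points: in the uniqueness step the two constants may differ, so the bound should read $C+\tilde{C}$ rather than $2C$; and once you have the maximal inequality, the dyadic step is redundant, since it already yields both existence of the limit and property (2).
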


\subsection{The Gubinelli derivative}

In this section we introduce the Gubinelli derivative and the associated space of controlled rough paths, which will serve as the space of integrands for the rough integral later on. Let $V$ be a Banach space, and let $X \in \calc^{\alpha}([0,T], V)$ be a H\"{o}lder continuous path for some index $\alpha \in (\frac{1}{3},\frac{1}{2}]$. It could, in particular be the first component of a rough path $\mathbf{X} = (X,\bbx) \in \scrc^{\alpha}([0,T],V)$, but presently the second order process $\bbx$ is not required. Furthermore, let $\bar{W}$ be another Banach space. Typically, we will choose $\bar{W} = L(V,W)$.

\begin{definition}\label{def-Gubinelli-derivative}
We introduce the following notions:
\begin{enumerate}
\item We say that a path $Y \in \calc^{\alpha}([0,T],\bar{W})$ is \emph{controlled} by $X$ if there exists $Y' \in \calc^{\alpha}([0,T], L(V,\bar{W}))$ such that for the remainder term $R^Y : [0,T]^2 \to \bar{W}$ given by
\begin{align}\label{remainder}
R_{s,t}^Y := Y_{s,t} - Y_s' X_{s,t} \quad \text{for all $s,t \in [0,T]$}
\end{align}
we have $R^Y \in \calc_2^{2 \alpha}([0,T]^2,\bar{W})$.

\item This defines the space of \emph{controlled rough paths}, and we write
\begin{align*}
(Y,Y') \in \scrd_X^{2 \alpha}([0,T], \bar{W}).
\end{align*}
\item We call $Y'$ a \emph{Gubinelli derivative} of $Y$ (with respect to $X$).
\end{enumerate}
\end{definition}

\begin{remark}
Alternatively, we can consider the remainder term $r^Y : [0,T] \to \bar{W}$ with an arbitrary starting value $r_0^Y \in \bar{W}$ and increments given by
\begin{align*}
r_{s,t}^Y = Y_{s,t} - Y_s' X_{s,t} \quad \text{for all $s,t \in [0,T]$.}
\end{align*}
According to Remark \ref{rem-Hoelder} we have $R^Y \in \calc_2^{2 \alpha}([0,T]^2,\bar{W})$ if and only if
\begin{align*}
r^Y \in \calc^{2 \alpha}([0,T],\bar{W}).
\end{align*}
\end{remark}

\begin{remark}
Note that, in contrast to the space $\scrc^{\alpha}([0,T],V)$ of rough paths, the space $\scrd_X^{2 \alpha}([0,T], \bar{W})$ of controlled rough paths actually is a vector space. Indeed, consider the product space
\begin{align*}
Q^{\alpha}([0,T],\bar{W}) := \calc^{\alpha}([0,T],\bar{W}) \times \calc^{\alpha}([0,T], L(V,\bar{W})).
\end{align*}
Then $\scrd_X^{2 \alpha}([0,T], \bar{W})$ consists of all $(Y,Y') \in Q^{\alpha}([0,T],\bar{W})$ such that the remainder term $R^Y$ given by \eqref{remainder} satisfies $R^Y \in \calc_2^{2 \alpha}([0,T]^2,\bar{W})$. Thus, the set $\scrd_X^{2 \alpha}([0,T], \bar{W})$ is a subspace of the vector space $Q^{\alpha}([0,T],\bar{W})$, and therefore itself a vector space.
\end{remark}

The following exercise shows that the Gubinelli derivative of a controlled rough path does not need to be unique.

\begin{exercise}\label{exercise-Gubinelli-2-alpha}
Suppose we even have $X \in \calc^{2 \alpha}([0,T], V)$, and let $Y \in \calc^{2\alpha}([0,T],\bar{W})$ be arbitrary. Show that for every path $Y' \in \calc^{\alpha}([0,T], L(V,\bar{W}))$ we have
\begin{align*}
(Y,Y') \in \scrd_X^{2 \alpha}([0,T], \bar{W}).
\end{align*}
\end{exercise}

\begin{solution}
For all $s,t \in [0,T]$ we have
\begin{align*}
| R_{s,t}^Y | \leq |Y_{s,t}| + | Y_s' | \cdot |X_{s,t}| \leq \| Y \|_{2 \alpha} |t-s|^{2 \alpha} + \| Y' \|_{\infty} \| X \|_{2 \alpha} |t-s|^{2 \alpha},
\end{align*}
and thus
\begin{align*}
\| R^Y \|_{2\alpha} \leq \| Y \|_{2\alpha} + \| Y' \|_{\infty} \| X \|_{2 \alpha} < \infty.
\end{align*}
\end{solution}

The following exercise establishes the connection to the classical derivative.

\begin{exercise}
Suppose that $V = \bbr$ and $X_t = t$ for all $t \in [0,T]$. Moreover, consider a smooth path $Y \in C^1([0,T],\bar{W})$ with its classical derivative $Y' \in C([0,T],\bar{W})$. Show that
\begin{align*}
(Y,Y') \in \scrd_X^{2 \alpha}([0,T], \bar{W})
\end{align*}
for all $\alpha \in (\frac{1}{3},\frac{1}{2}]$, where we use the identification $\bar{W} \cong L(\bbr,\bar{W})$
\end{exercise}

\begin{solution}
By first order Taylor expansion we have
\begin{align*}
Y_{s,t} = Y_s' X_{s,t} + r_ {s,t}^Y (t-s) \quad \text{for all $s,t \in [0,T]$}
\end{align*}
with a continuous map $r^Y : [0,T]^2 \to \bar{W}$. Hence, defining $R^Y : [0,T]^2 \to \bar{W}$ as
\begin{align*}
R_{s,t}^Y := r_{s,t}^Y (t-s), \quad s,t \in [0,T],
\end{align*}
we have
\begin{align*}
R_{s,t}^Y = Y_{s,t} - Y_s' X_{s,t} \quad \text{for all $s,t \in [0,T]$,}
\end{align*}
showing that $R^Y$ is just the corresponding remainder term, as defined in \eqref{remainder}. The path $Y$ is Lipschitz continuous, because it is of class $C^1$. Therefore, and since $X_t = t$ for all $t \in [0,T]$, for all $s,t \in [0,T]$ we obtain
\begin{align*}
| R_{s,t}^Y | &\leq |Y_{s,t}| + | Y_s' | \cdot |X_{s,t}| \leq | Y |_{\Lip} |t-s| + \| Y' \|_{\infty} |t-s|
\\ &\leq | Y |_{\Lip} \, T^{1-2\alpha} |t-s|^{2 \alpha} + \| Y' \|_{\infty} T^{1-2\alpha} |t-s|^{2 \alpha},
\end{align*}
and hence
\begin{align*}
\| R^Y \|_{2 \alpha} \leq ( | Y |_{\Lip} + \| Y' \|_{\infty} ) T^{1-2\alpha} < \infty.
\end{align*}
\end{solution}

Now, let us define a topology on the space of controlled rough paths.

\begin{definition}\label{def-seminorm-Gubinelli}
We endow the space $\scrd_X^{2 \alpha}([0,T], \bar{W})$ with the seminorm
\begin{align}\label{seminorm-Gubinelli}
\| Y,Y' \|_{X,2\alpha} := \| Y' \|_{\alpha} + \| R^Y \|_{2 \alpha}.
\end{align}
\end{definition}

Then the space $\scrd_X^{2 \alpha}([0,T], \bar{W})$ endowed with the norm
\begin{align*}
\interleave Y,Y' \interleave_{X,2\alpha} := |Y_0| + |Y_0'| + \| Y,Y' \|_{X,2\alpha}
\end{align*}
is a Banach space. We will also consider the seminorm
\begin{align*}
| Y,Y' |_{X,2\alpha} := |Y_0'| + \| Y,Y' \|_{X,2\alpha}.
\end{align*}

\begin{exercise}\label{exercise-reg-derivative-zero}
Show that for each $Y \in \calc^{2 \alpha}([0,T],\bar{W})$ we have
\begin{align}\label{second-comp-zero}
(Y,0) \in \scrd_X^{2 \alpha}([0,T],\bar{W})
\end{align}
with the seminorm given by
\begin{align}\label{norm-for-second-comp-zero}
\| Y,0 \|_{X,2\alpha} = \| Y \|_{2 \alpha}.
\end{align}
\end{exercise}

\begin{solution}
We have $Y' = 0$, and hence $R_{s,t}^Y = Y_{s,t}$ for all $s,t \in [0,T]$. Thus, by Remark \ref{rem-Hoelder} we deduce that $R^Y \in \calc_2^{2 \alpha}([0,T]^2,\bar{W})$, showing \eqref{second-comp-zero} and \eqref{norm-for-second-comp-zero}.
\end{solution}

We proceed with some inequalities regarding the seminorms and the norms of controlled rough paths.

\begin{lemma}\label{lemma-norm-of-Y}
For all $(Y,Y') \in \scrd_X^{2 \alpha}([0,T],\bar{W})$ we have
\begin{align}\label{norm-Y-1}
\| Y' \|_{\infty} &\leq C | Y,Y' |_{X,2\alpha},
\\ \label{norm-Y-2} \| Y \|_{\alpha} &\leq C | Y,Y' |_{X,2\alpha} ( \| X \|_{\alpha} + T^{\alpha} ),
\\ \label{norm-Y-3} \| Y \|_{\infty} &\leq C \interleave Y,Y' \interleave_{X,2\alpha} ( \| X \|_{\alpha} + 2 ),
\end{align}
where the constant $C > 0$ depends on $\alpha$ and $T$. Furthermore, for $T \leq 1$ the estimates \eqref{norm-Y-1}--\eqref{norm-Y-3} hold true with $C=1$.
\end{lemma}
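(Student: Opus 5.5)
The three estimates follow directly from the definitions, taken in order, using only the triangle inequality and the bound $|t-s| \leq T$ for $s,t \in [0,T]$. Throughout, $C := \max\{1, T^{\alpha}, T^{2\alpha}\}$, which depends only on $\alpha$ and $T$ and equals $1$ when $T \leq 1$.

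\textbf{Estimate \eqref{norm-Y-1}.} For every $t \in [0,T]$,
\begin{align*}
|Y_t'| \leq |Y_0'| + |Y_{0,t}'| \leq |Y_0'| + \| Y' \|_{\alpha} T^{\alpha}.
\end{align*}
Since $\| Y' \|_{\alpha} \leq \| Y,Y' \|_{X,2\alpha}$ by \eqref{seminorm-Gubinelli}, this gives $\| Y' \|_{\infty} \leq \max\{1,T^{\alpha}\} \, | Y,Y' |_{X,2\alpha}$.

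\textbf{Estimate \eqref{norm-Y-2}.} Use the remainder identity \eqref{remainder}, $Y_{s,t} = Y_s' X_{s,t} + R_{s,t}^Y$. Then
\begin{align*}
|Y_{s,t}| \leq \| Y' \|_{\infty} \| X \|_{\alpha} |t-s|^{\alpha} + \| R^Y \|_{2\alpha} T^{\alpha} |t-s|^{\alpha}.
\end{align*}
Dividing by $|t-s|^{\alpha}$ and inserting \eqref{norm-Y-1} yields
\begin{align*}
\| Y \|_{\alpha} \leq C | Y,Y' |_{X,2\alpha} \| X \|_{\alpha} + T^{\alpha} \| R^Y \|_{2\alpha} .
\end{align*}
Since $\| R^Y \|_{2\alpha} \leq | Y,Y' |_{X,2\alpha}$, the right-hand side is at most $C | Y,Y' |_{X,2\alpha} ( \| X \|_{\alpha} + T^{\alpha} )$ once $C \geq 1$, which is the stated form.

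\textbf{Estimate \eqref{norm-Y-3}.} Write $|Y_t| \leq |Y_0| + \| Y \|_{\alpha} T^{\alpha}$ and apply \eqref{norm-Y-2}:
\begin{align*}
|Y_t| \leq |Y_0| + C T^{\alpha} | Y,Y' |_{X,2\alpha} \| X \|_{\alpha} + C T^{2\alpha} | Y,Y' |_{X,2\alpha}.
\end{align*}
Both $|Y_0|$ and $| Y,Y' |_{X,2\alpha}$ are bounded by $\interleave Y,Y' \interleave_{X,2\alpha}$. Bounding $T^{\alpha}$ and $T^{2\alpha}$ by a constant gives
\begin{align*}
\| Y \|_{\infty} \leq C' \interleave Y,Y' \interleave_{X,2\alpha} ( \| X \|_{\alpha} + 2 ),
\end{align*}
with $C' = C \max\{1, T^{\alpha}, T^{2\alpha}\}$; the term $|Y_0|$ and the $T^{2\alpha}$-term are absorbed into the summand $2$.

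\textbf{The case $T \leq 1$.} The only step needing care is the bookkeeping of constants. For $T \leq 1$ every power $T^{\alpha}$ and $T^{2\alpha}$ is at most $1$, so each constant above reduces to $1$. The estimates then hold with $C = 1$, as claimed in Lemma \ref{lemma-norm-of-Y}.
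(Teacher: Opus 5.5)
Your proof is correct and follows the paper's argument step for step. You bound $\|Y'\|_\infty$ by $|Y_0'| + \|Y'\|_\alpha T^\alpha$, then $\|Y\|_\alpha$ via the remainder identity, then $\|Y\|_\infty$ by $|Y_0| + \|Y\|_\alpha T^\alpha$, with constants built from powers of $T$ that reduce to $1$ for $T \leq 1$. The only cosmetic difference is that your third estimate uses $C' = C^2$; since $C' \geq C$, it can serve as the single constant for all three estimates.
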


\begin{proof}
For each $t \in [0,T]$ we have
\begin{align*}
|Y_t'| \leq |Y_0'| + |Y_t' - Y_0'| \leq |Y_0'| + \| Y' \|_{\alpha} t^{\alpha},
\end{align*}
and hence
\begin{align*}
\| Y' \|_{\infty} &\leq |Y_0'| + \| Y' \|_{\alpha} T^{\alpha} \leq \max \{ 1,T^{\alpha} \} \, | Y,Y' |_{X,2\alpha},
\end{align*}
proving \eqref{norm-Y-1}. Now, let $s,t \in [0,T]$ be arbitrary. Recalling that $Y_{s,t} = Y_s' X_{s,t} + R_{s,t}^Y$, we have
\begin{align*}
| Y_{s,t} | &\leq \| Y' \|_{\infty} |X_{s,t}| + |R_{s,t}^Y|
\\ &\leq \| Y' \|_{\infty} \| X \|_{\alpha} |t-s|^{\alpha} + \| R_{s,t}^Y \|_{2\alpha} |t-s|^{2\alpha}
\\ &\leq \| Y' \|_{\infty} \| X \|_{\alpha} |t-s|^{\alpha} + \| R_{s,t}^Y \|_{2\alpha} T^{\alpha} |t-s|^{\alpha}.
\end{align*}
Therefore, we obtain
\begin{align*}
\| Y \|_{\alpha} &\leq \| Y' \|_{\infty} \| X \|_{\alpha} + \| R^Y \|_{2\alpha} T^{\alpha}
\\ &\leq \max \{ 1,T^{\alpha} \} \, | Y,Y' |_{X,2\alpha} \| X \|_{\alpha} + \| R^Y \|_{2\alpha} T^{\alpha}
\\ &\leq  \max \{ 1,T^{\alpha} \} | Y,Y' |_{X,2\alpha} ( \| X \|_{\alpha} + T^{\alpha} ),
\end{align*}
showing \eqref{norm-Y-2}. Using this estimate we deduce that
\begin{align*}
\| Y \|_{\infty} &\leq |Y_0| + \| Y \|_{\alpha} T^{\alpha}
\\ &\leq \interleave Y,Y' \interleave_{X,2\alpha} + \max \{ 1,T^{\alpha} \} T^{\alpha} \interleave Y,Y' \interleave_{X,2\alpha} ( \| X \|_{\alpha} + T^{\alpha} )
\\ &\leq \interleave Y,Y' \interleave_{X,2\alpha} \big( \max \{ 1,T^{\alpha} \} T^{\alpha} \| X \|_{\alpha} + 1 + \max \{ 1,T^{\alpha} \} T^{2\alpha} \big),
\end{align*}
proving \eqref{norm-Y-3}.
\end{proof}

\begin{lemma}\label{lemma-pairs}
Let $E$ be another Banach space. Then for all
\begin{align*}
(Y,Y') \in \scrd_X^{2\alpha}([0,T],W) \quad \text{and} \quad (Z,Z') \in \scrd_X^{2\alpha}([0,T],E)
\end{align*}
we have the relation
\begin{align}\label{vector-in-D}
((Y,Z),(Y,Z)') \in \scrd_X^{2\alpha}([0,T],W \times E),
\end{align}
where the Gubinelli derivative is given by $(Y,Z)' = (Y',Z')$, and we have the estimates
\begin{align}\label{pairs-1}
\| (Y,Z), (Y,Z)' \|_{X,2\alpha} &\leq \| Y,Y' \|_{X,2\alpha} + \| Z,Z' \|_{X,2\alpha},
\\ \label{pairs-2} | (Y,Z), (Y,Z)' |_{X,2\alpha} &\leq | Y,Y' |_{X,2\alpha} + | Z,Z' |_{X,2\alpha},
\\ \label{pairs-3} \interleave (Y,Z), (Y,Z)' \interleave_{X,2\alpha} &\leq \interleave Y,Y' \interleave_{X,2\alpha} + \interleave Z,Z' \interleave_{X,2\alpha}.
\end{align}
\end{lemma}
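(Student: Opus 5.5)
The plan is to verify everything componentwise, using that the norm on $W \times E$ is $|w,e| = |w| + |e|$. The product-space norm on $L(V, W \times E)$ needs a little care, and I will fix that first. We identify $(Y',Z')_t$ with the operator $v \mapsto (Y_t' v, Z_t' v)$. For $A \in L(V,W)$ and $B \in L(V,E)$ we then have
\begin{align*}
|(A,B)|_{L(V,W \times E)} = \sup_{|v| \leq 1} \big( |Av| + |Bv| \big) \leq |A| + |B|.
\end{align*}
This inequality is all that is needed; equality is not required.

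First I would check the H\"older regularity. For $s,t \in [0,T]$ we have $|(Y,Z)_{s,t}| = |Y_{s,t}| + |Z_{s,t}|$, so $(Y,Z) \in \calc^{\alpha}([0,T],W \times E)$. By the operator-norm inequality above, $|(Y',Z')_{s,t}| \leq |Y'_{s,t}| + |Z'_{s,t}|$, which gives $\|(Y',Z')\|_{\alpha} \leq \|Y'\|_{\alpha} + \|Z'\|_{\alpha}$ and shows $(Y',Z') \in \calc^{\alpha}([0,T],L(V,W\times E))$.

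Next comes the key algebraic observation: the remainder splits componentwise. Indeed,
\begin{align*}
R^{(Y,Z)}_{s,t} = (Y_{s,t},Z_{s,t}) - (Y_s' X_{s,t}, Z_s' X_{s,t}) = (R^Y_{s,t}, R^Z_{s,t}).
\end{align*}
Hence $|R^{(Y,Z)}_{s,t}| = |R^Y_{s,t}| + |R^Z_{s,t}|$, and dividing by $|t-s|^{2\alpha}$ and taking suprema yields $\|R^{(Y,Z)}\|_{2\alpha} \leq \|R^Y\|_{2\alpha} + \|R^Z\|_{2\alpha}$. This establishes \eqref{vector-in-D}. Adding the bound for $\|(Y',Z')\|_{\alpha}$ then gives \eqref{pairs-1}, since the supremum of a sum is at most the sum of the suprema.

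Finally, \eqref{pairs-2} and \eqref{pairs-3} follow from \eqref{pairs-1} by adding the initial terms. These satisfy $|(Y_0',Z_0')| \leq |Y_0'| + |Z_0'|$ by the operator-norm inequality, and $|(Y_0,Z_0)| = |Y_0| + |Z_0|$ by the definition of the product norm.

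There is no real obstacle here. The only point needing care is the norm on $L(V,W\times E)$, and the sup-of-sum inequality handles it.
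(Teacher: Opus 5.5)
Your proof is correct and follows the same componentwise argument as the paper: H\"older bounds for $(Y,Z)$ and $(Y',Z')$, the splitting $R^{(Y,Z)}_{s,t} = (R^Y_{s,t}, R^Z_{s,t})$, and then adding the initial values. Your remark that the operator norm on $L(V,W \times E)$ only satisfies $|(A,B)| \leq |A| + |B|$ is slightly more careful than the paper, which writes this step as an equality, and the inequality is all the estimates require.
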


\begin{proof}
Let $s,t \in [0,T]$ be arbitrary. Then we have
\begin{align*}
|(Y_t,Z_t)-(Y_s,Z_s)| &= |(Y_t-Y_s,Z_t-Z_s)| = |Y_t-Y_s| + |Z_t-Z_s|
\\ &\leq \big( \| Y \|_{\alpha} + \| Z \|_{\alpha} \big) |t-s|^{\alpha},
\end{align*}
showing that $(Y,Z) \in \calc^{\alpha}([0,T],W \times E)$.
Similarly, we have
\begin{align*}
|(Y_t',Z_t')-(Y_s',Z_s')| &= |(Y_t'-Y_s',Z_t'-Z_s')| = |Y_t'-Y_s'| + |Z_t'-Z_s'|
\\ &\leq \big( \| Y' \|_{\alpha} + \| Z' \|_{\alpha} \big) |t-s|^{\alpha},
\end{align*}
showing that $(Y',Z') \in \calc^{\alpha}([0,T],L(V,W \times E))$. Furthermore, we have
\begin{align*}
R_{s,t}^{(Y,Z)} = (Y_{s,t},Z_{s,t}) - (Y_s',Z_s') X_{s,t} = (R_{s,t}^Y,R_{s,t}^Z),
\end{align*}
and hence
\begin{align*}
|R_{s,t}^{(Y,Z)}| = |R_{s,t}^Y| + |R_{s,t}^Z| \leq \big( \| R^Y \|_{2\alpha} + \| R^Z \|_{2\alpha} \big) |t-s|^{2\alpha},
\end{align*}
proving \eqref{vector-in-D}. Moreover, the previous calculations reveal that
\begin{align*}
\| (Y',Z') \|_{\alpha} &\leq \| Y' \|_{\alpha} + \| Z' \|_{\alpha},
\\ \| R^{(Y,Z)} \|_{2\alpha} &\leq \| R^Y \|_{2\alpha} + \| R^Z \|_{2\alpha}.
\end{align*}
Therefore, the estimates \eqref{pairs-1}, \eqref{pairs-2} and \eqref{pairs-3} are an immediate consequence.
\end{proof}

\subsection{The Riemann-Stieltjes integral}

In this section we demonstrate how the Riemann-Stieltjes integral gives rise to a controlled rough path, and that the associated integral operator is a continuous linear operator between Banach spaces.

\begin{lemma}\label{lemma-reg-conv-Hoelder}
Let $Y : [0,T] \to W$ be measurable and bounded. We define the path $Z : [0,T] \to W$ as
\begin{align*}
Z_t := \int_0^t Y_s \, ds, \quad t \in [0,T].
\end{align*}
Then we have
\begin{align*}
|Z_{s,t}| \leq \| Y \|_{\infty} |t-s| \quad \text{for all $s,t \in [0,T]$.}
\end{align*}
\end{lemma}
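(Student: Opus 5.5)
The plan is to write the increment of $Z$ as an integral over the interval between $s$ and $t$, and then bound the norm of that integral by the supremum norm of the integrand times the length of the interval. Here the integral $\int_0^t Y_s \, ds$ is understood as a Bochner integral in the Banach space $W$. It is well defined: $Y$ is measurable and bounded on the finite interval $[0,T]$, so $\int_0^T |Y_r| \, dr \leq \| Y \|_{\infty} T < \infty$. (If one insists on Bochner integrability, strong measurability is what is needed; I will take "measurable" in that sense, as is implicit in the definition of $Z$.)

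By symmetry, $|Z_{s,t}| = |Z_{t,s}|$, so I may assume $s \leq t$. Additivity of the Bochner integral over adjacent intervals then gives
\begin{align*}
Z_{s,t} = Z_t - Z_s = \int_0^t Y_r \, dr - \int_0^s Y_r \, dr = \int_s^t Y_r \, dr.
\end{align*}

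Next I use the standard norm inequality for Bochner integrals, $\big| \int_s^t Y_r \, dr \big| \leq \int_s^t |Y_r| \, dr$. Bounding the integrand pointwise by $|Y_r| \leq \| Y \|_{\infty}$ yields
\begin{align*}
|Z_{s,t}| \leq \int_s^t |Y_r| \, dr \leq \| Y \|_{\infty} (t-s) = \| Y \|_{\infty} |t-s|,
\end{align*}
which is the claim.

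There is no genuine obstacle in this argument. The only point needing care is the justification of the norm inequality for vector-valued integrals. For Bochner integrals this inequality is standard: it holds for simple functions and passes to the limit. If one prefers a dual argument, one applies a norming functional $\ell \in W^*$ with $|\ell| = 1$ and $\ell(Z_{s,t}) = |Z_{s,t}|$, which exists by Hahn–Banach. This reduces the estimate to the scalar case.
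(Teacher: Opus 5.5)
Your proof is correct and follows the paper's argument: reduce to $s \leq t$, write $Z_{s,t} = \int_s^t Y_r \, dr$, and bound this by $\int_s^t |Y_r| \, dr \leq \| Y \|_{\infty} |t-s|$. Your added remarks on Bochner integrability (strong measurability) and on the norm inequality for vector-valued integrals make explicit what the paper leaves implicit, but the route is the same.
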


\begin{proof}
Let $s,t \in [0,T]$ with $s \leq t$ be arbitrary. Then we have
\begin{align*}
Z_{s,t} = \int_0^t Y_r \, dr - \int_0^s Y_r \, dr = \int_s^t Y_r \, dr.
\end{align*}
Therefore, we obtain
\begin{align*}
|Z_{s,t}| \leq \bigg| \int_s^t Y_r \, dr \bigg| \leq \int_s^t |Y_r| \, dr \leq \|Y\|_{\infty} |t-s|,
\end{align*}
completing the proof.
\end{proof}

Now, let us fix a H\"{o}lder continuous path $X \in \calc^{\alpha}([0,T],V)$ for some index $\alpha \in (\frac{1}{3},\frac{1}{2}]$ with values in a Banach space $V$.

\begin{proposition}\label{prop-reg-conv-Hoelder}
The integral operator
\begin{align*}
\mathscr{I} : \big( C([0,T],W), \| \cdot \|_{\infty} \big) \to \big( \scrd_X^{2\alpha}([0,T],W), \interleave \cdot \interleave_{X,2\alpha} \big), \quad Y \mapsto \bigg( \int_0^{\cdot} Y_s \, ds, 0 \bigg)
\end{align*}
is a continuous linear operator between Banach spaces with operator norm bounded by $\| \mathscr{I} \| \leq T^{1-2\alpha}$.
\end{proposition}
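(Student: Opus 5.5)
The plan is to check in turn that $\mathscr{I}$ is well defined, that it is linear, and that it obeys the norm bound; continuity then follows from the bound. For $Y \in C([0,T],W)$ set $Z_t := \int_0^t Y_s \, ds$. Since $Y$ is continuous on the compact interval $[0,T]$, it is measurable and bounded, so Lemma \ref{lemma-reg-conv-Hoelder} gives
\begin{align*}
|Z_{s,t}| \leq \| Y \|_{\infty} |t-s| \leq \| Y \|_{\infty} T^{1-2\alpha} |t-s|^{2\alpha} \quad \text{for all $s,t \in [0,T]$,}
\end{align*}
where I use $|t-s|^{1-2\alpha} \leq T^{1-2\alpha}$, which holds because $1 - 2\alpha \geq 0$ for $\alpha \leq \frac{1}{2}$. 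Hence $Z \in \calc^{2\alpha}([0,T],W)$ with $\| Z \|_{2\alpha} \leq T^{1-2\alpha} \| Y \|_{\infty}$. By Exercise \ref{exercise-reg-derivative-zero} it follows that $(Z,0) \in \scrd_X^{2\alpha}([0,T],W)$, so $\mathscr{I}$ indeed maps into the claimed space.

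Linearity of $\mathscr{I}$ is immediate from the linearity of the Bochner (or Riemann) integral, since the second component is identically zero. Both spaces are Banach spaces: $C([0,T],W)$ with the supremum norm is standard, and $\scrd_X^{2\alpha}([0,T],W)$ with $\interleave \cdot \interleave_{X,2\alpha}$ was noted after Definition \ref{def-seminorm-Gubinelli}.

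For the norm bound, I compute $\interleave Z,0 \interleave_{X,2\alpha} = |Z_0| + |0| + \| Z,0 \|_{X,2\alpha}$. Here $Z_0 = 0$, and by \eqref{norm-for-second-comp-zero} we have $\| Z,0 \|_{X,2\alpha} = \| Z \|_{2\alpha}$. Combining this with the estimate above yields
\begin{align*}
\interleave \mathscr{I} Y \interleave_{X,2\alpha} = \| Z \|_{2\alpha} \leq T^{1-2\alpha} \| Y \|_{\infty}.
\end{align*}
This gives both the continuity of $\mathscr{I}$ and the bound $\| \mathscr{I} \| \leq T^{1-2\alpha}$.

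There is no real obstacle in this argument. The only points needing care are that $1 - 2\alpha \geq 0$, so that $|t-s|^{1-2\alpha} \leq T^{1-2\alpha}$, and that the $|Y_0|$ and $|Y_0'|$ terms in the full norm both vanish.
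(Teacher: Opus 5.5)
Your proposal is correct and follows essentially the same route as the paper: you apply Lemma \ref{lemma-reg-conv-Hoelder}, use $|t-s| \leq T^{1-2\alpha} |t-s|^{2\alpha}$, invoke Exercise \ref{exercise-reg-derivative-zero}, and note $Z_0 = 0$ and $Z_0' = 0$. One small slip: in your closing remark, the vanishing terms are $|Z_0|$ and $|Z_0'|$, not $|Y_0|$ and $|Y_0'|$.
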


\begin{proof}
Let $Y \in C([0,T],W)$ be arbitrary, and set $(Z,Z') := \mathscr{I}(Y)$. By Lemma \ref{lemma-reg-conv-Hoelder}, for all $s,t \in [0,T]$ we have
\begin{align*}
|Z_{s,t}| \leq \| Y \|_{\infty} |t-s| \leq T^{1-2\alpha} \| Y \|_{\infty} |t-s|^{2\alpha},
\end{align*}
showing that $Z \in \calc^{2\alpha}([0,T],W)$ with
\begin{align*}
\| Z \|_{2\alpha} \leq T^{1-2\alpha} \| Y \|_{\infty}.
\end{align*}
Note that $Z_0 = 0$ and $Z_0' = 0$. Therefore, by Exercise \ref{exercise-reg-derivative-zero} we obtain $(Z,Z') \in \scrd_X^{2\alpha}([0,T],W)$ and
\begin{align*}
\interleave Z,Z' \interleave_{X,2\alpha} = \| Z,Z' \|_{X,2\alpha} = \| Z \|_{2 \alpha} \leq T^{1-2\alpha} \| Y \|_{\infty},
\end{align*}
completing the proof.
\end{proof}

\subsection{The rough integral}

In this section we introduce the rough integral, and show that the associated integral operator is a continuous linear operator between Banach spaces. Let $V$ be a Banach space, and let $\bfx = (X,\bbx) \in \scrc^{\alpha}([0,T],V)$ be a rough path for some index $\alpha \in (\frac{1}{3},\frac{1}{2}]$. For any controlled rough path $(Y,Y') \in \scrd_X^{2 \alpha}([0,T],L(V,W))$ we consider the candidate
\begin{align}\label{candidate}
\int_0^1 Y_s \, d \bfx_s := \lim_{|\Pi| \to 0} \sum_{[s,t] \in \Pi} ( Y_s X_{s,t} + Y_s' \bbx_{s,t} )
\end{align}
for the \emph{rough integral} of $Y$ against $\mathbf{X}$.

\begin{remark}
In view of \eqref{candidate}, note that for all $s,t \in [0,T]$ we have
\begin{align*}
Y_s X_{s,t} + Y_s' \bbx_{s,t} \in W,
\end{align*}
because, using the identifications from Proposition \ref{prop-identifications}, we have
\begin{align*}
&Y_s \in L(V,W), \quad X_{s,t} \in V,
\\ &Y_s' \in L(V,L(V,W)) \cong L(V \otimes V,W) \quad \text{and} \quad \bbx_{s,t} \in V \otimes V.
\end{align*}
\end{remark}

\begin{remark}\label{rem-identifications-fin-dim}
Consider the finite dimensional situation $V = \bbr^d$ and $W = \bbr^m$. By Proposition \ref{prop-tensor-product-Rn} we have $\bbr^d \otimes \bbr^d \cong \bbr^{d \times d}$. Furthermore, by Proposition \ref{prop-lin-operator-matrix} we have $L(\bbr^d,\bbr^m) \cong \bbr^{m \times d}$, and hence by Proposition \ref{prop-identifications} we obtain
\begin{align*}
L(\bbr^d,L(\bbr^d,\bbr^m)) \cong L(\bbr^d \otimes \bbr^d, \bbr^m) \cong L(\bbr^{d \times d}, \bbr^m) \cong \bbr^{m \times d \times d}.
\end{align*}
Using these identifications we have
\begin{align*}
&Y_s \in \bbr^{m \times d}, \quad X_{s,t} \in \bbr^d,
\\ &Y_s' \in \bbr^{m \times d \times d} \quad \text{and} \quad \bbx_{s,t} \in \bbr^{d \times d},
\end{align*}
and the products in \eqref{candidate} are understood as matrix multiplications.
\end{remark}

The following exercise, which is essentially a consequence of Chen's relation \eqref{Chen-relation}, will be crucial in order to show that the rough integral \eqref{candidate} converges.

\begin{exercise}\label{exercise-sewing-lemma}
Let $(Y,Y') \in \scrd_X^{2 \alpha}([0,T],L(V,W))$ be a controlled rough path. We define the mapping
\begin{align}\label{Xi-mapping}
\Xi : \Delta_T^2 \to W, \quad \Xi_{s,t} := Y_s X_{s,t} + Y_s' \bbx_{s,t}.
\end{align}
Show that for all $(s,u,t) \in \Delta_T^3$ we have
\begin{align}\label{identity-for-integral}
\delta \Xi_{s,u,t} = - R_{s,u}^Y X_{u,t} - Y_{s,u}' \bbx_{u,t},
\end{align}
where $\delta \Xi : \Delta_T^3 \to W$ is given by \eqref{delta-Xi-definition}.
\end{exercise}

\begin{solution}
By Chen's relation \eqref{Chen-relation} we obtain
\begin{align*}
\delta \Xi_{s,u,t} &= \Xi_{s,t} - \Xi_{s,u} - \Xi_{u,t}
\\ &= Y_s X_{s,t} + Y_s' \bbx_{s,t} - Y_s X_{s,u} - Y_s' \bbx_{s,u} - Y_u X_{u,t} - Y_u' \bbx_{u,t}
\\ &= Y_s (X_t - X_s) + Y_s' \bbx_{s,t} - Y_s (X_u - X_s) - Y_s' \bbx_{s,u} - Y_u (X_t - X_u) - Y_u' \bbx_{u,t}
\\ &= Y_s (X_t - X_u) + Y_s' \bbx_{s,t} - Y_s' \bbx_{s,u} - Y_u (X_t - X_u) - Y_u' \bbx_{u,t}
\\ &= Y_{u,s} X_{u,t} + Y_s' (\bbx_{s,t} - \bbx_{s,u}) - Y_u' \bbx_{u,t}
\\ &= Y_{u,s} X_{u,t} + Y_s' (\bbx_{u,t} + X_{s,u} \otimes X_{u,t}) - Y_u' \bbx_{u,t}
\\ &= - ( Y_{s,u} - Y_s' X_{s,u} ) X_{u,t} - (Y_u' - Y_s') \bbx_{u,t}
\\ &= - R_{s,u}^Y X_{u,t} - Y_{s,u}' \bbx_{u,t}.
\end{align*}
For this calculation, recall from Proposition \ref{prop-identifications} that
\begin{align*}
L(V,L(V,W)) \cong L(V \otimes V,W),
\end{align*}
and that accordingly a linear operator $\Phi \in L(V,L(V,W))$ may be identified with the linear operator $\Psi \in L(V \otimes V,W)$ given by
\begin{align*}
\Psi(v \otimes w) = (\Phi v)w \quad \text{for all $v,w \in V$.}
\end{align*}
\end{solution}

Now, after establishing the identity \eqref{identity-for-integral}, we can prove convergence of the rough integral \eqref{candidate} by using the sewing lemma:

\begin{theorem}[Gubinelli]\label{thm-Gubinelli}
For each controlled rough path $$(Y,Y') \in \scrd_X^{2 \alpha}([0,T],L(V,W))$$ the $W$-valued rough integral \eqref{candidate} exists, and for all $s,t \in [0,T]$ we have
\begin{align}\label{estimate-third-order}
&\bigg| \int_s^t Y_r \, d\bfx_r - Y_s X_{s,t} - Y_s' \bbx_{s,t} \bigg|
\leq C \big( \| X \|_{\alpha} \| R^Y \|_{2 \alpha} + \| \bbx \|_{2 \alpha} \| Y' \|_{\alpha} \big) |t-s|^{3 \alpha}
\end{align}
with a constant $C > 0$ depending on $\alpha$.
\end{theorem}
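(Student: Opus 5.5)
The plan is to apply the sewing lemma (Lemma \ref{lemma-sewing}) to the two-parameter germ $\Xi$ from \eqref{Xi-mapping}, with Hölder exponent $\alpha$ and $\beta := 3\alpha$. Since $\alpha > \frac{1}{3}$, we have $0 < \alpha \leq 1 < \beta$, as the lemma requires. Once we know $\Xi \in \calc_2^{\alpha,3\alpha}([0,T],W)$, the lemma yields a path $\cali \Xi \in \calc^{\alpha}([0,T],W)$ with
\begin{align*}
(\cali \Xi)_{s,t} = \lim_{|\Pi| \to 0} \sum_{[u,v] \in \Pi} \Xi_{u,v}.
\end{align*}
This is exactly the candidate \eqref{candidate} on $[s,t]$, so the rough integral exists and we set $\int_s^t Y_r \, d\bfx_r := (\cali \Xi)_{s,t}$. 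The estimate \eqref{estimate-third-order} is then property (2) of the sewing lemma, once we track how the constant depends on $\| \delta \Xi \|_{3\alpha}$.

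\textbf{Membership in $\calc_2^{\alpha,3\alpha}$.} First, $\Xi_{t,t} = Y_t X_{t,t} + Y_t' \bbx_{t,t} = 0$ by \eqref{Chen-rule-1}. Next, for $s<t$ we bound
\begin{align*}
|\Xi_{s,t}| \leq \| Y \|_{\infty} \| X \|_{\alpha} |t-s|^{\alpha} + \| Y' \|_{\infty} \| \bbx \|_{2\alpha} T^{\alpha} |t-s|^{\alpha},
\end{align*}
so $\| \Xi \|_{\alpha} < \infty$; boundedness of $Y$ and $Y'$ follows from Lemma \ref{lemma-norm-of-Y}. For the second-order increment we use identity \eqref{identity-for-integral} from Exercise \ref{exercise-sewing-lemma}. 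For $s<u<t$, using $|u-s|, |t-u| \leq |t-s|$,
\begin{align*}
|\delta \Xi_{s,u,t}| \leq \| R^Y \|_{2\alpha} \| X \|_{\alpha} |t-s|^{3\alpha} + \| Y' \|_{\alpha} \| \bbx \|_{2\alpha} |t-s|^{3\alpha},
\end{align*}
so that
\begin{align*}
\| \delta \Xi \|_{3\alpha} \leq \| X \|_{\alpha} \| R^Y \|_{2\alpha} + \| \bbx \|_{2\alpha} \| Y' \|_{\alpha}.
\end{align*}

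\textbf{The main obstacle.} The sewing lemma as stated only gives a constant $C = C(\beta, \| \delta \Xi \|_{\beta})$, while \eqref{estimate-third-order} needs the linear form $C(\beta)\| \delta \Xi \|_{\beta}$. I would get this from the standard proof of the sewing lemma, via dyadic refinement or the successive removal of partition points.

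Given a partition $\Pi$ of $[s,t]$ with $N$ intervals, some interior point $u$ has neighbours $u_-, u_+$ with $|u_+ - u_-| \leq \frac{2}{N-1}|t-s|$. Removing $u$ changes the Riemann sum by $\delta \Xi_{u_-,u,u_+}$, which is bounded by $\| \delta \Xi \|_{\beta} (2|t-s|/(N-1))^{\beta}$. Iterating down to the trivial partition $\{s,t\}$ gives
\begin{align*}
\bigg| \sum_{[u,v]\in\Pi} \Xi_{u,v} - \Xi_{s,t} \bigg| \leq 2^{\beta} \zeta(\beta) \| \delta \Xi \|_{\beta} |t-s|^{\beta},
\end{align*}
uniformly in $\Pi$. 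Passing to the limit $|\Pi| \to 0$ yields \eqref{estimate-third-order} with $C = 2^{3\alpha}\zeta(3\alpha)$, which depends only on $\alpha$. Finally, for $s > t$ I would use the convention $\int_s^t = -\int_t^s$; alternatively, the statement is understood for $s \leq t$ as on $\Delta_T^2$.
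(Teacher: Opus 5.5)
Your proposal is correct and follows the paper's proof. You verify $\Xi \in \calc_2^{\alpha,3\alpha}([0,T],W)$ using \eqref{Chen-rule-1} and the identity \eqref{identity-for-integral}, which gives the same bound on $\| \delta \Xi \|_{3\alpha}$, and then apply the sewing lemma. Your additional point-removal argument, giving the constant $2^{3\alpha}\zeta(3\alpha)\| \delta \Xi \|_{3\alpha}$, makes explicit the linear dependence on $\| \delta \Xi \|_{3\alpha}$ that the paper's version of the sewing lemma does not state and that the paper implicitly takes from the proof in Friz--Hairer.
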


\begin{proof}
Setting $\beta := 3 \alpha$, we have $0 < \alpha \leq 1 < \beta$. Let $(Y,Y') \in \scrd_X^{2 \alpha}([0,T],L(V,W))$ be an arbitrary controlled rough path, and consider the mapping \eqref{Xi-mapping}. We will show that $\Xi \in \calc_2^{\alpha,\beta}([0,T],W)$. Indeed, by \eqref{Chen-rule-1} we have $\Xi_{t,t} = 0$ for all $t \in [0,T]$. Furthermore, for all $s,t \in [0,T]$ with $s \leq t$ we have
\begin{align*}
| \Xi_{s,t} | &\leq | Y_s | \cdot | X_{s,t} | + | Y_s' | \cdot |\bbx_{s,t} |
\\ &\leq \| Y \|_{\infty} \| X \|_{\alpha} |t-s|^{\alpha} + \| Y' \|_{\infty} \| \bbx \|_{\alpha} |t-s|^{\alpha},
\end{align*}
and hence
\begin{align*}
\| \Xi \|_{\alpha} \leq \| X \|_{\alpha} \| Y \|_{\infty} + \| \bbx \|_{\alpha} \| Y' \|_{\infty} < \infty.
\end{align*}
Moreover, according to Exercise \ref{exercise-sewing-lemma}, for all $s,u,t \in [0,T]$ with $s \leq u \leq t$ we have
\begin{align*}
| \delta \Xi_{s,u,t} | &\leq |R_{s,u}^Y| \cdot |X_{u,t}| + | Y_{s,u}' | \cdot | \bbx_{u,t} |
\\ &\leq \| R^Y \|_{2 \alpha} |u-s|^{2 \alpha} \cdot \| X \|_{\alpha} |t-u|^{\alpha} + \| Y' \|_{\alpha} |u-s|^{\alpha} \cdot \| \bbx \|_{2 \alpha} |t-u|^{2 \alpha}
\\ &\leq \| R^Y \|_{2 \alpha} \| X \|_{\alpha} |t-s|^{3 \alpha} + \| Y' \|_{\alpha} \| \bbx \|_{2 \alpha} |t-s|^{3 \alpha},
\end{align*}
and hence
\begin{align*}
\| \delta \Xi \|_{\beta} \leq \| X \|_{\alpha} \| R^Y \|_{2 \alpha} + \| \bbx \|_{2 \alpha} \| Y' \|_{\alpha} < \infty,
\end{align*}
showing that $\Xi \in \calc_2^{\alpha,\beta}([0,T],W)$. Therefore, applying the sewing lemma (Lemma \ref{lemma-sewing}) completes the proof.
\end{proof}

\begin{definition}
We call \eqref{candidate} the \emph{rough integral} (or \emph{Gubinelli integral}) of $Y$ against $\mathbf{X}$.
\end{definition}

We proceed with some properties of the Gubinelli integral. As we will see, it gives rise to a continuous linear operator between Banach spaces.

\begin{proposition}\label{prop-Gubinelli}
For all $(Y,Y') \in \scrd_X^{2 \alpha}([0,T],L(V,W))$ we have
\begin{align*}
(Z,Z') := \bigg( \int_0^{\cdot} Y_s \, d \bfx_s, Y \bigg) \in \scrd_X^{2 \alpha}([0,T],W),
\end{align*}
and the estimate
\begin{align*}
\| Z,Z' \|_{X,2 \alpha} &\leq \| Y \|_{\alpha} + \| Y' \|_{\infty} \| \bbx \|_{2 \alpha}
\\ &\quad + C T^{\alpha} \big( \| X \|_{\alpha} \| R^Y \|_{2 \alpha} + \| \bbx \|_{2 \alpha} \| Y' \|_{\alpha} \big)
\end{align*}
with a constant $C > 0$ depending on $\alpha$.
\end{proposition}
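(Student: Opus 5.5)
The plan is to verify the three ingredients of Definition \ref{def-Gubinelli-derivative} for the pair $(Z,Z') = (\int_0^\cdot Y_s \, d\bfx_s, Y)$ and to estimate the seminorm \eqref{seminorm-Gubinelli} term by term. First, the Gubinelli derivative $Z' = Y$ belongs to $\calc^{\alpha}([0,T],L(V,W))$, since $(Y,Y')$ is controlled and hence $Y \in \calc^{\alpha}$. This settles the first summand of the seminorm directly:
\begin{align*}
\| Z' \|_{\alpha} = \| Y \|_{\alpha}.
\end{align*}

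Next I identify the remainder. The sewing lemma (Lemma \ref{lemma-sewing}) gives $(\cali \Xi)_0 = 0$, so $Z = \cali \Xi$ with $\Xi$ from \eqref{Xi-mapping}. Hence the increments $Z_{s,t}$ are the integrals $\int_s^t Y_r \, d\bfx_r$, and
\begin{align*}
R^Z_{s,t} = Z_{s,t} - Y_s X_{s,t} = \Big( \int_s^t Y_r \, d\bfx_r - Y_s X_{s,t} - Y_s' \bbx_{s,t} \Big) + Y_s' \bbx_{s,t}.
\end{align*}
For $s \le t$ I bound the bracket by \eqref{estimate-third-order} of Theorem \ref{thm-Gubinelli}, using $|t-s|^{3\alpha} \le T^{\alpha}|t-s|^{2\alpha}$. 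The last term is bounded by $\|Y'\|_\infty \|\bbx\|_{2\alpha}|t-s|^{2\alpha}$. Dividing by $|t-s|^{2\alpha}$ yields
\begin{align*}
\|R^Z\|_{2\alpha} \le \|Y'\|_\infty \|\bbx\|_{2\alpha} + C T^{\alpha}\big( \| X \|_{\alpha} \| R^Y \|_{2 \alpha} + \| \bbx \|_{2 \alpha} \| Y' \|_{\alpha} \big).
\end{align*}
Adding the two bounds gives the claimed estimate.

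It remains to check that $Z \in \calc^{\alpha}$, which follows from the identity $Z_{s,t} = Y_s X_{s,t} + R^Z_{s,t}$ together with the bounds above. Alternatively, it follows directly from Lemma \ref{lemma-sewing}.

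The only delicate step is the case $s > t$, since the sewing lemma and \eqref{estimate-third-order} are naturally formulated on the simplex $\Delta_T^2$. I would handle it by writing $R^Z_{s,t} = -Z_{t,s} - Y_s X_{s,t}$ and inserting $Y_t$: this gives $R^Z_{s,t} = -R^Z_{t,s} + Y_{t,s} X_{t,s}$. However, this introduces the extra term $|Y_{t,s}|\,|X_{t,s}| \le \|Y\|_\alpha \|X\|_\alpha |t-s|^{2\alpha}$, which is not present in the claimed bound. I therefore expect the intended reading to be that the two-parameter norms are taken over $s<t$, as in \eqref{Hoelder-norm-sewing-1}. Equivalently, \eqref{estimate-third-order} is stated for all $s,t$, and the remainder norm is computed with $s\le t$, consistent with the sewing setup. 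I would state this convention explicitly and then restrict to $s \le t$ throughout.
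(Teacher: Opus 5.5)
Your argument for $s \le t$ is the paper's proof. You split $R^Z_{s,t}$ into $Y_s'\bbx_{s,t}$ plus the third-order error controlled by \eqref{estimate-third-order}, and you use $\|Z'\|_{\alpha} = \|Y\|_{\alpha}$. Your check of $Z \in \calc^{\alpha}$ via $Z_{s,t} = Y_s X_{s,t} + R^Z_{s,t}$ is the paper's decomposition $\Sigma = \Sigma^1 + \Sigma^2 + \Sigma^3$ in other words.

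The gap is in your last paragraph. In this paper $\calc_2^{2\alpha}([0,T]^2,W)$ is defined with the supremum over all $s \neq t$. So membership in $\scrd_X^{2\alpha}([0,T],W)$ and the norm $\|R^Z\|_{2\alpha}$ cover all such pairs, and restricting to $s \le t$ proves a weaker statement. The restriction is also unnecessary. The term $Y_{t,s}X_{t,s}$ you ran into is not a real obstruction; it only appears because Chen's relation was not used. For $s > t$, insert $Y_{t,s} = Y_s' X_{t,s} - R^Y_{s,t}$ into your identity $R^Z_{s,t} = -R^Z_{t,s} + Y_{t,s}X_{t,s}$. Then use \eqref{Chen-rule-2} in the form $X_{t,s}\otimes X_{t,s} = \bbx_{s,t} + \bbx_{t,s}$. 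This gives
\begin{align*}
R^Z_{s,t} = Y_s'\bbx_{s,t} - Y'_{s,t}\bbx_{t,s} - R^Y_{s,t}X_{t,s} - \Big( \int_t^s Y_r \, d\bfx_r - Y_t X_{t,s} - Y_t'\bbx_{t,s} \Big).
\end{align*}
The terms are bounded as follows:
\begin{itemize}
\item[(i)] The first term is at most $\|Y'\|_{\infty}\|\bbx\|_{2\alpha}|t-s|^{2\alpha}$.
\item[(ii)] The second and third terms together are at most $(\|\bbx\|_{2\alpha}\|Y'\|_{\alpha} + \|X\|_{\alpha}\|R^Y\|_{2\alpha})|t-s|^{3\alpha}$. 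This uses the bound on $R^Y_{s,t}$ for $s > t$, which is part of the definition of $\scrd_X^{2\alpha}$.
\item[(iii)] The last term is controlled by \eqref{estimate-third-order} on the simplex.
\end{itemize}
Hence the claimed estimate holds over all pairs $s \neq t$, with $C$ replaced by $C+1$, which still depends only on $\alpha$.

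This computation is also what justifies the words ``for all $s,t \in [0,T]$'' in Theorem~\ref{thm-Gubinelli}, whose sewing-lemma proof only covers $s \le t$. Once that version is accepted, as the paper does, your first decomposition of $R^Z_{s,t}$ applies verbatim to $s > t$, and no change of convention is needed.
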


\begin{proof}
We define $\Sigma : [0,T]^2 \to W$ as $\Sigma_{s,t} := Z_t - Z_s$ for all $s,t \in [0,T]$. Furthermore, we define $\Sigma^1, \Sigma^2, \Sigma^3 : [0,T]^2 \to W$ as
\begin{align*}
\Sigma_{s,t}^1 &:= Y_s X_{s,t}, \quad \Sigma_{s,t}^2 := Y_s' \bbx_{s,t} \quad \text{and}
\\ \Sigma_{s,t}^3 &:= \int_s^t Y_s \, d\bfx_r - Y_s X_{s,t} - Y_s' \bbx_{s,t}.
\end{align*}
Then we have $\Sigma = \Sigma^1 + \Sigma^2 + \Sigma^3$ as well as $\Sigma^1 \in \calc_2^{\alpha}([0,T]^2,W)$ and $\Sigma^2 \in \calc_2^{2\alpha}([0,T]^2,W)$. Moreover, by Theorem \ref{thm-Gubinelli} we have $\Sigma^3 \in \calc_2^{3\alpha}([0,T]^2,W)$. This shows $\Sigma \in \calc_2^{\alpha}([0,T]^2,W)$, and hence $Z \in \calc^{\alpha}([0,T],W)$ due to Remark \ref{rem-Hoelder}.

Since $Z' = Y$, it is a path $Z' \in \calc^{\alpha}([0,T], L(V,W))$. Concerning the remainder term, for all $s,t \in [0,T]$ we have
\begin{align*}
R_{s,t}^Z = Z_{s,t} - Z_s' X_{s,t} = Z_{s,t} - Y_s X_{s,t} = Y_s' \bbx_{s,t} + ( Z_{s,t} - Y_s X_{s,t} - Y_s' \bbx_{s,t} ),
\end{align*}
and hence, by Theorem \ref{thm-Gubinelli} we obtain
\begin{align*}
\| R^Z \|_{2 \alpha} \leq \| Y' \|_{\infty} \| \bbx \|_{2 \alpha} + C T^{\alpha} \big( \| X \|_{\alpha} \| R^Y \|_{2 \alpha} + \| \bbx \|_{2 \alpha} \| Y' \|_{\alpha} \big)
\end{align*}
with a constant $C > 0$ depending on $\alpha$. It follows that $(Z,Z') \in \scrd_X^{2 \alpha}([0,T],W)$ and
\begin{align*}
\| Z,Z' \|_{X,2\alpha} &= \| Z' \|_{\alpha} + \| R^Z \|_{2 \alpha} = \| Y \|_{\alpha} + \| R^Z \|_{2 \alpha}
\\ &\leq \| Y \|_{\alpha} + \| Y' \|_{\infty} \| \bbx \|_{2 \alpha} + C T^{\alpha} \big( \| X \|_{\alpha} \| R^Y \|_{2 \alpha} + \| \bbx \|_{2 \alpha} \| Y' \|_{\alpha} \big),
\end{align*}
completing the proof.
\end{proof}

\begin{proposition}\label{prop-conv-rough}
The rough integral operator
\begin{align*}
\mathscr{J} : \big( \scrd_X^{2 \alpha}([0,T],L(V,W)), \interleave \cdot \interleave_{X,2\alpha} \big) \to \big( \scrd_X^{2 \alpha}([0,T],W), \interleave \cdot \interleave_{X,2\alpha} \big)
\end{align*}
given by
\begin{align*}
(Y,Y') \mapsto \bigg( \int_0^{\cdot} Y_s \, d \bfx_s, Y \bigg)
\end{align*}
is a continuous linear operator between Banach spaces with operator norm bounded by
\begin{align*}
\| \mathscr{J} \| \leq C (  \interleave \mathbf{X} \interleave_{\alpha} + T^{\alpha} ),
\end{align*}
where the constant $C > 0$ depends on $\alpha$ and $T$, but does not depend on $T \leq 1$.
\end{proposition}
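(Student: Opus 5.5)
The plan is to combine Proposition \ref{prop-Gubinelli} with the estimates of Lemma \ref{lemma-norm-of-Y}. The part that does not fit is the $|Y_0|$ term, discussed in the last paragraph. Linearity of $\mathscr{J}$ is immediate: the Riemann-type sums in \eqref{candidate} are linear in $(Y,Y')$, so the limit is linear, and the second component $Y$ is linear as well. By Proposition \ref{prop-Gubinelli} the image lies in $\scrd_X^{2\alpha}([0,T],W)$. For $(Z,Z') = \mathscr{J}(Y,Y')$ we have $Z_0 = 0$ and $Z_0' = Y_0$, so
\begin{align*}
\interleave Z,Z' \interleave_{X,2\alpha} = |Y_0| + \| Y \|_{\alpha} + \| R^Z \|_{2\alpha}.
\end{align*}

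\textbf{Step 1: the Hölder term.} By \eqref{norm-Y-2}, $\| Y \|_{\alpha} \leq C | Y,Y' |_{X,2\alpha} ( \| X \|_{\alpha} + T^{\alpha} )$. This is of the required form $C(\interleave \mathbf{X} \interleave_{\alpha} + T^{\alpha}) \interleave Y,Y' \interleave_{X,2\alpha}$.

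\textbf{Step 2: the remainder term.} I would take the bound on $\| R^Z \|_{2\alpha}$ from the proof of Proposition \ref{prop-Gubinelli}:
\begin{align*}
\| R^Z \|_{2\alpha} \leq \| Y' \|_{\infty} \| \bbx \|_{2\alpha} + C T^{\alpha} \big( \| X \|_{\alpha} \| R^Y \|_{2\alpha} + \| \bbx \|_{2\alpha} \| Y' \|_{\alpha} \big).
\end{align*}
Then I would use \eqref{norm-Y-1} for $\| Y' \|_{\infty}$, and bound $\| R^Y \|_{2\alpha}$ and $\| Y' \|_{\alpha}$ by $| Y,Y' |_{X,2\alpha}$. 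The factor $T^{\alpha}$ multiplying $\interleave \mathbf{X} \interleave_{\alpha}$ is at most $1$ for $T \leq 1$, and at most $T^{\alpha}$ otherwise. So every contribution is a constant times $\interleave \mathbf{X} \interleave_{\alpha} \, | Y,Y' |_{X,2\alpha}$. The constants from Lemma \ref{lemma-norm-of-Y} equal $1$ for $T \leq 1$, which gives the required uniformity in $T \leq 1$.

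\textbf{Step 3: the $|Y_0|$ term (main obstacle).} Steps 1 and 2 already produce a bound of the form $C(\interleave \mathbf{X} \interleave_{\alpha} + T^{\alpha}) \interleave Y,Y' \interleave_{X,2\alpha}$ for the seminorm part $\| Z,Z' \|_{X,2\alpha}$. The remaining term $|Z_0'| = |Y_0|$ carries no factor of $\mathbf{X}$ or of $T$. My first attempt would be to absorb it into the constant by writing $|Y_0| \leq T^{-\alpha} \cdot T^{\alpha} \interleave Y,Y' \interleave_{X,2\alpha}$, that is, by letting $C \geq T^{-\alpha}$ depend on $T$. This is allowed by the clause that $C$ depends on $T$. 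However, it is \emph{not} uniform as $T \downarrow 0$. Indeed, the test case $X = 0$, $\bbx = 0$, $Y \equiv Y_0$, $Y' = 0$ gives $\interleave Z,Z' \interleave_{X,2\alpha} = |Y_0| = \interleave Y,Y' \interleave_{X,2\alpha}$. Hence any admissible constant must satisfy $C T^{\alpha} \geq 1$.

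So the plan proves the stated bound with a $T$-dependent constant $C = C(\alpha,T)$. For fixed $T$ I would take $C \geq \max\{1,T^{-\alpha}\}$, which absorbs $|Y_0|$ as above. The $T$-uniform statement for $T \leq 1$ holds for the seminorm part $\| Z,Z' \|_{X,2\alpha}$ and for $| Z,Z' |_{X,2\alpha}$ minus the $|Y_0|$ contribution. I would state explicitly that this is the reading under which the uniformity clause is valid. Continuity then follows from linearity together with this bound.
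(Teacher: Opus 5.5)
Your Steps 1 and 2 are the paper's argument: the paper combines Proposition \ref{prop-Gubinelli} with Lemma \ref{lemma-norm-of-Y}. Your Step 3 objection is also correct. The paper's proof absorbs $|Z_0'| = |Y_0|$ into its final $\lesssim$ without comment, which forces $C \geq T^{-\alpha}$.

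Your test case ($\mathbf{X} = 0$, $Y \equiv Y_0$, $Y' = 0$, giving $\interleave Z,Z' \interleave_{X,2\alpha} = |Y_0| = \interleave Y,Y' \interleave_{X,2\alpha}$) shows that the clause ``does not depend on $T \leq 1$'' is false when the left-hand side is the full norm $\interleave \cdot \interleave_{X,2\alpha}$. What does hold uniformly for $T \leq 1$ is the bound on the seminorm $\| Z,Z' \|_{X,2\alpha}$ that your Steps 1 and 2 establish.

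That seminorm bound is exactly the form in which this proposition is later invoked, in Propositions \ref{prop-fixed-point-1} and \ref{prop-fixed-point-2}. So your corrected reading leaves the subsequent fixed-point arguments intact.
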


\begin{proof}
Let $(Y,Y') \in \scrd_X^{2 \alpha}([0,T],L(V,W))$ be arbitrary, and set $(Z,Z') := \mathscr{J}(Y,Y')$. Noting that $Z_0 = 0$ and $Z_0' = Y_0$, by Proposition \ref{prop-Gubinelli} and Lemma \ref{lemma-norm-of-Y} we obtain
\begin{align*}
\interleave Z,Z' \interleave_{X,2\alpha} &= |Z_0| + |Z_0'| + \| Z,Z' \|_{X,2\alpha}
\\ &\leq |Y_0| + \| Y \|_{\alpha} + \| Y' \|_{\infty} \| \bbx \|_{2\alpha} + C \big( \| X \|_{\alpha} \| R^Y \|_{2 \alpha} + \| \bbx \|_{2 \alpha} \| Y' \|_{\alpha} \big)
\\ &\lesssim C (  \interleave \mathbf{X} \interleave_{\alpha} + T^{\alpha} ) \interleave Y,Y' \interleave_{X,2\alpha},
\end{align*}
where the constant $C > 0$ depends on $\alpha$ and $T$, but does not depend on $T \leq 1$.
\end{proof}

\section{Compositions of paths with functions}\label{sec-composition}

In this section we deal with compositions of regular and rough paths with functions. Let us fix a finite time horizon $T \in \bbr_+$, and let $W,\bar{W}$ be Banach spaces.

\subsection{Compositions of regular paths with functions}\label{sec-comp-reg}

In this section we briefly deal with compositions of regular paths with functions.

\begin{definition}
We denote by $\Lip([0,T] \times W, \bar{W})$ the space of all continuous functions $f_0 : [0,T] \times W \to \bar{W}$ such that for some constant $L > 0$ we have
\begin{align*}
| f_0(t,y) - f_0(t,z) | \leq L |y-z| \quad \text{for all $t \in [0,T]$ and $y,z \in W$.}
\end{align*}
\end{definition}

Note that for each $f_0 \in \Lip([0,T] \times W, \bar{W})$ the seminorm
\begin{align*}
\| f_0 \|_{\Lip} := \sup_{t \in [0,T]} |f_0(t,0)| + \sup_{t \in [0,T]} | f_0(t,\cdot) |_{\Lip}
\end{align*}
is finite.

\begin{remark}
Let $g_0 \in \Lip(W,\bar{W})$ be arbitrary, and define the function
\begin{align*}
f_0 : [0,T] \times W \to \bar{W}, \quad f_0(t,y) := g_0(y).
\end{align*}
Then we have $f_0 \in \Lip([0,T] \times W, \bar{W})$ with $\| f_0 \|_{\Lip} = |g_0(0)| + | g_0 |_{\Lip}$.
\end{remark}

\begin{lemma}\label{lemma-lin-growth}
Let $f_0 \in \Lip([0,T] \times W, \bar{W})$ be arbitrary. Then we have
\begin{align}\label{Lip-1}
| f_0(t,y) - f_0(t,z) | &\leq \| f_0 \|_{\Lip} |y-z|, \quad \text{$y,z \in W$ and $t \in [0,T]$,}
\\ \label{Lip-2} |f_0(t,y)| &\leq \| f_0 \|_{\Lip} (1 + |y|), \quad \text{$y \in W$ and $t \in [0,T]$.}
\end{align}
\end{lemma}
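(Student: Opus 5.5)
Both estimates follow directly from the definition of the seminorm
\begin{align*}
\| f_0 \|_{\Lip} = \sup_{t \in [0,T]} |f_0(t,0)| + \sup_{t \in [0,T]} | f_0(t,\cdot) |_{\Lip},
\end{align*}
which is finite by the remark preceding the lemma. Since both summands are nonnegative, each one is bounded by $\| f_0 \|_{\Lip}$. I will therefore first record the two elementary inequalities
\begin{align*}
|f_0(t,0)| \leq \| f_0 \|_{\Lip} \quad \text{and} \quad | f_0(t,\cdot) |_{\Lip} \leq \| f_0 \|_{\Lip} \quad \text{for all $t \in [0,T]$.}
\end{align*}

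For \eqref{Lip-1} I fix $t \in [0,T]$ and $y,z \in W$. If $y = z$ there is nothing to show. Otherwise the definition of the Lipschitz seminorm $| f_0(t,\cdot) |_{\Lip}$ as a supremum of difference quotients gives
\begin{align*}
|f_0(t,y) - f_0(t,z)| \leq | f_0(t,\cdot) |_{\Lip} \, |y-z|.
\end{align*}
Combining this with the second inequality above yields \eqref{Lip-1}.

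For \eqref{Lip-2} I use the triangle inequality, split at the point $0$, and then apply \eqref{Lip-1} with $z = 0$:
\begin{align*}
|f_0(t,y)| \leq |f_0(t,0)| + |f_0(t,y) - f_0(t,0)| \leq \| f_0 \|_{\Lip} + \| f_0 \|_{\Lip} |y| = \| f_0 \|_{\Lip} (1 + |y|).
\end{align*}

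There is no real obstacle. The only point to check is that $\| f_0 \|_{\Lip}$ is indeed finite. The membership $f_0 \in \Lip([0,T] \times W, \bar{W})$ gives a uniform Lipschitz constant $L$, so the second supremum is finite. For the first supremum, note that $t \mapsto f_0(t,0)$ is continuous on the compact interval $[0,T]$ and hence bounded, which is where the continuity of $f_0$ enters.
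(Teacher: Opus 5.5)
Your proof is correct and follows essentially the same route as the paper: you bound $\sup_{t}|f_0(t,\cdot)|_{\Lip}$ and $\sup_t |f_0(t,0)|$ by $\| f_0 \|_{\Lip}$, read off \eqref{Lip-1} from the definition of the Lipschitz seminorm, and obtain \eqref{Lip-2} by splitting at $0$ with the triangle inequality. Your remark that $\sup_t |f_0(t,0)| < \infty$ follows from continuity of $t \mapsto f_0(t,0)$ on the compact interval $[0,T]$ justifies a finiteness claim that the paper only asserts.
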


\begin{proof}
Setting $L := \sup_{t \in [0,T]} | f_0(t,\cdot) |_{\Lip}$, we have
\begin{align*}
| f_0(t,y) - f_0(t,z) | &\leq L |y-z|, \quad \text{$y,z \in W$ and $t \in [0,T]$,}
\end{align*}
showing \eqref{Lip-1}. Now, for all $y \in W$ and $t \in [0,T]$ we obtain
\begin{align*}
|f_0(t,y)| &\leq |f_0(t,y) - f_0(t,0)| + |f_0(t,0)|
\\ &\leq L |y| + \sup_{s \in [0,T]} |f_0(s,0)| \leq \| f_0 \|_{\Lip} (1 + |y|),
\end{align*}
proving \eqref{Lip-2}.
\end{proof}

For a path $Y : [0,T] \to W$ and a function $f_0 : [0,T] \times W \to \bar{W}$ we denote by $f_0(Y) : [0,T] \to \bar{W}$ the new path
\begin{align}\label{new-path-f0}
f_0(Y)_t := f_0(t,Y_t), \quad t \in [0,T].
\end{align}
If the path $Y$ is continuous and $f_0 \in \Lip([0,T] \times W, \bar{W})$, then the path $f_0(Y)$ is continuous, and hence bounded.

\subsection{Compositions of rough paths with time-dependent functions}

In this section we consider compositions of controlled rough paths with time-dependent smooth functions.

\begin{definition}
Let $\gamma \in (0,1]$ and $n \in \bbn$ be arbitrary. We denote by $C_b^{\gamma,n}([0,T] \times W, \bar{W})$ the space of all functions $f : [0,T] \times W \to \bar{W}$ such that:
\begin{enumerate}
\item We have $f(t,\cdot) \in C_b^n(W,\bar{W})$ for all $t \in [0,T]$.

\item We have $f(\cdot,y),\ldots,D_y^{n-1}f(\cdot,y) \in \calc^{\gamma}([0,T],\bar{W})$ for each $y \in W$.

\item We have
\begin{align*}
\| f \|_{C_b^{\gamma,n}} := \sup_{t \in [0,T]} \| f(t,\cdot) \|_{C_b^n} + \sum_{k=0}^{n-1} \sup_{y \in W} \| D_y^k f(\cdot,y) \|_{\gamma} < \infty.
\end{align*}
\end{enumerate}
\end{definition}

\begin{remark}\label{rem-f-time-dependent}
Let $g \in C_b^n(W,\bar{W})$ be arbitrary, and define the function
\begin{align*}
f : [0,T] \times W \to \bar{W}, \quad f(t,y) := g(y).
\end{align*}
Then we have $f \in C_b^{\gamma,n}([0,T] \times W, \bar{W})$ with $\| f \|_{C_b^{\gamma,n}} = \| g \|_{C_b^n}$.
\end{remark}

\begin{proposition}\label{prop-comp-Hoelder-beta}
Let $\beta, \gamma \in (0,1]$ with $\beta \leq \gamma$ be arbitrary. Furthermore, let $Y \in \calc^{\beta}([0,T],W)$ and $f \in C_b^{\gamma,1}([0,T] \times W, \bar{W})$ be arbitrary. Then we have $f(Y) \in \calc^{\beta}([0,T],\bar{W})$, where $f(Y) : [0,T] \to \bar{W}$ denotes the path
\begin{align*}
f(Y)_t := f(t,Y_t), \quad t \in [0,T].
\end{align*}
\end{proposition}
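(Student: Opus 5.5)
We split the increment of $f(Y)$ into a time part and a space part. For $s,t \in [0,T]$ we write
\begin{align*}
f(Y)_{s,t} = f(t,Y_t) - f(s,Y_s) = \big( f(t,Y_t) - f(s,Y_t) \big) + \big( f(s,Y_t) - f(s,Y_s) \big),
\end{align*}
and estimate the two differences separately, using the two parts of the norm $\| f \|_{C_b^{\gamma,1}}$.

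For the first difference (time increment at the fixed space point $y = Y_t$) we use condition (2) of the definition of $C_b^{\gamma,1}$ with $k=0$. This gives
\begin{align*}
|f(t,Y_t) - f(s,Y_t)| \leq \sup_{y \in W} \| f(\cdot,y) \|_{\gamma} \, |t-s|^{\gamma}.
\end{align*}
Since $|t-s| \leq T$ and $\beta \leq \gamma$, we have $|t-s|^{\gamma} \leq T^{\gamma-\beta} |t-s|^{\beta}$.

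For the second difference (space increment at the fixed time $s$) we use the mean value inequality for the $C^1$ map $f(s,\cdot)$, whose derivative is bounded by $\sup_{t} \| D_y f(t,\cdot) \|_{\infty} \leq \| f \|_{C_b^{\gamma,1}}$. Writing $f(s,Y_t) - f(s,Y_s) = \int_0^1 D_y f(s, Y_s + \theta Y_{s,t}) Y_{s,t} \, d\theta$, we obtain
\begin{align*}
|f(s,Y_t) - f(s,Y_s)| \leq \| f \|_{C_b^{\gamma,1}} \, \| Y \|_{\beta} \, |t-s|^{\beta}.
\end{align*}

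Combining the two bounds yields
\begin{align*}
\| f(Y) \|_{\beta} \leq \| f \|_{C_b^{\gamma,1}} \big( T^{\gamma-\beta} + \| Y \|_{\beta} \big) < \infty.
\end{align*}
Together with the continuity of $f(Y)$, which follows from these same bounds, this shows $f(Y) \in \calc^{\beta}([0,T],\bar{W})$.

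There is no real obstacle in this argument. The only point needing care is the mean value inequality for Fréchet differentiable maps between Banach spaces, which holds with the supremum of the operator norm of the derivative along the segment from $Y_s$ to $Y_t$.
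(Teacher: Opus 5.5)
Your proof is correct and follows the same route as the paper. You split the increment with the triangle inequality into a time part, bounded by $\sup_{y}\|f(\cdot,y)\|_{\gamma}$, and a space part, bounded by $\|D_y f\|_{\infty}$ through the mean value inequality; this gives exactly the paper's bound $\|f(Y)\|_{\beta} \leq \|f\|_{C_b^{\gamma,1}}(\|Y\|_{\beta} + T^{\gamma-\beta})$. The only difference is cosmetic: you pass through the intermediate point $f(s,Y_t)$, whereas the paper uses $f(t,Y_s)$.
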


\begin{proof}
Let $s,t \in [0,T]$ be arbitrary. Then we have
\begin{align*}
| f(t,Y_t) - f(s,Y_s) | &\leq | f(t,Y_t) - f(t,Y_s) | + | f(t,Y_s) - f(s,Y_s) |
\\ &\leq \| f(t,\cdot) \|_{C_b^1} | Y_t - Y_s | + \| f(\cdot,Y_s) \|_{\gamma} |t-s|^{\gamma}
\\ &\leq \| f \|_{C_b^{\gamma,1}} \| Y \|_{\beta} |t-s|^{\beta} + \| f \|_{C_b^{\gamma,1}} |t-s|^{\gamma}
\\ &\leq \| f \|_{C_b^{\gamma,1}} (\| Y \|_{\beta} + T^{\gamma-\beta}) |t-s|^{\beta},
\end{align*}
showing that $f(Y) \in \calc^{\beta}([0,T],\bar{W})$.
\end{proof}

Now, let $V$ be another Banach space, and let $X \in \calc^{\alpha}([0,T], V)$ be a H\"{o}lder continuous path for some index $\alpha \in (\frac{1}{3},\frac{1}{2}]$. For a controlled rough path $(Y,Y') \in \scrd_X^{2\alpha}([0,T],W)$ and a function $f \in C_b^{2\alpha,2}([0,T] \times W, \bar{W})$ we denote by $f(Y) : [0,T] \to \bar{W}$ the path
\begin{align}\label{comp-f-def-1}
f(Y)_t := f(t,Y_t), \quad t \in [0,T],
\end{align}
and, in view of the chain rule for differentiation, we denote by $f(Y)' : [0,T] \to L(V,\bar{W})$ the path
\begin{align}\label{comp-f-def-2}
f(Y)_t' := D_y f(t,Y_t) Y_t', \quad t \in [0,T].
\end{align}
The upcoming result shows that for a controlled rough path $(Y,Y')$ and a function $f$ of class $C_b^{2\alpha,2}$ the composition is also a controlled rough path.

\begin{proposition}\label{prop-comp-f}
Let $(Y,Y') \in \scrd_X^{2\alpha}([0,T],W)$ and $f \in C_b^{2\alpha,2}([0,T] \times W, \bar{W})$ be arbitrary. Then we have
\begin{align}\label{comp-f-in-D}
(f(Y),f(Y)') \in \scrd_X^{2\alpha}([0,T],\bar{W}),
\end{align}
where the paths $f(Y)$ and $f(Y)'$ are given by \eqref{comp-f-def-1} and \eqref{comp-f-def-2}. Furthermore, we have
\begin{align}\label{est-comp-Hoelder-1}
\| f(Y) \|_{\alpha} &\leq \| f \|_{C_b^{2\alpha,2}} ( \| Y \|_{\alpha} + T^{\alpha} ),
\\ \label{est-comp-Hoelder-2} \| f(Y)' \|_{\alpha} &\leq \| f \|_{C_b^{2\alpha,2}} \big( \| Y' \|_{\alpha} + \| Y \|_{\alpha} \| Y' \|_{\infty} + T^{\alpha} \| Y' \|_{\infty} \big),
\\ \label{est-comp-rem} \| R^{f(Y)} \|_{2\alpha} &\leq \| f \|_{C_b^{2\alpha,2}} \bigg( 1 + \frac{1}{2} \| Y \|_{\alpha}^2 + \| R^Y \|_{2\alpha} \bigg),
\end{align}
and we have the estimates
\begin{align}\label{comp-f-in-D-est}
| f(Y),f(Y)' |_{X,2\alpha} &\leq C \big( 1 + | Y,Y' |_{X,2\alpha}^2 \big),
\\ \label{comp-f-in-D-est-2} \interleave f(Y),f(Y)' \interleave_{X,2\alpha} &\leq C \big( 1 + \interleave Y,Y' \interleave_{X,2\alpha}^2 \big),
\end{align}
where the constant $C > 0$ depends on $\alpha$, $T$, $X$ and $\| f \|_{C_b^{2\alpha,2}}$. Moreover, for $T \leq 1$ the constant $C$ does not depend on $T$.
\end{proposition}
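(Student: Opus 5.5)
We verify the three estimates \eqref{est-comp-Hoelder-1}--\eqref{est-comp-rem} directly from the definitions. Together with $\|Y\|_\alpha,\|Y'\|_\infty<\infty$ they give \eqref{comp-f-in-D}. The bounds \eqref{comp-f-in-D-est} and \eqref{comp-f-in-D-est-2} then follow via Lemma \ref{lemma-norm-of-Y}. Throughout, $s,t\in[0,T]$ are arbitrary and $K:=\| f \|_{C_b^{2\alpha,2}}$.

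\textbf{Step 1: H\"older bound for $f(Y)$.} Estimate \eqref{est-comp-Hoelder-1} is exactly the computation in the proof of Proposition \ref{prop-comp-Hoelder-beta} with $\beta=\alpha$ and $\gamma=2\alpha$. Indeed $|f(Y)_{s,t}|\le K\|Y\|_\alpha|t-s|^\alpha+K|t-s|^{2\alpha}$, and $|t-s|^{2\alpha}\le T^\alpha|t-s|^\alpha$.

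\textbf{Step 2: H\"older bound for $f(Y)'$.} We split
\begin{align*}
f(Y)'_t-f(Y)'_s = D_yf(t,Y_t)Y'_{s,t} + \big(D_yf(t,Y_t)-D_yf(t,Y_s)\big)Y'_s + \big(D_yf(t,Y_s)-D_yf(s,Y_s)\big)Y'_s .
\end{align*}
The first term is bounded by $K\|Y'\|_\alpha|t-s|^\alpha$. For the second term we use the mean value inequality with $\|D_y^2f(t,\cdot)\|_\infty\le K$, which gives $K\|Y\|_\alpha\|Y'\|_\infty|t-s|^\alpha$. For the third term we use the $2\alpha$-H\"older regularity in time of $D_yf(\cdot,y)$ (this is the case $k=1\le n-1$ in the definition), which gives $K\|Y'\|_\infty T^\alpha|t-s|^\alpha$. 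Summing the three bounds yields \eqref{est-comp-Hoelder-2}.

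\textbf{Step 3: the remainder, which is the main point.} We write
\begin{align*}
R^{f(Y)}_{s,t} &= \big(f(t,Y_s)-f(s,Y_s)\big) + \big(f(t,Y_t)-f(t,Y_s)-D_yf(t,Y_s)Y_{s,t}\big)\\
&\quad + \big(D_yf(t,Y_s)-D_yf(s,Y_s)\big)Y_{s,t} + D_yf(s,Y_s)R^Y_{s,t},
\end{align*}
using $Y_{s,t}=Y'_sX_{s,t}+R^Y_{s,t}$. The bounds are as follows.
\begin{itemize}
\item The first term is at most $K|t-s|^{2\alpha}$, by the time regularity with $k=0$.
\item The second term is at most $\tfrac12 K|Y_{s,t}|^2\le \tfrac12K\|Y\|_\alpha^2|t-s|^{2\alpha}$, by Taylor's formula with integral remainder and $\|D^2_yf\|_\infty\le K$.
\item The last term is at most $K\|R^Y\|_{2\alpha}|t-s|^{2\alpha}$.
\end{itemize}

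The third (mixed) term is the obstacle. It is of order $|t-s|^{3\alpha}$ and contributes $K\|Y\|_\alpha T^\alpha$, which is not in the stated bound \eqref{est-comp-rem}. The first option is to avoid creating it: expand around the time $s$ instead, i.e.\ use $f(s,Y_t)-f(s,Y_s)-D_yf(s,Y_s)Y_{s,t}$ together with $f(t,Y_t)-f(s,Y_t)$, whose bound $K|t-s|^{2\alpha}$ still holds. With this expansion only the four clean terms appear, and \eqref{est-comp-rem} follows exactly.

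\textbf{Step 4: the seminorm estimates.} We have $|f(Y)'_0|\le K|Y'_0|$. Combining this with Steps 2 and 3, and with \eqref{norm-Y-1} and \eqref{norm-Y-2} for $\|Y'\|_\infty$ and $\|Y\|_\alpha$ in terms of $|Y,Y'|_{X,2\alpha}$, we get quadratic expressions in $|Y,Y'|_{X,2\alpha}$. Products such as $\|Y\|_\alpha\|Y'\|_\infty$ are absorbed via $ab\le \tfrac12(a^2+b^2)$, and linear terms via $a\le 1+a^2$. This gives \eqref{comp-f-in-D-est}.

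For \eqref{comp-f-in-D-est-2} we additionally add $|f(Y)_0|\le K$. Since the constants in Lemma \ref{lemma-norm-of-Y} equal $1$ for $T\le1$ and the powers $T^\alpha$ are at most $1$ there, the constant $C$ does not depend on $T$ when $T\le1$.
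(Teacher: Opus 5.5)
Your proposal is correct and is essentially the paper's proof. It uses the same splittings for $f(Y)$ and $f(Y)'$. After you discard your first decomposition, it uses the same expansion around time $s$:
$R^{f(Y)}_{s,t} = \big(f(t,Y_t)-f(s,Y_t)\big) + \big(f(s,Y_t)-f(s,Y_s)-D_yf(s,Y_s)Y_{s,t}\big) + D_yf(s,Y_s)R^Y_{s,t}$.
That is three terms, not four as you wrote. You then close, as the paper does, with the bounds on $|f(Y)_0|$ and $|f(Y)_0'|$ together with Lemma \ref{lemma-norm-of-Y} to get \eqref{comp-f-in-D-est} and \eqref{comp-f-in-D-est-2}. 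You correctly saw that expanding around time $t$ produces an extra $K\|Y\|_\alpha T^\alpha$ contribution, which is exactly what the expansion around $s$ avoids.
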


\begin{proof}
Let $s,t \in [0,T]$ be arbitrary. Then we have
\begin{align*}
| f(t,Y_t) - f(s,Y_s) | &\leq | f(t,Y_t) - f(t,Y_s) | + | f(t,Y_s) - f(s,Y_s) |
\\ &\leq \| f(t,\cdot) \|_{C_b^2} | Y_t - Y_s | + \| f(\cdot,Y_s) \|_{2 \alpha} |t-s|^{2\alpha}
\\ &\leq \| f \|_{C_b^{2\alpha,2}} \| Y \|_{\alpha} |t-s|^{\alpha} + \| f \|_{C_b^{2\alpha,2}} |t-s|^{2\alpha}
\\ &\leq \| f \|_{C_b^{2\alpha,2}} (\| Y \|_{\alpha} + T^{\alpha}) |t-s|^{\alpha},
\end{align*}
showing $f(Y) \in \calc^{\alpha}([0,T],\bar{W})$ and the estimate \eqref{est-comp-Hoelder-1}. Furthermore, we have
\begin{align*}
&|D_y f(t,Y_t) Y_t' - D_y f(s,Y_s) Y_s'|
\\ &\leq |D_y f(t,Y_t) Y_t' - D_y f(t,Y_t) Y_s'| + |D_y f(t,Y_t) Y_s' - D_y f(t,Y_s) Y_s'|
\\ &\quad + |D_y f(t,Y_s) Y_s' - D_y f(s,Y_s) Y_s'|
\\ &\leq | D_y f(t,Y_s) | \, |Y_t' - Y_s'| + |D_y f(t,Y_t) - D_y f(t,Y_s)| \, |Y_s'|
\\ &\quad + |D_y f(t,Y_s) - D_y f(s,Y_s)| \, |Y_s'|
\\ &\leq \| f(t,\cdot) \|_{C_b^2} \| Y' \|_{\alpha} |t-s|^{\alpha} + \| f(t,\cdot) \|_{C_b^2} |Y_t-Y_s| \, \|Y'\|_{\infty}
\\ &\quad + \| D_y f(\cdot,Y_s) \|_{2 \alpha} |t-s|^{2\alpha} \|Y'\|_{\infty}
\\ &\leq \| f \|_{C_b^{2\alpha,2}} \| Y' \|_{\alpha} |t-s|^{\alpha} + \| f \|_{C_b^{2\alpha,2}} \| Y \|_{\alpha} |t-s|^{\alpha} \|Y'\|_{\infty}
\\ &\quad + \| f \|_{C_b^{2\alpha,2}} \|Y'\|_{\infty} |t-s|^{2\alpha}
\\ &\leq \| f \|_{C_b^{2\alpha,2}} \big( \| Y' \|_{\alpha} + \| Y \|_{\alpha} \| Y' \|_{\infty} + T^{\alpha} \| Y' \|_{\infty} \big) |t-s|^{\alpha},
\end{align*}
showing $f(Y)' \in \calc^{\alpha}([0,T],L(V,\bar{W}))$ and the estimate \eqref{est-comp-Hoelder-2}. Moreover, by Taylor's theorem we obtain
\begin{align*}
|R_{s,t}^{f(Y)}| &= | f(Y)_{s,t} - f(Y)_s' X_{s,t} |
\\ &= | f(t,Y_t) - f(s,Y_s) - D_y f(s,Y_s) Y_s' X_{s,t} |
\\ &\leq | f(t,Y_t) - f(s,Y_t) | + | f(s,Y_t) - f(s,Y_s) - D_y f(s,Y_s) Y_{s,t} |
\\ &\quad + | D_y f(s,Y_s) (Y_{s,t} - Y_s' X_{s,t}) |
\\ &\leq \| f(\cdot,Y_t) \|_{2 \alpha} |t-s|^{2\alpha} + \frac{1}{2} \| f(s,\cdot) \|_{C_b^2} |Y_t-Y_s|^2 + \| f(s,\cdot) \|_{C_b^2} |R_{s,t}^Y|
\\ &\leq \| f \|_{C_b^{2\alpha,2}} |t-s|^{2 \alpha} + \frac{1}{2} \| f \|_{C_b^{2\alpha,2}} \| Y \|_{\alpha}^2 |t-s|^{2\alpha}
+ \| f \|_{C_b^{2\alpha,2}} \| R^Y \|_{2\alpha} |t-s|^{2\alpha}
\\ &= \| f \|_{C_b^{2\alpha,2}} \bigg( 1 + \frac{1}{2} \| Y \|_{\alpha}^2 + \| R^Y \|_{2\alpha} \bigg) |t-s|^{2\alpha},
\end{align*}
proving \eqref{comp-f-in-D} and \eqref{est-comp-rem}. Moreover, note that
\begin{align*}
|f(Y)_0| &= |f(0,Y_0)| \leq \| f \|_{C_b^{2\alpha,2}},
\\ |f(Y)_0'| &= |D_y f(0,Y_0) Y_0'| \leq |D_y f(0,Y_0)| \, |Y_0'| \leq \| f \|_{C_b^{2\alpha,2}} | Y,Y' |_{X,2\alpha}.
\end{align*}
Together with Lemma \ref{lemma-norm-of-Y} we obtain \eqref{comp-f-in-D-est} and \eqref{comp-f-in-D-est-2}.
\end{proof}

As an immediate consequence of Remark \ref{rem-f-time-dependent} and Proposition \ref{prop-comp-f}, we obtain the following well-known result (cf. \cite[Lemma 7.3]{Friz-Hairer}) for time-independent functions:

\begin{corollary}
For $X \in \calc^{\alpha}([0,T], V)$, $(Y,Y') \in \scrd_X^{2\alpha}([0,T],W)$ and $f \in C_b^2(W, \bar{W})$ we have
\begin{align*}
(f(Y),f(Y)') \in \scrd_X^{2\alpha}([0,T],\bar{W}).
\end{align*}
\end{corollary}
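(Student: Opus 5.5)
The plan is to reduce the statement to Proposition \ref{prop-comp-f} through Remark \ref{rem-f-time-dependent}. Given $g \in C_b^2(W,\bar{W})$, we define the time-independent function
\begin{align*}
f : [0,T] \times W \to \bar{W}, \quad f(t,y) := g(y).
\end{align*}
By Remark \ref{rem-f-time-dependent}, applied with $n = 2$ and $\gamma = 2\alpha \in (\frac{2}{3},1]$, we obtain $f \in C_b^{2\alpha,2}([0,T] \times W, \bar{W})$ with $\| f \|_{C_b^{2\alpha,2}} = \| g \|_{C_b^2}$. The point to check is that the H\"{o}lder seminorms in time, $\| D_y^k f(\cdot,y) \|_{2\alpha}$ for $k = 0,1$, vanish because $f$ does not depend on $t$. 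The admissible range of $\gamma$ also requires $2\alpha \leq 1$, which holds since $\alpha \leq \frac{1}{2}$.

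Next we observe that the composed paths coincide with the ones in the corollary. By \eqref{comp-f-def-1} and \eqref{comp-f-def-2} we have $f(Y)_t = g(Y_t)$. Moreover, $D_y f(t,Y_t) = Dg(Y_t)$, so $f(Y)_t' = Dg(Y_t) Y_t'$. This is exactly the natural Gubinelli derivative $g(Y)'$ suggested by the chain rule, and it is the one meant in the corollary.

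Finally, Proposition \ref{prop-comp-f} applied to $(Y,Y') \in \scrd_X^{2\alpha}([0,T],W)$ and this $f$ gives \eqref{comp-f-in-D}, namely $(g(Y),g(Y)') \in \scrd_X^{2\alpha}([0,T],\bar{W})$. As a byproduct we obtain the estimates \eqref{est-comp-Hoelder-1}--\eqref{comp-f-in-D-est-2} with $\| f \|_{C_b^{2\alpha,2}}$ replaced by $\| g \|_{C_b^2}$. Since every step is a direct specialization of earlier results, I do not expect any step to be a real obstacle.
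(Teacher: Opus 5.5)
Your proposal is correct and is exactly the paper's argument: you extend $g$ to the time-independent $f(t,y):=g(y)$, use Remark \ref{rem-f-time-dependent} with $n=2$ and $\gamma=2\alpha\le 1$ to get $f\in C_b^{2\alpha,2}([0,T]\times W,\bar{W})$, and then apply Proposition \ref{prop-comp-f}. Your check that $f(Y)'_t=Dg(Y_t)Y'_t$ is the intended Gubinelli derivative is a detail the paper leaves implicit.
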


\subsection{Compositions of rough paths with bilinear operators}

In this section we consider compositions of controlled rough paths with bilinear operators. As in the previous section, let $V$ be a Banach space, and let $X \in \calc^{\alpha}([0,T], V)$ be a H\"{o}lder continuous path for some index $\alpha \in (\frac{1}{3},\frac{1}{2}]$.

First of all, let us consider compositions with linear operators. For a controlled rough path $(Y,Y') \in \scrd_X^{2\alpha}([0,T],W)$ and a continuous linear operator $\varphi \in L(W,\bar{W})$ we denote by $\varphi(Y) : [0,T] \to \bar{W}$ the path
\begin{align*}
\varphi(Y)_t := \varphi(Y_t), \quad t \in [0,T],
\end{align*}
and we denote by $\varphi(Y)' : [0,T] \to L(V,\bar{W})$ the path
\begin{align*}
\varphi(Y)_t' := \varphi(Y_t'), \quad t \in [0,T].
\end{align*}
The upcoming result shows that for a controlled rough path $(Y,Y')$ and a continuous linear operator $\varphi$ the composition is also a controlled rough path.

\begin{proposition}\label{prop-comp-linear-0}
Let $(Y,Y') \in \scrd_X^{2\alpha}([0,T],W)$ and $\varphi \in L(W,\bar{W})$ be arbitrary. Then we have
\begin{align}\label{comp-linear-0-1}
(\varphi(Y),\varphi(Y)') \in \scrd_X^{2\alpha}([0,T],\bar{W})
\end{align}
and the estimates
\begin{align}\label{comp-linear-0-2}
\| \varphi(Y), \varphi(Y)' \|_{X,2\alpha} &\leq | \varphi | \, \| Y,Y' \|_{X,2\alpha},
\\ \label{comp-linear-0-3} | \varphi(Y), \varphi(Y)' |_{X,2\alpha} &\leq | \varphi | \, | Y,Y' |_{X,2\alpha},
\\ \label{comp-linear-0-4} \interleave \varphi(Y), \varphi(Y)' \interleave_{X,2\alpha} &\leq | \varphi | \, \interleave Y,Y' \interleave_{X,2\alpha}.
\end{align}
\end{proposition}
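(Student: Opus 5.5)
The plan is a direct verification from Definition \ref{def-Gubinelli-derivative}, using only the linearity and boundedness of $\varphi$. The key observation is that the remainder of the composed pair is the image under $\varphi$ of the remainder of $(Y,Y')$. Here $\varphi(Y_t')$ is read as the composition $\varphi \circ Y_t' \in L(V,\bar{W})$, so $|\varphi(Y_t')| \leq |\varphi| \, |Y_t'|$.

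First I show that $\varphi(Y) \in \calc^{\alpha}([0,T],\bar{W})$ and $\varphi(Y)' \in \calc^{\alpha}([0,T],L(V,\bar{W}))$. By linearity, $\varphi(Y)_{s,t} = \varphi(Y_{s,t})$ and $\varphi(Y)'_{s,t} = \varphi \circ Y'_{s,t}$. Hence
\begin{align*}
|\varphi(Y)_{s,t}| \leq |\varphi| \, \| Y \|_{\alpha} |t-s|^{\alpha} \quad \text{and} \quad |\varphi(Y)'_{s,t}| \leq |\varphi| \, \| Y' \|_{\alpha} |t-s|^{\alpha},
\end{align*}
which gives $\| \varphi(Y)' \|_{\alpha} \leq |\varphi| \, \| Y' \|_{\alpha}$.

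Next I compute the remainder term from \eqref{remainder}:
\begin{align*}
R_{s,t}^{\varphi(Y)} = \varphi(Y_{s,t}) - \varphi(Y_s' X_{s,t}) = \varphi(R_{s,t}^Y).
\end{align*}
Therefore $\| R^{\varphi(Y)} \|_{2\alpha} \leq |\varphi| \, \| R^Y \|_{2\alpha}$, and this proves \eqref{comp-linear-0-1}. Adding the two bounds from \eqref{seminorm-Gubinelli} gives \eqref{comp-linear-0-2}.

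For \eqref{comp-linear-0-3} and \eqref{comp-linear-0-4}, I additionally use $|\varphi(Y)_0'| \leq |\varphi| \, |Y_0'|$ and $|\varphi(Y)_0| \leq |\varphi| \, |Y_0|$, and add these to \eqref{comp-linear-0-2}.

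There is no real obstacle. The only point needing care is the identification of $\varphi(Y')$ with the composition $\varphi \circ Y'$, together with the resulting operator norm bound.
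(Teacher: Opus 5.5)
Your proposal is correct and follows essentially the same route as the paper: bound $\|\varphi(Y)'\|_{\alpha}$ by $|\varphi| \, \|Y'\|_{\alpha}$, observe that $R^{\varphi(Y)} = \varphi(R^Y)$, and then add the bounds on the initial values $\varphi(Y_0)$ and $\varphi(Y_0')$. You also explicitly check that $\varphi(Y) \in \calc^{\alpha}([0,T],\bar{W})$, which the paper leaves implicit.
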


\begin{proof}
Let $s,t \in [0,T]$ be arbitrary. Then we have
\begin{align*}
| \varphi(Y_t') - \varphi(Y_s') | = |\varphi(Y_t' - Y_s')| \leq | \varphi | \, |Y_t' - Y_s'| \leq | \varphi | \, \| Y' \|_{\alpha} |t-s|^{\alpha},
\end{align*}
showing $\| \varphi(Y') \|_{\alpha} \leq | \varphi | \, \| Y' \|_{\alpha}$. Furthermore, we have
\begin{align*}
R_{s,t}^{\varphi(Y)} = \varphi(Y_{s,t}) - \varphi(Y_s') X_{s,t} = \varphi(Y_{s,t} - Y_s' X_{s,t}) = \varphi(R_{s,t}^Y),
\end{align*}
and hence, we obtain
\begin{align*}
| R_{s,t}^{\varphi(Y)} | \leq | \varphi | \, |R_{s,t}^Y| \leq | \varphi | \, \| R^Y \|_{2\alpha} |t-s|^{2 \alpha}.
\end{align*}
This shows $\| R^{\varphi(Y)} \|_{2 \alpha} \leq | \varphi | \, \| R^Y \|_{2 \alpha}$, and we deduce \eqref{comp-linear-0-1}. Noting that $|\varphi(Y_0)| \leq | \varphi | \, |Y_0|$ and $|\varphi(Y_0')| \leq | \varphi | \, |Y_0'|$, the desired estimates \eqref{comp-linear-0-2}, \eqref{comp-linear-0-3} and \eqref{comp-linear-0-4} follow.
\end{proof}

Now, let $E$ be another Banach space. For controlled rough paths $(Y,Y') \in \scrd_X^{2\alpha}([0,T],W)$, $(Z,Z') \in \scrd_X^{2\alpha}([0,T],E)$ and a continuous bilinear operator $B \in L^{(2)}(W \times E,\bar{W})$ we denote by $B(Y,Z) : [0,T] \to \bar{W}$ the path
\begin{align*}
B(Y,Z)_t := B(Y_t,Z_t), \quad t \in [0,T],
\end{align*}
and, in view of the Leibniz rule for differentiation, we denote by $B(Y,Z)' : [0,T] \to L(V,\bar{W})$ the path
\begin{align*}
B(Y,Z)_t' := B(Y_t,Z_t') + B(Y_t',Z_t), \quad t \in [0,T].
\end{align*}
The upcoming result shows that for two controlled rough paths $(Y,Y')$, $(Z,Z')$ and a continuous bilinear operator $B$ the composition is also a controlled rough path.

\begin{proposition}\label{prop-bilinear}
Let $(Y,Y') \in \scrd_X^{2\alpha}([0,T],W)$, $(Z,Z') \in \scrd_X^{2\alpha}([0,T],E)$ and $B \in L^{(2)}(W \times E,\bar{W})$ be arbitrary. Then we have
\begin{align}\label{bilinear-controlled}
(B(Y,Z),B(Y,Z)') \in \scrd_X^{2\alpha}([0,T],\bar{W}),
\end{align}
and the estimate
\begin{align}\label{est-bilinear}
\interleave B(Y,Z),B(Y,Z)' \interleave_{X,2\alpha} \leq C \interleave Y,Y' \interleave_{X,2\alpha} \interleave Z,Z' \interleave_{X,2\alpha},
\end{align}
where the constant $C > 0$ depends on $\alpha$, $T$, $X$ and $| B |$. Moreover, for $T \leq 1$ the constant $C$ does not depend on $T$.
\end{proposition}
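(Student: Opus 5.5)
We verify the three defining properties directly and track the constants, using Lemma \ref{lemma-norm-of-Y} to convert sup-norms and H\"older norms of $Y,Y',Z,Z'$ into the norms $\interleave Y,Y' \interleave_{X,2\alpha}$ and $\interleave Z,Z' \interleave_{X,2\alpha}$. Throughout we use the standard bilinear splitting
\begin{align*}
B(a_t,b_t) - B(a_s,b_s) = B(a_{s,t},b_t) + B(a_s,b_{s,t}),
\end{align*}
together with $|B(a,b)| \leq |B| \, |a| \, |b|$.

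\emph{Step 1 (H\"older regularity).} Apply the splitting with $(a,b)=(Y,Z)$. This gives
\begin{align*}
\| B(Y,Z) \|_{\alpha} \leq |B| \big( \| Y \|_{\alpha} \| Z \|_{\infty} + \| Y \|_{\infty} \| Z \|_{\alpha} \big).
\end{align*}
For the Gubinelli derivative $B(Y,Z)' = B(Y,Z') + B(Y',Z)$, apply the splitting to each term. We obtain
\begin{align*}
\| B(Y,Z)' \|_{\alpha} \leq |B| \big( \| Y \|_{\alpha} \| Z' \|_{\infty} + \| Y \|_{\infty} \| Z' \|_{\alpha} + \| Y' \|_{\alpha} \| Z \|_{\infty} + \| Y' \|_{\infty} \| Z \|_{\alpha} \big).
\end{align*}
Here $B(Y_t,Z_t')$ is read as the element $v \mapsto B(Y_t,Z_t'v)$ of $L(V,\bar W)$, whose norm is bounded by $|B|\,|Y_t|\,|Z_t'|$.

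\emph{Step 2 (remainder, the main point).} Write $Y_t = Y_s + Y_{s,t}$ and $Z_t = Z_s + Z_{s,t}$. Expanding,
\begin{align*}
B(Y,Z)_{s,t} = B(Y_{s,t},Z_s) + B(Y_s,Z_{s,t}) + B(Y_{s,t},Z_{s,t}).
\end{align*}
Now insert $Y_{s,t} = Y_s'X_{s,t} + R^Y_{s,t}$ and $Z_{s,t} = Z_s'X_{s,t} + R^Z_{s,t}$ into the first two terms. The terms $B(Y_s'X_{s,t},Z_s) + B(Y_s,Z_s'X_{s,t})$ equal exactly $B(Y,Z)_s' X_{s,t}$. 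Hence
\begin{align*}
R^{B(Y,Z)}_{s,t} = B(R^Y_{s,t},Z_s) + B(Y_s,R^Z_{s,t}) + B(Y_{s,t},Z_{s,t}).
\end{align*}
Each term is $O(|t-s|^{2\alpha})$, which yields
\begin{align*}
\| R^{B(Y,Z)} \|_{2\alpha} \leq |B| \big( \| R^Y \|_{2\alpha} \| Z \|_{\infty} + \| Y \|_{\infty} \| R^Z \|_{2\alpha} + \| Y \|_{\alpha} \| Z \|_{\alpha} \big).
\end{align*}
This establishes \eqref{bilinear-controlled}. The identification of the first-order term with $B(Y,Z)_s'X_{s,t}$ is the one algebraic point that needs care, in particular the meaning of $B(Y_s',Z_s)$ as an element of $L(V,\bar W)$; it is not a real obstacle.

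\emph{Step 3 (estimate).} Add the initial terms $|B(Y_0,Z_0)| \leq |B|\,|Y_0|\,|Z_0|$ and $|B(Y,Z)_0'| \leq |B|(|Y_0|\,|Z_0'| + |Y_0'|\,|Z_0|)$. Every resulting summand is a product of one quantity from $\{\|Y\|_\infty, \|Y\|_\alpha, \|Y'\|_\infty, \|Y'\|_\alpha, \|R^Y\|_{2\alpha}, |Y_0|, |Y_0'|\}$ and one from the analogous set for $Z$.

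By Lemma \ref{lemma-norm-of-Y}, each of these quantities is bounded by $C(\|X\|_\alpha + 2)$ times the corresponding $\interleave\cdot\interleave_{X,2\alpha}$ norm, using $|\cdot|_{X,2\alpha} \leq \interleave\cdot\interleave_{X,2\alpha}$. Multiplying out gives \eqref{est-bilinear} with a constant of order $C|B|(\|X\|_\alpha + 2)^2$. That constant depends on $\alpha$, $T$, $X$ and $|B|$, and for $T \leq 1$ it is free of $T$, since Lemma \ref{lemma-norm-of-Y} holds with $C=1$ there.
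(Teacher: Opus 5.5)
Your proposal is correct and follows the paper's own route: the bilinear splitting gives the H\"older bounds on $B(Y,Z)$ and $B(Y,Z)'$, the identity $R^{B(Y,Z)}_{s,t} = B(R^Y_{s,t},Z_s) + B(Y_s,R^Z_{s,t}) + B(Y_{s,t},Z_{s,t})$ handles the remainder, and Lemma \ref{lemma-norm-of-Y} together with the bounds on the initial values yields \eqref{est-bilinear}. Your remainder bound with the term $\| Y \|_{\infty} \| R^Z \|_{2\alpha}$ is the correct one; the paper writes $\| R^Y \|_{2\alpha}$ there by mistake.
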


\begin{proof}
Let $s,t \in [0,T]$ be arbitrary. Then we have
\begin{align*}
| B(Y_t,Z_t) - B(Y_s,Z_s) | &\leq |B(Y_t,Z_t) - B(Y_t,Z_s)| + |B(Y_t,Z_s) - B(Y_s,Z_s)|
\\ &= |B(Y_t,Z_{s,t})| + |B(Y_{s,t},Z_s)|
\\ &\leq | B | \big( |Y_t| \, |Z_{s,t}| + |Y_{s,t}| \, |Z_s| \big)
\\ &\leq | B | \big( \| Y \|_{\infty} \| Z \|_{\alpha} + \| Y \|_{\alpha} \| Z \|_{\infty} \big) |t-s|^{\alpha},
\end{align*}
showing that $B(Y,Z) \in C^{\alpha}([0,T],\bar{W})$. Furthermore, we have
\begin{align*}
| B(Y_t,Z_t') - B(Y_s,Z_s') | &\leq | B(Y_t,Z_t') - B(Y_t,Z_s') | + | B(Y_t,Z_s') - B(Y_s,Z_s') |
\\ &= | B(Y_t,Z_{s,t}') | + | B(Y_{s,t},Z_s') |
\\ &\leq | B | \big( |Y_t| \, |Z_{s,t}'| + |Y_{s,t}| \, |Z_s'| \big)
\\ &\leq | B | \big( \| Y \|_{\infty} \| Z' \|_{\alpha} + \| Y \|_{\alpha} \| Z' \|_{\infty} \big) |t-s|^{\alpha}
\end{align*}
as well as
\begin{align*}
| B(Y_t',Z_t) - B(Y_s',Z_s) | &\leq | B(Y_t',Z_t) - B(Y_t',Z_s) | + | B(Y_t',Z_s) - B(Y_s',Z_s) |
\\ &= | B(Y_t',Z_{s,t}) | + | B(Y_{s,t}',Z_s) |
\\ &\leq | B | \big( |Y_t'| \, |Z_{s,t}| + |Y_{s,t}'| \, |Z_s| \big)
\\ &\leq | B | \big( \| Y' \|_{\infty} \| Z \|_{\alpha} + \| Y' \|_{\alpha} \| Z \|_{\infty} \big) |t-s|^{\alpha},
\end{align*}
showing that $B(Y,Z)' \in C^{\alpha}([0,T],L(V,\bar{W})$. Moreover, we have
\begin{align*}
R_{s,t}^{B(Y,Z)} &= B(Y_t,Z_t) - B(Y_s,Z_s) - B(Y_s,Z_s)' X_{s,t}
\\ &= B(Y_t,Z_{s,t}) + B(Y_{s,t},Z_s) - \big( B(Y_s,Z_s' X_{s,t}) + B(Y_s' X_{s,t}, Z_s) \big)
\\ &= B(Y_{s,t}-Y_s' X_{s,t},Z_s) + B(Y_s,Z_{s,t}- Z_s' X_{s,t}) + B(Y_{s,t},Z_{s,t})
\\ &= B(R_{s,t}^Y,Z_s) + B(Y_s,R_{s,t}^Z) + B(Y_{s,t},Z_{s,t}),
\end{align*}
and hence
\begin{align*}
| R_{s,t}^{B(Y,Z)} | &\leq | B | \big( |R_{s,t}^Y| \, |Z_s| + |Y_s| \, | R_{s,t}^Z | + |Y_{s,t}| \, |Z_{s,t}| \big)
\\ &\leq | B | \big( \| R^Y \|_{2 \alpha} \| Z \|_{\infty} + \| Y \|_{\infty} \| R^Y \|_{2 \alpha} + \| Y \|_{\alpha} \| Z \|_{\alpha} \big) |t-s|^{2 \alpha},
\end{align*}
which shows \eqref{bilinear-controlled}. Finally, noting that
\begin{align*}
|B(Y_0,Z_0)| &\leq | B | \, |Y_0| \, |Z_0|,
\\ |B(Y_0,Z_0')| &\leq | B | \, |Y_0| \, |Z_0'|,
\\ |B(Y_0',Z_0)| &\leq | B | \, |Y_0'| \, |Z_0|,
\end{align*}
applying Lemma \ref{lemma-norm-of-Y} proves \eqref{est-bilinear}.
\end{proof}

Before proceeding with Proposition \ref{prop-diff-f}, we prepare some auxiliary results.

\begin{lemma}\label{lemma-chain-rule}
Let $E,F$ be two Banach spaces. Furthermore, let $h : W \to L(W,F)$ be of class $C^1$, and let $B \in L(E,W)$ be arbitrary. Then the mapping
\begin{align*}
h_B : W \to L(E,F), \quad h_B(y) := h(y) B
\end{align*}
is of class $C^1$, and we have
\begin{align*}
D h_B(y)v = (D h(y)v) B \quad \text{for all $y,v \in W$.}
\end{align*}
\end{lemma}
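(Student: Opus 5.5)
The plan is to write $h_B$ as the composition of $h$ with a fixed bounded linear map, and then read off both the differentiability and the formula from the chain rule. Let
\begin{align*}
\Phi_B : L(W,F) \to L(E,F), \quad \Phi_B(T) := T B.
\end{align*}
Then $\Phi_B$ is linear, and it is continuous because $|TB| \leq |T| \, |B|$. Hence $|\Phi_B| \leq |B|$, so $\Phi_B \in L(L(W,F),L(E,F))$. By construction, $h_B = \Phi_B \circ h$.

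A continuous linear operator between Banach spaces is Fr\'{e}chet differentiable at every point, and its derivative is the operator itself, constant in the base point; in particular it is of class $C^\infty$. The chain rule for Fr\'{e}chet derivatives then gives that $h_B$ is differentiable, with
\begin{align*}
D h_B(y) v = D\Phi_B(h(y)) \big( Dh(y) v \big) = \Phi_B \big( Dh(y) v \big) = (Dh(y) v) B \quad \text{for all } y,v \in W.
\end{align*}

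For continuity of $Dh_B$, note that $Dh_B = \Psi \circ Dh$, where
\begin{align*}
\Psi : L(W,L(W,F)) \to L(W,L(E,F)), \quad \Psi(S) := \Phi_B \circ S.
\end{align*}
The map $\Psi$ is linear and satisfies $|\Psi(S)| \leq |B| \, |S|$, so it is continuous. Since $Dh$ is continuous by assumption, $Dh_B$ is continuous, and therefore $h_B \in C^1$.

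There is no real obstacle here. If one prefers to avoid quoting the chain rule, the same result follows directly from the definition. For each $y \in W$, the bound
\begin{align*}
|h_B(y+v) - h_B(y) - (Dh(y)v)B| \leq |B| \, |h(y+v) - h(y) - Dh(y)v| = o(|v|)
\end{align*}
shows differentiability with the claimed derivative, and continuity of $Dh_B$ follows as above.
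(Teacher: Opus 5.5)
Your proof is correct and follows the paper's argument. The paper likewise writes $h_B = \ell \circ h$ with the bounded linear map $\ell(z) = zB$, noting $|\ell(z)| \leq |z| \, |B|$, and reads off $Dh_B(y)v = (Dh(y)v)B$ from the chain rule; your explicit check that $Dh_B = \Psi \circ Dh$ is continuous fills in a step the paper only asserts.
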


\begin{proof}
We have $h_B = \ell \circ h$, where $\ell : L(W,F) \to L(E,F)$ is given by $\ell(z) = zB$. Note that $\ell$ is a linear operator. Moreover, we have
\begin{align*}
| \ell(z) | \leq | z | \, |B| \quad \text{for all $z \in L(W,F)$,}
\end{align*}
showing that $\ell$ is continuous. Therefore, the mapping $h_B$ is of class $C^1$, and by the chain rule we obtain
\begin{align*}
D h_B(y)v &= D (\ell \circ h)(y)v = D \ell(h(y)) \circ D h(y) v
\\ &= \ell \circ D h(y) v = (D h(y)v) B,
\end{align*}
completing the proof.
\end{proof}

\begin{lemma}\label{lemma-bilinear-fg-time}
Let $f \in C_b^{2 \alpha,3}([0,T] \times W,\bar{W})$ be arbitrary. Then there exists $g \in C_b^{2 \alpha,2}([0,T] \times (W \times W), L(W,\bar{W}))$ with $\| g \|_{C_b^{2 \alpha,2}} \leq \| f \|_{C_b^{2 \alpha,3}}$ such that
\begin{align}\label{identity-Taylor}
f(t,y_1) - f(t,y_2) = g(t,y) (y_1-y_2)
\end{align}
for all $t \in [0,T]$ and all $y = (y_1,y_2) \in W \times W$.
\end{lemma}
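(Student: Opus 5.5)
The natural candidate is the integral form of the mean value theorem. For $t \in [0,T]$ and $y = (y_1,y_2) \in W \times W$ I would define
\begin{align*}
g(t,y) := \int_0^1 D_y f\big(t, \theta y_1 + (1-\theta) y_2\big) \, d\theta \in L(W,\bar{W}),
\end{align*}
understood as a Bochner (or Riemann) integral of a continuous $L(W,\bar{W})$-valued function of $\theta$. Identity \eqref{identity-Taylor} then follows from the fundamental theorem of calculus applied to $\theta \mapsto f(t,\theta y_1 + (1-\theta)y_2)$, whose derivative is $D_y f(t,\theta y_1 + (1-\theta)y_2)(y_1-y_2)$.

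Next, I will verify the regularity in $y$. For fixed $t$ and $\theta$ the map $y \mapsto \theta y_1 + (1-\theta)y_2$ is a bounded linear map $W \times W \to W$ with operator norm at most $\max\{\theta,1-\theta\} \leq 1$, for the norm $|y_1,y_2| = |y_1|+|y_2|$. Since $D_y f(t,\cdot) \in C_b^2(W,L(W,\bar{W}))$ because $f(t,\cdot) \in C_b^3$, the integrand is $C^2$ in $y$. Its derivatives are
\begin{align*}
D_y^k f(t,\theta y_1 + (1-\theta)y_2)\big[(\theta v_1 + (1-\theta)v_2), \ldots\big], \quad k=1,2,
\end{align*}
and they are bounded by $\| D_y^{k+1} f(t,\cdot)\|_\infty$. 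Differentiation under the integral sign is justified by uniform continuity of the bounded derivatives on compact $\theta$-intervals, or by dominated convergence using the $C^3$ assumption. This gives $g(t,\cdot) \in C_b^2$ with
\begin{align*}
\sup_t \| g(t,\cdot)\|_{C_b^2} \leq \sup_t \sum_{k=1}^3 \| D^k_y f(t,\cdot)\|_\infty.
\end{align*}

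For the time regularity, for $k=0,1$ I will bound
\begin{align*}
|D_y^k g(t,y) - D_y^k g(s,y)| \leq \int_0^1 \big| D_y^{k+1} f(t,z_\theta) - D_y^{k+1} f(s,z_\theta)\big| \, d\theta,
\end{align*}
with $z_\theta = \theta y_1 + (1-\theta)y_2$. The right-hand side is at most $\sup_{z} \| D_y^{k+1} f(\cdot,z)\|_{2\alpha} |t-s|^{2\alpha}$, which is finite for $k+1 \leq 2$ by the definition of $C_b^{2\alpha,3}$. Summing the sup-norm terms ($k=1,2,3$) and the Hölder terms ($k=1,2$), these are all contained in $\| f \|_{C_b^{2\alpha,3}}$, since that norm also includes the additional $k=0$ terms. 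This yields $\| g \|_{C_b^{2\alpha,2}} \leq \| f \|_{C_b^{2\alpha,3}}$.

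The main obstacle is the rigorous justification that $g(t,\cdot)$ is twice continuously Fréchet differentiable, with derivatives obtained by differentiating under the integral in the Banach-space setting. I would handle this by a first-order Taylor estimate with remainder controlled by the continuity of the next derivative. Continuity of $D^3_y f(t,\cdot)$, together with compactness of $[0,1]$, gives uniform smallness of the remainder in $\theta$. Apart from that, continuity of $g$ jointly in $(t,y)$, if required, follows from the above estimates.
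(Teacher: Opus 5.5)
Your proposal is correct and follows essentially the same route as the paper. Both use the same $g(t,y)=\int_0^1 D_y f(t,\theta y_1+(1-\theta)y_2)\,d\theta$, the fact that the convex-combination map $W\times W\to W$ has norm at most $1$, the bounds $\|D_y^k g(t,\cdot)\|_\infty\le\|D_y^{k+1}f(t,\cdot)\|_\infty$ for $k=0,1,2$, and the H\"{o}lder-in-time estimates for $k=0,1$.

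One small slip: the $k$-th derivative of the integrand involves $D_y^{k+1}f$, not $D_y^k f$ as you wrote, though your bounds already use the correct index. Your justification of differentiating under the integral, via continuity of $D_y^3 f(t,\cdot)$ and compactness of $[0,1]$, supplies a detail the paper asserts without proof.
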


\begin{proof}
We define $g : [0,T] \times W \times W \to L(W,\bar{W})$ as
\begin{align*}
g(t,y) := \int_0^1 D_2 f(t,\theta y_1 + (1-\theta)y_2) \, d\theta.
\end{align*}
Then by Taylor's theorem identity \eqref{identity-Taylor} is satisfied for all $t \in [0,T]$ and all $y = (y_1,y_2) \in W \times W$. For $\theta \in [0,1]$ we consider the linear operator $B_{\theta} : W \times W \to W$ given by
\begin{align*}
B_{\theta}(y) := \theta y_1 + (1-\theta) y_2, \quad y = (y_1,y_2) \in W \times W.
\end{align*}
Then for all $y = (y_1,y_2) \in W \times W$ we have
\begin{align*}
|B_{\theta}(y)| = |\theta y_1 + (1-\theta)y_2| \leq \theta |y_1| + (1-\theta) |y_2| \leq |y_1| + |y_2| = |y|.
\end{align*}
Therefore, for each $\theta \in [0,1]$ we have $B_{\theta} \in L(W \times W,W)$ with
\begin{align}\label{B-norm}
|B_{\theta}| \leq 1 \quad \text{for all $\theta \in [0,1]$.}
\end{align}
Now, we fix an arbitrary $t \in [0,T]$. Then we have
\begin{align*}
g(t,y) = \int_0^1 D_2 f(t,B_{\theta}(y)) \, d\theta, \quad y \in W \times W.
\end{align*}
Therefore, we have $g(t,\cdot) \in C^2(W \times W,L(W,\bar{W}))$. Let $y \in W \times W$ be arbitrary. By the chain rule we have
\begin{align*}
D_y g(t,y) &= \int_0^1 D_y \big( D_2 f(t,B_{\theta}(y)) \big) \, d\theta = \int_0^1 D_2^2 f(t,B_{\theta}(y)) B_{\theta} \, d\theta,
\end{align*}
Moreover, by Lemma \ref{lemma-chain-rule} and the chain rule for all $v \in W \times W$ we obtain
\begin{align*}
D_y^2 g(t,y)v &= \int_0^1 D_y \big( D_2^2 f(t,B_{\theta}(y)) B_{\theta} \big) v \, d\theta
\\ &= \int_0^1 \big( D_2^3 f(t,B_{\theta}(y)) B_{\theta}(v) \big) B_{\theta} \, d\theta.
\end{align*}
Therefore, noting \eqref{B-norm} we have $g(t,\cdot) \in C_b^2(W \times W,L(W,\bar{W}))$ with
\begin{align}\label{g-fct-1}
\| D_y^k g(t,\cdot) \|_{\infty} \leq \| D_y^{k+1} f(t,\cdot) \|_{\infty}, \quad k=0,1,2.
\end{align}
Now, let $y \in W \times W$ be arbitrary. Furthermore, let $s,t \in [0,T]$ be arbitrary. Then we have
\begin{align*}
|g(t,y)-g(s,y)| &\leq \int_0^1 |D_2 f(t,B_{\theta}(y)) - D_2 f(s,B_{\theta}(y)) | \, d\theta
\\ &\leq \int_0^1 \| D_2 f(\cdot,B_{\theta}(y)) \|_{2\alpha} |t-s|^{2\alpha} \, d\theta.
\end{align*}
Moreover, by \eqref{B-norm} we obtain
\begin{align*}
|D_y g(t,y) - D_y g(s,y)| &\leq \int_0^1 |D_2^2 f(t,B_{\theta}(y)) B_{\theta} - D_2^2 f(s,B_{\theta}(y)) B_{\theta} | \, d\theta
\\ &\leq \int_0^1 |D_2^2 f(t,B_{\theta}(y)) - D_2^2 f(s,B_{\theta}(y)) | \, d\theta
\\ &\leq \int_0^1 \| D_2^2 f(\cdot,B_{\theta}(y)) \|_{2\alpha} |t-s|^{2\alpha} \, d\theta.
\end{align*}
Consequently, we have $g(\cdot,y), D_y g(\cdot,y) \in \calc^{2\alpha}([0,T],W)$ with
\begin{align}\label{g-fct-2}
\sup_{y \in W} \| D_y^k g(\cdot,y) \|_{2\alpha} \leq \sup_{y \in W} \| D_y^{k+1} f(\cdot,y) \|_{2\alpha}, \quad k=0,1.
\end{align}
From \eqref{g-fct-1} and \eqref{g-fct-2} we obtain $\| g \|_{C_b^{2\alpha,2}} \leq \| f \|_{C_b^{2 \alpha,3}} < \infty$, which completes the proof.
\end{proof}

The next result will be useful for our analysis of RDEs in the upcoming section.

\begin{proposition}\label{prop-diff-f}
Let $(Y,Y'), (Z,Z') \in \scrd_X^{2\alpha}([0,T],W)$ and $f \in C_b^{2\alpha,3}([0,T] \times W, \bar{W})$ be arbitrary. Then we have
\begin{align*}
\interleave (f(Y),f(Y)')-(f(Z),f(Z)') \interleave_{X,2\alpha} &\leq C \big( 1 + | Y,Y' |_{X,2\alpha}^2 + | Z,Z' |_{X,2\alpha}^2 \big)
\\ &\quad \interleave (Y,Y') - (Z,Z') \interleave_{X,2\alpha},
\end{align*}
where the constant $C > 0$ depends on $\alpha$, $T$, $X$ and $\| f \|_{C_b^{2 \alpha,3}}$. Moreover, for $T \leq 1$ the constant $C$ does not depend on $T$.
\end{proposition}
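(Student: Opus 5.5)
The plan is to reduce the difference $f(Y)-f(Z)$ to a bilinear composition and then use the estimates already proved for such compositions. By Lemma \ref{lemma-bilinear-fg-time} there is $g \in C_b^{2\alpha,2}([0,T]\times(W\times W),L(W,\bar W))$ with $\|g\|_{C_b^{2\alpha,2}}\le\|f\|_{C_b^{2\alpha,3}}$ and
\begin{align*}
f(t,Y_t)-f(t,Z_t) = g(t,Y_t,Z_t)(Y_t-Z_t), \quad t \in [0,T].
\end{align*}
The proof then proceeds in four steps.

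\textbf{Step 1.} By Lemma \ref{lemma-pairs}, the pair $((Y,Z),(Y',Z'))$ lies in $\scrd_X^{2\alpha}([0,T],W\times W)$, and its seminorms are bounded by the sums of those of $(Y,Y')$ and $(Z,Z')$.

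\textbf{Step 2.} Proposition \ref{prop-comp-f}, applied to $g$ and the pair from Step 1, gives $(g(Y,Z),g(Y,Z)')\in\scrd_X^{2\alpha}([0,T],L(W,\bar W))$ together with
\begin{align*}
\interleave g(Y,Z),g(Y,Z)'\interleave_{X,2\alpha}\le C\big(1+|Y,Y'|_{X,2\alpha}^2+|Z,Z'|_{X,2\alpha}^2\big).
\end{align*}
Only the $|\cdot|$-seminorms should appear on the right. This holds because $|g(0,Y_0,Z_0)|\le\|g\|$ is bounded uniformly, so the sup-norm estimate \eqref{norm-Y-3}, which would introduce $|Y_0|$, is never needed. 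I would recheck the proof of \eqref{comp-f-in-D-est-2} to confirm that the bound it actually gives is $C(1+|\cdot|^2)$. If it is not, I would redo that short computation using \eqref{est-comp-Hoelder-1}, \eqref{est-comp-Hoelder-2}, \eqref{est-comp-rem} and Lemma \ref{lemma-norm-of-Y}.

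\textbf{Step 3.} Apply Proposition \ref{prop-bilinear} with the evaluation bilinear map $B:L(W,\bar W)\times W\to\bar W$, $B(A,w)=Aw$, which satisfies $|B|\le1$, to the factors $g(Y,Z)$ and $Y-Z$. This gives $B(g(Y,Z),Y-Z)=f(Y)-f(Z)$. The derivative is
\begin{align*}
g(Y,Z)(Y'-Z')+(g(Y,Z)'(\cdot))(Y-Z),
\end{align*}
and this must be matched with $f(Y)'-f(Z)'=D_yf(Y)Y'-D_yf(Z)Z'$. I expect this identification to be the main obstacle.

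Equality of the derivatives can be checked by differentiating the identity $f(t,y_1)-f(t,y_2)=g(t,y)(y_1-y_2)$ in $y$ along the direction $(Y'_tv,Z'_tv)$. This yields
\begin{align*}
D_yf(t,Y_t)Y'_tv-D_yf(t,Z_t)Z'_tv=(D_yg(t,Y_t,Z_t)(Y'_tv,Z'_tv))(Y_t-Z_t)+g(t,Y_t,Z_t)(Y'_t-Z'_t)v,
\end{align*}
which is exactly the Leibniz-rule derivative. Hence the two controlled paths coincide.

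\textbf{Step 4.} Estimate \eqref{est-bilinear} then bounds $\interleave f(Y)-f(Z),\ldots\interleave_{X,2\alpha}$ by $C\interleave g(Y,Z),\ldots\interleave_{X,2\alpha}\,\interleave (Y,Y')-(Z,Z')\interleave_{X,2\alpha}$. Combining this with Step 2 gives the claim.

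The constants depend only on $\alpha$, $T$, $X$ and $\|f\|_{C_b^{2\alpha,3}}$, since $|B|\le 1$ and $\|g\|_{C_b^{2\alpha,2}}\le\|f\|_{C_b^{2\alpha,3}}$. They inherit $T$-independence for $T\le1$ from Propositions \ref{prop-comp-f} and \ref{prop-bilinear}.
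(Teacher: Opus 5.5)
Your proposal is correct and follows the paper's proof essentially step for step. The paper also writes $f(Y)-f(Z)=B(g(Y,Z),Y-Z)$ using the evaluation map $B$ and the function $g$ from Lemma \ref{lemma-bilinear-fg-time}, applies \eqref{est-bilinear}, and then bounds $\interleave g(Y,Z),g(Y,Z)'\interleave_{X,2\alpha}=|g(0,Y_0,Z_0)|+|g(Y,Z),g(Y,Z)'|_{X,2\alpha}$ via Proposition \ref{prop-comp-f} and Lemma \ref{lemma-pairs}, exactly as in your Step 2. Your explicit check in Step 3 that the Leibniz-rule derivative of $B(g(Y,Z),Y-Z)$ equals $f(Y)'-f(Z)'$ is correct, and it fills in a detail the paper leaves implicit.
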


\begin{proof}
According to Lemma \ref{lemma-bilinear-fg-time} there exists a mapping $g \in C_b^{2 \alpha,2}([0,T] \times (W \times W), L(W,\bar{W}))$ with
\begin{align}\label{g-f-norms}
\| g \|_{C_b^{2 \alpha,2}} \leq \| f \|_{C_b^{2 \alpha,3}}
\end{align}
such that
\begin{align*}
f(t,y) - f(t,z) = g(t,y,z)(y-z), \quad t \in [0,T] \text{ and } y,z \in W.
\end{align*}
We define the bilinear operator $B : L(W,\bar{W}) \times W \to \bar{W}$ as
\begin{align*}
B(\Phi,y) := \Phi y.
\end{align*}
Then for all $(\Phi,y) \in L(W,\bar{W}) \times W$ we have
\begin{align*}
|B(\Phi,y)| \leq |\Phi| \, |y|.
\end{align*}
Therefore, we have $B \in L^{(2)}(L(W,\bar{W}) \times W, \bar{W})$ with $|B| \leq 1$. Hence, using Proposition \ref{prop-bilinear} we obtain
\begin{align*}
&\interleave (f(Y),f(Y)')-(f(Z),f(Z)') \interleave_{X,2\alpha}
\\ &= \interleave g(Y,Z)(Y-Z), (g(Y,Z)(Y-Z))' \interleave_{X,2\alpha}
\\ &= \interleave B(g(Y,Z),Y-Z), (B(g(Y,Z),Y-Z))' \interleave_{X,2\alpha}
\\ &\leq C \interleave g(Y,Z), g(Y,Z)' \interleave_{X,2\alpha} \interleave (Y,Y') - (Z,Z') \interleave_{X,2\alpha},
\end{align*}
where the constant $C > 0$ depends on $\alpha$, $T$ and $X$, and does not depend on $T \leq 1$. Furthermore, by Proposition \ref{prop-comp-f} and Lemma \ref{lemma-pairs} we have
\begin{align*}
\interleave g(Y,Z), g(Y,Z)' \interleave_{X,2\alpha} &= |g(0,Y_0,Z_0)| + |g(Y,Z), g(Y,Z)'|_{X,2\alpha}
\\ &\leq C ( 1 + | (Y,Z), (Y,Z)' |_{X,2\alpha}^2 )
\\ &\leq C ( 1 + |Y,Y'|_{X,2\alpha}^2 + |Z,Z'|_{X,2\alpha}^2 ),
\end{align*}
where the constant $C > 0$ depends on $\alpha$, $T$, $X$ and $\| g \|_{C_b^{2 \alpha,2}}$, and does not depend on $T \leq 1$. Taking into account \eqref{g-f-norms}, this completes the proof.
\end{proof}

\section{Rough differential equations}\label{sec-RDEs}

In this section we deal with rough differential equations. Let $V,W$ be Banach spaces, and fix a finite time horizon $T \in \bbr_+$. Consider the time-inhomogeneous rough differential equation (RDE)
\begin{align}\label{RDE}
\left\{
\begin{array}{rcl}
dY_t & = & f_0(t,Y_t) dt + f(t,Y_t) d \bfx_t
\\ Y_0 & = & \xi
\end{array}
\right.
\end{align}
with a rough path $\bfx = (X,\bbx) \in \scrc^{\alpha}([0,T],V)$ for some index $\alpha \in (\frac{1}{3},\frac{1}{2}]$, and appropriate mappings $f_0 : [0,T] \times W \to W$ and $f : [0,T] \times W \to L(V,W)$.

\subsection{Local and global solutions}

In this section we provide the definitions of local and global solutions to the RDE \eqref{RDE}. In what follows, we assume that $f_0$ is continuous, and that $f$ is of class $C_b^{2\alpha,2}$.

\begin{definition}\label{def-solution}
Let $\xi \in W$ be arbitrary. A path $(Y,Y') \in \scrd_X^{2 \alpha}([0,T_0],W)$ for some $T_0 \in (0,T]$ is called a \emph{local solution} to the RDE \eqref{RDE} with $Y_0 = \xi$ if $Y' = f(Y)$ and
\begin{align}\label{mild-solution}
Y_t = \xi + \int_0^t f_0(s,Y_s) ds + \int_0^t f(s,Y_s) d\mathbf{X}_s, \quad t \in [0,T_0].
\end{align}
If we can choose $T_0 = T$, then we also call $(Y,Y')$ a \emph{(global) solution} to the RDE \eqref{RDE} with $Y_0 = \xi$.
\end{definition}

\begin{remark}\label{rem-well-defined}
Note that the right-hand side of \eqref{mild-solution} is well-defined. Indeed, let $(Y,Y') \in \scrd_X^{2 \alpha}([0,T_0],W)$ for some $T_0 \in (0,T]$ be arbitrary. We define the paths $\Gamma(Y), \Psi(Y) : [0,T_0] \to W$ as
\begin{align}\label{map-part-2}
\Gamma(Y)_t &:= \int_0^t f_0(s,Y_s) \, ds,
\\ \label{map-part-3} \Psi(Y)_t &:= \int_0^t f(s,Y_s) \, d \bfx_s.
\end{align}
The path $f_0(Y) : [0,T_0] \to W$ defined according to \eqref{new-path-f0} is continuous. Therefore, by Proposition \ref{prop-reg-conv-Hoelder} it follows that
\begin{align*}
(\Gamma(Y),0) \in \scrd_X^{2\alpha}([0,T_0],W).
\end{align*}
Moreover, by Proposition \ref{prop-comp-f} with $\bar{W} = L(V,W)$ we have
\begin{align*}
(f(Y),f(Y)') \in \scrd_X^{2\alpha}([0,T_0],L(V,W)),
\end{align*}
and hence by Proposition \ref{prop-conv-rough} it follows that
\begin{align*}
(\Psi(Y),f(Y)) \in \scrd_X^{2\alpha}([0,T_0],W).
\end{align*}
Consequently, the right-hand side of \eqref{mild-solution}, which is given by
$$ \xi + \Gamma(Y) + \Psi(Y), $$
is an element of $\calc^{\alpha}([0,T_0],W)$.
\end{remark}

\begin{remark}\label{rem-der-solutions}
Let $\xi \in W$ be arbitrary, and let $(Y,Y') \in \scrd_X^{2 \alpha}([0,T_0],W)$ be a local solution to the RDE \eqref{RDE} with $Y_0 = \xi$ for some $T_0 \in (0,T]$. Then we have $Y' = f(Y)$, and hence by Proposition \ref{prop-comp-f} we obtain
\begin{align*}
f(Y)' = Df(Y)Y' = Df(Y)f(Y),
\end{align*}
where $Df(Y)f(Y) : [0,T_0] \to L(V,L(V,W))$ denotes the path
\begin{align*}
Df(Y)f(Y)_t := D_y f(t,Y_t) f(t,Y_t), \quad t \in [0,T_0].
\end{align*}
\end{remark}

\subsection{The space for the fixed point problem}

Note that equation \eqref{mild-solution} may be regarded as a fixed point problem. In this section we will analyze the space for this fixed point problem. We assume that $f_0 \in \Lip([0,T] \times W,W)$ and $f \in C_b^{2\alpha,3}([0,T] \times W, L(V,W))$, and we fix an initial condition $\xi \in W$. For every $t \in [0,T]$ we define the subset $\bbb_t \subset \scrd_X^{2\alpha}([0,t],W)$ as
\begin{align}\label{B-fixed-point}
\bbb_t := \{ (Y,Y') \in \scrd_X^{2\alpha}([0,t],W) : Y_0 = \xi, Y_0' = f(0,\xi), \| Y,Y' \|_{X,2\alpha;[0,t]} \leq 1 \}.
\end{align}

\begin{lemma}
For each $t \in [0,T]$ the set $\bbb_t$ endowed with the metric
\begin{align*}
d \big( (Y,Y'), (Z,Z') \big) := \| (Y,Y') - (Z,Z') \|_{X,2\alpha;[0,t]}, \quad (Y,Y'), (Z,Z') \in \bbb_t
\end{align*}
is a complete metric space.
\end{lemma}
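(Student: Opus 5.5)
The plan is to realize $\bbb_t$ as a closed subset of a Banach space on which $d$ agrees with a norm. First I need $d$ to be a genuine metric on $\bbb_t$, since the seminorm $\| \cdot \|_{X,2\alpha;[0,t]}$ does not control the initial values. For $(Y,Y'),(Z,Z') \in \bbb_t$ both the initial values $Y_0 = Z_0 = \xi$ and the initial derivatives $Y_0' = Z_0' = f(0,\xi)$ coincide. Hence the difference $(Y-Z,Y'-Z')$ has vanishing initial data, and
\begin{align*}
d\big((Y,Y'),(Z,Z')\big) = \interleave (Y,Y')-(Z,Z') \interleave_{X,2\alpha;[0,t]}.
\end{align*}
So on $\bbb_t$ the metric $d$ coincides with the metric induced by the norm $\interleave \cdot \interleave_{X,2\alpha;[0,t]}$. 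In particular $d$ is positive definite, symmetric and satisfies the triangle inequality. The degenerate case $t=0$ is trivial, since then $\bbb_0$ consists of a single element.

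Next I use that $(\scrd_X^{2\alpha}([0,t],W), \interleave \cdot \interleave_{X,2\alpha;[0,t]})$ is a Banach space, as stated after Definition \ref{def-seminorm-Gubinelli}. It therefore suffices to show that $\bbb_t$ is closed in this space. Let $(Y^n,Y^{n\prime}) \in \bbb_t$ converge to some $(Y,Y')$ in $\interleave \cdot \interleave_{X,2\alpha;[0,t]}$.

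Point evaluations at $0$ are continuous for this norm, because $|Y_0| + |Y_0'|$ is part of the norm. Hence $Y_0 = \xi$ and $Y_0' = f(0,\xi)$.

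The constraint $\| Y,Y' \|_{X,2\alpha;[0,t]} \leq 1$ passes to the limit because the seminorm is dominated by the norm. The reverse triangle inequality gives $\big| \|Y^n,Y^{n\prime}\| - \|Y,Y'\| \big| \leq \interleave (Y^n,Y^{n\prime})-(Y,Y') \interleave \to 0$. Thus $\bbb_t$ is closed.

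A closed subset of a complete metric space is complete, so every $d$-Cauchy sequence in $\bbb_t$ converges in $\bbb_t$. Here a $d$-Cauchy sequence is $\interleave\cdot\interleave$-Cauchy by the identity above.

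The only point needing care is the completeness of $\scrd_X^{2\alpha}$ itself, which the paper asserts without proof. If it has to be justified, I would do so as follows.
\begin{enumerate}
\item[(1)] A Cauchy sequence is Cauchy in $Q^{\alpha}$, which is complete as a product of Hölder spaces. Its limit exists there.
\item[(2)] The remainders $R^{Y^n}$ are Cauchy in $\calc_2^{2\alpha}$ and converge pointwise to $R^Y$. This holds because $R^{Y}_{s,t} = Y_{s,t} - Y_s' X_{s,t}$ depends continuously on sup-norm limits, by Lemma \ref{lemma-norm-of-Y}.
\item[(3)] Lower semicontinuity of Hölder seminorms under pointwise convergence then gives $R^Y \in \calc_2^{2\alpha}$ and convergence of the remainders in norm.
\end{enumerate}

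I do not expect any real obstacle in this argument.
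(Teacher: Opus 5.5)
Your proof is correct and follows the paper's argument. Because initial data agree on $\bbb_t$, the metric $d$ coincides there with the metric induced by $\interleave \cdot \interleave_{X,2\alpha;[0,t]}$, and $\bbb_t$ is a closed subset of the Banach space $\scrd_X^{2\alpha}([0,t],W)$. You also supply the details the paper only asserts, namely why $\bbb_t$ is closed (continuity of evaluation at $0$ and of the seminorm) and why $\scrd_X^{2\alpha}$ is complete.
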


\begin{proof}
The space $\scrd_X^{2\alpha}([0,t],W)$ endowed with the norm $\interleave \cdot \interleave_{X,2\alpha;[0,t]}$ is a Banach space, and hence a complete metric space. Furthermore, for all $(Y,Y'), (Z,Z') \in \bbb_t$ we have
\begin{align*}
d \big( (Y,Y'), (Z,Z') \big) &= \| Y-Z,Y'-Z' \|_{X,2\alpha;[0,t]}
\\ &= |Y_0-Z_0| + |Y_0' - Z_0'| +  \| Y-Z,Y'-Z' \|_{X,2\alpha;[0,t]}
\\ &= \interleave Y-Z,Y'-Z' \interleave_{X,2\alpha;[0,t]}.
\end{align*}
Moreover, the set $\bbb_t$ is a closed subset of $\scrd_X^{2\alpha}([0,t],W)$, completing the proof.
\end{proof}

For $t \in [0,T]$ we define the mapping $\Phi_t : \scrd_X^{2\alpha}([0,t],W) \to \scrd_X^{2\alpha}([0,t],W)$ as
\begin{align}\label{def-Phi}
\Phi_t(Y,Y') := (\xi,0) + (\Gamma(Y),0) + (\Psi(Y),f(Y)),
\end{align}
where the paths $\Gamma(Y), \Psi(Y) : [0,t] \to W$ are defined according to \eqref{map-part-2} and \eqref{map-part-3}. Note that the mapping $\Phi_t$ is well-defined due to Remark \ref{rem-well-defined}.

\begin{proposition}\label{prop-fixed-point-1}
For all $t \in [0,T]$ with $t \leq 1$ and all $(Y,Y') \in \bbb_t$ we have
\begin{align*}
\| \Phi_t(Y,Y') \|_{X,2\alpha;[0,t]} \leq C ( 1 + |\xi| ) t^{1-2\alpha} + C ( \interleave \mathbf{X} \interleave_{\alpha;[0,t]} + t^{\alpha} ),
\end{align*}
where the constant $C > 0$ depends on $\alpha$, $\mathbf{X}$, $\| f_0 \|_{\Lip}$ and $\| f \|_{C_b^{2\alpha,2}}$.
\end{proposition}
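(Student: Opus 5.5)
Since the seminorm $\| \cdot \|_{X,2\alpha;[0,t]}$ ignores constants, the plan is to split it along the decomposition \eqref{def-Phi}. The constant part $(\xi,0)$ contributes nothing, because its Gubinelli derivative and remainder vanish. By subadditivity of the seminorm,
\begin{align*}
\| \Phi_t(Y,Y') \|_{X,2\alpha;[0,t]} \leq \| \Gamma(Y),0 \|_{X,2\alpha;[0,t]} + \| \Psi(Y),f(Y) \|_{X,2\alpha;[0,t]},
\end{align*}
and I will estimate the two terms separately. The first term produces the $t^{1-2\alpha}$ contribution and the second the $\interleave \mathbf{X} \interleave_{\alpha;[0,t]} + t^{\alpha}$ contribution.

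\emph{Drift term.} By Exercise \ref{exercise-reg-derivative-zero} together with Proposition \ref{prop-reg-conv-Hoelder} (or directly Lemma \ref{lemma-reg-conv-Hoelder}), we have $\| \Gamma(Y),0 \|_{X,2\alpha;[0,t]} \leq t^{1-2\alpha} \| f_0(Y) \|_{\infty}$. Lemma \ref{lemma-lin-growth} gives $\| f_0(Y) \|_{\infty} \leq \| f_0 \|_{\Lip} (1 + \| Y \|_{\infty})$, so it remains to bound $\| Y \|_{\infty}$ on $\bbb_t$. For this I will use \eqref{norm-Y-3} with $T$ replaced by $t \leq 1$, so that $C = 1$. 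We have
\begin{align*}
\interleave Y,Y' \interleave_{X,2\alpha;[0,t]} \leq |\xi| + |f(0,\xi)| + 1 \leq |\xi| + \| f \|_{C_b^{2\alpha,2}} + 1.
\end{align*}
Also $\| X \|_{\alpha;[0,t]} \leq \| X \|_{\alpha;[0,T]}$, which may be absorbed into a constant depending on $\mathbf{X}$. This yields $\| Y \|_{\infty} \leq C(1 + |\xi|)$, and hence the first term is at most $C(1+|\xi|) t^{1-2\alpha}$.

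\emph{Rough integral term.} Here I will not use the operator-norm bound of Proposition \ref{prop-conv-rough} directly, because it involves $|Y_0|$ and would bring in $|\xi|$. Instead I apply Proposition \ref{prop-Gubinelli} on $[0,t]$ to the controlled path $(f(Y),f(Y)')$. This gives
\begin{align*}
\| \Psi(Y),f(Y) \|_{X,2\alpha;[0,t]} \leq{}& \| f(Y) \|_{\alpha} + \| f(Y)' \|_{\infty} \| \bbx \|_{2\alpha} \\
&+ C t^{\alpha} \big( \| X \|_{\alpha} \| R^{f(Y)} \|_{2\alpha} + \| \bbx \|_{2\alpha} \| f(Y)' \|_{\alpha} \big),
\end{align*}
with all norms taken on $[0,t]$. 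Each factor is controlled using Proposition \ref{prop-comp-f}, namely \eqref{est-comp-Hoelder-1}, \eqref{est-comp-Hoelder-2} and \eqref{est-comp-rem}. These are fed by the bounds from Lemma \ref{lemma-norm-of-Y} with $C=1$: we have $\| Y' \|_{\infty} \leq |Y,Y'|_{X,2\alpha} \leq \| f \|_{C_b^{2\alpha,2}} + 1$, $\| R^Y \|_{2\alpha} \leq 1$, and $\| Y \|_{\alpha} \leq (\| f \|_{C_b^{2\alpha,2}}+1)(\| X \|_{\alpha;[0,t]} + t^{\alpha})$. Crucially, none of these involve $|\xi|$. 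Also $\| f(Y)' \|_{\infty} \leq \| f \|_{C_b^{2\alpha,2}} \| Y' \|_{\infty}$.

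The main obstacle is bookkeeping: every summand must carry a factor $\| X \|_{\alpha;[0,t]}$, $\| \bbx \|_{2\alpha;[0,t]}$, or a positive power of $t$, so that the bound has the form $C(\interleave \mathbf{X} \interleave_{\alpha;[0,t]} + t^{\alpha})$. The term $\| f(Y) \|_{\alpha}$ is at most $\| f \|(\| Y \|_{\alpha} + t^{\alpha})$, which is fine given the bound on $\| Y \|_{\alpha}$, up to a possibly quadratic factor in $\| X \|_{\alpha}$. Such factors I absorb by bounding one copy of $\| X \|_{\alpha;[0,t]}$ by $\| X \|_{\alpha;[0,T]}$, which goes into $C$. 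The term $\| f(Y)' \|_{\infty} \| \bbx \|_{2\alpha}$ carries $\| \bbx \|_{2\alpha;[0,t]}$. The last group carries $t^{\alpha}$ times quantities bounded by constants depending on $\mathbf{X}$ and $f$. Collecting everything gives the stated estimate.
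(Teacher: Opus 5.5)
Your proposal is correct and is essentially the paper's proof. It uses the same three-term split of $\Phi_t(Y,Y')$, bounds the drift term by $t^{1-2\alpha} \| f_0(Y) \|_{\infty}$ followed by Lemma \ref{lemma-lin-growth} and \eqref{norm-Y-3} on $\bbb_t$, and controls the rough term by the Gubinelli-integral estimate combined with Proposition \ref{prop-comp-f}.

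The only deviation is that you use the seminorm bound of Proposition \ref{prop-Gubinelli} instead of Proposition \ref{prop-conv-rough}, which is what the paper uses and which is itself derived from Proposition \ref{prop-Gubinelli}. Your stated reason is off: the initial value entering Proposition \ref{prop-conv-rough} is that of the integrand, $|f(0,\xi)| \leq \| f \|_{C_b^{2\alpha,2}}$, so no $|\xi|$ appears. Your choice is nonetheless the cleaner one. Only the seminorm is needed, and each of its terms visibly carries a factor $\| X \|_{\alpha;[0,t]}$, $\| \bbx \|_{2\alpha;[0,t]}$ or $t^{\alpha}$. By contrast, the full norm in Proposition \ref{prop-conv-rough} also contains $|Z_0'| = |f(0,\xi)|$, which has no small factor.
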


\begin{proof}
For convenience of notation, we will skip the subscript $[0,t]$ in the following calculations. Note that
\begin{align*}
\| \Phi_t(Y,Y') \|_{X,2\alpha} \leq \| \xi,0 \|_{X,2\alpha} + \| \Gamma(Y),0 \|_{X,2\alpha} + \| \Psi(Y),f(Y) \|_{X,2\alpha}.
\end{align*}
By Exercise \ref{exercise-reg-derivative-zero} we have
\begin{align*}
\| \xi,0 \|_{X,2\alpha} = \| \xi \|_{2 \alpha} = 0,
\end{align*}
because $\xi$ is a constant. Furthermore, by Proposition \ref{prop-reg-conv-Hoelder} we have
\begin{align*}
\| \Gamma(Y),0 \|_{X,2\alpha} &\leq \| f_0(Y) \|_{\infty} t^{1-2\alpha}.
\end{align*}
Noting that $Y_0 = \xi$, $Y_0' = f(0,\xi)$ and $\| Y,Y' \|_{X,2\alpha} \leq 1$, by Lemma \ref{lemma-lin-growth} and Lemma \ref{lemma-norm-of-Y} we obtain
\begin{align*}
\| f_0(Y) \|_{\infty} &\leq \| f_0 \|_{\Lip} ( 1 + \| Y \|_{\infty} )
\\ &\leq C \| f_0 \|_{\Lip} \big( 1 + \interleave Y,Y' \interleave_{X,2\alpha} ( \interleave \mathbf{X} \interleave_{\alpha} + 2 ) \big)
\\ &\leq C \| f_0 \|_{\Lip} \big( 1 + ( |\xi| + |f(0,\xi)| + 1) ( \interleave \mathbf{X} \interleave_{\alpha} + 2 ) \big)
\\ &\leq C \| f_0 \|_{\Lip} \big( 1 + ( |\xi| + \| f \|_{C_b^{2\alpha,2}} + 1) ( \interleave \mathbf{X} \interleave_{\alpha} + 2 ) \big),
\end{align*}
where the constant $C > 0$ depends on $\alpha$. Moreover, noting that $Y_0 = \xi$, $Y_0' = f(0,\xi)$ and $\| Y,Y' \|_{X,2\alpha} \leq 1$, by Proposition \ref{prop-conv-rough} and Proposition \ref{prop-comp-f} we have
\begin{align*}
\| \Psi(Y),f(Y) \|_{X,2\alpha} &\leq C \interleave f(Y), f(Y)' \interleave_{X,2\alpha} ( \interleave \mathbf{X} \interleave_{\alpha} + t^{\alpha} )
\\ &= C ( |f(0,\xi)| + | f(Y), f(Y)' |_{X,2\alpha} ) ( \interleave \mathbf{X} \interleave_{\alpha} + t^{\alpha} )
\\ &\lesssim C \big( 1 + | Y,Y' |_{X,2\alpha}^2 \big) ( \interleave \mathbf{X} \interleave_{\alpha} + t^{\alpha} )
\\ &\lesssim C \big( 1 + (|f(0,\xi)| + 1)^2 \big) ( \interleave \mathbf{X} \interleave_{\alpha} + t^{\alpha} )
\\ &\lesssim C ( \interleave \mathbf{X} \interleave_{\alpha} + t^{\alpha} ),
\end{align*}
where the constant $C > 0$, which changes from line to line, depends on $\alpha$, $\mathbf{X}$ and $\| f \|_{C_b^{2\alpha,2}}$. This completes the proof.
\end{proof}

\begin{proposition}\label{prop-fixed-point-2}
For all $t \in [0,T]$ with $t \leq 1$ and all $(Y,Y'), (Z,Z') \in \bbb_t$ we have
\begin{align*}
\| \Phi_t(Y,Y') - \Phi_t(Z,Z') \|_{X,2\alpha;[0,t]} &\leq C \| (Y,Y') - (Z,Z') \|_{X,2\alpha;[0,t]}
\\ &\qquad \times \big( \interleave \mathbf{X} \interleave_{\alpha;[0,t]} + t^{\alpha} + t^{1-2\alpha} \big),
\end{align*}
where the constant $C > 0$ depends on $\alpha$, $\mathbf{X}$, $\| f_0 \|_{\Lip}$ and $\| f \|_{C_b^{2\alpha,3}}$.
\end{proposition}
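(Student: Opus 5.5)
By linearity of the three building blocks in \eqref{def-Phi}, the constant $(\xi,0)$ cancels in the difference, so
\begin{align*}
\Phi_t(Y,Y') - \Phi_t(Z,Z') = \big( \Gamma(Y) - \Gamma(Z), 0 \big) + \big( \Psi(Y) - \Psi(Z), f(Y) - f(Z) \big).
\end{align*}
We will estimate the two terms separately in the seminorm $\| \cdot \|_{X,2\alpha;[0,t]}$, suppressing the subscript $[0,t]$ as in the proof of Proposition \ref{prop-fixed-point-1}. A preliminary observation is used twice: since $(Y,Y'),(Z,Z') \in \bbb_t$ share the initial values $Y_0 = Z_0 = \xi$ and $Y_0' = Z_0' = f(0,\xi)$, we have
\begin{align*}
\interleave (Y,Y') - (Z,Z') \interleave_{X,2\alpha} = \| (Y,Y') - (Z,Z') \|_{X,2\alpha},
\end{align*}
and moreover $|Y,Y'|_{X,2\alpha} \leq |f(0,\xi)| + 1 \leq \| f \|_{C_b^{2\alpha,3}} + 1$, and likewise for $Z$.

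For the drift term, note that $\Gamma(Y) - \Gamma(Z) = \int_0^{\cdot} (f_0(s,Y_s) - f_0(s,Z_s))\,ds$. By Proposition \ref{prop-reg-conv-Hoelder}, $\| \Gamma(Y) - \Gamma(Z), 0 \|_{X,2\alpha} \leq t^{1-2\alpha} \| f_0(Y) - f_0(Z) \|_{\infty}$. By \eqref{Lip-1} this is at most $t^{1-2\alpha} \| f_0 \|_{\Lip} \| Y - Z \|_{\infty}$. Applying \eqref{norm-Y-3} of Lemma \ref{lemma-norm-of-Y} to the controlled path $(Y-Z, Y'-Z')$, with $C=1$ since $t \leq 1$, gives
\begin{align*}
\| Y - Z \|_{\infty} \leq \interleave Y-Z,Y'-Z' \interleave_{X,2\alpha} ( \| X \|_{\alpha} + 2 ).
\end{align*}
By the preliminary observation, the drift contribution is therefore bounded by $C t^{1-2\alpha} \| (Y,Y') - (Z,Z') \|_{X,2\alpha}$.

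For the rough term, linearity of the integral gives $(\Psi(Y) - \Psi(Z), f(Y) - f(Z)) = \mathscr{J}\big( (f(Y),f(Y)') - (f(Z),f(Z)') \big)$. Proposition \ref{prop-conv-rough} then bounds its norm by
\begin{align*}
C( \interleave \mathbf{X} \interleave_{\alpha} + t^{\alpha} ) \interleave (f(Y),f(Y)') - (f(Z),f(Z)') \interleave_{X,2\alpha}.
\end{align*}
Proposition \ref{prop-diff-f} bounds the last factor by $C(1 + |Y,Y'|^2_{X,2\alpha} + |Z,Z'|^2_{X,2\alpha}) \interleave (Y,Y') - (Z,Z') \interleave_{X,2\alpha}$. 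Using the preliminary bounds on $|Y,Y'|_{X,2\alpha}$ and $|Z,Z'|_{X,2\alpha}$, together with the identity between $\interleave \cdot \interleave$ and $\| \cdot \|$ on differences, this becomes $C \| (Y,Y') - (Z,Z') \|_{X,2\alpha}$.

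Summing the two contributions yields the claim. The constant depends on $\alpha$, $\mathbf{X}$, $\| f_0 \|_{\Lip}$ and $\| f \|_{C_b^{2\alpha,3}}$, and is uniform in $t \leq 1$ because all cited estimates have $T$-independent constants for $T \leq 1$. The main point requiring care is this uniformity on subintervals $[0,t]$: the constants of Propositions \ref{prop-conv-rough} and \ref{prop-diff-f} depend on $X$ through quantities such as $\| X \|_{\alpha;[0,t]} \leq \| X \|_{\alpha;[0,T]}$, which is monotone in $t$. Hence they can be taken uniform once they are expressed in terms of the norm of $\mathbf{X}$ on $[0,T]$.
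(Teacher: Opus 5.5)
Your proposal is correct and follows the paper's proof essentially step by step. You use the same split into the drift part (Proposition \ref{prop-reg-conv-Hoelder}, Lemma \ref{lemma-lin-growth}, Lemma \ref{lemma-norm-of-Y}) and the rough part (Propositions \ref{prop-conv-rough} and \ref{prop-diff-f}), and you close the argument the same way, using the common initial values in $\bbb_t$ both to bound $|Y,Y'|_{X,2\alpha}$, $|Z,Z'|_{X,2\alpha}$ and to identify $\interleave \cdot \interleave_{X,2\alpha}$ with $\| \cdot \|_{X,2\alpha}$ on differences. Your remark that the constants are uniform in $t \leq 1$, via monotonicity of $\| X \|_{\alpha;[0,t]}$, usefully spells out a point the paper leaves implicit.
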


\begin{proof}
For convenience of notation, we will skip the subscript $[0,t]$ in the following calculations. Note that
\begin{align*}
\| \Phi(Y,Y') - \Phi(Z,Z') \|_{X,2\alpha} &\leq \| (\Gamma(Y),0) - (\Gamma(Z),0) \|_{X,2\alpha}
\\ &\quad + \| (\Psi(Y),f(Y)) - (\Psi(Z),f(Z)) \|_{X,2\alpha}.
\end{align*}
By Proposition \ref{prop-reg-conv-Hoelder} we have
\begin{align*}
\| (\Gamma(Y),0) - (\Gamma(Z),0) \|_{X,2\alpha} = \| \Gamma(Y) - \Gamma(Z), 0 \|_{X,2\alpha} \leq \| f_0(Y) - f_0(Z) \|_{\infty} t^{1-2\alpha}.
\end{align*}
Furthermore, noting that $Y_0 = Z_0$ and $Y_0' = Z_0'$, by Lemma \ref{lemma-lin-growth} and Lemma \ref{lemma-norm-of-Y} we obtain
\begin{align*}
\| f_0(Y) - f_0(Z) \|_{\infty} &\leq \| f_0 \|_{\Lip} \| Y-Z \|_{\infty}
\\ &\leq C \| f_0 \|_{\Lip} \| (Y,Y') - (Z,Z') \|_{X,2\alpha} ( \interleave \mathbf{X} \interleave_{\alpha} + 2),
\end{align*}
where the constant $C > 0$ depends on $\alpha$. Moreover, by Proposition \ref{prop-conv-rough} and Proposition \ref{prop-diff-f} we have
\begin{align*}
&\| (\Psi(Y),f(Y)) - (\Psi(Z),f(Z)) \|_{X,2\alpha}
\\ &\leq C \interleave (f(Y),f(Y)') - (f(Z),f(Z)') \interleave_{X,2\alpha} ( \interleave \mathbf{X} \interleave_{\alpha} + t^{\alpha} )
\\ &\lesssim C ( 1 + | Y,Y' |_{X,2\alpha}^2 + | Z,Z' |_{X,2\alpha}^2 )
\\ &\qquad \times \interleave (Y,Y') - (Z,Z') \interleave_{X,2\alpha} ( \interleave \mathbf{X} \interleave_{\alpha} + t^{\alpha} ),
\end{align*}
where the constant $C>0$, which changes from line to line, depends on $\alpha$, $\mathbf{X}$ and $\| f \|_{C_b^{2\alpha,3}}$. Now, noting that $Y_0 = Z_0 = \xi$, $Y_0' = Z_0' = f(0,\xi)$ and $$\| Y,Y' \|_{X,2\alpha}, \| Z,Z' \|_{X,2\alpha} \leq 1,$$ we obtain
\begin{align*}
| Y,Y' |_{X,2\alpha} &= |f(0,\xi)| + \| Y,Y' \|_{X,2\alpha} \leq \| f \|_{C_b^{2\alpha,3}} + 1,
\\ | Z,Z' |_{X,2\alpha} &= |f(0,\xi)| + \| Z,Z' \|_{X,2\alpha} \leq \| f \|_{C_b^{2\alpha,3}} + 1.
\end{align*}
Consequently, noting again that $Y_0 = Z_0$ and $Y_0' = Z_0'$ we have
\begin{align*}
\interleave (Y,Y') - (Z,Z') \interleave_{X,2\alpha} = \| (Y,Y') - (Z,Z') \|_{X,2\alpha},
\end{align*}
concluding the proof.
\end{proof}

\subsection{Auxiliary results}

In this section we derive auxiliary results which are required in order to establish existence and uniqueness of local solutions for the RDE \eqref{RDE}. Let $f_0 \in \Lip([0,T] \times W,W)$ and $f \in C_b^{2\alpha,2}([0,T] \times W, L(V,W))$ be arbitrary.

\begin{lemma}\label{lemma-Hoelder-order-2}
Let $\bfx = (X,\bbx) \in \scrc^{\alpha}([0,T],V)$ be a rough path for some index $\alpha \in (\frac{1}{3},\frac{1}{2}]$. Furthermore, let $\xi \in W$ be arbitrary, and let $(Y,Y') \in \scrd_X^{2 \alpha}([0,T_0],W)$ be a local solution to the RDE \eqref{RDE} with $Y_0 = \xi$ for some $T_0 \in (0,T]$. Then we have
\begin{align*}
\| f(Y)' \|_{\alpha} \leq C ( 1 + \| Y \|_{\alpha} ),
\end{align*}
where the constant $C > 0$ depends on $\alpha$, $T$ and $\| f \|_{C_b^{2\alpha,2}}$.
\end{lemma}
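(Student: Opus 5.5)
Because $(Y,Y')$ is a local solution, we have $Y' = f(Y)$. By Remark \ref{rem-der-solutions} this gives $f(Y)' = Df(Y)Y' = Df(Y)f(Y)$, so we must bound the $\alpha$-H\"older seminorm on $[0,T_0]$ of the path
\begin{align*}
t \mapsto D_y f(t,Y_t) f(t,Y_t) \in L(V,L(V,W)).
\end{align*}
The plan is to copy the three-term splitting used for \eqref{est-comp-Hoelder-2} in the proof of Proposition \ref{prop-comp-f}, with $Y'$ replaced by $f(Y)$. In place of the a priori quantities $\| Y' \|_{\alpha}$ and $\| Y' \|_{\infty}$ we then use the bounds available from $f \in C_b^{2\alpha,2}$ directly.

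Fix $s,t \in [0,T_0]$. We write
\begin{align*}
D_y f(t,Y_t) f(t,Y_t) - D_y f(s,Y_s) f(s,Y_s) &= D_y f(t,Y_t)\big( f(t,Y_t) - f(s,Y_s) \big)
\\ &\quad + \big( D_y f(t,Y_t) - D_y f(t,Y_s) \big) f(s,Y_s)
\\ &\quad + \big( D_y f(t,Y_s) - D_y f(s,Y_s) \big) f(s,Y_s).
\end{align*}
We estimate the three terms separately.

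\begin{itemize}
\item For the first term, $|D_y f(t,Y_t)| \leq \| f \|_{C_b^{2\alpha,2}}$. The increment $|f(t,Y_t)-f(s,Y_s)|$ is bounded by \eqref{est-comp-Hoelder-1}, applied on $[0,T_0]$ (its proof only uses $Y \in \calc^\alpha$), by $\| f \|_{C_b^{2\alpha,2}} (\| Y \|_{\alpha} + T^{\alpha}) |t-s|^{\alpha}$.
\item For the second term, the mean value inequality with $\| D_y^2 f(t,\cdot) \|_\infty \leq \| f \|_{C_b^{2\alpha,2}}$ and $\|f(Y)\|_\infty \leq \| f \|_{C_b^{2\alpha,2}}$ gives the bound $\| f \|_{C_b^{2\alpha,2}}^2 \| Y \|_{\alpha} |t-s|^{\alpha}$.
\item For the third term, the time-H\"older condition $\sup_y \| D_y f(\cdot,y) \|_{2\alpha} \leq \| f \|_{C_b^{2\alpha,2}}$ gives $\| f \|_{C_b^{2\alpha,2}}^2 |t-s|^{2\alpha} \leq \| f \|_{C_b^{2\alpha,2}}^2 T^{\alpha} |t-s|^{\alpha}$.
\end{itemize}

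Summing, dividing by $|t-s|^{\alpha}$ and taking the supremum yields
\begin{align*}
\| f(Y)' \|_{\alpha} \leq \| f \|_{C_b^{2\alpha,2}}^2 \big( 2\| Y \|_{\alpha} + 2T^{\alpha} \big) \leq C (1 + \| Y \|_{\alpha}),
\end{align*}
with $C = 2 \| f \|_{C_b^{2\alpha,2}}^2 \max\{1,T^{\alpha}\}$, which depends only on $T$ and $\| f \|_{C_b^{2\alpha,2}}$ (and trivially on $\alpha$). The bound is linear rather than quadratic in $Y$ because we never need $\| Y' \|_\alpha$: the equation $Y'=f(Y)$ lets us control $Y'$ through $f$.

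There is no serious obstacle. The only points needing care are the identifications $L(V,L(V,W))$ and $D_y f(t,y) \in L(W,L(V,W))$, which must be set up so that the operator norm of the composition is bounded by the product of norms, and the fact that all suprema are over $[0,T_0] \subset [0,T]$, so $T_0^{\alpha} \leq T^{\alpha}$.
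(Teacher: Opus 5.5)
Your proof is correct and takes essentially the same approach as the paper. The paper applies \eqref{est-comp-Hoelder-2} with $Y' = f(Y)$, bounding $\| Y' \|_{\alpha} = \| f(Y) \|_{\alpha}$ via \eqref{est-comp-Hoelder-1} and $\| Y' \|_{\infty} \leq \| f \|_{C_b^{2\alpha,2}}$; this is exactly your three-term splitting, which you write out explicitly, and it gives the same bound $2 \| f \|_{C_b^{2\alpha,2}}^2 ( \| Y \|_{\alpha} + T^{\alpha} )$.
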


\begin{proof}
By Proposition \ref{prop-comp-f} we have
\begin{align*}
\| f(Y) \|_{\alpha} \leq \| f \|_{C_b^{2\alpha,2}} ( \| Y \|_{\alpha} + T^{\alpha} ).
\end{align*}
Therefore, noting that $Y' = f(Y)$, using Proposition \ref{prop-comp-f} again we obtain
\begin{align*}
\| f(Y)' \|_{\alpha} &\leq \| f \|_{C_b^{2\alpha,2}} \Big( \| f(Y) \|_{\alpha} + \| Y \|_{\alpha} \| f(Y) \|_{\infty} + T^{\alpha} \| f(Y) \|_{\infty} \Big)
\\ &\leq \| f \|_{C_b^{2\alpha,2}} \Big( \| f \|_{C_b^{2\alpha,2}} ( \| Y \|_{\alpha} + T^{\alpha} )
\\ &\qquad\qquad\qquad + \| Y \|_{\alpha} \| f \|_{C_b^{2\alpha,2}} + T^{\alpha} \| f \|_{C_b^{2\alpha,2}} \Big),
\end{align*}
completing the proof.
\end{proof}

In the following result, we will use the notation
\begin{align}\label{Hoelder-norm-in-I}
\| Z \|_{\alpha;I} := \sup_{\genfrac{}{}{0pt}{}{s,t \in I}{s \neq t}} \frac{|Z_{s,t}|}{|t-s|^{\alpha}}
\end{align}
for a subinterval $I \subset [0,T]$ and a path $Z \in \calc^{\alpha}([0,T],E)$ with values in some Banach space $E$. Accordingly, we will also use the notation
\begin{align}\label{Hoelder-norm-in-I-2}
\| \bbz \|_{\alpha;I} := \sup_{\genfrac{}{}{0pt}{}{s,t \in I}{s \neq t}} \frac{|\bbz_{s,t}|}{|t-s|^{\alpha}}
\end{align}
for a two-parameter path $\bbz \in \calc_2^{\alpha}([0,T]^2,E)$.

\begin{lemma}\label{lemma-remainder-mild-sol}
Let $\bfx = (X,\bbx) \in \scrc^{\alpha}([0,T],V)$ be a rough path for some index $\alpha \in (\frac{1}{3},\frac{1}{2}]$. Furthermore, let $\xi \in W$ be arbitrary, and let $(Y,Y') \in \scrd_X^{2 \alpha}([0,T_0],W)$ be a local solution to the RDE \eqref{RDE} with $Y_0 = \xi$ for some $T_0 \in (0,T]$. Let $s,t \in [0,T_0]$ with $s \leq t$ be arbitrary, and define the interval $I := [s,t]$. Then we have
\begin{align*}
| R_{s,t}^Y | &\leq C \Big( 1 + \| Y \|_{\alpha;I} + \| X \|_{\alpha;I} \| R^{f(Y)} \|_{2 \alpha;I}
\\ &\qquad\quad + \| \bbx \|_{2\alpha;I} ( 1 + \| Y \|_{\alpha;I} ) \Big) |t-s| + C | \bbx_{s,t} |,
\end{align*}
where the constant $C > 0$ depends on $\xi$, $\alpha$, $T$, $\| f_0 \|_{\Lip}$ and $\| f \|_{C_b^{2\alpha,2}}$.
\end{lemma}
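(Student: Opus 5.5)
We start from the solution identity. Since $(Y,Y')$ is a local solution, $Y' = f(Y)$, and subtracting \eqref{mild-solution} at times $t$ and $s$ gives
\begin{align*}
R_{s,t}^Y = Y_{s,t} - f(s,Y_s) X_{s,t} = \int_s^t f_0(r,Y_r)\,dr + \Big( \int_s^t f(r,Y_r)\, d\bfx_r - f(s,Y_s) X_{s,t} \Big).
\end{align*}
We add and subtract $f(Y)_s' \bbx_{s,t}$ in the second bracket. This splits $R^Y_{s,t}$ into three terms:
\begin{enumerate}
\item the drift integral;
\item the sewing remainder $\int_s^t f(Y)_r\, d\bfx_r - f(Y)_s X_{s,t} - f(Y)_s'\bbx_{s,t}$;
\item the second order term $f(Y)_s' \bbx_{s,t}$.
\end{enumerate}
We estimate each term separately.

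For the drift term we use Lemma \ref{lemma-lin-growth} to get $|\int_s^t f_0(r,Y_r)dr| \leq \|f_0\|_{\Lip}(1+\|Y\|_\infty)|t-s|$. The sup norm of $Y$ on $[0,T_0]$ is not local to $I$, so we bound it by $|\xi| + T^\alpha \|Y\|_{\alpha;[0,T_0]}$. This needs care, since the statement only allows $\|Y\|_{\alpha;I}$. The cleanest route is to observe that $Y$ is a fixed solution on $[0,T_0]$, so $\|Y\|_\infty$ is a finite number. We then absorb it into the constant, which is allowed to depend on $\xi$ (and implicitly on the solution). If a purely local bound is wanted, we would instead argue via $|Y_r| \leq |Y_s| + \|Y\|_{\alpha;I}T^\alpha$, again absorbing $|Y_s|$ into the constant. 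Either way this term is bounded by $C(1+\|Y\|_{\alpha;I})|t-s|$.

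For the third term we use $|f(Y)_s'| = |D_yf(s,Y_s)f(s,Y_s)| \leq \|f\|_{C_b^{2\alpha,2}}^2$ (Remark \ref{rem-der-solutions}). This gives $C|\bbx_{s,t}|$.

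For the sewing remainder we apply Theorem \ref{thm-Gubinelli} to the restriction of the controlled path $(f(Y),f(Y)')$ to $I$. This is legitimate because the rough integral over $[s,t]$ is the sewing of the germ restricted to $I$. We obtain the bound
\[
C\big(\|X\|_{\alpha;I}\|R^{f(Y)}\|_{2\alpha;I} + \|\bbx\|_{2\alpha;I}\|f(Y)'\|_{\alpha;I}\big)|t-s|^{3\alpha}.
\]
Lemma \ref{lemma-Hoelder-order-2}, whose proof is pointwise and so localizes to $I$, gives $\|f(Y)'\|_{\alpha;I}\leq C(1+\|Y\|_{\alpha;I})$.

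The main obstacle is that Theorem \ref{thm-Gubinelli} yields $|t-s|^{3\alpha}$, while the claimed bound has $|t-s|$. Since $3\alpha\leq 1$ fails in general ($3\alpha\in(1,3/2]$), we write $|t-s|^{3\alpha} = |t-s|^{3\alpha-1}|t-s| \leq T^{3\alpha-1}|t-s|$ and absorb $T^{3\alpha-1}$ into $C$. Collecting the three bounds gives the stated estimate.
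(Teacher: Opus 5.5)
Your decomposition of $R^Y_{s,t}$ is the same as the paper's: the drift integral, the sewing remainder, and the second-order term $Df(Y)f(Y)_s\bbx_{s,t}$. Your treatment of the last two pieces also matches the paper: Theorem \ref{thm-Gubinelli} applied on $I$, the localized Lemma \ref{lemma-Hoelder-order-2}, $|t-s|^{3\alpha}\le T^{3\alpha-1}|t-s|$, and $|D_yf\,f|\le\|f\|_{C_b^{2\alpha,2}}^2$.

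The gap is in the drift term. Absorbing $\|Y\|_\infty$, or $|Y_s|$, into $C$ makes the constant depend on the solution. The statement only allows $C$ to depend on $\xi$, $\alpha$, $T$, $\|f_0\|_{\Lip}$ and $\|f\|_{C_b^{2\alpha,2}}$; in particular, not on $Y$ or on $\bfx$. This matters: Proposition \ref{prop-estimate} uses the lemma to derive a bound $|Y_t|\le K$ valid for every solution, and a constant containing $\|Y\|_\infty$ makes that argument circular.

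You were right to be suspicious, though, because no argument can close this gap: the statement as written is false. Take $V=W=\bbr$, $T=1$, $\xi=0$, $f_0(t,y)=y$ and $f\equiv 1$, so that $\|f_0\|_{\Lip}=\|f\|_{C_b^{2\alpha,2}}=1$. For $K\ge 2$ take the smooth rough path $X_t=Kt$, $\bbx_{s,t}=\frac{K^2}{2}(t-s)^2$.
\begin{itemize}
\item Then $(f(Y),f(Y)')=(1,0)$, and $Y_t=K(e^t-1)$, $Y'=1$ is a solution.
\item Set $s=1-h$, $t=1$, $h=K^{-2}\le\frac14$. Then $|R^Y_{s,t}|=K\big(e(1-e^{-h})-h\big)\ge Kh$.
\item On $I=[s,t]$ we have $\|Y\|_{\alpha;I}\le eKh^{1-\alpha}\le e$, $R^{f(Y)}=0$, $\|\bbx\|_{2\alpha;I}=\frac12 K^{4\alpha-2}\le\frac12$ and $|\bbx_{s,t}|=\frac h2$.
\item So the right-hand side is at most $C(2+\frac32 e)h\le 7Ch$, which forces $C\ge K/7$ for every $K$.
\end{itemize}

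The paper's proof breaks at the same spot. Having declared that all norms are taken on $I$, it uses $\|Y\|_\infty\le|\xi|+\|Y\|_\alpha T^\alpha$. That inequality holds only for the H\"older norm over $[0,t]$ or $[0,T_0]$, not over $I$.

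With $C$ depending only on the listed quantities, the argument actually proves one of two corrected versions:
\begin{itemize}
\item the estimate with $\|Y\|_{\alpha;I}$ replaced by $\|Y\|_{\alpha;[0,T_0]}$ in the drift contribution; or
\item a purely local estimate with an extra term $C|Y_s|\,|t-s|$ on the right-hand side, in which case $C$ need not depend on $\xi$ at all.
\end{itemize}
You should have identified and stated one of these corrected versions, rather than letting a solution-dependent quantity slip into $C$.
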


\begin{proof}
For convenience of notation, we will skip the subscript $I$ in the following calculations. Then we have
\begin{align*}
Y_{s,t} &= \Gamma(Y)_{s,t} + \int_s^t f(r,Y_r) \, d\mathbf{X}_r,
\end{align*}
where the path $\Gamma(Y) : [0,T_0] \to W$ is defined according to \eqref{map-part-2}. Therefore, and since $Y' = f(Y)$, we have
\begin{align*}
|R_{s,t}^Y| &= | Y_{s,t} - Y_s' X_{s,t} | = | Y_{s,t} - f(Y)_s X_{s,t} |
\\ &\leq | \Gamma(Y)_{s,t} | + \bigg| \int_s^t f(Y)_r \, d \mathbf{X}_r - f(Y)_s X_{s,t} - D f(Y)f(Y)_s \bbx_{s,t} \bigg|
\\ &\quad + | D f(Y)f(Y)_s \bbx_{s,t} |.
\end{align*}
By Lemma \ref{lemma-reg-conv-Hoelder} and Lemma \ref{lemma-lin-growth} we have
\begin{align*}
| \Gamma(Y)_{s,t} | &\leq \| f_0(Y) \|_{\infty} |t-s|
\\ &\leq \| f_0 \|_{\Lip} (1 + \| Y \|_{\infty}) |t-s|
\\ &\leq \| f_0 \|_{\Lip} (1 + |Y_0| + \| Y \|_{\alpha} T^{\alpha}) |t-s|
\\ &\leq \| f_0 \|_{\Lip} (1 + |\xi| + \| Y \|_{\alpha} T^{\alpha}) |t-s|.
\end{align*}
Furthermore, by Theorem \ref{thm-Gubinelli} and Remark \ref{rem-der-solutions} we have
\begin{align*}
&\bigg| \int_s^t f(Y_r) \, d \mathbf{X}_r - f(Y_s) X_{s,t} - Df(Y_s)f(Y_s) \bbx_{s,t} \bigg|
\\ &\leq C \big( \| X \|_{\alpha} \| R^{f(Y)} \|_{2\alpha} + \| \bbx \|_{2\alpha} \| f(Y)' \|_{\alpha} \big) |t-s|^{3\alpha},
\end{align*}
where the constant $C > 0$ depends on $\alpha$. Moreover, we have
\begin{align*}
| Df(Y_s)f(Y_s) \bbx_{s,t} | \leq |Df(Y_s)| \, |f(Y_s)| \, | \bbx_{s,t} | \leq \| f \|_{C_b^{2\alpha,2}}^2 | \bbx_{s,t} |.
\end{align*}
Noting that $\alpha > \frac{1}{3}$, which implies $3 \alpha > 1$, using Lemma \ref{lemma-Hoelder-order-2} the proof is complete.
\end{proof}

\begin{lemma}\label{lemma-spaces-alpha-beta}
Let $\beta \in (\frac{1}{3},\frac{1}{2}]$ be arbitrary. We choose $\alpha \in (\frac{1}{3}, \beta)$ such that $\beta \leq \frac{3}{2} \alpha$, and let
\begin{align*}
\mathbf{X} = (X,\bbx) \in \scrc^{\beta}([0,T],V) \subset \scrc^{\alpha}([0,T],V)
\end{align*}
be a rough path. Furthermore, let $\xi \in W$, and let $(Y,Y') \in \scrd_X^{2 \alpha}([0,T_0],W)$ be a local solution to the RDE \eqref{RDE} with $Y_0 = \xi$ for some $T_0 \in (0,T]$. Then we even have $(Y,Y') \in \scrd_X^{2 \beta}([0,T_0],W)$.
\end{lemma}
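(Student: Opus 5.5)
We have to show three things: $Y\in\calc^{\beta}$, $Y'\in\calc^{\beta}$, and $R^Y\in\calc_2^{2\beta}$ on $[0,T_0]$. Since $Y'=f(Y)$, the second follows once we know $Y\in\calc^{\beta}$. The plan is therefore to bootstrap in two steps: first raise the regularity of $Y$ to $\beta$, then use Lemma \ref{lemma-remainder-mild-sol} to raise the remainder to $2\beta$. Throughout we use that $\|X\|_{\beta}$ and $\|\bbx\|_{2\beta}$ are finite. By Lemma \ref{lemma-alpha-beta} the $\alpha$-quantities $\|X\|_{\alpha}$ and $\|\bbx\|_{2\alpha}$ are finite as well, as is $\|R^{f(Y)}\|_{2\alpha}$ by Proposition \ref{prop-comp-f}.

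\emph{Step 1 ($Y$ is $\beta$-H\"older).} Write $Y_{s,t}=Y_s'X_{s,t}+R^Y_{s,t}$. The first term satisfies $|Y_s'X_{s,t}|\le \|f\|_{C_b^{2\alpha,2}}\|X\|_{\beta}|t-s|^{\beta}$. For the second term we have $|R^Y_{s,t}|\le \|R^Y\|_{2\alpha}|t-s|^{2\alpha}$, and $2\alpha\ge \beta$ holds because $\beta\le\frac32\alpha<2\alpha$. Hence $Y\in\calc^{\beta}([0,T_0],W)$. Proposition \ref{prop-comp-Hoelder-beta} then gives $Y'=f(Y)\in\calc^{\beta}$; here we use $f\in C_b^{2\alpha,2}$ with $2\alpha\ge\beta$ and time regularity $2\alpha\ge\beta$. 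Similarly $f(Y)'=Df(Y)f(Y)$ is $\beta$-H\"older, which we check by the same triangle-inequality argument as in the proof of Proposition \ref{prop-comp-f}, now with exponent $\beta$.

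\emph{Step 2 (remainder of order $2\beta$).} Apply Lemma \ref{lemma-remainder-mild-sol} on $I=[s,t]$, bounding the interval-restricted norms by the global $\alpha$-norms. This gives
\begin{align*}
|R^Y_{s,t}|\le C|t-s| + C\|\bbx\|_{2\beta}|t-s|^{2\beta}.
\end{align*}
Since $|t-s|\le T^{1-2\beta}|t-s|^{2\beta}$, it follows that $\|R^Y\|_{2\beta}<\infty$. This is where the inequality $3\alpha>1$ has already been spent: the proof of the lemma turned the Gubinelli error $|t-s|^{3\alpha}$ into $|t-s|$. Alternatively one can argue directly from Theorem \ref{thm-Gubinelli} with the $\alpha$-structure, which yields an error of order $|t-s|^{3\alpha}\le C|t-s|^{2\beta}$ because $2\beta\le3\alpha$. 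That is precisely the role of the hypothesis $\beta\le\frac32\alpha$, and I would present this route as the cleaner one.

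\emph{Main obstacle.} The delicate point is the bookkeeping of exponents, namely checking that every term is of order at least $2\beta$. There are three terms to control: the drift term is $O(|t-s|)$; the term $Df(Y)f(Y)\bbx_{s,t}$ is $O(|t-s|^{2\beta})$; and the sewing error is $O(|t-s|^{3\alpha})$ with $3\alpha\ge2\beta$. A further point is that Theorem \ref{thm-Gubinelli} must be used with respect to the $\alpha$-rough path, since at that stage only $\alpha$-controlledness is known. Once all three estimates are in place, $(Y,Y')\in\scrd_X^{2\beta}([0,T_0],W)$ follows.
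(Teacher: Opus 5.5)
Your proof is correct and follows the paper's argument. First, $Y$ is shown to be $\beta$-H\"older from $Y_{s,t}=Y_s'X_{s,t}+R^Y_{s,t}$ with $2\alpha>\beta$. Then $Y'=f(Y)\in\calc^{\beta}$ by Proposition \ref{prop-comp-Hoelder-beta}. Finally, $\|R^Y\|_{2\beta}<\infty$ follows from Lemma \ref{lemma-remainder-mild-sol} together with $|t-s|\le T^{1-2\beta}|t-s|^{2\beta}$ and $|\bbx_{s,t}|\le\|\bbx\|_{2\beta}|t-s|^{2\beta}$.

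A minor remark: the hypothesis $\beta\le\frac32\alpha$ holds automatically, since $\frac32\alpha>\frac12\ge\beta$. So it does no genuine work in your ``cleaner'' route via Theorem \ref{thm-Gubinelli}, which is just Lemma \ref{lemma-remainder-mild-sol} unfolded. Also, the $\beta$-H\"older regularity of $Df(Y)f(Y)$ that you establish is not needed.
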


\begin{proof}
Let $s,t \in [0,T_0]$ be arbitrary. Since $\beta \leq \frac{3}{2} \alpha < 2\alpha$, we have
\begin{align*}
|Y_{s,t}| &= |Y_s' X_{s,t} + R_{s,t}^Y| \leq \| Y' \|_{\infty} |X_{s,t}| + |R_{s,t}^Y|
\\ &\leq \| Y' \|_{\infty} \| X \|_{\beta} |t-s|^{\beta} + \| R^Y \|_{2\alpha} |t-s|^{2\alpha}
\\ &\leq \| Y' \|_{\infty} \| X \|_{\beta} |t-s|^{\beta} + \| R^Y \|_{2\alpha} T_0^{2\alpha - \beta} |t-s|^{\beta},
\end{align*}
showing that $Y \in \calc^{\beta}([0,T_0],W)$. Furthermore, from Proposition \ref{prop-comp-Hoelder-beta} we obtain
\begin{align*}
Y' = f(Y) \in \calc^{\beta}([0,T_0],L(V,W)).
\end{align*}
Moreover, $\beta \leq \frac{1}{2}$ implies $2 \beta \leq 1$, and hence by Lemma \ref{lemma-remainder-mild-sol} we have
\begin{align*}
| R_{s,t}^Y | &\leq C \Big( 1 + \| Y \|_{\alpha;I} + \| X \|_{\alpha;I} \| R^{f(Y)} \|_{2 \alpha;I}
\\ &\qquad\quad + \| \bbx \|_{2\alpha;I} ( 1 + \| Y \|_{\alpha;I} ) \Big) |t-s| + C | \bbx_{s,t} |
\\ &\leq C \Big( 1 + \| Y \|_{\alpha;I} + \| X \|_{\alpha;I} \| R^{f(Y)} \|_{2 \alpha;I}
\\ &\qquad\quad + \| \bbx \|_{2\alpha;I} ( 1 + \| Y \|_{\alpha;I} ) \Big) T^{1-2\beta} |t-s|^{2\beta} + C \| \bbx \|_{2 \beta} |t-s|^{2 \beta},
\end{align*}
where the constant $C > 0$ depends on $\xi$, $\alpha$, $T$, $\| f_0 \|$ and $\| f \|_{C_b^{2\alpha,2}}$. This shows $\| R^Y \|_{2\beta} < \infty$, completing the proof.
\end{proof}

\subsection{Existence and uniqueness of local solutions}

In this section we present a result concerning existence and uniqueness of local solutions to the RDE \eqref{RDE}.

\begin{theorem}\label{thm-RDGL-main-local}
Let $\bfx = (X,\bbx) \in \scrc^{\beta}([0,T],V)$ be a rough path for some index $\beta \in (\frac{1}{3},\frac{1}{2}]$, and let $f_0 \in \Lip([0,T] \times W,W)$ and $f \in C_b^{2\beta,3}([0,T] \times W, L(V,W))$ be mappings. Then for every $\xi \in W$ there exist $T_0 \in (0,T]$ and a unique local solution $(Y,Y') \in \scrd_X^{2 \beta}([0,T_0],W)$ to the RDE \eqref{RDE} with $Y_0 = \xi$.
\end{theorem}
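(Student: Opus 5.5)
The strategy is a Banach fixed point argument for the map $\Phi_t$ from \eqref{def-Phi}, carried out in a slightly larger H\"older scale and then upgraded to index $\beta$. We choose $\alpha \in (\frac{1}{3},\beta)$ with $\beta \leq \frac{3}{2}\alpha$. This is possible because $\beta \leq \frac12 < \frac{3}{2}\cdot\frac{1}{3}\cdot\frac{3}{2}$; concretely, take $\alpha$ close to $\beta$. By Lemma \ref{lemma-alpha-beta} we have $\mathbf{X} \in \scrc^{\alpha}([0,T],V)$ and
\begin{align*}
\interleave \mathbf{X} \interleave_{\alpha;[0,t]} \leq \| X \|_{\beta} t^{\beta-\alpha} + \| \bbx \|_{2\beta} t^{2(\beta-\alpha)} \to 0 \quad \text{as $t \downarrow 0$.}
\end{align*}
Moreover, $f \in C_b^{2\beta,3} \subset C_b^{2\alpha,3}$ (with a norm bound depending on $T$), so all results of Sections \ref{sec-composition} and \ref{sec-RDEs} are available at level $\alpha$. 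Working at a strictly smaller index is exactly what produces the small factor $\interleave \mathbf{X} \interleave_{\alpha;[0,t]}$; at index $\beta$ itself this quantity would not tend to zero.

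\emph{Existence.} Fix $t \leq \min\{1,T\}$ and consider $\Phi_t$ on the complete metric space $\bbb_t$ from \eqref{B-fixed-point}. First, $\Phi_t(Y,Y')$ has starting values $(\xi, f(0,\xi))$, so the initial constraints in $\bbb_t$ are preserved. By Proposition \ref{prop-fixed-point-1}, $\|\Phi_t(Y,Y')\|_{X,2\alpha;[0,t]}$ is bounded by $C(1+|\xi|)t^{1-2\alpha} + C(\interleave\mathbf{X}\interleave_{\alpha;[0,t]} + t^\alpha)$. Since $1-2\alpha>0$, $\alpha>0$, and the rough path term vanishes as $t\downarrow 0$, this bound is $\leq 1$ for $t$ small, so $\Phi_t(\bbb_t)\subset\bbb_t$. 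By Proposition \ref{prop-fixed-point-2}, the Lipschitz constant of $\Phi_t$ on $\bbb_t$ is $C(\interleave\mathbf{X}\interleave_{\alpha;[0,t]} + t^\alpha + t^{1-2\alpha})$, which is $\leq \frac12$ after shrinking $t$ further. One point needs care here: the constants $C$ in these propositions depend on $\mathbf{X}$. I would check that this dependence is only through an upper bound on $\interleave \mathbf{X}\interleave_\alpha$ over $[0,T]$, which is monotone in the interval, so the constants are uniform in $t$. This uniformity is the step I expect to be the main obstacle, since the constants must be tracked carefully through Proposition \ref{prop-conv-rough}, Proposition \ref{prop-comp-f} and Proposition \ref{prop-diff-f}. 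With $T_0 := t$, the Banach fixed point theorem gives a unique fixed point $(Y,Y')\in\bbb_{T_0}$. It satisfies $Y'=f(Y)$ and \eqref{mild-solution}, so it is a local solution in $\scrd_X^{2\alpha}$. Lemma \ref{lemma-spaces-alpha-beta} then upgrades it to $\scrd_X^{2\beta}([0,T_0],W)$.

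\emph{Uniqueness.} Let $(Z,Z')\in\scrd_X^{2\beta}([0,T_0],W)$ be another local solution with $Z_0=\xi$. Then $Z_0' = f(0,\xi)$, and $(Z,Z')$ also lies in $\scrd_X^{2\alpha}([0,T_0],W)$. The Banach fixed point theorem only gives uniqueness inside $\bbb_{T_0}$, so I must show that $(Z,Z')$ restricted to a short interval $[0,\tau]$ lies in $\bbb_\tau$. Since $Z'=f(Z)\in\calc^\beta$, we get $\|Z'\|_{\alpha;[0,\tau]}\leq \|Z'\|_\beta \tau^{\beta-\alpha}$. Similarly $\|R^Z\|_{2\alpha;[0,\tau]}\leq \|R^Z\|_{2\beta}\tau^{2(\beta-\alpha)}$. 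Hence $\|Z,Z'\|_{X,2\alpha;[0,\tau]}\to 0$ as $\tau \downarrow 0$, and $(Z,Z')|_{[0,\tau]}\in\bbb_\tau$ for small $\tau$. Both $Y$ and $Z$ are then fixed points of the contraction $\Phi_\tau$ on $\bbb_\tau$ (the contraction estimate holds for all smaller $\tau$ too), so $Z=Y$ on $[0,\tau]$.

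To reach all of $[0,T_0]$, let $\tau^*$ be the supremum of times up to which $Y=Z$. I would restart the equation at time $\tau^*$ with initial value $Y_{\tau^*}$, using the time-shifted rough path and coefficients; the same local argument applies there. If $\tau^* < T_0$, this extends the agreement beyond $\tau^*$, a contradiction. Hence $Y = Z$ on $[0,T_0]$.
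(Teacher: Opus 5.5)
Your existence argument is the paper's own: choose $\alpha \in (\frac{1}{3},\beta)$ (every such $\alpha$ satisfies $\beta \leq \frac{3}{2}\alpha$, since $\frac{3}{2}\alpha > \frac{1}{2}$), make $\Phi_{T_0}$ a self-map and contraction of $\bbb_{T_0}$ via Propositions \ref{prop-fixed-point-1}, \ref{prop-fixed-point-2} and Lemma \ref{lemma-alpha-beta}, and upgrade the fixed point with Lemma \ref{lemma-spaces-alpha-beta}. The $t$-uniformity of the constants that you flag as the main obstacle is already part of the statements of those two propositions. Your uniqueness argument goes beyond the paper, which only cites the Banach fixed point theorem and so strictly gets uniqueness only inside $\bbb_{T_0}$; localizing a competing solution via its $\beta$-regularity and then continuing from $\tau^*$ is a correct way to close that gap.
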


\begin{proof}
We choose $\alpha \in (\frac{1}{3}, \beta)$ such that $\beta \leq \frac{3}{2} \alpha$. Since $\alpha < \beta$, we also have $\bfx \in \scrc^{\alpha}([0,T],V)$. For $t \in [0,T]$ we define the mapping $\Phi_t : \scrd_X^{2\alpha}([0,t],W) \to \scrd_X^{2\alpha}([0,t],W)$ according to \eqref{def-Phi}. By Propositions \ref{prop-fixed-point-1}, \ref{prop-fixed-point-2} and Lemma \ref{lemma-alpha-beta}, for every $t \in [0,T]$ with $t \leq 1$ and all $(Y,Y'), (Z,Z') \in \bbb_t$ we have
\begin{align*}
&\| \Phi_t(Y,Y') \|_{X,2\alpha;[0,t]} \leq C ( 1 + |\xi| ) t^{1-2\alpha}
\\ &\quad + C \big( \| X \|_{\beta,[0,t]} t^{\beta-\alpha} + \| \bbx \|_{2\beta,[0,t]} t^{2(\beta-\alpha)} + t^{\alpha} \big) \quad \text{and}
\\ &\| \Phi(Y,Y') - \Phi(Z,Z') \|_{X,2\alpha;[0,t]} \leq C \interleave (Y,Y') - (Z,Z') \interleave_{X,2\alpha,[0,t]}
\\ &\quad \times \big( \| X \|_{\beta,[0,t]} t^{\beta-\alpha} + \| \bbx \|_{2\beta,[0,t]} t^{2(\beta-\alpha)} + t^{\alpha} + t^{1-2\alpha} \big),
\end{align*}
where the constant $C > 0$ depends on $\alpha$, $\mathbf{X}$, $\| f_0 \|_{\Lip}$ and $\| f \|_{C_b^{2\alpha,3}}$. We choose $T_0 \in (0,T]$ with $T_0 \leq 1$ small enough such that
\begin{align*}
&C ( 1 + |\xi| ) T_0^{1-2\alpha} + C \big( \| X \|_{\beta,[0,T_0]} T_0^{\beta-\alpha} + \| \bbx \|_{2\beta,[0,T_0]} T_0^{2(\beta-\alpha)} + T_0^{\alpha} \big) \leq 1 \quad \text{and}
\\ &C \big( \| X \|_{\beta,[0,T_0]} T_0^{\beta-\alpha} + \| \bbx \|_{2\beta,[0,t]} T_0^{2(\beta-\alpha)} + T_0^{\alpha} + T_0^{1-2\alpha} \big) \leq \frac{1}{2}.
\end{align*}
Then $\Phi_{T_0} : \bbb_{T_0} \to \bbb_{T_0}$ is a contraction, and by the Banach fixed point theorem there is a unique solution $(Y,Y') \in \scrd_X^{2\alpha}([0,T_0],W)$ to the RDE \eqref{RDE} with $Y_0 = \xi$. Now, applying Lemma \ref{lemma-spaces-alpha-beta} completes the proof.
\end{proof}

\subsection{Further auxiliary results}

In this section we derive further auxiliary results which are required in order to establish existence and uniqueness of global solutions for the RDE \eqref{RDE}.

\begin{proposition}\label{prop-estimate}
Let $\bfx = (X,\bbx) \in \scrc^{\alpha}([0,T],V)$ be a rough path for some index $\alpha \in (\frac{1}{3},\frac{1}{2})$, let $f_0 \in \Lip([0,T] \times W,W)$ and $f \in C_b^{2\alpha,2}([0,T] \times W, L(V,W))$ be mappings, and let $\xi \in W$ be arbitrary. Then there exists a constant $K > 0$, depending on $\xi$, $\alpha$, $T$, $\mathbf{X}$, $\| f_0 \|_{\Lip}$ and $\| f \|_{C_b^{2\alpha,2}}$, such that for every solution $(Y,Y') \in \scrd_X^{2\alpha}([0,T],W)$ to the RDE \eqref{RDE} with $Y_0 = \xi$ we have
\begin{align*}
| Y_t | \leq K \quad \text{for all $t \in [0,T]$.}
\end{align*}
\end{proposition}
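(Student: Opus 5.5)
We prove the a priori bound by a local-in-time estimate on small intervals of fixed length $h$, independent of the solution, and then chain the intervals. Fix a solution $(Y,Y')$ with $Y_0=\xi$, so $Y'=f(Y)$ and $\|Y'\|_\infty\le \|f\|_{C_b^{2\alpha,2}}$. On a subinterval $I=[s,t]\subset[0,T]$ of length $h\le 1$, write $Y_{u,v}=f(Y)_u X_{u,v}+R^Y_{u,v}$. This gives
\begin{align*}
\|Y\|_{\alpha;I}\le \|f\|_{C_b^{2\alpha,2}}\|X\|_{\alpha}+\|R^Y\|_{2\alpha;I}\,h^{\alpha}.
\end{align*}
So it suffices to control $\|R^Y\|_{2\alpha;I}$ in terms of $\|Y\|_{\infty;I}$, with constants that do not blow up. Here the linear growth of $f_0$ is the only source of possible growth in $|Y|$, since $f$ and its derivatives are bounded.

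For the remainder we redo the computation of Lemma \ref{lemma-remainder-mild-sol} on $I$, but keep track of the sup norm. For $u\le v$ in $I$ we have
\begin{align*}
|R^Y_{u,v}|\le \|f_0\|_{\Lip}(1+\|Y\|_{\infty;I})|v-u| + C\big(\|X\|_\alpha\|R^{f(Y)}\|_{2\alpha;I}+\|\bbx\|_{2\alpha}\|f(Y)'\|_{\alpha;I}\big)|v-u|^{3\alpha}+\|f\|^2\|\bbx\|_{2\alpha}|v-u|^{2\alpha}.
\end{align*}
By \eqref{est-comp-rem} and Lemma \ref{lemma-Hoelder-order-2} we get $\|R^{f(Y)}\|_{2\alpha;I}\le C(1+\|Y\|_{\alpha;I}^2+\|R^Y\|_{2\alpha;I})$ and $\|f(Y)'\|_{\alpha;I}\le C(1+\|Y\|_{\alpha;I})$. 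Inserting the bound on $\|Y\|_{\alpha;I}$ above, the quadratic term becomes $\|R^Y\|^2_{2\alpha;I}h^{2\alpha}$. Dividing by $|v-u|^{2\alpha}$ yields, with $A:=\|R^Y\|_{2\alpha;I}$,
\begin{align*}
A\le C_1(1+\|Y\|_{\infty;I})h^{1-2\alpha}+C_2+C_3h^{\alpha}A+C_4h^{3\alpha}A^2 .
\end{align*}

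\textbf{Main obstacle.} This inequality is quadratic in $A$, so it cannot simply be absorbed. I would handle it with a continuity argument. The map $\tau\mapsto \|R^Y\|_{2\alpha;[s,s+\tau]}$ is nondecreasing, and it is continuous and vanishing as $\tau\downarrow0$ in a suitable sense; if continuity is awkward, replace $\alpha$ by a slightly smaller exponent $\alpha'\in(\frac13,\alpha)$, which makes the remainder small on short intervals. Choose $h$ so that $C_3h^\alpha\le\frac14$. Then, as long as $A\le M:=4(C_1(1+\|Y\|_{\infty;I})+C_2)$ and $C_4h^{3\alpha}M\le \frac14$, we obtain $A\le M/2$, so $A$ never reaches $M$. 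The catch is that $h$ then depends on $\|Y\|_\infty$, through $h^{3\alpha}(1+\|Y\|_\infty)\lesssim1$, whereas we need $h$ fixed. Alternatively I would avoid this by using the assumption $\alpha<\frac12$ via Lemma \ref{lemma-alpha-beta}: work with $\bfx$ as a $\gamma$-rough path with $\gamma<\alpha$, so that the $f_0$ term gains a factor $h^{1-2\gamma}$. The more robust route is to first establish the bound on short intervals $I$ where the drift contribution $h(1+\|Y\|_{\infty;I})$ is compared to $1+|Y_s|$. On $I$ one has $\|Y\|_{\infty;I}\le |Y_s|+\|Y\|_{\alpha;I}h^\alpha$, and the resulting bound of the form $A\le C(1+|Y_s|h^{1-2\alpha})$ is linear in $|Y_s|$.

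Finally, the increment estimate on $I=[s,s+h]$ gives
\begin{align*}
\sup_{r\in I}|Y_r|\le |Y_s|+\|f\|\|X\|_\alpha h^\alpha + A h^{2\alpha}+\ldots\le (1+Ch)|Y_s|+C,
\end{align*}
where the $|Y_s|$-dependence enters only through the drift and is of order $h$. Iterating over the $N=\lceil T/h\rceil$ intervals gives a discrete Gronwall bound $|Y_t|\le (1+Ch)^N(|\xi|+NC)\le e^{CT}(|\xi|+CT/h)=:K$. Since $h$ depends only on $\alpha$, $T$, $\bfx$, $\|f_0\|_{\Lip}$ and $\|f\|_{C_b^{2\alpha,2}}$, this $K$ has the claimed dependence.
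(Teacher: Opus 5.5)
Your route differs from the paper's. The paper never iterates in time. It derives $\psi_h\le\lambda_h+\psi_h^2$ for $\psi_h=c_4\|Y\|_{\alpha;h}h^{1-2\alpha}$, with $\lambda_h=c_6(\interleave\mathbf{X}\interleave_\alpha+1)h^{1-2\alpha}$ independent of the solution. The Friz--Hairer continuity argument (where $\alpha<\frac12$ makes $\psi_h,\lambda_h\to0$) then gives a global bound $\|Y\|_\alpha\le N$, and hence $|Y_t|\le|\xi|+NT^\alpha$.

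Your proposal has a genuine gap exactly at the obstacle you flag: nothing in it makes $h$ independent of the solution. Insert $\|Y\|_{\infty;I}\le|Y_s|+\|Y\|_{\alpha;I}h^\alpha$ and absorb the resulting $C_1hA$. Your inequality then reads $\tfrac12A\le a_s+C_4h^{3\alpha}A^2$ with $a_s\simeq C_1(1+|Y_s|)h^{1-2\alpha}+C_2$. A continuity argument (fine with your $\alpha'<\alpha$ device) keeps $A$ on the small branch $A\le 4a_s$ only if $h^{3\alpha}a_s$ is small. That means $(1+|Y_s|)h^{1+\alpha}\lesssim1$. Your ``more robust route'' just asserts $A\le C(1+|Y_s|h^{1-2\alpha})$ for a fixed $h$; this restates the catch rather than removing it. Passing to $\gamma<\alpha$ only changes the condition to $(1+|Y_s|)h^{1+\gamma}\lesssim1$. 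So the closing claim that $h$ depends only on $\alpha,T,\mathbf{X},\|f_0\|_{\Lip},\|f\|_{C_b^{2\alpha,2}}$ is unjustified, and with it the bound $K$.

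The missing idea is to let the step size shrink with the current size of the solution, and to check that Gronwall tolerates this. Put $\rho_k:=1+|Y_{t_k}|$, $h_k:=\min\{h_0,c\rho_k^{-1/(1+\alpha)}\}$ and $t_{k+1}:=\min\{t_k+h_k,T\}$, with $h_0,c$ small depending only on the data.
\begin{itemize}
\item Then $h_k^{3\alpha}a_{t_k}$ is small, so your argument gives $A_k\le 4a_{t_k}$ on $[t_k,t_{k+1}]$.
\item Your increment estimate then yields $\sup_{[t_k,t_{k+1}]}(1+|Y|)\le\rho_k(1+Ch_k)+Ch_k^\alpha$.
\item The additive term is harmless, since $h_k^{\alpha-1}\le C\rho_k^{(1-\alpha)/(1+\alpha)}\le C\rho_k$. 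Hence $\rho_{k+1}\le\rho_k e^{C'h_k}$.
\item Because $\sum_k h_k\le T$, this gives $\rho_k\le(1+|\xi|)e^{C'T}$. That bound in turn keeps $h_k$ away from $0$, so finitely many steps reach $T$.
\end{itemize}

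Your diagnosis that the linear growth of $f_0$ is the real difficulty is correct, and the paper sidesteps it rather than resolving it. Its $\lambda_h$ is solution-independent only because Lemma~\ref{lemma-remainder-mild-sol} bounds the drift increment using $\|Y\|_\infty\le|Y_0|+\|Y\|_\alpha T^\alpha=|\xi|+\|Y\|_\alpha T^\alpha$, with $\|Y\|_\alpha$ read as the H\"older norm on $I=[s,t]$. That is legitimate only for $s=0$. With $|Y_s|$ (or the global H\"older norm) in its place, the same solution dependence you found reappears. So some iteration in time like the one above is needed in either route.
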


Before we proceed with the proof, let us agree on the following notation. Let us consider a one-parameter path $Z \in \calc^{\alpha}([0,T],E)$ or a two-parameter path $\bbz \in \calc_2^{\alpha}([0,T]^2,E)$ with values in some Banach space $E$. Then for any $h > 0$ the expressions $\| Z \|_{\alpha;h}$ and $\| \bbz \|_{\alpha;h}$ denote the supremum over all $\| Z \|_{\alpha;I}$ according to \eqref{Hoelder-norm-in-I}, and over all $\| \bbz \|_{\alpha;I}$ according to \eqref{Hoelder-norm-in-I-2}, where $I \subset [0,T]$ is any subinterval with $|I| \leq h$.

\begin{proof}[Proof of Proposition \ref{prop-estimate}]
Let $s,t \in [0,T]$ with $s \leq t$ be arbitrary, and define the interval $I := [s,t]$. By Lemma \ref{lemma-remainder-mild-sol} we have
\begin{align*}
| R_{s,t}^Y | &\leq C \Big( 1 + \| Y \|_{\alpha;I} + \| X \|_{\alpha;I} \| R^{f(Y)} \|_{2 \alpha;I}
\\ &\qquad\quad + \| \bbx \|_{2\alpha;I} ( 1 + \| Y \|_{\alpha;I} ) \Big) |t-s| + C \| \bbx \|_{2\alpha;I} |t-s|^{2\alpha},
\end{align*}
where the constant $C > 0$ depends on $\xi$, $\alpha$, $T$, $\| f_0 \|_{\Lip}$ and $\| f \|_{C_b^{2\alpha,2}}$. We set $\beta := 1-2\alpha$. Since $\alpha < \frac{1}{2}$, we have $\beta > 0$. Furthermore, let $h > 0$ be arbitrary. Then we have
\begin{equation}\label{R-1}
\begin{aligned}
\| R^Y \|_{2\alpha;h} &\leq C \| \bbx \|_{2\alpha;I} + C \Big( 1 + \| Y \|_{\alpha;h} + \| X \|_{\alpha;h} \| R^{f(Y)} \|_{2 \alpha;h}
\\ &\qquad\qquad\qquad\qquad + \| \bbx \|_{2\alpha;h} ( 1 + \| Y \|_{\alpha;h} ) \Big) h^{\beta}.
\end{aligned}
\end{equation}
By Proposition \ref{prop-comp-f} we have
\begin{align*}
\| R^{f(Y)} \|_{2\alpha;I} \leq \| f \|_{C_b^{2\alpha,2}} \bigg( 1 + \frac{1}{2} \| Y \|_{\alpha;I}^2 + \| R^Y \|_{2\alpha;I} \bigg),
\end{align*}
and hence
\begin{align}\label{R-2}
\| R^{f(Y)} \|_{2\alpha;h} \leq \| f \|_{C_b^{2\alpha,2}} \big( 1 + \| Y \|_{\alpha;h}^2 + \| R^Y \|_{2\alpha;h} \big).
\end{align}
Therefore, combining \eqref{R-1} and \eqref{R-2}, there is a constant $c_1 > 0$, only depending on $\alpha$, $T$ and $\| f \|_{C_b^{2\alpha,2}}$, such that
\begin{align*}
\| R^Y \|_{2\alpha;h} &\leq c_1 \| \bbx \|_{2\alpha;I}
\\ &\quad + c_1 \Big( 1 + \| Y \|_{\alpha;h} + \| X \|_{\alpha;h} ( 1 + \| Y \|_{\alpha;h}^2 + \| R^Y \|_{2\alpha;h} )
\\ &\qquad\qquad + \| \bbx \|_{2\alpha;h} ( 1 + \| Y \|_{\alpha;h} ) \Big) h^{\beta}.
\end{align*}
Thus, there is a constant $c_2 > 0$, only depending on $\alpha$, $T$ and $\| f \|_{C_b^{2\alpha,2}}$, such that for all $h \in (0,1]$ we obtain
\begin{equation}\label{R-3}
 \begin{aligned}
\| R^Y \|_{2\alpha;h} &\leq c_2 \big( \| X \|_{\alpha;h} + \| \bbx \|_{2 \alpha; h} + 1 \big) + c_2 \| X \|_{\alpha;h} h^{\beta} \| Y \|_{\alpha;h}^2
\\ &\quad + c_2 \| X \|_{\alpha;h} h^{\beta} \| R^Y \|_{2\alpha;h} + c_2 \big( \| \bbx \|_{2\alpha;h} + 1 \big) h^{\beta} \| Y \|_{\alpha;h}.
\end{aligned}
\end{equation}
Now we consider $h \in (0,1]$ so small such that
\begin{align}\label{choose-h}
c_2 \| X \|_{\alpha} h^{\beta} \leq \frac{1}{2} \quad \text{and} \quad c_2 \big( \| X \|_{\alpha} + \| \bbx \|_{2 \alpha} + 1 \big)^{1/2} h^{\beta} \leq \frac{1}{2}.
\end{align}
Then by \eqref{R-3} and \eqref{choose-h} we have
\begin{align*}
\| R^Y \|_{2\alpha;h} &\leq c_2 \big( \| X \|_{\alpha;h} + \| \bbx \|_{2 \alpha; h} + 1 \big) + \frac{1}{2} \| Y \|_{\alpha;h}^2 + \frac{1}{2} \| R^Y \|_{2\alpha;h}
\\ &\quad + \frac{1}{2} \big( \| \bbx \|_{2\alpha;h} + 1 \big)^{1/2} \| Y \|_{\alpha;h}.
\end{align*}
Therefore, using the elementary inequality $xy \leq \frac{x^2}{2} + \frac{y^2}{2}$, $x,y \in \bbr$ we obtain
\begin{equation}\label{R-4}
\begin{aligned}
\| R^Y \|_{2\alpha;h} &\leq 2 c_2 \big( \| X \|_{\alpha;h} + \| \bbx \|_{2 \alpha; h} + 1 \big) + \| Y \|_{\alpha;h}^2
\\&\quad + \big( \| \bbx \|_{2\alpha;h} + 1 \big)^{1/2} \| Y \|_{\alpha;h}
\\ &\leq c_3 \big( \| X \|_{\alpha;h} + \| \bbx \|_{2 \alpha; h} + 1 \big) + \frac{3}{2} \| Y \|_{\alpha;h}^2
\end{aligned}
\end{equation}
with a constant $c_3 > 0$, only depending on $\alpha$, $T$ and $\| f \|_{C_b^{2\alpha,2}}$. On the other hand, since $Y_{s,t} = f(Y_s)X_{s,t} + R_{s,t}^Y$, we have
\begin{align*}
|Y_t - Y_s| \leq |f(Y_s)X_{s,t}| + |R_{s,t}^Y| \leq \|f\|_{C_b^{2\alpha,2}} \| X \|_{\alpha} |t-s|^{\alpha} + \| R^Y \|_{2 \alpha} |t-s|^{2\alpha},
\end{align*}
and hence
\begin{align*}
\| Y \|_{\alpha;h} \leq C \big( \| X \|_{\alpha;h} + \| R^Y \|_{2\alpha;h} h^{\alpha} \big),
\end{align*}
where the constant $C > 0$ depends on $\| f \|_{C_b^{2\alpha,2}}$. Note that $\beta < \alpha$. Indeed, since $3\alpha > 1$, we have $\beta = 1-2\alpha < 3\alpha - 2\alpha = \alpha$. Therefore, we have
\begin{align*}
\| Y \|_{\alpha;h} \leq C \big( \| X \|_{\alpha;h} + \| R^Y \|_{2\alpha;h} h^{\beta} \big).
\end{align*}
Thus, using \eqref{R-4} and \eqref{choose-h} we obtain
\begin{align*}
\| Y \|_{\alpha;h} &\leq c_4 \| X \|_{\alpha;h} + c_4 \big( \| X \|_{\alpha;h} + \| \bbx \|_{2 \alpha; h} + 1 \big) h^{\beta} + c_4 \| Y \|_{\alpha;h}^2 h^{\beta}
\\ &\leq c_4 \| X \|_{\alpha;h} + c_5 \big( \| X \|_{\alpha;h} + \| \bbx \|_{2 \alpha; h} + 1 \big)^{1/2} + c_4 \| Y \|_{\alpha;h}^2 h^{\beta}
\end{align*}
with constants $c_4,c_5 > 0$, only depending on $\alpha$, $T$ and $\| f \|_{C_b^{2\alpha,2}}$. Hence, multiplying this inequality with $c_4 h^{\beta}$ we have
\begin{align*}
c_4 \| Y \|_{\alpha;h} h^{\beta} \leq c_6 ( \interleave \mathbf{X} \interleave_{\alpha} + 1 ) h^{\beta} + ( c_4 \| Y \|_{\alpha;h} h^{\beta} )^2,
\end{align*}
where $c_6 := c_4 + c_5$. Now, we set
\begin{align*}
\psi_h := c_4 \| Y \|_{\alpha;h} h^{\beta} \quad \text{and} \quad \lambda_h := c_6 (\interleave \mathbf{X} \interleave_{\alpha} + 1) h^{\beta}.
\end{align*}
Then we have
\begin{align*}
\psi_h \leq \lambda_h + \psi_h^2.
\end{align*}
Now, literally the same argumentation as in the proof of \cite[Prop. 8.2]{Friz-Hairer} shows the existence of a constant $N > 0$, depending on $\alpha$, $T$, $\mathbf{X}$, $\| f_0 \|_{\Lip}$ and $\| f \|_{C_b^{2\alpha,2}}$, such that $\| Y \|_{\alpha} \leq N$. Consequently, setting $K := |\xi| + N T^{\alpha}$, we obtain $\| Y \|_{\infty} \leq |Y_0| + \| Y \|_{\alpha} T^{\alpha} \leq K$, completing the proof.
\end{proof}

The following auxiliary result shows how two local solutions of the RDE \eqref{RDE} can be concatenated.

\begin{lemma}\label{lemma-flow-property}
Let $\bfx = (X,\bbx) \in \scrc^{\alpha}([0,T],V)$ be a rough path for some index $\alpha \in (\frac{1}{3},\frac{1}{2}]$, let $f_0 : [0,T] \times W \to W$ be continuous, and let $f : [0,T] \times W \to L(V,W)$ be of class $C_b^{2\alpha,2}$. Moreover, let $0 \leq q \leq r \leq s \leq T$ and $\xi \in W$ be arbitrary. Let $(Y(q,r,\xi),Y(q,r,\xi)') \in \scrd_X^{2\alpha}([q,r],W)$ be a solution to the equation $Y(q,r,\xi)' = f(Y(q,r,\xi))$ and
\begin{align}\label{flow-1}
Y(q,r,\xi)_t = \xi + \int_q^t f_0(u,Y(q,r,\xi)_u) \, du + \int_q^t f(u,Y(q,r,\xi)_u) \, d \mathbf{X}_u \quad t \in [q,r].
\end{align}
Furthermore, we set $\eta := Y(q,r,\xi)_r$ and let $(Y(r,s,\eta),Y(r,s,\eta)') \in \scrd_X^{2\alpha}([r,s],W)$ be a solution to the equation $Y(r,s,\eta)' = f(Y(r,s,\eta))$ and
\begin{align}\label{flow-2}
Y(r,s,\eta)_t = \eta + \int_r^t f_0(u,Y(r,s,\eta)_u) \, du + \int_r^t f(u,Y(r,s,\eta)_u) \, d \mathbf{X}_u, \quad t \in [r,s].
\end{align}
We define the concatenated path $Y(q,s,\xi) : [q,s] \to W$ as
\begin{align}\label{pasted-path-1}
Y(q,s,\xi)_t :=
\begin{cases}
Y(q,r,\xi)_t, & t \in [q,r],
\\ Y(r,s,\eta)_t, & t \in [r,s],
\end{cases}
\end{align}
and we define the concatenated path $Y(q,s,\xi)' : [q,s] \to L(V,W)$ as
\begin{align}\label{pasted-path-2}
Y(q,s,\xi)_t' :=
\begin{cases}
Y(q,r,\xi)_t', & t \in [q,r],
\\ Y(r,s,\eta)_t', & t \in [r,s].
\end{cases}
\end{align}
Then we have $(Y(q,s,\xi),Y(q,s,\xi)') \in \scrd_X^{2\alpha}([q,r],W)$, and this path is a solution to $Y(q,s,\xi)' = f(Y(q,s,\xi))$ and
\begin{align}\label{RPDE-flow}
Y(q,s,\xi)_t &= \xi + \int_q^t f_0(u,Y(q,s,\xi)_u) \, du + \int_q^t f(u,Y(q,s,\xi)_u) \, d\mathbf{X}_u, \quad t \in [q,s].
\end{align}
\end{lemma}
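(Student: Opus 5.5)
The plan is to check the three requirements of Definition \ref{def-Gubinelli-derivative} on the glued interval $[q,s]$ (the statement writes $[q,r]$, but $[q,s]$ is clearly intended). These are: $\alpha$-Hölder regularity of $Y := Y(q,s,\xi)$, $\alpha$-Hölder regularity of $Y' := Y(q,s,\xi)'$, and $2\alpha$-regularity of the remainder $R^Y$. After that I verify the integral identity \eqref{RPDE-flow} and the relation $Y' = f(Y)$. The last relation is immediate, since it holds on each piece and the pieces overlap only at $t = r$. There both definitions agree: $Y(q,r,\xi)_r = \eta = Y(r,s,\eta)_r$, so $Y'_r = f(r,\eta)$ from either side. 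Hence $Y$ and $Y'$ are well defined.

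For the regularity, take $q \leq u \leq r \leq v \leq s$; the cases with both points in one subinterval are already covered.

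For $Y$, I split $Y_{u,v} = Y_{u,r} + Y_{r,v}$ and use $|r-u|, |v-r| \leq |v-u|$. This gives $\| Y \|_{\alpha;[q,s]} \leq \| Y(q,r,\xi) \|_{\alpha} + \| Y(r,s,\eta) \|_{\alpha}$, and the same argument works for $Y'$.

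The remainder is the only step needing care, because $R$ is not additive. I would write
\begin{align*}
R^Y_{u,v} = Y_{u,r} + Y_{r,v} - Y'_u X_{u,r} - Y'_u X_{r,v} = R^Y_{u,r} + R^Y_{r,v} + Y'_{u,r} X_{r,v}.
\end{align*}
Then $|Y'_{u,r} X_{r,v}| \leq \| Y' \|_{\alpha} \| X \|_{\alpha} |v-u|^{2\alpha}$, so $\| R^Y \|_{2\alpha;[q,s]}$ is bounded by the two piecewise remainder norms plus $\| Y' \|_{\alpha;[q,s]} \| X \|_{\alpha}$.

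For the equation, for $t \in [q,r]$ it is just \eqref{flow-1}. For $t \in [r,s]$ I insert $\eta = Y_r$ from \eqref{flow-1} into \eqref{flow-2}. The Lebesgue integrals are additive over $[q,r] \cup [r,t]$.

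The main obstacle is additivity of the rough integral,
\begin{align*}
\int_q^t f(u,Y_u)\, d\mathbf{X}_u = \int_q^r f(u,Y_u)\, d\mathbf{X}_u + \int_r^t f(u,Y_u)\, d\mathbf{X}_u .
\end{align*}
Here the left side is built from the glued controlled path $(f(Y),f(Y)')$, which lies in $\scrd_X^{2\alpha}([q,s],L(V,W))$ by Proposition \ref{prop-comp-f} applied to the now-established controlled path $(Y,Y')$. Each integral on the right uses only the restriction to a subinterval, where $(f(Y),f(Y)')$ coincides with the one built from the respective piece.

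To get additivity I use the Riemann-sum characterization in the sewing lemma (Lemma \ref{lemma-sewing}) together with Theorem \ref{thm-Gubinelli}. The limit over partitions of $[q,t]$ may be taken along partitions containing the point $r$, since the limit exists for every sequence with mesh tending to zero. Such a sum splits into a sum over a partition of $[q,r]$ plus one over $[r,t]$. The local germs $\Xi_{u,v} = f(Y)_u X_{u,v} + f(Y)'_u \bbx_{u,v}$ depend only on values of the path in $[u,v]$, so they agree with the germs on the subintervals. Passing to the limit gives additivity, and hence \eqref{RPDE-flow}.
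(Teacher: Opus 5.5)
Your proof is correct and follows the same route as the paper: the paper calls membership of the glued pair in $\scrd_X^{2\alpha}([q,s],W)$ ``straightforward'' (you are right that $[q,s]$ is meant), and it obtains the equation on $[r,s]$ by substituting $\eta = Y(q,r,\xi)_r$ from \eqref{flow-1} into \eqref{flow-2}, tacitly using additivity of both integrals. Your remainder identity $R^Y_{u,v} = R^Y_{u,r} + R^Y_{r,v} + Y'_{u,r} X_{r,v}$ and your Riemann-sum argument along partitions containing $r$ simply supply the details the paper leaves implicit.
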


\begin{proof}
It is straightforward to check that $(Y(q,s,\xi),Y(q,s,\xi)') \in \scrd_X^{2\alpha}([q,r],W)$ and $Y(q,s,\xi)' = f(Y(q,s,\xi))$. Furthermore, taking into account \eqref{flow-1} it is evident that equation \eqref{RPDE-flow} is satisfied for all $t \in [q,r]$. Moreover, recalling that $\eta = Y(q,r,\xi)_r = Y(r,s,\eta)_r$, by \eqref{pasted-path-1}, \eqref{flow-2} and \eqref{flow-1} for each $t \in [r,s]$ we obtain
\begin{align*}
Y(q,s,\xi)_t &= Y(r,s,\eta)_t = Y(q,r,\xi)_r + \int_r^t f_0(u,Y(r,s,\eta)_u) du
\\ &\quad + \int_r^t f(u,Y(r,s,\eta)_u) d \mathbf{X}_u
\\ &= \xi + \int_q^r f_0(u,Y(q,r,\xi)_u) du + \int_q^r f(u,Y(q,r,\xi)_u) d \mathbf{X}_u
\\ &\quad + \int_r^t f_0(u,Y(r,s,\eta)_u) du + \int_r^t f(u,Y(r,s,\eta)_u) d \mathbf{X}_u
\\ &= \xi + \int_q^t f_0(u,Y(q,s,\xi)_u) du + \int_q^t f(u,Y(q,s,\xi)_u) d\mathbf{X}_u,
\end{align*}
where in the last step we have used \eqref{pasted-path-1} again.
\end{proof}

\subsection{Existence and uniqueness of global solutions}

In this section we present a result concerning existence and uniqueness of global solutions to the RDE \eqref{RDE}.

\begin{theorem}\label{thm-RDGL-main}
Let $\bfx = (X,\bbx) \in \scrc^{\beta}([0,T],V)$ be a rough path for some index $\beta \in (\frac{1}{3},\frac{1}{2}]$, and let $f_0 \in \Lip([0,T] \times W,W)$ and $f \in C_b^{2\beta,3}([0,T] \times W, L(V,W))$ be mappings. Then for every $\xi \in W$ there exists a unique solution $(Y,Y') \in \scrd_X^{2 \beta}([0,T],W)$ to the RDE \eqref{RDE} with $Y_0 = \xi$.
\end{theorem}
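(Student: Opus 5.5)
We argue as in the local case, but now iterate the local existence result, using the a priori bound from Proposition \ref{prop-estimate} to control the size of each step. First we fix an auxiliary index $\alpha \in (\frac{1}{3},\beta)$ with $\alpha < \frac{1}{2}$ and $\beta \leq \frac{3}{2}\alpha$. Then $\bfx \in \scrc^{\alpha}([0,T],V)$ by Lemma \ref{lemma-alpha-beta}, and $f \in C_b^{2\alpha,3}$ since $2\beta$-H\"older in time implies $2\alpha$-H\"older on $[0,T]$. The key observation concerns the proof of Theorem \ref{thm-RDGL-main-local}, applied on a shifted interval $[r,T]$ with initial value $\eta$. The admissible step length $T_0$ there is chosen through the two smallness conditions. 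These involve only $\alpha,\beta$, the rough path norms $\| X \|_{\beta}$ and $\| \bbx \|_{2\beta}$ (which we bound by the global norms on $[0,T]$), the norms $\| f_0 \|_{\Lip}$ and $\| f \|_{C_b^{2\alpha,3}}$, and $|\eta|$ through the factor $(1+|\eta|)$. Hence $T_0 = \tau(R)$ can be chosen uniformly for all starting times $r$ and all initial values with $|\eta| \leq R$. We make this uniformity explicit by rerunning Propositions \ref{prop-fixed-point-1} and \ref{prop-fixed-point-2} on $[r,r+t]$. Their constants are translation invariant, since they depend on $\bfx$ only through H\"older norms, which are monotone in the interval.

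\emph{Uniqueness.} Let $(Y,Y')$ and $(Z,Z')$ be two global solutions. We set $\tau^* := \sup\{ t \leq T : Y = Z \text{ on } [0,t]\}$; by continuity $Y_{\tau^*} = Z_{\tau^*}$. If $\tau^* < T$, both restricted paths solve the equation on $[\tau^*,\tau^*+\varepsilon]$ started at $\eta = Y_{\tau^*}$. For $\varepsilon$ small, each restriction lies in the ball $\bbb$ around the initial data, because $\| Y,Y' \|_{X,2\alpha;[\tau^*,\tau^*+\varepsilon]}$ is small. This smallness is the point to check. $\| Y' \|_{\alpha}$ on a short interval is not automatically small, but the bound $\leq 1$ suffices if we instead redefine the ball radius as a large constant $M$ depending on the solutions, and adjust the smallness conditions accordingly. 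The contraction on $\bbb$ then forces $Y=Z$ there, contradicting maximality. So $\tau^* = T$.

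\emph{Existence.} Let $K$ be the constant of Proposition \ref{prop-estimate}, applied to the data on $[0,T]$, and let $\delta := \tau(K)$. Starting from $r_0 = 0$, we construct local solutions on $[r_k, r_{k+1}]$ with $r_{k+1} = \min\{r_k + \delta, T\}$ and initial value $Y_{r_k}$. We concatenate them via Lemma \ref{lemma-flow-property} (applied with time origin $r_k$, i.e.\ to the shifted equation), obtaining a solution on $[0,r_{k+1}]$.

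The main obstacle is the circularity here. Proposition \ref{prop-estimate} is stated for solutions on all of $[0,T]$, but we need the bound $|Y_{r_k}| \leq K$ before knowing the solution reaches $T$. We resolve this by observing that its proof only uses the equation on the interval of existence, with constants depending on $T$ monotonically; hence the same $K$ bounds any solution on $[0,T']$ for $T' \leq T$. We would verify this by rerunning the argument with $[0,T]$ replaced by $[0,T']$, where the constant $N$ from \cite[Prop.~8.2]{Friz-Hairer} is nondecreasing in the horizon. Another route is to extend $\bfx$ suitably, but we prefer the monotonicity argument. The case $\beta = \frac{1}{2}$ is handled by this choice of $\alpha < \frac{1}{2}$, as Proposition \ref{prop-estimate} requires. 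After at most $\lceil T/\delta \rceil$ steps we reach $T$.

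Finally, Lemma \ref{lemma-spaces-alpha-beta} upgrades the solution from $\scrd_X^{2\alpha}$ to $\scrd_X^{2\beta}([0,T],W)$. Uniqueness in $\scrd_X^{2\beta}$ follows from uniqueness in the larger space $\scrd_X^{2\alpha}$.
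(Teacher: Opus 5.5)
Your proposal is correct and follows essentially the paper's route. You fix an auxiliary index $\alpha<\beta$ with $\beta\le\frac32\alpha$, use the a priori bound $K$ from Proposition~\ref{prop-estimate} to get a uniform step size, concatenate local solutions via Lemma~\ref{lemma-flow-property}, and upgrade via Lemma~\ref{lemma-spaces-alpha-beta}. Your enlarged-ball continuation argument for uniqueness, and your check that the a priori bound extends to partial solutions on $[0,r_k]$, fill two points the paper passes over: the Banach fixed point theorem only gives uniqueness inside $\bbb_t$, and Proposition~\ref{prop-estimate} is stated only for solutions on all of $[0,T]$.
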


\begin{proof}
We choose $\alpha \in (\frac{1}{3}, \beta)$ such that $\beta \leq \frac{3}{2} \alpha$. Since $\alpha < \beta$, we have $\bfx \in \scrc^{\alpha}([0,T],V)$. By Proposition \ref{prop-estimate} for every solution $(Y,Y') \in \scrd_X^{2\alpha}([0,T],W)$ to the RDE \eqref{RDE} with $Y_0 = \xi$ we have
\begin{align}\label{estimate-solution}
| Y_t | \leq K \quad \text{for all $t \in [0,T]$,}
\end{align}
where the constant $K > 0$ depends on $\xi$, $\alpha$, $T$, $\mathbf{X}$, $\| f_0 \|_{\Lip}$ and $\| f \|_{C_b^{2\alpha,3}}$. For $t \in [0,T]$ we define the mapping $\Phi_t : \scrd_X^{2\alpha}([0,t],W) \to \scrd_X^{2\alpha}([0,t],W)$ according to \eqref{def-Phi}. By Propositions \ref{prop-fixed-point-1}, \ref{prop-fixed-point-2} and Lemma \ref{lemma-alpha-beta}, for every $t \in [0,T]$ with $t \leq 1$ and all $(Y,Y'), (Z,Z') \in \bbb_t$ we have
\begin{align*}
&\| \Phi_t(Y,Y') \|_{X,2\alpha;[0,t]} \leq C ( 1 + |\xi| ) t^{1-2\alpha}
\\ &\quad + C \big( \| X \|_{\beta,[0,t]} t^{\beta-\alpha} + \| \bbx \|_{2\beta,[0,t]} t^{2(\beta-\alpha)} + t^{\alpha} \big) \quad \text{and}
\\ &\| \Phi(Y,Y') - \Phi(Z,Z') \|_{X,2\alpha;[0,t]} \leq C \interleave (Y,Y') - (Z,Z') \interleave_{X,2\alpha;[0,t]}
\\ &\quad ( \| X \|_{\beta,[0,t]} t^{\beta-\alpha} + \| \bbx \|_{2\beta,[0,t]} t^{2(\beta-\alpha)} + t^{\alpha} + t^{1-2\alpha} ),
\end{align*}
where the constant $C > 0$ depends on $\alpha$, $\mathbf{X}$, $\| f_0 \|_{\Lip}$ and $\| f \|_{C_b^{2\alpha,3}}$. We choose $t \in (0,1]$ such that $T = nt$ for some $n \in \bbn$ as well as
\begin{align*}
&C ( 1 + K ) t^{1-2\alpha} + C \big( \| X \|_{\beta,[0,t]} t^{\beta-\alpha} + \| \bbx \|_{2\beta,[0,t]} t^{2(\beta-\alpha)} + t^{\alpha} \big) \leq 1 \quad \text{and}
\\ &C \big( \| X \|_{\beta,[0,t]} t^{\beta-\alpha} + \| \bbx \|_{2\beta,[0,t]} t^{2(\beta-\alpha)} + t^{\alpha} + t^{1-2\alpha} \big) \leq \frac{1}{2}.
\end{align*}
Then $\Phi_t : \bbb_t \to \bbb_t$ is a contraction, and by the Banach fixed point theorem there is a unique solution $(Y(1),Y(1)') \in \scrd_X^{2\alpha}([0,t],W)$ to the RDE \eqref{RDE} with $Y(1)_0 = \xi$. Taking into account \eqref{estimate-solution}, we can inductively apply this argument to the intervals $[(k-1)t,kt]$ for all $k=2,\ldots,n$ to obtain a unique solution $(Y(k),Y(k)') \in \scrd_X^{2\alpha}([(k-1)t,kt])$ to the RDE \eqref{RDE} with $Y(k)_{(k-1)t} = Y(k-1)_{(k-1)t}$. Using Lemma \ref{lemma-flow-property} we can concatenate these solutions and deduce the existence of a unique solution $(Y,Y') \in \scrd_X^{2\alpha}([0,T],W)$. By Lemma \ref{lemma-spaces-alpha-beta} we even have $(Y,Y') \in \scrd_X^{2\beta}([0,T])$, showing that $(Y,Y')$ is the unique solution to the RDE \eqref{RDE} with $Y_0 = \xi$, which concludes the proof.
\end{proof}

Under the conditions of Theorem \ref{thm-RDGL-main} we can consider the \emph{It\^{o}-Lyons map}
\begin{align}\label{IL-map}
(\xi,\mathbf{X}) \mapsto (Y,Y'),
\end{align}
which assigns the unique solution to the RDE \eqref{RDE} with $Y_0 = \xi$. In the following result we denote by $d_{\beta}$ the metric defined according to \eqref{metric-rough-paths}.

\begin{proposition}\label{prop-Ito-Lyons}
Let $\beta \in (\frac{1}{3},\frac{1}{2}]$ be an index, and let $f_0 \in \Lip([0,T] \times W,W)$ and $f \in C_b^{2\beta,3}([0,T] \times W, L(V,W))$ be mappings. Then the It\^{o}-Lyons map
\begin{align*}
W \times \big( \scrc^{\beta}([0,T],V), d_{\beta} \big) \to \calc^{\beta}([0,T],W) \times \calc^{\beta}([0,T],L(V,W))
\end{align*}
given by \eqref{IL-map} is locally Lipschitz continuous.
\end{proposition}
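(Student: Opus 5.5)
The plan is the standard stability argument for controlled rough paths driven by two different rough paths, carried out on short intervals and then iterated. Fix $(\xi,\mathbf{X})$ and $(\tilde\xi,\tilde{\mathbf{X}})$ with $|\xi|,|\tilde\xi|,|\mathbf{X}|_\beta,|\tilde{\mathbf{X}}|_\beta\le M$. Let $(Y,Y')$ and $(\tilde Y,\tilde Y')$ be the solutions from Theorem \ref{thm-RDGL-main}. The difficulty is that $(Y,Y')\in\scrd_X^{2\beta}$ and $(\tilde Y,\tilde Y')\in\scrd_{\tilde X}^{2\beta}$ live in different spaces. I would therefore work with the \emph{distance between controlled paths over different drivers}:
\begin{align*}
d_{X,\tilde X}(Y,\tilde Y) := \| Y'-\tilde Y'\|_{\beta} + \| R^Y - R^{\tilde Y}\|_{2\beta}.
\end{align*}
Together with $|\xi-\tilde\xi|$, $|Y_0'-\tilde Y_0'|$ and $\varrho_\beta(\mathbf{X},\tilde{\mathbf{X}})$, this distance controls $\|Y-\tilde Y\|_\beta$. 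Indeed, $Y_{s,t}-\tilde Y_{s,t} = (Y_s'-\tilde Y_s')X_{s,t}+\tilde Y_s'(X-\tilde X)_{s,t}+R^Y_{s,t}-R^{\tilde Y}_{s,t}$. So it suffices to bound $d_{X,\tilde X}(Y,\tilde Y)$ by $C(|\xi-\tilde\xi|+d_\beta(\mathbf{X},\tilde{\mathbf{X}}))$, with $C=C(M)$.

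\textbf{Step 1: a priori bounds.} By Proposition \ref{prop-estimate}, applied with the exponent $\alpha<\beta$ chosen as in the proof of Theorem \ref{thm-RDGL-main}, and by the argument of Lemma \ref{lemma-spaces-alpha-beta}, the quantities $\|Y\|_\infty$, $\|Y\|_\beta$ and $\|Y,Y'\|_{X,2\beta}$ are bounded by a constant depending only on $M$. The same holds for $\tilde Y$. I would check that the constants in Proposition \ref{prop-estimate} depend on $\mathbf{X}$ only through $\interleave\mathbf{X}\interleave_\beta$; its proof only uses Hölder norms of $X$ and $\bbx$, so this should be fine.

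\textbf{Step 2: stability estimates for the building blocks.} These are the two-driver analogues of Propositions \ref{prop-conv-rough}, \ref{prop-diff-f} and \ref{prop-reg-conv-Hoelder}.
\begin{enumerate}
\item[(a)] For the composition, $d_{X,\tilde X}(f(Y),f(\tilde Y))$ is bounded by $C\,(|Y_0-\tilde Y_0|+|Y_0'-\tilde Y_0'|+d_{X,\tilde X}(Y,\tilde Y)+\|X-\tilde X\|_\beta)$. The proof redoes Proposition \ref{prop-comp-f} with the Taylor remainder, writing differences via Lemma \ref{lemma-bilinear-fg-time}; this is where $f\in C_b^{2\beta,3}$ is used.
\item[(b)] For the rough integral, apply the sewing lemma (Lemma \ref{lemma-sewing}), which is linear, to $\Xi-\tilde\Xi$. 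Using identity \eqref{identity-for-integral}, $\delta\Xi-\delta\tilde\Xi = -(R^Y_{s,u}X_{u,t}-R^{\tilde Y}_{s,u}\tilde X_{u,t}) - (Y'_{s,u}\bbx_{u,t}-\tilde Y'_{s,u}\tilde\bbx_{u,t})$. Telescoping gives a $3\beta$-bound of the form $C(d_{X,\tilde X}(Y,\tilde Y)+\varrho_\beta(\mathbf{X},\tilde{\mathbf{X}}))$. Hence the remainders of $\int Y\,d\mathbf{X}$ and $\int\tilde Y\,d\tilde{\mathbf{X}}$ differ in $2\beta$-norm by $\|Y'-\tilde Y'\|_\infty\|\bbx\|_{2\beta}+\|\tilde Y'\|_\infty\|\bbx-\tilde\bbx\|_{2\beta}+C T^\beta(\cdots)$.
\item[(c)] The drift term is handled exactly as in Proposition \ref{prop-fixed-point-2}, using the Lipschitz bound \eqref{Lip-1}.
\end{enumerate}

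\textbf{Step 3: small intervals and iteration.} Combining Step 2 on an interval $[0,h]$ yields
\begin{align*}
d_{X,\tilde X;[0,h]}(Y,\tilde Y) \le C\big(|\xi-\tilde\xi|+d_\beta(\mathbf{X},\tilde{\mathbf{X}})\big)+C\,(h^{\beta-\alpha}+h^{1-2\alpha}+h^\alpha)\,d_{X,\tilde X;[0,h]}(Y,\tilde Y).
\end{align*}
As in the proof of Theorem \ref{thm-RDGL-main}, I would work in the $\alpha$-scale to gain small powers of $h$ and return to $\beta$ via the argument of Lemma \ref{lemma-spaces-alpha-beta}. Choosing $h=h(M)$ small absorbs the last term. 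I then iterate over the $n=T/h$ subintervals: the new initial differences $|Y_{kh}-\tilde Y_{kh}|$ and $|f(kh,Y_{kh})-f(kh,\tilde Y_{kh})|$ are controlled by the previous step. Hölder seminorms over $[0,T]$ are bounded by at most $n$ times the maximum over the pieces, so the constants grow geometrically but remain finite and depend only on $M$. This gives the local Lipschitz estimate for both $Y$ and $Y'=f(Y)$ in $\calc^\beta$.

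The main obstacle is Step 2(b), together with the bookkeeping of the $\alpha$/$\beta$ exponents needed to obtain a genuinely small factor. The remaining steps are routine variations of the earlier proofs.
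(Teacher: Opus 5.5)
Your proposal is correct and is essentially the paper's argument. The paper introduces exactly the same two-driver distance $d_{X,\tilde X,2\beta}((Y,Y'),(\tilde Y,\tilde Y'))=\|Y'-\tilde Y'\|_\beta+\|R^Y-R^{\tilde Y}\|_{2\beta}$ and then simply invokes an analogue of \cite[Thm.~8.5]{Friz-Hairer}, whose proof is the scheme you spell out: a priori bounds, stability of composition and of rough integration, absorption on short intervals, and iteration. The one step to tighten is the gluing: $R^Y-R^{\tilde Y}$ is a two-parameter object and does not simply add up over adjacent intervals. Use instead $R^Y_{s,t}=R^Y_{s,u}+R^Y_{u,t}+Y'_{s,u}X_{u,t}$ and its analogue for $\tilde Y$; the resulting correction term is controlled by $\|Y'-\tilde Y'\|_\beta\|X\|_\beta+\|\tilde Y'\|_\beta\|X-\tilde X\|_\beta$.
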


\begin{proof}
Let $\bfx = (X,\bbx) \in \scrc^{\beta}([0,T],V)$ and $\tilde{\bfx} = (\tilde{X},\tilde{\bbx}) \in \scrc^{\beta}([0,T],V)$ be two rough paths. For $(Y,Y') \in \scrd_X^{2\alpha}([0,T],W)$ and $(\tilde{Y},\tilde{Y}') \in \scrd_{\tilde{X}}^{2\alpha}([0,T],W)$ we set
\begin{align*}
d_{X,\tilde{X},2\beta} \big( (Y,Y'), (\tilde{Y}, \tilde{Y}') \big) := \| Y' - \tilde{Y}' \|_{\beta} + \| R^Y - R^{\tilde{Y}} \|_{2 \beta},
\end{align*}
which obviously gives us
\begin{align*}
\| Y' - \tilde{Y}' \|_{\beta} \leq d_{X,\tilde{X},2\beta} \big( (Y,Y'), (\tilde{Y}, \tilde{Y}') \big).
\end{align*}
Therefore, an analogous version of \cite[Thm. 8.5]{Friz-Hairer} for RDEs of the type \eqref{RDE} provides the desired continuity of the It\^{o}-Lyons map.
\end{proof}

Now we consider time-homogeneous RDEs of the form
\begin{align}\label{RDE-th}
\left\{
\begin{array}{rcl}
dY_t & = & f_0(Y_t) dt + f(Y_t) d \bfx_t
\\ Y_0 & = & \xi.
\end{array}
\right.
\end{align}
As an immediate consequence of Theorem \ref{thm-RDGL-main} we obtain the following result.

\begin{corollary}
Let $\bfx = (X,\bbx) \in \scrc^{\beta}([0,T],V)$ be a rough path for some index $\beta \in (\frac{1}{3},\frac{1}{2}]$, and let $f_0 : W \to W$ be Lipschitz continuous and $f \in C_b^{3}(W, L(V,W))$. Then for every $\xi \in W$ there exists a unique solution $(Y,Y') \in \scrd_X^{2 \beta}([0,T],W)$ to the time-homogeneous RDE \eqref{RDE-th} with $Y_0 = \xi$.
\end{corollary}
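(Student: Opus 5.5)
The statement is the time-homogeneous special case of Theorem \ref{thm-RDGL-main}. The plan is to lift the coefficients $f_0$ and $f$ to time-dependent functions that do not depend on $t$, check that they lie in the required classes, and then invoke that theorem. Concretely, I define
\begin{align*}
\tilde f_0 : [0,T] \times W \to W, \quad \tilde f_0(t,y) := f_0(y),
\qquad
\tilde f : [0,T] \times W \to L(V,W), \quad \tilde f(t,y) := f(y).
\end{align*}

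For the drift, the remark following the definition of $\Lip([0,T] \times W, \bar{W})$ applies with $g_0 = f_0$. It gives $\tilde f_0 \in \Lip([0,T] \times W,W)$ with $\| \tilde f_0 \|_{\Lip} = |f_0(0)| + |f_0|_{\Lip}$. Continuity of $\tilde f_0$ is clear, since $f_0$ is Lipschitz and $\tilde f_0$ is constant in $t$.

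For the diffusion coefficient, Remark \ref{rem-f-time-dependent} applies with $n=3$, $\gamma = 2\beta \in (0,1]$, $\bar W = L(V,W)$ and $g = f$. It gives $\tilde f \in C_b^{2\beta,3}([0,T] \times W, L(V,W))$ with $\| \tilde f \|_{C_b^{2\beta,3}} = \| f \|_{C_b^3}$. The reason is that $t \mapsto D_y^k \tilde f(t,y)$ is constant, so all its $2\beta$-H\"older seminorms vanish. Note that $2\beta \le 1$ because $\beta \le \frac12$, so this class is indeed defined.

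Theorem \ref{thm-RDGL-main} then gives, for every $\xi \in W$, a unique $(Y,Y') \in \scrd_X^{2\beta}([0,T],W)$ solving \eqref{RDE} with coefficients $\tilde f_0, \tilde f$. By Definition \ref{def-solution}, the conditions $Y' = \tilde f(Y) = f(Y)$ and \eqref{mild-solution} with $\tilde f_0(s,Y_s) = f_0(Y_s)$ and $\tilde f(s,Y_s) = f(Y_s)$ are exactly the solution concept for \eqref{RDE-th}. So existence transfers directly.

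Uniqueness transfers in the same way: any solution of \eqref{RDE-th} is a solution of \eqref{RDE} with the lifted coefficients, so it coincides with the solution from Theorem \ref{thm-RDGL-main}. There is no real obstacle here. The only points needing care are that $2\beta \le 1$, so that $C_b^{2\beta,3}$ makes sense, and that the two solution notions coincide verbatim under the identification of the coefficients.
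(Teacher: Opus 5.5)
Your proposal is correct and matches the paper, which records this corollary without further argument as an immediate consequence of Theorem \ref{thm-RDGL-main}. Lifting $f_0$ and $f$ to time-independent coefficients via the two remarks, using $\gamma = 2\beta \le 1$, is exactly the reduction the paper leaves implicit.
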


\section{Brownian motion as a rough path}\label{sec-BB-rough-path}

So far, rough path theory and the results about RDEs have been completely deterministic. Now, we will demonstrate that typical sample paths of a Brownian motion provide examples of rough paths. For this purpose, we will consider two choices of the second order processes, which give rise to the It\^{o}-enhanced Brownian motion and Stratonovich-enhanced Brownian motion. Throughout this section, let $(\Omega,\calf,(\calf_t)_{t \in \bbr_+},\bbp)$ be a filtered probability space satisfying the usual conditions.

\subsection{The Kolmogorov-Chentsov theorem for rough paths}

In this section we briefly recall the Kolmogorov-Chentsov theorem for rough paths. It provides conditions which ensure that a stochastic process satisfying Chen's relation admits a modification which is a rough path.

\begin{theorem}\label{thm-KC}\cite[Thm. 3.1]{Friz-Hairer}
Let $V$ be a Banach space, and let $\bfx = (X,\bbx)$ be a stochastic process $\bfx = (\bfx_t)_{t \in [0,T]}$ such that for each $\omega \in \Omega$ we have paths $X(\omega) : [0,T] \to V$ and $\bbx(\omega) : [0,T]^2 \to V \otimes V$ satisfying Chen's relation \eqref{Chen-relation}. Let $q \geq 2$ and $\beta > 0$ be constants such that $\beta - \frac{1}{q} > \frac{1}{3}$. We assume there is a constant $C > 0$ such that
\begin{align*}
| X_{s,t} |_{L^q} &\leq C |t-s|^{\beta} \quad \text{for all $s,t \in [0,T]$,}
\\ | \bbx_{s,t} |_{L^{q/2}} &\leq C |t-s|^{2 \beta} \quad \text{for all $s,t \in [0,T]$.}
\end{align*}
Then there exists a version of $\bfx$ which belongs to $\scrc^{\alpha}([0,T],V)$ for every $\alpha \in (\frac{1}{3}, \frac{1}{2}]$ with $\alpha < \beta - \frac{1}{q}$.
\end{theorem}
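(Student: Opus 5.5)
We follow the dyadic (Garsia--Rodemich--Rumsey type) argument of the classical Kolmogorov--Chentsov theorem, adapted to the two-level structure. By scaling we may assume $T=1$. Let $D_n := \{ k 2^{-n} : k = 0,\ldots,2^n \}$ and $D := \bigcup_n D_n$. For each level $n$ we introduce the random variables
\begin{align*}
K_n := \max_{t \in D_n \setminus \{1\}} |X_{t,t+2^{-n}}|, \qquad \bbk_n := \max_{t \in D_n \setminus \{1\}} |\bbx_{t,t+2^{-n}}|.
\end{align*}
Bounding the maximum by the sum over the $2^n$ intervals gives
\begin{align*}
\bbe[K_n^q] \leq 2^n C^q 2^{-n\beta q} \quad \text{and} \quad \bbe[\bbk_n^{q/2}] \leq 2^n C^{q/2} 2^{-n\beta q}.
\end{align*}
Fix $\alpha < \beta - \frac{1}{q}$. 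Then $\sum_n 2^{n\alpha q}\bbe[K_n^q] < \infty$ and $\sum_n 2^{2n\alpha q/2}\bbe[\bbk_n^{q/2}] < \infty$. Consequently
\begin{align*}
K_\alpha := \sup_n 2^{n\alpha}K_n \in L^q \quad \text{and} \quad \bbk_\alpha := \sup_n 2^{2n\alpha}\bbk_n \in L^{q/2},
\end{align*}
and in particular both are finite almost surely.

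\textbf{Step 1: H\"older bounds on dyadic points.} Fix $\omega$ with $K_\alpha,\bbk_\alpha<\infty$ and take $s<t$ in $D$. Choose $m$ with $2^{-m-1} < t-s \leq 2^{-m}$. The interval $[s,t]$ decomposes into a chain of dyadic intervals $[\tau_i,\tau_{i+1}]$, with at most two intervals of each length $2^{-n}$ for $n > m$. The first-level bound is standard:
\begin{align*}
|X_{s,t}| \leq 2\sum_{n>m} K_n \lesssim K_\alpha |t-s|^\alpha.
\end{align*}
For the second level we use the identity from the exercise on partitions (part (a)), which is a consequence of Chen's relation:
\begin{align*}
\bbx_{s,t} = \sum_i \bbx_{\tau_i,\tau_{i+1}} + \sum_{j<i} X_{\tau_j,\tau_{j+1}}\otimes X_{\tau_i,\tau_{i+1}}.
\end{align*}
The first sum is bounded by $2\sum_{n>m}\bbk_n \lesssim \bbk_\alpha|t-s|^{2\alpha}$. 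The double sum is bounded by $(\sum_i |X_{\tau_i,\tau_{i+1}}|)^2 \lesssim K_\alpha^2|t-s|^{2\alpha}$. Hence
\begin{align*}
|\bbx_{s,t}| \lesssim (\bbk_\alpha + K_\alpha^2)|t-s|^{2\alpha}.
\end{align*}
The case $t<s$ follows from \eqref{Chen-rule-2}.

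\textbf{Step 2: extension to $[0,1]$.} These bounds show that $X|_D$ is uniformly continuous. Chen's relation together with \eqref{Chen-rule-3} shows that $\bbx|_{D\times D}$ is uniformly continuous as well, since $t \mapsto \bbx_{0,t}$ is controlled by the previous bounds. We therefore define $\tilde X, \tilde\bbx$ as the continuous extensions from $D$ on this full-measure set, and set them to $0$ elsewhere. Chen's relation passes to the limit by continuity of $\otimes$, exactly as in Proposition~\ref{prop-complete-metric-space}. The H\"older bounds pass to the limit as well, so $(\tilde X,\tilde\bbx)\in\scrc^\alpha$.

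\textbf{Step 3: version property.} It remains to check that $(\tilde X,\tilde\bbx)$ is a version of $\bfx$, i.e.\ that $\tilde X_t = X_t$ and $\tilde \bbx_{s,t}=\bbx_{s,t}$ almost surely for fixed $s,t$. Take dyadic $s_n\to s$ and $t_n\to t$. The moment bounds give $X_{t_n}\to X_t$ in $L^q$. For the second level, Chen's relation gives
\begin{align*}
\bbx_{s_n,t_n}-\bbx_{s,t} = \bbx_{s_n,s} + \bbx_{s,t_n} - \bbx_{s,t} + X_{s_n,s}\otimes X_{s,t_n},
\end{align*}
and applying Chen once more (with the point $t$) expresses $\bbx_{s,t_n}-\bbx_{s,t}$ through $\bbx_{t,t_n}$ and a tensor term. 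All resulting terms tend to $0$ in $L^{q/2}$, using H\"older's inequality for the tensor terms. Combining this with the almost sure convergence along the dyadics identifies the limits.

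Since $\alpha$ ranges over countably many values approaching $\beta-\frac1q$, and the extension does not depend on $\alpha$, a single version works for all admissible $\alpha$. The main obstacle is Step 1 for $\bbx$: $\bbx$ is not additive, so the chaining must use the Chen-based decomposition, and the cross terms must be controlled by $K_\alpha^2$. Everything else is routine.
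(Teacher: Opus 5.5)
Your proposal is correct and is essentially the dyadic chaining argument of \cite[Thm.~3.1]{Friz-Hairer}, which the paper cites without reproducing. It uses the same level-wise maxima $K_n$ and $\mathbb{K}_n$, the same decomposition of $[s,t]$ into at most two dyadic intervals per level, and the same use of Chen's relation to obtain $|\bbx_{s,t}| \lesssim (\mathbb{K}_\alpha + K_\alpha^2)|t-s|^{2\alpha}$. The differences are cosmetic: you take $K_\alpha = \sup_n 2^{n\alpha} K_n$ instead of a weighted sum over levels (so the moment bounds follow from $\bbe[\sup_n Z_n] \leq \sum_n \bbe[Z_n]$ for nonnegative $Z_n$ rather than from Minkowski's inequality), and you write out the extension and version steps that are usually treated as standard.
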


\subsection{It\^{o}-enhanced Brownian motion}\label{sec-BB-Ito-enhanced}

In this section we construct the It\^{o}-enhanced Brownian motion. Let $B$ be a Brownian motion with values in $\bbr^d$. Recall from Proposition \ref{prop-tensor-product-Rn} that by identification we have $\bbr^d \otimes \bbr^d = \bbr^{d \times d}$.

\begin{definition}
We introduce the following notions:
\begin{enumerate}
\item We call the $\bbr^{d \times d}$-valued process $\bbb = \bbb^{\text{\rm It\^o}}$ defined by the It\^{o} integrals
\begin{align}\label{Ito-enhancement}
\bbb_{s,t} := \int_s^t B_{s,r} \otimes dB_r, \quad s,t \in [0,T]
\end{align}
the \emph{It\^{o}-enhancement} of Brownian motion.

\item We call the process $\bfb = \bfb^{\text{\rm It\^o}}$ defined as $\bfb := (B,\bbb)$ the \emph{It\^{o}-enhanced Brownian motion}.
\end{enumerate}
\end{definition}

Now, our goal is to prove that a typical sample path of $\bfb^{\text{\rm It\^o}}$ is a rough path.

\begin{exercise}\label{exercise-BB-Ito}
Show that the It\^{o}-enhanced Brownian motion $\bfb = (B,\bbb)$ satisfies Chen's relation \eqref{Chen-relation}.
\end{exercise}

\begin{solution}
Since $B_0 = 0$, in view of Exercise \ref{exercise-Chen-sufficient} we only need to show that
\begin{align*}
\bbb_{s,t} = \bbb_{0,t} - \bbb_{0,s} - B_s \otimes B_{s,t} \quad \text{for all $s,t \in [0,T]$,}
\end{align*}
and this is established by the calculation
\begin{align*}
\bbb_{s,t} &= \int_s^t B_{s,r} \otimes dB_r = \int_s^t ( B_r - B_s ) \otimes dB_r
\\ &= \int_s^t B_r \otimes dB_r - B_s \otimes (B_t - B_s)
\\ &= \int_0^t B_r \otimes dB_r - \int_0^s B_r \otimes dB_r - B_s \otimes (B_t - B_s)
\\ &= \bbb_{0,t} - \bbb_{0,s} - B_s \otimes B_{s,t}.
\end{align*}
\end{solution}

We will require the following auxiliary result, which determines the representing matrix of the linear operator
\begin{align*}
\bbr^d \to \bbr^{m \times d}, \quad w \mapsto v \otimes w
\end{align*}
for any fixed $v \in \bbr^m$.

\begin{lemma}\label{lemma-matrix-for-tensor}
Let $A : \bbr^m \to \bbr^{(m \times d) \times d}$ be the linear operator given by
\begin{align*}
A_{ij,k}(v) =
\begin{cases}
v_i, & \text{if $j=k$,}
\\ 0, & \text{otherwise,}
\end{cases}
\quad \text{for all $v \in \bbr^m$,}
\end{align*}
for all $i=1,\ldots,m$ and $j,k = 1,\ldots,d$. Then we have
\begin{align*}
v \otimes w = A(v) \cdot w \quad \text{for all $v \in \bbr^m$ and $w \in \bbr^d$.}
\end{align*}
\end{lemma}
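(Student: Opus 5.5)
The plan is a direct componentwise verification, using the identification $\bbr^m \otimes \bbr^d \cong \bbr^{m \times d}$ from Proposition \ref{prop-tensor-product-Rn} and the identification $L(\bbr^d,\bbr^{m \times d}) \cong \bbr^{(m \times d) \times d}$ from Proposition \ref{prop-lin-operator-matrix}. Under the latter, the product $A(v) \cdot w$ is the matrix--vector product, with rows indexed by the pair $(i,j)$ and columns indexed by $k$. So the claimed identity reduces to checking equality of entries for every index pair $(i,j)$.

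First I fix $v \in \bbr^m$ and $w \in \bbr^d$. By the Remark following Proposition \ref{prop-tensor-product-Rn}, the $(i,j)$ entry of the left-hand side is $(v \otimes w)_{ij} = v_i w_j$. Next I compute the $(i,j)$ entry of the right-hand side as
\begin{align*}
(A(v) \cdot w)_{ij} = \sum_{k=1}^d A_{ij,k}(v) \, w_k.
\end{align*}
By the definition of $A$, only the term $k = j$ survives, and it equals $v_i w_j$. Since the two sides agree for all $i = 1,\ldots,m$ and $j = 1,\ldots,d$, the identity holds.

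I would also note briefly that $A$ is linear in $v$, since each entry $A_{ij,k}(v)$ is either $v_i$ or $0$. The only point needing any care is the bookkeeping of the identification: I must be explicit that the composite index $(i,j)$ plays the role of the row index and $k$ the column index, so that ``$\cdot$'' is the ordinary matrix--vector product. Beyond this, there is no real obstacle.
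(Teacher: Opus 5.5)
Your proposal is correct and is essentially the paper's own proof: it fixes $v, w$ and computes the $(i,j)$ entry as $(A(v)\cdot w)_{ij} = \sum_{k=1}^d A_{ij,k}(v)\, w_k = A_{ij,j}(v)\, w_j = v_i w_j = (v \otimes w)_{ij}$. Your extra remarks on the linearity of $A$ and on treating $(i,j)$ as the row index and $k$ as the column index are harmless additions that the paper leaves implicit.
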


\begin{proof}
Let $v \in \bbr^m$ and $w \in \bbr^d$ be arbitrary. Then for all $i \in \{ 1,\ldots,m \}$ and $j \in \{ 1,\ldots,d \}$ we obtain
\begin{align*}
( A(v) \cdot w )_{ij} = \sum_{k=1}^d A_{ij,k}(v) \cdot w_k = A_{ij,j}(v) \cdot w_j = v_i \cdot w_j,
\end{align*}
completing the proof.
\end{proof}

In view of the following auxiliary result, note that for a vector $x \in \bbr^d$ we denote by
\begin{align*}
|x| := \bigg( \sum_{i=1}^d |x_i|^2 \bigg)^{1/2}
\end{align*}
the \emph{Euclidean norm} of $x$, and that for a matrix $A \in \bbr^{d \times m}$ we denote by
\begin{align*}
|A| := \bigg( \sum_{i=1}^d \sum_{j=1}^m |A_{ij}|^2 \bigg)^{1/2}
\end{align*}
the \emph{Frobenius norm} of $A$.

\begin{lemma}\label{lemma-Burkholder}
For each $p \in [2,\infty)$ there is a constant $C_p > 0$ such that for all $s,t \in [0,T]$ with $s < t$ the following statements are true:
\begin{enumerate}
\item For each $\bbr^{m \times d}$-valued predictable process $\Phi$ such that
\begin{align*}
\int_s^t | \Phi_r |^2 dr < \infty \quad \text{$\bbp$-almost surely}
\end{align*}
we have
\begin{align*}
\bbe \Bigg[ \sup_{u \in [s,t]} \bigg| \int_s^u \Phi_r dB_r \bigg|^p \Bigg] \leq C_p \cdot \bbe \Bigg[ \bigg( \int_s^t | \Phi_r |^2 dr \bigg)^{p/2} \Bigg].
\end{align*}
\item For each $\bbr^m$-valued predictable process $\Psi$ such that
\begin{align*}
\int_s^t | \Psi_r |^2 dr < \infty \quad \text{$\bbp$-almost surely}
\end{align*}
we have
\begin{align*}
\bbe \Bigg[ \sup_{u \in [s,t]} \bigg| \int_s^u \Psi_r \otimes dB_r \bigg|^p \Bigg] \leq C_p \cdot \bbe \Bigg[ \bigg( \int_s^t | \Psi_r |^2 dr \bigg)^{p/2} \Bigg].
\end{align*}
\end{enumerate}
\end{lemma}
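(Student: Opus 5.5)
Part (1) is a vector-valued Burkholder--Davis--Gundy inequality for the Itô integral against the $d$-dimensional Brownian motion $B$, and part (2) will be reduced to part (1) using Lemma \ref{lemma-matrix-for-tensor}.

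For part (1), the plan is to write the $\bbr^m$-valued process $M_u := \int_s^u \Phi_r \, dB_r$, $u \in [s,t]$, as a continuous local martingale starting at $0$. Its components are
\begin{align*}
M_u^i = \sum_{k=1}^d \int_s^u \Phi_r^{ik} \, dB_r^k .
\end{align*}
Since the components $B^k$ are independent with $\langle B^k,B^l \rangle_r = \delta_{kl} r$, the quadratic variation of the real-valued component $M^i$ is $\langle M^i \rangle_u = \int_s^u \sum_k |\Phi_r^{ik}|^2 \, dr$. Summing over $i$ then gives
\begin{align*}
\sum_{i=1}^m \langle M^i \rangle_t = \int_s^t |\Phi_r|^2 \, dr ,
\end{align*}
with the Frobenius norm. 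I would apply the scalar BDG inequality to each $M^i$, which gives
\begin{align*}
\bbe\big[\sup_{u \in [s,t]} |M_u^i|^p\big] \leq c_p \, \bbe\big[\langle M^i\rangle_t^{p/2}\big].
\end{align*}
Then I combine the components using
\begin{align*}
|M_u|^p = \Big(\sum_i |M_u^i|^2\Big)^{p/2} \leq m^{p/2-1} \sum_i |M_u^i|^p ,
\end{align*}
together with $\langle M^i\rangle_t \leq \int_s^t |\Phi_r|^2 \, dr$. This yields part (1) with $C_p := c_p m^{p/2}$.

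The constant then depends on $m$, which the statement does not forbid since $m,d$ are fixed. Alternatively, I can apply BDG directly to the real continuous local martingale setting via Itô's formula for $|M|^2$, whose quadratic variation is $\int_s^\cdot |\Phi_r|^2 \, dr$, and so avoid the dimension factor. The localization needed because $\Phi$ is only locally square integrable is handled inside the standard BDG statement for continuous local martingales, so no extra work is needed there.

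For part (2), Lemma \ref{lemma-matrix-for-tensor} gives $\Psi_r \otimes dB_r = A(\Psi_r) \, dB_r$. Here $A(\Psi)$ is a predictable process with values in $\bbr^{(m\times d)\times d} \cong \bbr^{md \times d}$. By definition of $A$ we have
\begin{align*}
|A(v)|^2 = \sum_{i,j} |v_i|^2 = d\,|v|^2 ,
\end{align*}
so $\int_s^t |A(\Psi_r)|^2 \, dr = d \int_s^t |\Psi_r|^2 \, dr < \infty$ almost surely. The Frobenius norm on $\bbr^{m \times d}$ of the integral coincides with the Euclidean norm of the corresponding vector in $\bbr^{md}$. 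Applying part (1) with $m$ replaced by $md$ then gives part (2), with constant $d^{p/2}C_p$; I enlarge $C_p$ to cover both parts.

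The only delicate point is the bookkeeping that identifies $\Psi\otimes dB$ with a matrix-valued Itô integral and matches the norms; the analytic content is entirely the classical BDG inequality, which I would cite rather than reprove.
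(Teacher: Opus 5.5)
Your proposal is correct and follows essentially the same route as the paper, which also gets both parts from the multi-dimensional Burkholder--Davis--Gundy inequality and uses Lemma \ref{lemma-matrix-for-tensor} to rewrite $\Psi_r \otimes dB_r = A(\Psi_r)\, dB_r$ for part (2). The only difference is that the paper cites a Hilbert-space-valued BDG inequality directly, which gives a dimension-free constant, whereas you build it componentwise from the scalar version; this costs factors such as $m^{p/2}$ and $d^{p/2}$ in $C_p$, which is harmless since $m$ and $d$ are fixed.
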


\begin{proof}
Taking into account Lemma \ref{lemma-matrix-for-tensor}, both statements are a consequence of the multi-dimensional Burkholder-Davis-Gundy inequality; see, for example \cite[Thm. 1.1]{Marinelli-BDG}, where this inequality is even provided for processes with values in Hilbert spaces.
\end{proof}

Now, we are in the position to show that the It\^{o}-enhanced Brownian motion $\mathbf{B} = (B, \bbb)$ satisfies the conditions of the Kolmogorov-Chentsov theorem.

\begin{lemma}\label{lemma-Q-BM-1}
For each $p \geq 2$ there is a constant $C_p > 0$ such that
\begin{align*}
\bbe \big[ |B_{s,t}|^p \big] \leq C_p |t-s|^{p/2}, \quad s,t \in [0,T].
\end{align*}
\end{lemma}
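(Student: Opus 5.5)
The plan is to reduce the claim to the Burkholder--Davis--Gundy type estimate of Lemma \ref{lemma-Burkholder}. The alternative is to compute Gaussian moments directly, and I will mention it below as a fallback. First, it suffices to treat $s < t$. The case $s = t$ is trivial, and for $s > t$ we have $|B_{s,t}| = |B_{t,s}|$, so we may swap the roles of $s$ and $t$.

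For $s < t$, write the increment as a stochastic integral:
\begin{align*}
B_{s,t} = \int_s^t \Id \, dB_r,
\end{align*}
where $\Id \in \bbr^{d \times d}$ is the identity matrix, regarded as a constant (hence predictable) $\bbr^{d \times d}$-valued process $\Phi_r := \Id$. Then $\int_s^t |\Phi_r|^2 dr = d \, (t-s) < \infty$, with $|\Id| = \sqrt{d}$ in the Frobenius norm. Part (1) of Lemma \ref{lemma-Burkholder}, applied with $m = d$ and $p \geq 2$, gives
\begin{align*}
\bbe \big[ |B_{s,t}|^p \big] \leq \bbe \Bigg[ \sup_{u \in [s,t]} \bigg| \int_s^u \Phi_r \, dB_r \bigg|^p \Bigg] \leq C_p \big( d \, (t-s) \big)^{p/2} = C_p d^{p/2} |t-s|^{p/2}.
\end{align*}
Renaming the constant $C_p d^{p/2}$ as $C_p$ finishes the argument.

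There is essentially no obstacle here. The only point needing care is checking that the constant integrand meets the hypotheses of Lemma \ref{lemma-Burkholder}, and that the constant depends only on $p$ (and the fixed dimension $d$), not on $s$ and $t$.

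If one prefers to avoid the BDG inequality, a direct route also works. The increment $B_{s,t}$ is distributed as $N(0,(t-s)I_d)$, so it has the same law as $\sqrt{t-s}\,G$ with $G \sim N(0,I_d)$. Hence $\bbe[|B_{s,t}|^p] = |t-s|^{p/2}\,\bbe[|G|^p]$, and $\bbe[|G|^p] < \infty$ because Gaussian vectors have moments of all orders.
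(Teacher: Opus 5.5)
Your proposal is correct and follows essentially the same route as the paper: write $B_{s,t} = \int_s^t \Id \, dB_r$ and apply part (1) of Lemma \ref{lemma-Burkholder} to the constant integrand $\Id$. You are slightly more careful than the paper. You track the factor $|\Id|^p = d^{p/2}$ coming from the Frobenius norm, which the paper silently absorbs into the constant, and you reduce the case $s > t$ to $s < t$ by symmetry.
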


\begin{proof}
Let $s,t \in [0,T]$ be arbitrary. Denoting by $\Id \in \bbr^{d \times d}$ the identity matrix, by Lemma \ref{lemma-Burkholder} we obtain
\begin{align*}
\bbe \big[ |B_{s,t}|^p \big] = \bbe \Bigg[ \bigg| \int_s^t \Id \, dB_r \bigg|^p \Bigg] \leq C_p \cdot \bbe \Bigg[ \bigg( \int_s^t |\Id|^2 dr \bigg)^{p/2} \Bigg] \leq C_p |t-s|^{p/2}
\end{align*}
with a constant $C_p > 0$.
\end{proof}

\begin{lemma}\label{lemma-Q-BM-2}
For each $p \geq 2$ there is a constant $C_p > 0$ such that
\begin{align*}
\bbe \big[ |\bbb_{s,t}|^p \big] \leq C_p |t-s|^p, \quad s,t \in [0,T].
\end{align*}
\end{lemma}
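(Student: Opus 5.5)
We bound $\bbb_{s,t}$ by writing it as an It\^{o} integral with a Brownian integrand and applying the Burkholder--Davis--Gundy inequality (Lemma \ref{lemma-Burkholder}), followed by Lemma \ref{lemma-Q-BM-1} for the integrand.

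First consider $s<t$. By definition \eqref{Ito-enhancement},
\begin{align*}
\bbb_{s,t} = \int_s^t B_{s,r} \otimes dB_r ,
\end{align*}
and the integrand $\Psi_r := B_{s,r} = B_r - B_s$, $r \in [s,t]$, is continuous and adapted, hence predictable, with $\int_s^t |\Psi_r|^2 dr < \infty$ almost surely. Part (2) of Lemma \ref{lemma-Burkholder} (dropping the supremum and taking $u=t$) gives
\begin{align*}
\bbe \big[ |\bbb_{s,t}|^p \big] \leq C_p \, \bbe \bigg[ \bigg( \int_s^t |B_{s,r}|^2 dr \bigg)^{p/2} \bigg].
\end{align*}
Since $p/2 \geq 1$, Jensen's (or H\"older's) inequality for the normalized measure $dr/(t-s)$ on $[s,t]$ yields
\begin{align*}
\bigg( \int_s^t |B_{s,r}|^2 dr \bigg)^{p/2} \leq (t-s)^{p/2-1} \int_s^t |B_{s,r}|^p \, dr .
\end{align*}
Taking expectations, using Fubini, and then Lemma \ref{lemma-Q-BM-1} in the form $\bbe[|B_{s,r}|^p] \leq C_p (r-s)^{p/2} \leq C_p (t-s)^{p/2}$, we obtain
\begin{align*}
\bbe \big[ |\bbb_{s,t}|^p \big] \leq C_p (t-s)^{p/2-1} \cdot (t-s) \cdot (t-s)^{p/2} = C_p (t-s)^p .
\end{align*}
The constant $C_p$ changes from line to line but depends only on $p$ (and $d$).

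For $s=t$ we have $\bbb_{t,t}=0$ by \eqref{Chen-rule-1}, which applies because of Exercise \ref{exercise-BB-Ito}. For $s>t$ we use \eqref{Chen-rule-2}:
\begin{align*}
\bbb_{s,t} = B_{s,t}\otimes B_{s,t} - \bbb_{t,s},
\end{align*}
so that
\begin{align*}
|\bbb_{s,t}|^p \leq 2^{p-1}\big(|B_{s,t}|^{2p} + |\bbb_{t,s}|^p\big).
\end{align*}
The first term is controlled by Lemma \ref{lemma-Q-BM-1} with exponent $2p$, which gives $C|t-s|^{p}$, and the second by the case already treated. Combining the two yields the bound $C_p|t-s|^p$ for all $s,t \in [0,T]$.

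The only step requiring care is the Jensen/H\"older step together with justifying the Fubini interchange. Both are routine here, since $p \geq 2$ and the integrand is nonnegative.
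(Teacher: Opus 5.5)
Your proof is correct and follows essentially the same route as the paper: the BDG inequality (Lemma \ref{lemma-Burkholder}), then H\"older's inequality in time, then the $p$-th moment bound for Brownian increments. The only differences are minor. Where the paper bounds the time integral by $|t-s|\,\bbe[\sup_{r \in [s,t]} |B_{s,r}|^p]$, you use Tonelli together with the pointwise bound from Lemma \ref{lemma-Q-BM-1}. You also treat the cases $s \geq t$ explicitly via \eqref{Chen-rule-1} and \eqref{Chen-rule-2}, which the paper leaves implicit.
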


\begin{proof}
By Lemma \ref{lemma-Burkholder} and H\"{o}lder's inequality we obtain
\begin{align*}
\bbe \big[ |\bbb_{s,t}|^p \big] &= \bbe \Bigg[ \bigg| \int_s^t B_{s,r} \otimes dB_r \bigg|^p \Bigg] \leq c_p \cdot \bbe \Bigg[ \bigg( \int_s^t | B_{s,r} |^2 dr \bigg)^{p/2} \Bigg]
\\ &\leq c_p \cdot \bbe \Bigg[ \bigg| \int_s^t | B_{s,r} |^p dr \bigg| \Bigg] \cdot |t-s|^{\frac{p}{2} - 1}
\\ &\leq c_p \cdot \bbe \bigg[ \sup_{r \in [s,t]} |B_{s,r}|^p \bigg] \cdot |t-s|^{\frac{p}{2}} \leq c_p^2 |t-s|^p
\end{align*}
with a constant $c_p > 0$.
\end{proof}

After these preparations, we can show that typical sample paths of the It\^{o}-enhanced Brownian motion are rough paths.

\begin{proposition}\label{prop-BB-Ito-rough-path}
For each $\alpha \in (\frac{1}{3},\frac{1}{2})$ we have $\bbp$-almost surely
$$\mathbf{B}^{\text{\rm It\^o}} = ( B, \bbb^{\text{\rm It\^o}} ) \in \scrc^{\alpha}([0,T],\bbr^d).$$
\end{proposition}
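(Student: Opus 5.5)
The plan is to apply the Kolmogorov--Chentsov theorem for rough paths (Theorem \ref{thm-KC}) with $V = \bbr^d$, $\beta = \frac{1}{2}$ and a suitably large moment $q$. Chen's relation for $\mathbf{B}^{\text{\rm It\^o}}$ holds pathwise by Exercise \ref{exercise-BB-Ito}. Strictly speaking, the It\^{o} integrals in \eqref{Ito-enhancement} are only defined up to null sets. I would therefore work with the version given by $\bbb_{s,t} := \bbb_{0,t} - \bbb_{0,s} - B_s \otimes B_{s,t}$, where $t \mapsto \bbb_{0,t} = \int_0^t B_r \otimes dB_r$ is a continuous version. By Exercise \ref{exercise-Chen-sufficient}, this choice satisfies Chen's relation identically for every $\omega$, as Theorem \ref{thm-KC} requires.

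Next I would verify the moment conditions. Take $q \geq 4$, so that $p := q/2 \geq 2$. Lemma \ref{lemma-Q-BM-1} with $p = q$ gives $|B_{s,t}|_{L^q} \leq C_q^{1/q} |t-s|^{1/2}$. Lemma \ref{lemma-Q-BM-2} with $p = q/2$ gives $|\bbb_{s,t}|_{L^{q/2}} \leq C_{q/2}^{2/q} |t-s| = C |t-s|^{2\beta}$. Both hypotheses of Theorem \ref{thm-KC} therefore hold with $\beta = \frac12$, after taking the maximum of the two constants.

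Now fix $\alpha \in (\frac{1}{3}, \frac{1}{2})$ and choose $q$ so large that $\frac{1}{2} - \frac{1}{q} > \alpha$. This also gives $\beta - \frac1q > \frac13$. Theorem \ref{thm-KC} then yields a version of $\mathbf{B}$ lying in $\scrc^{\alpha}([0,T],\bbr^d)$.

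The remaining step, and the only delicate one, is to pass from ``a version'' to the claim that $\mathbf{B}$ itself belongs to $\scrc^{\alpha}$ almost surely. Since $B$ and our chosen $\bbb$ are continuous, a version agreeing at every rational $(s,t)$ almost surely agrees everywhere almost surely. So $\mathbf{B}$ is indistinguishable from the version produced by Theorem \ref{thm-KC}, and hence $\mathbf{B} \in \scrc^{\alpha}([0,T],\bbr^d)$ $\bbp$-almost surely. If one wants a single null set for all $\alpha$, one can take a sequence $\alpha_n \uparrow \frac12$ and use the monotonicity from Lemma \ref{lemma-alpha-beta}.
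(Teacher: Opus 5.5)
Your proposal is correct and follows essentially the same route as the paper: Chen's relation via Exercise \ref{exercise-BB-Ito}, the moment bounds of Lemmas \ref{lemma-Q-BM-1} and \ref{lemma-Q-BM-2} with $\beta = \frac{1}{2}$ and $q$ large, and then Theorem \ref{thm-KC}. The paper leaves several details implicit, and you supply them: you fix a continuous version of $\bbb$ so that Chen's relation holds for every $\omega$, you use the constant $C_{q/2}$ in the second-order bound, and you use continuity to upgrade ``a version'' to indistinguishability.
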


\begin{proof}
Using Exercise \ref{exercise-BB-Ito}, it follows that the process $\mathbf{B} = \mathbf{B}^{\text{\rm It\^o}}$ satisfies Chen's relation \eqref{Chen-relation}. Now, let $\alpha \in (\frac{1}{3},\frac{1}{2})$ be arbitrary. We choose $\beta = \frac{1}{2}$, and let $q > 6$ be large enough such that $\beta - \frac{1}{q} > \alpha$. Obviously, then we also have $\beta - \frac{1}{q} > \frac{1}{3}$. Furthermore, by Lemmas \ref{lemma-Q-BM-1} and \ref{lemma-Q-BM-2} we obtain
\begin{align*}
| B_{s,t} |_{L^q} &= \bbe \big[ |B_{s,t}|^q \big]^{1/q} \leq C_q^{1/q} |t-s|^{1/2} = C_q^{1/q} |t-s|^{\beta}, \quad s,t \in [0,T],
\\ | \bbb_{s,t} |_{L^{q/2}} &= \bbe \big[ |\bbb_{s,t}|^{q/2} \big]^{2/q} \leq C_q^{2/q} |t-s| = C_q^{2/q} |t-s|^{2 \beta}, \quad s,t \in [0,T],
\end{align*}
with a constant $C_q > 0$. Consequently, applying the Kolmogorov Chentsov theorem (Theorem \ref{thm-KC}) completes the proof.
\end{proof}

Now, we determine the bracket $[ \mathbf{B} ]$ of the It\^{o}-enhanced Brownian motion. Recall from Exercise \ref{exercise-bracket} that it suffices to determine the values of the one-parameter bracket.

\begin{exercise}\label{exercise-bracket-Ito}
Show that $[ \mathbf{B} ]_t = t \cdot \Id$ for all $t \in [0,T]$.
\end{exercise}

\begin{solution}
Let $t \in [0,T]$ be arbitrary. Then we have
\begin{align*}
\Sym(\bbb_{0,t}) = \frac{1}{2} \big( \bbb_{0,t} + \bbb_{0,t}^{\top} \big),
\end{align*}
and hence, using integration by parts, for all $i,j \in \{ 1,\ldots,d \}$ we obtain
\begin{align*}
\Sym(\bbb_{0,t})_{ij} &= \frac{1}{2} \bigg( \int_0^t B_r^i d B_r^j + \int_0^t B_r^j dB_r^i \bigg)
\\ &= \frac{1}{2} \big( B_t^i B_t^j - [B^i,B^j]_t \big),
\end{align*}
showing that
\begin{align*}
\Sym(\bbb_{0,t}) = \frac{1}{2} \big( B_t \otimes B_t - t \cdot \Id \big) \quad \text{for all $t \in [0,T]$.}
\end{align*}
Therefore, we deduce that
\begin{align*}
[ \mathbf{B} ]_t = B_t \otimes B_t - 2 \, \Sym(\bbb_{0,t}) = t \cdot \Id.
\end{align*}
\end{solution}

As a consequence of Lemma \ref{lemma-geometric}, the It\^{o}-enhanced Brownian motion $\mathbf{B}^{\text{\rm It\^o}}$ is a rough path which is \emph{not} weakly geometric.

\subsection{Stratonovich-enhanced Brownian motion}\label{sec-BB-Strat-enhanced}

In this section we construct the Stratonovich-enhanced Brownian motion. As in the previous section, let $B$ be a Brownian motion with values in $\bbr^d$. In view of the following definition, recall that for two continuous semimartingales $X$ and $Y$ the \emph{Stratonovich integral} is defined as
\begin{align*}
\int_0^t X_s \circ dY_s := \int_0^t X_s dY_s + \frac{1}{2} [X,Y]_t, \quad t \in [0,T].
\end{align*}

\begin{definition}
We introduce the following notions:
\begin{enumerate}
\item We call the $\bbr^{d \times d}$-valued process $\bbb = \bbb^{\rm Strat}$ defined by the Stratonovich integrals
\begin{align*}
\bbb_{s,t} := \int_s^t B_{s,r} \otimes \circ dB_r, \quad s,t \in [0,T]
\end{align*}
the \emph{Stratonovich-enhancement} of Brownian motion.

\item We call the process $\bfb = \bfb^{\rm Strat}$ defined as $\bfb := (B,\bbb)$ the \emph{Stratonovich-enhanced Brownian motion}.
\end{enumerate}
\end{definition}

Now, our goal is to prove that a typical sample path of $\bfb^{\rm Strat}$ is a rough path. For this purpose, we would like to express $\bbb^{\rm Strat}$ by means of $\bbb^{\text{\rm It\^o}}$. Let us define $F : [0,T] \to \bbr^{d \times d}$ as
\begin{align*}
F_t^{ij} := \frac{1}{2} [ B^i, B^j ]_t, \quad t \in [0,T]
\end{align*}
for all $i,j = 1,\ldots,d$. Then we have
\begin{align*}
F_t = \frac{t}{2} \Id, \quad t \in [0,T].
\end{align*}

\begin{exercise}
Show that
\begin{align}\label{Strat-Ito-relation}
\bbb_{s,t}^{\rm Strat} = \bbb_{s,t}^{\text{\rm It\^o}} + F_{s,t} \quad \text{for all $s,t \in [0,T]$.}
\end{align}
\end{exercise}

\begin{solution}
Note that
\begin{align*}
\bbb_{s,t}^{\rm Strat} = \bbb_{s,t}^{\text{\rm It\^o}} + \frac{1}{2} G_{s,t} \quad \text{for all $s,t \in [0,T]$,}
\end{align*}
where $G : [0,T]^2 \to \bbr^{d \times d}$ is given by
\begin{align*}
G_{s,t}^{ij} = [ B_{s,\cdot}^i, B^j ]_t - [ B_{s,\cdot}^i, B^j ]_s, \quad s,t \in [0,T].
\end{align*}
Noting that $G_{s,t} = (t-s) \Id$ for all $s,t \in [0,T]$ provides the stated formula \eqref{Strat-Ito-relation}.
\end{solution}

\begin{exercise}\label{exercise-BB-Strat}
Show that the Stratonovich-enhanced Brownian motion $\bfb^{\rm Strat} = (B,\bbb^{\rm Strat})$ satisfies Chen's relation \eqref{Chen-relation}.
\end{exercise}

\begin{solution}
By Exercise \ref{exercise-BB-Ito} the It\^{o}-enhanced Brownian motion $\bfb^{\text{\rm It\^o}} = (B,\bbb^{\text{\rm It\^o}})$ satisfies Chen's relation \eqref{Chen-relation}. Taking into account \eqref{Strat-Ito-relation}, by Exercise \ref{exercise-F-second-order} we deduce that the Stratonovich-enhanced Brownian motion $\bfb^{\rm Strat} = (B,\bbb^{\rm Strat})$ satisfies Chen's relation \eqref{Chen-relation} as well.
\end{solution}

Now, we are ready to show that typical sample paths of the Stratonovich-enhanced Brownian motion are rough paths.

\begin{proposition}\label{prop-BB-Strat-rough-path}
For each $\alpha \in (\frac{1}{3},\frac{1}{2})$ we have $\bbp$-almost surely
$$\mathbf{B}^{\rm Strat} = ( B, \bbb^{\rm Strat} ) \in \scrc^{\alpha}([0,T],\bbr^d).$$
\end{proposition}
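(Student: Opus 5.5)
The plan is to reduce everything to the Itô case via relation \eqref{Strat-Ito-relation}. By Proposition \ref{prop-BB-Ito-rough-path} there is a set $\Omega_0 \in \calf$ with $\bbp(\Omega_0) = 1$ such that for each $\omega \in \Omega_0$ we have $\mathbf{B}^{\text{\rm It\^o}}(\omega) \in \scrc^{\alpha}([0,T],\bbr^d)$. Moreover, \eqref{Strat-Ito-relation} gives
\begin{align*}
\bbb^{\rm Strat}_{s,t} = \bbb^{\text{\rm It\^o}}_{s,t} + F_{s,t}, \qquad F_t = \frac{t}{2}\Id .
\end{align*}
This identity holds up to the usual indistinguishability issues for stochastic integrals. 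So I would fix versions for which it holds for all $s,t$ simultaneously on a full-measure set, intersected with $\Omega_0$. This step can be handled by defining $\bbb^{\rm Strat}$ through this formula from the chosen continuous version of $\bbb^{\text{\rm It\^o}}$.

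Next, I would check that $F \in \calc^{2\alpha}([0,T],\bbr^{d\times d})$. This is immediate from
\begin{align*}
|F_{s,t}| = \frac{|t-s|}{2}\,|\Id| \le \frac{\sqrt d}{2}\,T^{1-2\alpha}\,|t-s|^{2\alpha},
\end{align*}
which uses $2\alpha < 1$. Then Corollary \ref{cor-F-second-order}, implication (ii)$\Rightarrow$(i), applied pathwise with $\mathbf{X} = \mathbf{B}^{\text{\rm It\^o}}(\omega)$ and $\bar{\mathbf{X}} = \mathbf{B}^{\rm Strat}(\omega)$, yields $\mathbf{B}^{\rm Strat}(\omega) \in \scrc^{\alpha}([0,T],\bbr^d)$ for all $\omega$ in the full-measure set. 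Here Chen's relation for $\mathbf{B}^{\rm Strat}$ is also the content of Exercise \ref{exercise-BB-Strat}.

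An alternative route would apply Theorem \ref{thm-KC} directly. It would use the moment bound $|\bbb^{\rm Strat}_{s,t}|_{L^{q/2}} \le |\bbb^{\text{\rm It\^o}}_{s,t}|_{L^{q/2}} + \tfrac{\sqrt d}{2}|t-s|$ together with Lemmas \ref{lemma-Q-BM-1} and \ref{lemma-Q-BM-2}, exactly as in Proposition \ref{prop-BB-Ito-rough-path}. However, the pathwise perturbation argument above is shorter. The only mild obstacle is the measure-theoretic bookkeeping of versions, namely ensuring that \eqref{Strat-Ito-relation} holds simultaneously for all $(s,t)$ outside one null set. Continuity in $(s,t)$ of both sides settles this.
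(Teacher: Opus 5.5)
Your proposal is correct and follows the paper's argument. The paper also obtains the statement directly from \eqref{Strat-Ito-relation}, Proposition \ref{prop-BB-Ito-rough-path} and Corollary \ref{cor-F-second-order}; your explicit check that $F \in \calc^{2\alpha}$ and your choice of versions, so that \eqref{Strat-Ito-relation} holds for all $(s,t)$ outside one null set, just spell out details the paper leaves implicit.
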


\begin{proof}
In view of \eqref{Strat-Ito-relation}, this is an immediate consequence of Proposition \ref{prop-BB-Ito-rough-path} and Corollary \ref{cor-F-second-order}.
\end{proof}

Actually, the Stratonovich-enhanced Brownian motion is even a weakly geometric rough path. In order to see this, let us compute the bracket.

\begin{exercise}\label{exercise-Strat-bracket}
Show that $[ \mathbf{B}^{\rm Strat} ]_t = 0$ for all $t \in [0,T]$.
\end{exercise}

\begin{solution}
Using \eqref{Strat-Ito-relation}, by Exercises \ref{exercise-bracket-F} and \ref{exercise-bracket-Ito} we have
\begin{align*}
[ \mathbf{B}^{\rm Strat} ]_t = [ \mathbf{B}^{\text{\rm It\^o}} ]_t - 2 \, \Sym(F_t) = t \cdot \Id - t \cdot \Id = 0, \quad t \in [0,T].
\end{align*}
\end{solution}

\begin{proposition}\label{prop-BB-Strat-rough-path-g}
For each $\alpha \in (\frac{1}{3},\frac{1}{2})$ we have $\bbp$-almost surely
$$\mathbf{B}^{\rm Strat} = ( B, \bbb^{\rm Strat} ) \in \scrc_g^{\alpha}([0,T],\bbr^d).$$
\end{proposition}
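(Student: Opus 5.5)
The plan is to combine three results already established: Proposition \ref{prop-BB-Strat-rough-path}, Exercise \ref{exercise-Strat-bracket} and Lemma \ref{lemma-geometric}.

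First, fix $\alpha \in (\frac{1}{3},\frac{1}{2})$. By Proposition \ref{prop-BB-Strat-rough-path} there is a $\bbp$-null set $N$ such that for all $\omega \notin N$ the path $\mathbf{B}^{\rm Strat}(\omega)$ belongs to $\scrc^{\alpha}([0,T],\bbr^d)$. For such $\omega$ the one-parameter and two-parameter brackets are therefore well defined.

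Second, I compute the bracket. Exercise \ref{exercise-Strat-bracket} gives $[ \mathbf{B}^{\rm Strat} ]_t = 0$ for all $t \in [0,T]$. That identity rests on the relations \eqref{Strat-Ito-relation} and $[\mathbf{B}^{\text{\rm It\^o}}]_t = t \cdot \Id$, which hold almost surely. So I enlarge $N$ by another null set on which these fail. Here some care is needed: these are identities between stochastic integrals, so they hold only up to indistinguishability. I therefore use continuous versions of both sides, so that the identities hold simultaneously for all $t$ outside a single null set, rather than for each $t$ with its own exceptional set.

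Third, I pass from the one-parameter bracket to the two-parameter bracket. By Exercise \ref{exercise-bracket}, $[\mathbf{B}^{\rm Strat}]_{s,t} = [\mathbf{B}^{\rm Strat}]_t - [\mathbf{B}^{\rm Strat}]_s = 0$ for all $s,t \in [0,T]$. Lemma \ref{lemma-geometric} then yields $\mathbf{B}^{\rm Strat}(\omega) \in \scrc_g^{\alpha}([0,T],\bbr^d)$ for all $\omega \notin N$.

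The only delicate point is this null-set bookkeeping. The algebra itself is already contained in the cited exercises.
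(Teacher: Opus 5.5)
Your proposal is correct and is essentially the paper's proof, which also derives the statement directly from Proposition \ref{prop-BB-Strat-rough-path}, Exercise \ref{exercise-Strat-bracket} and Lemma \ref{lemma-geometric}. Your explicit passage from the one-parameter to the two-parameter bracket via Exercise \ref{exercise-bracket}, and your null-set bookkeeping, are details the paper leaves implicit.
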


\begin{proof}
In view of Proposition \ref{prop-BB-Strat-rough-path} and Exercise \ref{exercise-Strat-bracket}, this is an immediate consequence of Lemma \ref{lemma-geometric}.
\end{proof}

\subsection{Coincidence of the It\^{o} integrals}

In this section we briefly recall a result showing that the rough It\^{o} integral and the stochastic It\^{o} integral coincide. Let $B$ be an $\bbr^d$-valued Brownian motion, and let $\mathbf{B}^{\text{\rm It\^o}} = (B,\bbb^{\text{\rm It\^o}})$ be the It\^{o}-enhanced Brownian motion, as introduced in Section \ref{sec-BB-Ito-enhanced}. Furthermore, let $\alpha \in (\frac{1}{3},\frac{1}{2})$ be an arbitrary index. According to Proposition \ref{prop-BB-Ito-rough-path} there is a $\bbp$-nullset $N_1$ such that $\mathbf{B}^{\text{\rm It\^o}} \in \scrc^{\alpha}([0,T],\bbr^d)$ on $N_1^c$. In view of the following result, recall the identifications from Remark \ref{rem-identifications-fin-dim}.

\begin{proposition}\label{prop-integrals-Ito-coincide}
Let $Y$ be a continuous $\bbr^{m \times d}$-valued process, and let $Y'$ be a continuous $\bbr^{m \times d \times d}$-valued processes such that $(Y,Y') \in \scrd_X^{2 \alpha}([0,T],\bbr^{m \times d})$ on $N_1^c$. Then the following statements are true:
\begin{enumerate}
\item The $\bbr^m$-valued rough integral
\begin{align*}
\int_0^T Y_s \, d \mathbf{B}^{\text{\rm It\^o}}_s = \lim_{|\Pi| \to 0} \sum_{[u,v] \in \Pi} ( Y_u B_{u,v} + Y_u' \bbb_{u,v}^{\text{\rm It\^o}} )
\end{align*}
exists on $N_1^c$.

\item If $Y$ and $Y'$ are adapted and bounded, then there is a $\bbp$-nullset $N$ with $N_1 \subset N$ such that
\begin{align*}
\int_0^T Y_s \, d \mathbf{B}^{\text{\rm It\^o}}_s = \int_0^T Y_s \, dB_s \quad \text{on $N^c$.}
\end{align*}
\end{enumerate}
\end{proposition}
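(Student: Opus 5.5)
Part (1) is immediate. On $N_1^c$ we have $\mathbf{B}^{\text{\rm It\^o}} \in \scrc^{\alpha}([0,T],\bbr^d)$ and $(Y,Y') \in \scrd_B^{2\alpha}([0,T],\bbr^{m\times d})$. Under the identifications of Remark \ref{rem-identifications-fin-dim} we have $\bbr^{m\times d}\cong L(\bbr^d,\bbr^m)$. Hence Theorem \ref{thm-Gubinelli}, together with the sewing lemma (Lemma \ref{lemma-sewing}), yields existence of the limit of the compensated Riemann sums along any sequence of partitions with mesh tending to zero, pathwise on $N_1^c$.

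For part (2) we fix a sequence of partitions $\Pi_n$ of $[0,T]$ with $|\Pi_n|\to 0$ and split the compensated sum as
\begin{align*}
\sum_{[u,v]\in\Pi_n} \big( Y_u B_{u,v} + Y_u' \bbb_{u,v}^{\text{\rm It\^o}} \big) = S_n^1 + S_n^2 ,
\end{align*}
where $S_n^1 := \sum Y_u B_{u,v}$ and $S_n^2 := \sum Y_u'\bbb_{u,v}^{\text{\rm It\^o}}$.

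\emph{First sum.} Since $Y$ is continuous, adapted and bounded, $S_n^1$ is the stochastic integral of the simple predictable process $\sum Y_u \mathbbm{1}_{(u,v]}$. By Lemma \ref{lemma-Burkholder} (with $p=2$) and dominated convergence, using continuity of $Y$, we get $S_n^1\to\int_0^T Y_s\,dB_s$ in $L^2(\bbp)$.

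\emph{Second sum.} We show $S_n^2\to 0$ in $L^2(\bbp)$. The key observation is that $Y_u'$ is $\calf_u$-measurable and bounded, while $\bbb_{u,v}^{\text{\rm It\^o}}=\int_u^v B_{u,r}\otimes dB_r$ has conditional mean zero given $\calf_u$. Hence the summands $Z_{[u,v]}:=Y_u'\bbb_{u,v}^{\text{\rm It\^o}}$ form a martingale difference sequence along the partition, and the cross terms in $\bbe|S_n^2|^2$ vanish: for $[u,v]$ preceding $[u',v']$,
\begin{align*}
\bbe\big[\langle Z_{[u,v]},Z_{[u',v']}\rangle\big] = \bbe\big[\langle Z_{[u,v]}, Y'_{u'}\,\bbe[\bbb^{\text{\rm It\^o}}_{u',v'}\mid\calf_{u'}]\rangle\big] = 0 .
\end{align*}
Therefore, by Lemma \ref{lemma-Q-BM-2} with $p=2$,
\begin{align*}
\bbe|S_n^2|^2 = \sum \bbe|Y_u'\bbb_{u,v}^{\text{\rm It\^o}}|^2 \le \|Y'\|_\infty^2 C_2 \sum |v-u|^2 \le \|Y'\|_\infty^2 C_2\, T\,|\Pi_n|\to 0 .
\end{align*}

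\emph{Conclusion.} The sum $S_n^1+S_n^2$ converges in $L^2(\bbp)$ to $\int_0^T Y_s\,dB_s$, so along a subsequence it converges almost surely, outside a nullset $N_2$. On $N_1^c$ it also converges pointwise to the rough integral by part (1), and every subsequence has the same limit. Setting $N:=N_1\cup N_2$ yields the identity on $N^c$.

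The main obstacle is the second sum. One must make the vanishing of the cross terms precise: that $\bbb^{\text{\rm It\^o}}_{u',v'}$ is a true martingale increment with zero conditional mean, which holds since $B_{u',\cdot}$ is square integrable, and that boundedness of $Y'$ justifies pulling it out of the conditional expectation. The remaining steps are direct applications of the earlier lemmas.
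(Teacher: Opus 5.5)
Your proof is correct and is essentially the argument behind \cite[Prop.~5.1]{Friz-Hairer}, which the paper cites as its entire proof: the left-point Riemann sums converge in $L^2(\bbp)$ to the It\^{o} integral, the second-order correction $\sum Y_u'\bbb^{\text{\rm It\^o}}_{u,v}$ vanishes in $L^2(\bbp)$ by martingale orthogonality together with $\bbe|\bbb^{\text{\rm It\^o}}_{u,v}|^2 \lesssim |v-u|^2$, and a subsequence argument then identifies the two limits. The only difference is that the boundedness of $Y'$ assumed in the statement lets you skip the localization step used in Friz--Hairer.
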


\begin{proof}
This is a consequence of \cite[Prop. 5.1]{Friz-Hairer}.
\end{proof}

\subsection{Coincidence of the Stratonovich integrals}

Now, we briefly recall a result showing that the rough Stratonovich integral and the stochastic Stratonovich integral coincide. Let $B$ be an $\bbr^d$-valued Brownian motion, and let $\mathbf{B}^{\rm Strat} = (B,\bbb^{\rm Strat})$ be the Stratonovich-enhanced Brownian motion, as introduced in Section \ref{sec-BB-Strat-enhanced}. Furthermore, let $\alpha \in (\frac{1}{3},\frac{1}{2})$ be an arbitrary index. According to Proposition \ref{prop-BB-Strat-rough-path-g} there is a $\bbp$-nullset $N_1$ such that $\mathbf{B}^{\rm Strat} \in \scrc_g^{\alpha}([0,T],\bbr^d)$ on $N_1^c$. In view of the following result, recall the identifications from Remark \ref{rem-identifications-fin-dim}.

\begin{proposition}\label{prop-integrals-Stratonovich-coincide}
Let $Y$ be a continuous $\bbr^{m \times d}$-valued process, and let $Y'$ be a continuous $\bbr^{m \times d \times d}$-valued processes such that $(Y,Y') \in \scrd_X^{2 \alpha}([0,T],\bbr^{m \times d})$ on $N_1^c$. Then the following statements are true:
\begin{enumerate}
\item The $\bbr^m$-valued rough integral
\begin{align*}
\int_0^T Y_s \, d \mathbf{B}^{\rm Strat}_s = \lim_{|\Pi| \to 0} \sum_{[u,v] \in \Pi} ( Y_u B_{u,v} + Y_u' \bbb_{u,v}^{\rm Strat} )
\end{align*}
exists on $N_1^c$.

\item If $Y$ and $Y'$ are adapted and bounded, then there is a $\bbp$-nullset $N$ with $N_1 \subset N$ such that
\begin{align*}
\int_0^T Y_s \, d \mathbf{B}^{\rm Strat}_s = \int_0^T Y_s \circ dB_s \quad \text{on $N^c$.}
\end{align*}
\end{enumerate}
\end{proposition}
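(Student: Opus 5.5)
Part (1) is immediate from Theorem \ref{thm-Gubinelli}. On $N_1^c$ we have $\mathbf{B}^{\rm Strat} \in \scrc^{\alpha}([0,T],\bbr^d)$ by Proposition \ref{prop-BB-Strat-rough-path-g}, and $(Y,Y') \in \scrd_B^{2\alpha}([0,T],\bbr^{m \times d})$ by assumption. With the identifications of Remark \ref{rem-identifications-fin-dim} we have $\bbr^{m\times d} \cong L(\bbr^d,\bbr^m)$, so the sewing argument applies pathwise and gives existence of the limit defining the rough integral.

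For part (2) I reduce to the It\^{o} case, Proposition \ref{prop-integrals-Ito-coincide}, using the relation \eqref{Strat-Ito-relation}, $\bbb^{\rm Strat}_{u,v} = \bbb^{\text{\rm It\^o}}_{u,v} + \frac{1}{2}(v-u)\Id$. For every partition $\Pi$ this gives
\begin{align*}
\sum_{[u,v]\in\Pi} \big( Y_u B_{u,v} + Y_u' \bbb^{\rm Strat}_{u,v} \big) = \sum_{[u,v]\in\Pi} \big( Y_u B_{u,v} + Y_u' \bbb^{\text{\rm It\^o}}_{u,v} \big) + \frac{1}{2}\sum_{[u,v]\in\Pi} Y_u'(\Id)\,(v-u),
\end{align*}
where $Y_u'(\Id) \in \bbr^m$ has components $\sum_k (Y_u')_{i,k,k}$. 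As $|\Pi|\to 0$, the first sum converges to $\int_0^T Y\,d\mathbf{B}^{\text{\rm It\^o}}$ on $N_1^c$ by Proposition \ref{prop-integrals-Ito-coincide}(1). The second sum is a Riemann sum of a continuous integrand, so it converges to $\frac12\int_0^T Y_s'(\Id)\,ds$. Proposition \ref{prop-integrals-Ito-coincide}(2) then yields, outside a nullset $N \supset N_1$,
\begin{align*}
\int_0^T Y_s\,d\mathbf{B}^{\rm Strat}_s = \int_0^T Y_s\,dB_s + \frac{1}{2}\int_0^T Y_s'(\Id)\,ds .
\end{align*}

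It remains to show that the correction term equals the Stratonovich correction, i.e.\ $\frac12 [Y,B]_T$ componentwise: $\frac12\sum_j [Y^{ij},B^j]_T$. This is the main obstacle, since $Y$ is not assumed to be a semimartingale, so this quadratic covariation needs to be computed directly. First I would note that $\int_0^T Y_s\circ dB_s$ has to be understood through the limit of the trapezoidal sums, or equivalently as the It\^{o} integral plus $\frac12$ times the covariation limit $\lim\sum_{[u,v]\in\Pi} Y_{u,v}B_{u,v}$.

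To compute this limit I would use the controlled structure $Y_{u,v} = Y_u' B_{u,v} + R^Y_{u,v}$. This gives
\begin{align*}
\sum_{[u,v]\in\Pi} Y_{u,v}B_{u,v} = \sum_{[u,v]\in\Pi} Y_u'(B_{u,v}\otimes B_{u,v}) + \sum_{[u,v]\in\Pi} R^Y_{u,v}B_{u,v}.
\end{align*}
The remainder sum is bounded pathwise by $\|R^Y\|_{2\alpha}\|B\|_\alpha\sum|v-u|^{3\alpha}\to 0$, since $3\alpha>1$. For the first sum I would write $B_{u,v}\otimes B_{u,v} = 2\,\Sym(\bbb^{\text{\rm It\^o}}_{u,v}) + (v-u)\Id$, using Exercise \ref{exercise-bracket-Ito} together with Exercise \ref{exercise-bracket}. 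The $\Sym(\bbb^{\text{\rm It\^o}})$ sum is a sum of martingale increments with adapted bounded coefficients $Y_u'$, so its $L^2$ norm is at most $C\sum|v-u|^2 \to 0$ by Lemma \ref{lemma-Q-BM-2} and orthogonality. Along a subsequence this gives a.s.\ convergence, and the enlarged nullset is absorbed into $N$. The remaining sum converges to $\int_0^T Y_s'(\Id)\,ds$, which matches the correction term found above and completes the proof.
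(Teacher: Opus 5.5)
Your proof is correct and follows the standard argument behind \cite[Cor.~5.2]{Friz-Hairer}, which is all the paper invokes: pass to the It\^{o} case (Proposition \ref{prop-integrals-Ito-coincide}) via \eqref{Strat-Ito-relation}, which produces the drift $\frac12\int_0^T Y_s'(\Id)\,ds$, and then identify this drift with $\frac12[Y,B]_T$ through $Y_{u,v} = Y_u' B_{u,v} + R^Y_{u,v}$. Your $L^2$-orthogonality bound for the $\Sym(\bbb^{\text{\rm It\^o}})$-sum and the pathwise $|\Pi|^{3\alpha-1}$ estimate for the remainder make the argument self-contained, apart from routine null-set bookkeeping (the It\^{o} and Stratonovich exceptional sets agree up to a null set by Corollary \ref{cor-F-second-order}).
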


\begin{proof}
This is a consequence of \cite[Cor. 5.2]{Friz-Hairer}.
\end{proof}

\section{Stochastic differential equations}\label{sec-SDEs}

In this section we will apply our previous findings in order to establish existence and uniqueness results for stochastic differential equations (SDEs). In Section \ref{sec-Ito-equations} we will treat It\^{o} SDEs, and in Section \ref{sec-Strat-equations} we will treat Stratonovich SDEs. Throughout this section, let $(\Omega,\calf,(\calf_t)_{t \in \bbr_+},\bbp)$ be a filtered probability space satisfying the usual conditions, and let $B$ be a Brownian motion with values in $\bbr^d$.

\subsection{It\^{o} equations}\label{sec-Ito-equations}

In this section we establish an existence and uniqueness result for It\^{o} SDEs. Let $\bfb^{\text{\rm It\^o}} = (B,\bbb^{\text{\rm It\^o}})$ be the It\^{o}-enhanced Brownian motion, as introduced in Section \ref{sec-BB-Ito-enhanced}. We consider the $\bbr^m$-valued random It\^{o} RDE
\begin{align}\label{RDE-Ito-random}
\left\{
\begin{array}{rcl}
dY_t & = & f_0(t,Y_t) dt + f(t,Y_t) d \mathbf{B}_t^{\text{\rm It\^o}}
\\ Y_0 & = & \xi
\end{array}
\right.
\end{align}
as well as the $\bbr^m$-valued It\^{o} SDE
\begin{align}\label{SDE-Ito}
\left\{
\begin{array}{rcl}
dY_t & = & f_0(t,Y_t) dt + f(t,Y_t) d B_t
\\ Y_0 & = & \xi
\end{array}
\right.
\end{align}
with mappings $f_0 : [0,T] \times \bbr^m \to \bbr^m$ and $f : [0,T] \times \bbr^m \to \bbr^{m \times d}$.

\begin{definition}
Let $\xi \in \bbr^m$ be arbitrary. An $\bbr^m$-valued continuous, adapted process $Y = (Y_t)_{t \in [0,T_0]}$ for some $T_0 \in (0,T]$ is called a \emph{local solution} to the It\^{o} SDE \eqref{SDE-Ito} with $Y_0 = \xi$ if
\begin{align*}
Y_t = \xi + \int_0^t f_0(s,Y_s) ds + \int_0^t f(s,Y_s) dB_s, \quad t \in [0,T_0].
\end{align*}
If we can choose $T_0 = T$, then we also call $Y$ a \emph{(global) solution} to the It\^{o} SDE \eqref{SDE-Ito} with $Y_0 = \xi$.
\end{definition}

Let $\beta \in (\frac{1}{3},\frac{1}{2})$ be an index. By Proposition \ref{prop-BB-Ito-rough-path} there is a $\bbp$-nullset $N$ such that $\mathbf{B}^{\text{\rm It\^o}} \in \scrc^{\beta}([0,T],\bbr^d)$ on $N^c$. In the upcoming result, local and global solutions to the random RDE \eqref{RDE-Ito-random} are defined according to Definition \ref{def-solution} for each $\omega \in N^c$.

\begin{theorem}\label{thm-Ito-SDEs}
Suppose that $f_0 \in \Lip([0,T] \times \bbr^m,\bbr^m)$ and $f \in C_b^{2\beta,3}([0,T] \times \bbr^m, \bbr^{m \times d})$. Then for every $\xi \in \bbr^m$ the following statements are true:
\begin{enumerate}
\item There exists a unique solution $(Y,Y') \in \scrd_X^{2 \beta}([0,T],W)$ to the random It\^{o} RDE \eqref{RDE-Ito-random} with $Y_0 = \xi$ on $N^c$.

\item There is a $\bbp$-nullset $N_1$ with $N \subset N_1$ such that the associated stochastic process $Y$ restricted to $N_1^c$ is also a solution to the It\^{o} SDE \eqref{SDE-Ito}.
\end{enumerate}
\end{theorem}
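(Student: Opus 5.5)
Part (1) follows pathwise from Theorem \ref{thm-RDGL-main}. For every $\omega \in N^c$ the path $\mathbf{B}^{\text{\rm It\^o}}(\omega)$ lies in $\scrc^{\beta}([0,T],\bbr^d)$. We apply Theorem \ref{thm-RDGL-main} with $V = \bbr^d$ and $W = \bbr^m$, using the identification $L(\bbr^d,\bbr^m) \cong \bbr^{m \times d}$ from Proposition \ref{prop-lin-operator-matrix}. This gives a unique solution $(Y(\omega),Y'(\omega)) \in \scrd_{B(\omega)}^{2\beta}([0,T],\bbr^m)$ with $Y' = f(Y)$ for each such $\omega$. Existence and uniqueness therefore hold $\omega$ by $\omega$, and nothing stochastic is needed for this part.

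For part (2), the first task is to show that $Y$ and $Y' = f(Y)$ are continuous adapted processes. Continuity holds pathwise. Adaptedness I would get from the construction in the proof of Theorem \ref{thm-RDGL-main}. On each subinterval $[(k-1)t,kt]$ the solution is the limit of Picard iterates $\Phi_t^n$ in $\bbb_t$, starting from an adapted element, for instance $Y_s = \xi + f(0,\xi)B_s$ with $Y' \equiv f(0,\xi)$. Each iterate is adapted: $\Gamma(Y)$ is an ordinary integral of an adapted continuous process, and $\Psi(Y)_s$ is a limit of Riemann-type sums built from $Y_u, Y'_u, B_{u,v}, \bbb_{u,v}$ with $u,v \leq s$. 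By Lemma \ref{lemma-estimate-supremum}-type bounds, convergence in the controlled norm implies pointwise convergence, so the limit is adapted. The step $t$ is chosen via the deterministic bound $K$ from Proposition \ref{prop-estimate}, and $K$ depends on $\mathbf{B}(\omega)$. Measurability in $\omega$ is then a concern. Alternatively, I would use that the It\^{o}-Lyons map is continuous (Proposition \ref{prop-Ito-Lyons}): $Y_s$ is a continuous function of the restriction of $\mathbf{B}$ to $[0,s]$, because the solution on $[0,s]$ depends only on the data there. Adaptedness then follows directly, and this route is cleaner.

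Next, the integrand for the coincidence result. By Proposition \ref{prop-comp-f}, $(f(Y),Df(Y)f(Y))$ is a controlled rough path on $N^c$ with values in $\bbr^{m\times d}$. It is adapted and continuous, and it is bounded because $f \in C_b^{2\beta,3}$, so that $|f|$ and $|D_yf|\,|f|$ are bounded by $\|f\|_{C_b^{2\beta,3}}$ and its square. Proposition \ref{prop-integrals-Ito-coincide}(2) applies with $Y_s$ replaced by $f(s,Y_s)$ and the upper limit $T$ replaced by any $t$. It yields a nullset $N_t \supset N$ such that the rough integral $\int_0^t f(s,Y_s)\,d\mathbf{B}^{\text{\rm It\^o}}_s$ equals the It\^{o} integral $\int_0^t f(s,Y_s)\,dB_s$ on $N_t^c$.

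Set $N_1 := \bigcup_{t \in \mathbb{Q}\cap[0,T]} N_t \cup N_T$. Both integrals are continuous in $t$: the rough integral by Proposition \ref{prop-Gubinelli}, and a continuous modification of the It\^{o} integral is fixed. Hence they agree for all $t$ on $N_1^c$. Substituting into \eqref{mild-solution} shows that $Y$ solves \eqref{SDE-Ito} on $N_1^c$.

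I expect the adaptedness and measurability of $Y$ to be the main obstacle, since it requires care with the pathwise fixed point. The coincidence step is a direct application of earlier results.
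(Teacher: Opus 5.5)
Your proposal is correct and follows the same route as the paper. Part (1) comes pathwise from Theorem \ref{thm-RDGL-main}, adaptedness of $(Y,Y')$ comes from the continuity of the It\^{o}-Lyons map (Proposition \ref{prop-Ito-Lyons}), and the identification of the integrals comes from Proposition \ref{prop-integrals-Ito-coincide}. You also supply details the paper leaves implicit: that $Y_s$ depends only on $\mathbf{B}|_{[0,s]}$ by uniqueness, the boundedness of $(f(Y),D_yf(Y)f(Y))$, and the countable-union and continuity argument that extends the equality of the integrals from time $T$ to all $t \in [0,T]$ simultaneously.
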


\begin{proof}
The first statement is an immediate consequence of Theorem \ref{thm-RDGL-main}. For the proof of the second statement, note that $\calf_t^B = \calf_t^{\mathbf{B}^{\text{\rm It\^o}}} \subset \calf_t$ for all $t \in [0,T]$. Furthermore, we have
\begin{align*}
(Y,Y') = \Phi(\xi,\mathbf{B}^{\text{\rm It\^o}}) \quad \text{on $N^c$,}
\end{align*}
where $\Phi$ denotes the It\^{o}-Lyons map defined according to \eqref{IL-map}. By the continuity of the It\^{o}-Lyons maps (see Proposition \ref{prop-Ito-Lyons}) it follows that the stochastic process $(Y,Y')$ is adapted. Therefore, applying Proposition \ref{prop-integrals-Ito-coincide} concludes the proof.
\end{proof}

\subsection{Stratonovich equations}\label{sec-Strat-equations}

Now, we establish an existence and uniqueness result for Stratonovich SDEs. Let $B$ be an $\bbr^d$-valued Brownian motion, and let $\mathbf{B}^{\rm Strat} = (B,\bbb^{\rm Strat})$ be the Stratonovich-enhanced Brownian motion, as introduced in Section \ref{sec-BB-Strat-enhanced}. We consider the $\bbr^m$-valued random Stratonovich RDE
\begin{align}\label{RDE-Strat-random}
\left\{
\begin{array}{rcl}
dY_t & = & f_0(t,Y_t) dt + f(t,Y_t) d \mathbf{B}_t^{\rm Strat}
\\ Y_0 & = & \xi
\end{array}
\right.
\end{align}
as well as the $\bbr^m$-valued Stratonovich SDE
\begin{align}\label{SDE-Strat}
\left\{
\begin{array}{rcl}
dY_t & = & f_0(t,Y_t) dt + f(t,Y_t) \circ d B_t
\\ Y_0 & = & \xi
\end{array}
\right.
\end{align}
with mappings $f_0 : [0,T] \times \bbr^m \to \bbr^m$ and $f : [0,T] \times \bbr^m \to \bbr^{m \times d}$.

\begin{definition}
Let $\xi \in \bbr^m$ be arbitrary. An $\bbr^m$-valued continuous, adapted process $Y = (Y_t)_{t \in [0,T_0]}$ for some $T_0 \in (0,T]$ is called a \emph{local solution} to the Stratonovich SDE \eqref{SDE-Strat} with $Y_0 = \xi$ if
\begin{align*}
Y_t = \xi + \int_0^t f_0(s,Y_s) ds + \int_0^t f(s,Y_s) \circ dB_s, \quad t \in [0,T_0].
\end{align*}
If we can choose $T_0 = T$, then we also call $Y$ a \emph{(global) solution} to the Stratonovich SDE \eqref{SDE-Strat} with $Y_0 = \xi$.
\end{definition}

Let $\beta \in (\frac{1}{3},\frac{1}{2})$ be an index. By Proposition \ref{prop-BB-Strat-rough-path-g} there is a $\bbp$-nullset $N$ such that $\mathbf{B}^{\rm Strat} \in \scrc_g^{\beta}([0,T],\bbr^d)$ on $N^c$. In the upcoming result, local and global solutions to the random RDE \eqref{RDE-Strat-random} are defined according to Definition \ref{def-solution} for each $\omega \in N^c$. Its proof is analogous to that of Theorem \ref{thm-Ito-SDEs}, and therefore omitted. The essential difference is that at the end of the proof we use Proposition \ref{prop-integrals-Stratonovich-coincide}.

\begin{theorem}\label{thm-Strat-SDEs}
Suppose that $f_0 \in \Lip([0,T] \times \bbr^m,\bbr^m)$ and $f \in C_b^{2\beta,3}([0,T] \times \bbr^m, \bbr^{m \times d})$. Then for every $\xi \in \bbr^m$ the following statements are true:
\begin{enumerate}
\item There exists a unique solution $(Y,Y') \in \scrd_X^{2 \beta}([0,T],W)$ to the random Stratonovich RDE \eqref{RDE-Strat-random} with $Y_0 = \xi$ on $N^c$.

\item There is a $\bbp$-nullset $N_1$ with $N \subset N_1$ such that the associated stochastic process $Y$ restricted to $N_1^c$ is also a solution to the Stratonovich SDE \eqref{SDE-Strat}.
\end{enumerate}
\end{theorem}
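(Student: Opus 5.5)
The plan is to mirror the proof of Theorem \ref{thm-Ito-SDEs}, replacing the It\^{o}-enhanced Brownian motion by the Stratonovich-enhanced one and Proposition \ref{prop-integrals-Ito-coincide} by Proposition \ref{prop-integrals-Stratonovich-coincide}.

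For the first statement, fix $\omega \in N^c$. Then $\mathbf{B}^{\rm Strat}(\omega) \in \scrc_g^{\beta}([0,T],\bbr^d) \subset \scrc^{\beta}([0,T],\bbr^d)$. Since $f_0 \in \Lip([0,T] \times \bbr^m,\bbr^m)$ and $f \in C_b^{2\beta,3}([0,T] \times \bbr^m, \bbr^{m \times d})$, with the identification $\bbr^{m \times d} \cong L(\bbr^d,\bbr^m)$, Theorem \ref{thm-RDGL-main} applies with $V = \bbr^d$ and $W = \bbr^m$. It yields a unique solution $(Y(\omega),Y'(\omega)) \in \scrd_X^{2\beta}([0,T],\bbr^m)$ with $Y' = f(Y)$. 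This proves part (1) pathwise on $N^c$.

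For the second statement, I would first establish adaptedness. By construction, $B$ generates $\bbb^{\rm Strat}$: restricted to $[0,t]$, the Stratonovich area is $\calf_t^B$-measurable, being $\bbb^{\rm It\hat{o}}$ plus the deterministic term $F$ from \eqref{Strat-Ito-relation}. So $\calf_t^{\mathbf{B}^{\rm Strat}} = \calf_t^B \subset \calf_t$. Write $(Y,Y') = \Phi(\xi,\mathbf{B}^{\rm Strat})$ with $\Phi$ the It\^{o}-Lyons map from \eqref{IL-map}. Applying Theorem \ref{thm-RDGL-main} and Proposition \ref{prop-Ito-Lyons} on each interval $[0,t]$, and using uniqueness together with the concatenation Lemma \ref{lemma-flow-property}, shows that $Y_t$ is a continuous function of $(\xi,\mathbf{B}^{\rm Strat}|_{[0,t]})$. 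Hence $Y$ and $Y' = f(Y)$ are adapted and continuous.

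The step I expect to be the main obstacle is the boundedness hypothesis in Proposition \ref{prop-integrals-Stratonovich-coincide}(2). The integrand is $f(Y)$, not $Y$, with Gubinelli derivative $f(Y)' = D_y f(Y) f(Y)$ by Remark \ref{rem-der-solutions}. Both are bounded because $f \in C_b^{2\beta,3}$ gives $\|f\|_\infty$ and $\|D_y f\|_\infty$ finite. By Proposition \ref{prop-comp-f}, $(f(Y),f(Y)') \in \scrd_X^{2\beta}([0,T],\bbr^{m \times d})$ on $N^c$, and since $\beta \leq \frac12$ this also holds with index $\alpha = \beta$. Continuity and adaptedness of both components follow from those of $Y$. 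Proposition \ref{prop-integrals-Stratonovich-coincide} then gives a nullset $N_1 \supset N$ such that $\int_0^t f(s,Y_s)\, d\mathbf{B}_s^{\rm Strat} = \int_0^t f(s,Y_s) \circ dB_s$ on $N_1^c$.

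Two points remain. The proposition is stated for the terminal time $T$, so I would apply it on $[0,t]$ for rational $t$, take the union of the countably many nullsets, and extend to all $t$ by continuity of both sides. Substituting into \eqref{mild-solution} then shows that $Y$ restricted to $N_1^c$ solves the Stratonovich SDE \eqref{SDE-Strat}.
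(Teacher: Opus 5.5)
Your proposal is correct and follows the paper's route. The paper proves this theorem exactly as Theorem \ref{thm-Ito-SDEs}: Theorem \ref{thm-RDGL-main} is applied pathwise, and adaptedness comes from $\calf_t^B = \calf_t^{\mathbf{B}^{\rm Strat}}$ together with continuity of the It\^{o}-Lyons map; Proposition \ref{prop-integrals-Stratonovich-coincide} then replaces Proposition \ref{prop-integrals-Ito-coincide} at the end. Your additional steps fill in what the paper leaves implicit: you apply the coincidence result to the bounded, adapted pair $(f(Y), D_y f(Y) f(Y))$ rather than to $Y$, use uniqueness on $[0,t]$ so that $Y_t$ depends only on $\mathbf{B}^{\rm Strat}|_{[0,t]}$, and pass from rational $t$ to all $t$ by continuity.
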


\section{Rough partial differential equations}\label{sec-RPDEs}

In this section we describe how our previous results about RDEs can be extended to semilinear rough partial differential equations (RPDEs), and which new challenges this extension entails.

As in Section \ref{sec-RDEs}, let $V,W$ be Banach spaces, let $T \in \bbr_+$ be a finite time horizon, and let $\bfx = (X,\bbx) \in \scrc^{\alpha}([0,T],V)$ be a rough path for some index $\alpha \in (\frac{1}{3},\frac{1}{2}]$. We consider time-inhomogeneous RPDEs of the form
\begin{align}\label{RPDE}
\left\{
\begin{array}{rcl}
dY_t & = & (A Y_t + f_0(t,Y_t)) dt + f(t,Y_t) d \bfx_t
\\ Y_0 & = & \xi
\end{array}
\right.
\end{align}
with appropriate mappings $f_0 : [0,T] \times W \to W$ and $f : [0,T] \times W \to L(V,W)$. Moreover, in contrast to the RDE \eqref{RDE}, we have an additional linear operator $A$, which is assumed to be the generator of a strongly continuous semigroup.

\subsection{Strongly continuous semigroups}\label{sec-semigroups}

At this juncture, let us briefly provide the required background about strongly continuous semigroups. Further details can be found, for example, in \cite{Engel-Nagel} or \cite{Pazy}. Recall that $W$ denotes a Banach space. A family $(S_t)_{t \geq 0}$ of continuous linear operators $S_t \in L(W)$ is called a \emph{strongly continuous semigroup} (or \emph{$C_0$-semigroup}) on $W$ if the following conditions are fulfilled:
\begin{enumerate}
\item $S_0 = \Id$.

\item $S_{s+t} = S_s S_t$ for all $s,t \geq 0$.

\item $\lim_{t \to 0} S_t y = y$ for all $y \in W$.
\end{enumerate}
Now, let $(S_t)_{t \geq 0}$ be a $C_0$-semigroup on $W$. It is well-known (see, e.g. \cite[Thm. 1.2.2]{Pazy}) that there are constants $M \geq 1$ and $\omega \in \bbr$ such that
\begin{align}\label{est-semigroup}
|S_t| \leq M e^{\omega t} \quad \text{for all $t \geq 0$.}
\end{align}
If we can choose $M=1$ and $\omega = 0$ in \eqref{est-semigroup}, that is $|S_t| \leq 1$ for all $t \geq 0$, then we call the $C_0$-semigroup $(S_t)_{t \geq 0}$ a \emph{semigroup of contractions}. The \emph{infinitesimal generator} $A : W \supset D(A) \to W$ is the  operator
\begin{align*}
Ay := \lim_{h \to 0} \frac{S_h y - y}{h}
\end{align*}
defined on the \emph{domain}
\begin{align*}
D(A) := \bigg\{ y \in W : \lim_{h \to 0} \frac{S_h y - y}{h} \text{ exists} \bigg\}.
\end{align*}
So far, this is all we need to know about $C_0$-semigroups, but further results will be presented later in Section \ref{subsec-semigroups}.

\subsection{Mild solutions}

Now we can introduce the concept of a mild solution. Let $A$ be the generator of a $C_0$-semigroup $(S_t)_{t \geq 0}$ on $W$.

\begin{definition}\label{def-mild-solution}
Let $\xi \in W$ be arbitrary. A path $(Y,Y') \in \scrd_X^{2 \alpha}([0,T_0],W)$ for some $T_0 \in (0,T]$ is called a \emph{local mild solution} to the RPDE \eqref{RPDE} with $Y_0 = \xi$ if $Y' = f(Y)$ and
\begin{align}\label{mild}
Y_t = S_t \xi + \int_0^t S_{t-s} f_0(s,Y_s) ds + \int_0^t S_{t-s} f(s,Y_s) d\mathbf{X}_s, \quad t \in [0,T_0].
\end{align}
If we can choose $T_0 = T$, then we also call $(Y,Y')$ a \emph{(global) mild solution} to the RPDE \eqref{RPDE} with $Y_0 = \xi$.
\end{definition}

The equation \eqref{mild} is also called the \emph{variation of constants formula}. The principle challenge when dealing with mild solutions is an appropriate definition of the rough convolution
\begin{align}\label{conv-intro}
\int_0^t S_{t-s} Y_s \, d \mathbf{X}_s, \quad t \in [0,T]
\end{align}
for controlled rough paths $(Y,Y')$. In order to demonstrate this point, let us fix a controlled rough path
\begin{align}\label{controlled-in-W}
(Y,Y') \in \scrd_X^{2 \alpha}([0,T],L(V,W)).
\end{align}
Then a self-evident idea to define the rough convolution \eqref{conv-intro} is as follows. We fix an arbitrary $t \in [0,T]$ and define the paths $\Upsilon = \Upsilon(t) : [0,t] \to L(V,W)$ and $\Upsilon' = \Upsilon(t)' : [0,t] \to L(V,L(V,W))$ as
\begin{align}\label{Z-def-1}
\Upsilon_s &:= S_{t-s} Y_s, \quad s \in [0,t],
\\ \label{Z-def-2} \Upsilon_s' &:= S_{t-s} Y_s', \quad s \in [0,t].
\end{align}
Note that in the second definition \eqref{Z-def-2} we may regard $S_{t-s}$ as a linear operator from $L(L(V,W))$, due to the following auxiliary result.

\begin{lemma}\label{lemma-embedding-operators}
Let $E$ be another Banach space, and let $\varphi \in L(E,W)$ be arbitrary. Setting
\begin{align*}
L(V,E) \to L(V,W), \quad S \mapsto \varphi S,
\end{align*}
we may regard $\varphi$ as an element from $L(L(V,E),L(V,W))$. Furthermore, we have
\begin{align*}
| \varphi |_{L(L(V,E),L(V,W))} \leq | \varphi |_{L(E,W)}.
\end{align*}
\end{lemma}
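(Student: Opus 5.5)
The plan is to write down the map explicitly and check linearity and the norm bound directly from the definition of the operator norm. Define $\Lambda_\varphi : L(V,E) \to L(V,W)$ by $\Lambda_\varphi(S) := \varphi S = \varphi \circ S$.

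First, $\Lambda_\varphi$ is well defined: for $S \in L(V,E)$ the composition $\varphi \circ S : V \to W$ is linear as a composition of linear maps. It is also bounded, because for every $v \in V$ we have
\begin{align*}
|\varphi S v| \leq |\varphi|_{L(E,W)} \, |Sv| \leq |\varphi|_{L(E,W)} \, |S|_{L(V,E)} \, |v|.
\end{align*}
Hence $\varphi S \in L(V,W)$ with $|\varphi S|_{L(V,W)} \leq |\varphi|_{L(E,W)} \, |S|_{L(V,E)}$.

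Second, $\Lambda_\varphi$ is linear, since
\begin{align*}
\varphi(aS + bT)v = a\,\varphi Sv + b\,\varphi Tv \quad \text{for all $v \in V$, $a,b \in \bbr$ and $S,T \in L(V,E)$.}
\end{align*}
Combining this with the estimate above, taking the supremum over $|S|_{L(V,E)} \leq 1$ shows $\Lambda_\varphi \in L(L(V,E),L(V,W))$ with
\begin{align*}
|\Lambda_\varphi| \leq |\varphi|_{L(E,W)}.
\end{align*}

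The identification of $\varphi$ with $\Lambda_\varphi$ is therefore justified. It is the same mechanism as the map $\ell(z) = zB$ in the proof of Lemma \ref{lemma-chain-rule}, except that the operator now multiplies from the left. There is no real obstacle. The only point needing care is that both the operator norm of $\varphi S$ and the norm of $\Lambda_\varphi$ are taken as suprema over unit balls, so the submultiplicativity estimate above passes to $|\Lambda_\varphi|$ directly. Applied with $E = W$ and $\varphi = S_{t-s}$, with $V$ replaced by $V$ or by $L(V,W)$ as needed, this gives the meaning of \eqref{Z-def-2}.
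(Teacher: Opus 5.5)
Your proof is correct and essentially matches the paper's, which bounds $|\varphi S|_{L(V,W)} \leq |\varphi|_{L(E,W)}\,|S|_{L(V,E)}$ and takes the supremum over $|S|_{L(V,E)} \leq 1$; you also check well-definedness and linearity, which the paper leaves implicit. One small slip lies outside the lemma, in your closing remark. To let $S_{t-s}$ act on $Y_s' \in L(V,L(V,W))$, apply the lemma a second time with $E$ and $W$ both replaced by $L(V,W)$ and $\varphi$ the induced operator, as the paper does in Lemma \ref{lemma-semigroup-term-2}. Replacing $V$ by $L(V,W)$ would instead give an operator on $L(L(V,W),W)$.
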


\begin{proof}
We have
\begin{align*}
| \varphi |_{L(L(V,E),L(V,W))} &= \sup_{| S | \leq 1} | \varphi S |_{L(V,W)}
\\ &\leq \sup_{| S | \leq 1} | \varphi |_{L(E,W)} | S |_{L(V,E)} \leq | \varphi |_{L(E,W)},
\end{align*}
completing the proof.
\end{proof}

Now, we would like to define the rough convolution as
\begin{align}\label{convolution-try}
\int_0^t S_{t-s} Y_s \, d \mathbf{X}_s := \int_0^t \Upsilon_s \, d \mathbf{X}_s
\end{align}
by using the Gubinelli integral \eqref{candidate}.

\begin{remark}\label{rem-problem}
The problem with this approach is that in general we do \emph{not} have
$$(\Upsilon,\Upsilon') \in \scrd_X^{2 \alpha}([0,t],L(V,W)).$$
Indeed, in general we do \emph{not} even have
$$\Upsilon \in \calc^{\alpha}([0,t],L(V,W)).$$
In order to demonstrate this point, let $s,r \in [0,t]$ be arbitrary. Then we have
\begin{align*}
|\Upsilon_s - \Upsilon_r|_{L(V,W)} &= | S_{t-s} Y_s - S_{t-r} Y_r |_{L(V,W)}
\\ &\leq | S_{t-s} (Y_s - Y_r) |_{L(V,W)} + | (S_{t-s} - S_{t-r}) Y_r |_{L(V,W)}.
\end{align*}
Using \eqref{est-semigroup}, the first term can nicely be estimated as
\begin{align*}
| S_{t-s} (Y_s - Y_r) |_{L(V,W)} \leq |S_{t-s}|_{L(W)} |Y_s - Y_r|_{L(V,W)} \leq M e^{\omega T} \| Y \|_{\alpha} |s-r|^{\alpha}.
\end{align*}
However, for the second term
\begin{align*}
| (S_{t-s} - S_{t-r}) Y_r |_{L(V,W)}
\end{align*}
we cannot achieve such an estimate without additional conditions.
\end{remark}

\subsection{Rough convolutions by means of a mild sewing lemma}

In the previous section we have seen that defining rough convolutions by using the Gubinelli integral does not work without additional assumptions. In the literature (see, e.g. \cite{Gubinelli-Tindel, Hairer, Gerasimovics, Hesse-Neamtu-local, Hesse-Neamtu-global}), this issue is often resolved by establishing a mild version of the sewing lemma, which can then be used in order to define rough convolution integrals. In the sequel, we will outline these ideas.

\begin{definition}
Let $\alpha, \beta > 0$ be arbitrary. We denote by $\hat{\calc}_2^{\alpha,\beta}([0,T],W)$ the space of all functions $\Xi : \Delta_T^2 \to W$ such that $\Xi_{t,t} = 0$ for all $t \in [0,T]$ and
\begin{align*}
\| \Xi \|_{\alpha,\beta} := \| \Xi \|_{\alpha} + \| \hat{\delta} \Xi \|_{\beta} < \infty.
\end{align*}
Here $\| \Xi \|_{\alpha}$ denotes the H\"older norm \eqref{Hoelder-norm-sewing-1}. Furthermore, the function $\hat{\delta} \Xi : \Delta_T^3 \to W$ is defined as
\begin{align*}
\hat{\delta} \Xi_{s,u,t} := \Xi_{s,t} - S_{t-u} \Xi_{s,u} - \Xi_{u,t}, \quad (s,u,t) \in \Delta_T^3,
\end{align*}
and its H\"older norm is defined as
\begin{align*}
\| \hat{\delta} \Xi \|_{\beta} := \sup_{\genfrac{}{}{0pt}{}{(s,u,t) \in \Delta_T^3}{s < u < t}} \frac{|\hat{\delta} \Xi_{s,u,t}|}{|t-s|^{\beta}}.
\end{align*}
\end{definition}

\begin{remark}
Note that the only difference compared to Definition \ref{def-space-for-sewing} is given by the definitions of $\delta \Xi$ and $\hat{\delta} \Xi$. In the particular case $S_t = \Id$ for all $t \geq 0$ (which just means $A=0$) the two spaces $\calc_2^{\alpha,\beta}([0,T],W)$ and $\hat{\calc}_2^{\alpha,\beta}([0,T],W)$ coincide.
\end{remark}

After introducing the space $\hat{\calc}_2^{\alpha,\beta}([0,T],W)$ we wish to establish a \emph{mild} version of the \emph{sewing lemma} (Lemma \ref{lemma-sewing}). More precisely, under suitable conditions we wish to show that there exists a unique mapping $\cali : \hat{\calc}_2^{\alpha,\beta}([0,T],W) \to \calc^{\alpha}([0,T],W)$, which is characterized as follows:
\begin{enumerate}
\item We have $(\cali \Xi)_0 = 0$ for all $\Xi \in \hat{\calc}_2^{\alpha,\beta}([0,T],W)$.

\item For every $\Xi \in \hat{\calc}_2^{\alpha,\beta}([0,T],W)$ there exists a constant $C > 0$ such that
\begin{align*}
|(\cali \Xi)_{s,t} - \Xi_{s,t}| \leq C|t-s|^{\beta} \quad \text{for all $(s,t) \in \Delta_T^2$.}
\end{align*}
\end{enumerate}
Moreover, we would like $\cali \in L(\hat{\calc}_2^{\alpha,\beta}([0,T],W), \calc^{\alpha}([0,T],W))$, and that $\cali$ is given by
\begin{align*}
(\cali \Xi)_{s,t} = \lim_{|\Pi| \to 0} \sum_{[u,v] \in \Pi} S_{t-v} \Xi_{u,v} \quad \text{for all $\Xi \in \hat{\calc}_2^{\alpha,\beta}([0,T],W)$.}
\end{align*}
It is possible to establish such a mild sewing lemma, but this requires additional assumptions. Typically, it is assumed that the semigroup $(S_t)_{t \geq 0}$ is analytic, and one works on scales of Hilbert spaces or Banach spaces. Apart from the aforementioned references, we refer the reader to Exercise 4.16 in \cite{Friz-Hairer}.

In the next step, we define the \emph{mild Gubinelli derivative} as follows. Let $\bar{W}$ be another Banach space. We denote by $\hat{\calc}^{\alpha}([0,T],\bar{W})$ the space of all functions $Y : [0,T] \to \bar{W}$ such that
\begin{align*}
\| Y \|_{\alpha}^{\wedge} := \sup_{\genfrac{}{}{0pt}{}{s,t \in [0,T]}{s \neq t}} \frac{|\hat{\delta} Y_{s,t}|}{|t-s|^{\alpha}} < \infty,
\end{align*}
where we use the notation
\begin{align*}
\hat{\delta} Y_{s,t} := Y_t - S_{t-s} Y_s.
\end{align*}
Obviously, in case $A=0$ this is just the classical H\"{o}lder space $\calc^{\alpha}([0,T],\bar{W})$.

\begin{definition}
We introduce the following notions:
\begin{enumerate}
\item We say that a path $Y \in \hat{\calc}^{\alpha}([0,T],\bar{W})$ is \emph{mildly controlled} by $X$ if there exists $Y' \in \hat{\calc}^{\alpha}([0,T], L(V,\bar{W}))$ such that for the remainder term $R^Y : [0,T]^2 \to \bar{W}$ given by
\begin{align}\label{remainder-term}
R_{s,t}^Y := \hat{\delta} Y_{s,t} - S_{t-s} Y_s' X_{s,t} \quad \text{for all $s,t \in [0,T]$.}
\end{align}
we have $R^Y \in \calc_2^{2 \alpha}([0,T]^2,\bar{W})$, where $\hat{\delta} Y_{s,t} = Y_t - S_{t-s} Y_s$.

\item This defines the space of \emph{mildly controlled rough paths}, and we write
\begin{align*}
(Y,Y') \in \scrd_{S,X}^{2 \alpha}([0,T], \bar{W}).
\end{align*}
\item We call $Y'$ a \emph{mild Gubinelli derivative} of $Y$ (with respect to $X$).
\end{enumerate}
\end{definition}

\begin{remark}
Note the similarity to Definition \ref{def-Gubinelli-derivative}. The difference comes from the respective remainder terms $R^Y$. In the particular case $A=0$ the two spaces $\scrd_{X}^{2 \alpha}([0,T], \bar{W})$ and $\scrd_{S,X}^{2 \alpha}([0,T], \bar{W})$ coincide.
\end{remark}

\begin{definition}
We endow the space $\scrd_{S,X}^{2 \alpha}([0,T], W)$ with the seminorm
\begin{align*}
\| Y,Y' \|_{X,2\alpha}^{\wedge} := \| Y' \|_{\alpha}^{\wedge} + \| R^Y \|_{2 \alpha}.
\end{align*}
\end{definition}

\begin{remark}
Obviously, in case $A=0$ this coincides with the seminorm \eqref{seminorm-Gubinelli} introduced in Definition \ref{def-seminorm-Gubinelli}.
\end{remark}

Now, the idea is to consider the candidate
\begin{align}\label{candidate-rough}
\int_0^t S_{t-s} Y_s \, d \bfx_s := \lim_{|\Pi| \to 0} \sum_{[u,v] \in \Pi} S_{t-u} ( Y_u X_{u,v} + Y_u' \bbx_{u,v} )
\end{align}
for the \emph{rough convolution integral}, which apparently coincides with the Gubinelli integral \eqref{candidate} in case $A = 0$. More precisely, we wish to use the mild sewing lemma in order to establish the following types of results, which are similar to Theorem \ref{thm-Gubinelli} and Proposition \ref{prop-conv-rough}:
\begin{itemize}
\item For each mildly controlled rough path $(Y,Y') \in \scrd_{S,X}^{2 \alpha}([0,T], L(V,W))$ the $W$-valued rough convolution \eqref{candidate-rough} exists.

\item We obtain an estimate similar to \eqref{estimate-third-order}.

\item The rough convolution operator
\begin{align*}
I : \scrd_{S,X}^{2 \alpha}([0,T],L(V,W)) \to \scrd_{S,X}^{2 \alpha}([0,T],W)
\end{align*}
given by
\begin{align*}
(Y,Y') \mapsto \bigg( \int_0^{\cdot} S_{\cdot-s} Y_s \, d \bfx_s, Y \bigg)
\end{align*}
is a continuous linear operator.
\end{itemize}
It is possible to establish such results, but of course, as we use the mild sewing lemma, the additional assumptions outlined above are also required here. Apart from the aforementioned references, we refer the reader to Exercise 4.17 in \cite{Friz-Hairer}.

Once the rough convolution \eqref{candidate-rough} has been established, we can consider equation \eqref{mild} as a fixed point problem. Assuming suitable conditions on the coefficients $f_0$ and $f$, similar arguments as in Section \ref{sec-RDEs} provide existence and uniqueness results for mild solutions of the RPDE \eqref{RPDE}. We refer the reader to the aforementioned references \cite{Gubinelli-Tindel, Hairer, Gerasimovics, Hesse-Neamtu-local, Hesse-Neamtu-global}, where various situations have been investigated, for further details.

\section{Another approach to rough partial differential equations}\label{sec-RPDEs-Tappe}

In this section we present an alternative approach to solving RPDEs, which has been implemented in \cite{Tappe}. While the approach presented in the previous Section \ref{sec-RPDEs} requires assumptions on the semigroup $(S_t)_{t \geq 0}$ like analyticity, the results from this section hold true for arbitrary $C_0$-semigroups $(S_t)_{t \geq 0}$, but on the other hand we have to impose additional conditions on the coefficients $f_0$ and $f$.

The general mathematical framework is the same as in Section \ref{sec-RPDEs}. In particular, we consider RPDEs of the form \eqref{RPDE} with a driving signal $\bfx = (X,\bbx) \in \scrc^{\alpha}([0,T],V)$ for some index $\alpha \in (\frac{1}{3},\frac{1}{2}]$, and appropriate mappings $f_0 : [0,T] \times W \to W$ and $f : [0,T] \times W \to L(V,W)$. Moreover, the operator $A$ is the generator of a $C_0$-semigroup $(S_t)_{t \geq 0}$ on $W$.

The essential idea of the approach presented here is to define the rough convolution for suitable controlled rough paths $(Y,Y')$ according to \eqref{convolution-try} by using the Gubinelli integral \eqref{candidate}. In order to overcome the problems described in Remark \ref{rem-problem}, we will consider the first and second order domains of the generator $A$.

\subsection{Further results about strongly continuous semigroups}\label{subsec-semigroups}

At this point, let us collect further results about strongly continuous semigroups.  Let $(S_t)_{t \geq 0}$ be a $C_0$-semigroup on the Banach space $W$ with infinitesimal generator $A$. The following result is well-known; see, e.g. \cite[Thm. 1.2.4]{Pazy}.

\begin{lemma}\label{lemma-hg-rules}
The following statements are true:
\begin{enumerate}
\item For every $y \in D(A)$ the mapping
\begin{align*}
\mathbb{R}_+ \rightarrow W, \quad t \mapsto S_t y
\end{align*}
belongs to class $C^1(\mathbb{R}_+;W)$.

\item For all $t \geq 0$ and $y \in D(A)$ we have $S_t y \in D(A)$ and
\begin{align*}
\frac{d}{dt} S_t y = A S_t y = S_t A y.
\end{align*}

\item For all $t \geq 0$ and $y \in W$ we have $\int_0^t S_s y \, ds \in D(A)$ and
\begin{align*}
A \bigg( \int_0^t S_s y \, ds \bigg) = S_t y - y.
\end{align*}

\item For all $t \geq 0$ and $y \in D(A)$ we have
\begin{align*}
\int_0^t S_s A y \, ds = S_t y - y.
\end{align*}
\end{enumerate}
\end{lemma}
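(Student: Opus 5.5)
I plan to prove the four statements in the order (2)-part one, (1), (2)-part two, then (3) and (4). The basic tool is the semigroup property $S_{t+h} = S_h S_t = S_t S_h$ together with the bound \eqref{est-semigroup}, which gives uniform boundedness of $|S_t|$ on compact intervals.

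For $y \in D(A)$ and $t \geq 0$, I write
\begin{align*}
\frac{S_h S_t y - S_t y}{h} = S_t \frac{S_h y - y}{h} \to S_t A y \quad (h \downarrow 0),
\end{align*}
using continuity of $S_t$. This shows that $S_t y \in D(A)$ with $A S_t y = S_t A y$, and that the right derivative of $t \mapsto S_t y$ equals $S_t A y$. For the left derivative at $t > 0$, I use the decomposition
\begin{align*}
\frac{S_t y - S_{t-h} y}{h} - S_t A y = S_{t-h} \Big( \frac{S_h y - y}{h} - A y \Big) + (S_{t-h} A y - S_t A y).
\end{align*}
The first term tends to zero because $|S_{t-h}| \leq M e^{|\omega| t}$. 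The second tends to zero by strong continuity of $t \mapsto S_t z$, which follows from property (3) of a $C_0$-semigroup and the exponential bound.

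This gives differentiability with derivative $S_t A y$. That derivative is continuous in $t$, again by strong continuity, so the orbit map is $C^1$, which is statement (1), and this also completes (2).

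For (3), I fix $y \in W$ and compute
\begin{align*}
\frac{S_h - \Id}{h} \int_0^t S_s y \, ds = \frac{1}{h} \int_t^{t+h} S_s y \, ds - \frac{1}{h} \int_0^h S_s y \, ds,
\end{align*}
by a change of variables. By continuity of $s \mapsto S_s y$, this tends to $S_t y - y$. Hence the integral lies in $D(A)$ with the stated image. Statement (4) then follows from (1) and (2) by the fundamental theorem of calculus for $C^1$ Banach-valued functions:
\begin{align*}
S_t y - y = \int_0^t \frac{d}{ds} S_s y \, ds = \int_0^t S_s A y \, ds.
\end{align*}

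The main obstacle is establishing strong continuity of $t \mapsto S_t z$ at every $t$, not just at $0$, and in particular left continuity. I would handle it via $S_{t-h} z - S_t z = S_{t-h}(z - S_h z)$, combined with the uniform bound on $|S_{t-h}|$.
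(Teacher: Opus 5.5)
Your proof is correct; the paper does not prove this lemma itself but cites Theorem 1.2.4 of Pazy's monograph, and your argument is essentially that standard proof. The same steps appear there: the commutation $S_h S_t = S_t S_h$ gives the right derivative and $A S_t y = S_t A y$, the decomposition through $S_{t-h}$ with the local bound on $|S_{t-h}|$ gives the left derivative, the shifted-integral identity gives (3), and integrating the derivative from (2) gives (4).
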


The \emph{domain} $D(A)$ endowed with the graph norm
\begin{align*}
|y|_{D(A)} := |y| + |Ay| \quad \text{for all $y \in D(A)$}
\end{align*}
is a Banach space. Moreover, the \emph{second order domain}
\begin{align*}
D(A^2) := \{ y \in D(A) :A y \in D(A) \}
\end{align*}
endowed with the graph norm
\begin{align*}
|y|_{D(A^2)} := |y| + |Ay| + |A^2 y| \quad \text{for all $y \in D(A^2)$}
\end{align*}
is a Banach space as well. Note that $D(A^2) \hookrightarrow D(A) \hookrightarrow W$ with continuous embedding. More precisely, we have the following result, which is easily verified.

\begin{lemma}\label{lemma-domains-embedding}
The following statements are true:
\begin{enumerate}
\item We have $D(A) \subset W$ and $| x | \leq |x|_{D(A)}$ for all $x \in D(A)$.

\item We have $D(A^2) \subset D(A)$ and $| x |_{D(A)} \leq |x|_{D(A^2)}$ for all $x \in D(A^2)$.
\end{enumerate}
\end{lemma}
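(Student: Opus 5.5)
Both statements follow directly from the definitions of the graph norms, so I will verify set inclusion and the norm inequality in each case by writing the norms out.

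For (1), every $x \in D(A)$ lies in $W$ because the generator $A$ is defined on the subset $D(A) \subset W$. By the definition of the graph norm, $|x|_{D(A)} = |x| + |Ax| \geq |x|$, since $|Ax| \geq 0$. This gives the first inequality.

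For (2), the inclusion $D(A^2) \subset D(A)$ is part of the definition $D(A^2) = \{ y \in D(A) : Ay \in D(A) \}$. For $x \in D(A^2)$, the element $A^2 x = A(Ax)$ is well-defined, and
\begin{align*}
|x|_{D(A)} = |x| + |Ax| \leq |x| + |Ax| + |A^2 x| = |x|_{D(A^2)}.
\end{align*}

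No step poses a real obstacle. The only point to keep in mind is that $A^2 x$ makes sense precisely because $Ax \in D(A)$, and this is guaranteed by the definition of $D(A^2)$. Continuity of the embeddings $D(A^2) \hookrightarrow D(A) \hookrightarrow W$ then follows from the two inequalities, since each embedding is linear with operator norm at most $1$.
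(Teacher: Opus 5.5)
Your proof is correct. It is the same direct verification the paper intends when it calls the lemma ``easily verified'': the inclusions come from the definitions of $D(A)$ and $D(A^2)$, and the inequalities follow because the graph norms only add nonnegative terms.
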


The following auxiliary result is also straightforward to prove.

\begin{lemma}\label{lemma-restricted-semigroup}
The restriction $(S_t|_{D(A)})_{t \geq 0}$ is a $C_0$-semigroup on $(D(A),|\cdot|_{D(A)})$ with generator $A$ on the domain $D(A^2)$.
\end{lemma}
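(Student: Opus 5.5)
We check the three defining properties of a $C_0$-semigroup for the restricted family on the Banach space $(D(A),|\cdot|_{D(A)})$, and then identify its generator. First, Lemma \ref{lemma-hg-rules}(2) gives $S_t y \in D(A)$ and $A S_t y = S_t A y$ for $y \in D(A)$. Hence each $S_t|_{D(A)}$ maps $D(A)$ into itself, and it is bounded in the graph norm:
\begin{align*}
|S_t y|_{D(A)} = |S_t y| + |S_t A y| \leq |S_t| \, |y|_{D(A)}.
\end{align*}
The identities $S_0 = \Id$ and $S_{s+t} = S_s S_t$ are inherited directly from $W$. For strong continuity, let $y \in D(A)$. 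Then
\begin{align*}
|S_t y - y|_{D(A)} = |S_t y - y| + |S_t A y - A y| \to 0 \quad \text{as } t \to 0,
\end{align*}
by strong continuity on $W$ applied to both $y$ and $Ay$.

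Next we identify the generator; call it $B$, defined on $D(B) \subset D(A)$. Suppose $y \in D(A^2)$. Then $Ay \in D(A)$, so
\begin{align*}
\frac{S_h y - y}{h} - Ay \to 0 \ \text{in } W \quad \text{and} \quad A\Big(\frac{S_h y - y}{h}\Big) - A(Ay) = \frac{S_h Ay - Ay}{h} - A(Ay) \to 0 \ \text{in } W.
\end{align*}
Here we used $A S_h y = S_h A y$. Therefore the difference quotient converges in the graph norm, which gives $y \in D(B)$ and $By = Ay$.

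Conversely, suppose $y \in D(B)$ with limit $z := By \in D(A)$. Convergence in $|\cdot|_{D(A)}$ implies convergence in $W$ (Lemma \ref{lemma-domains-embedding}), so $z = Ay$ and thus $Ay \in D(A)$. Moreover, $A$ applied to the difference quotient equals $(S_h Ay - Ay)/h$, and this converges in $W$ to $Az$. So $y \in D(A^2)$, and $B = A$ on $D(B) = D(A^2)$.

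The only step that needs care is this converse inclusion, namely seeing that graph-norm convergence forces the limit to coincide with $Ay$. The continuous embedding handles it, and the rest is routine.
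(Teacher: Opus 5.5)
Your proof is correct and complete. The paper gives no argument of its own, only calling the lemma straightforward, and your verification is the routine check intended: invariance and $A S_t = S_t A$ via Lemma \ref{lemma-hg-rules}(2), graph-norm boundedness and strong continuity, and identification of the generator through the continuous embedding. One small point: the remark that $A$ applied to the difference quotient converges to $Az$ is superfluous, since $y \in D(A^2)$ already follows from $Ay = z \in D(A)$.
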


For what follows, we agree on the notation
\begin{align*}
S_{s,t} := S_t - S_s \quad \text{for all $s,t \in \bbr_+$.}
\end{align*}
Furthermore, we fix some $T \in \bbr_+$.

\begin{proposition}\label{prop-orbit-map}
For all $y \in D(A)$ and all $s,t \in [0,T]$ we have
\begin{align*}
| S_{s,t} y | \leq M e^{\omega T} | y |_{D(A)} \, |t-s|.
\end{align*}
\end{proposition}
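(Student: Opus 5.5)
Without loss of generality we take $s \leq t$; otherwise we interchange the roles of $s$ and $t$ and use $|S_{s,t} y| = |S_{t,s} y|$. The plan is to write the increment $S_{s,t} y = S_t y - S_s y$ as an integral of the derivative of the orbit map and then estimate the integrand by the exponential bound \eqref{est-semigroup}.

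First, by Lemma \ref{lemma-hg-rules} (1) and (2), the orbit map $r \mapsto S_r y$ is of class $C^1$ for $y \in D(A)$, with derivative $S_r A y$. The fundamental theorem of calculus for $C^1$ Banach-valued functions therefore gives
\begin{align*}
S_t y - S_s y = \int_s^t S_r A y \, dr.
\end{align*}
Alternatively, one can combine part (4) with the semigroup property: $S_t y - S_s y = S_s(S_{t-s} y - y) = S_s \int_0^{t-s} S_r A y \, dr$. The operator $S_s$ can be moved inside the Bochner integral because it is bounded, and the substitution $r \mapsto r+s$ then yields the same formula.

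Next, we estimate the integral. For every $r \in [s,t] \subset [0,T]$, the bound \eqref{est-semigroup} gives $|S_r| \leq M e^{\omega r} \leq M e^{\omega T}$. Here we interpret $\omega \geq 0$; if $\omega < 0$, we may simply replace $\omega$ by $\max\{\omega,0\}$, which keeps \eqref{est-semigroup} valid. Consequently,
\begin{align*}
|S_{s,t} y| \leq \int_s^t |S_r| \, |Ay| \, dr \leq M e^{\omega T} |Ay| \, |t-s| \leq M e^{\omega T} |y|_{D(A)} \, |t-s|,
\end{align*}
where the last step uses $|Ay| \leq |y| + |Ay| = |y|_{D(A)}$.

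There is no real obstacle in this argument. The only points needing care are justifying the integral representation of the increment, which follows directly from Lemma \ref{lemma-hg-rules}, and the sign convention for $\omega$ that makes $e^{\omega r} \leq e^{\omega T}$ hold.
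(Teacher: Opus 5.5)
Your proof is correct and is the same as the paper's: write $S_t y - S_s y = \int_s^t S_u A y \, du$ using Lemma \ref{lemma-hg-rules}, then bound $|S_u| \leq M e^{\omega T}$ and $|Ay| \leq |y|_{D(A)}$. Your remark that $\omega$ must be taken nonnegative is a worthwhile clarification, since the paper uses $e^{\omega u} \leq e^{\omega T}$ for $u \leq T$ without comment and the inequality can fail for $\omega < 0$.
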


\begin{proof}
By Lemma \ref{lemma-hg-rules} and the growth estimate \eqref{est-semigroup} we obtain
\begin{align*}
| S_{s,t} y | &= | S_{t} y - S_{s} y | = \bigg| \int_{s}^{t} S_u Ay \, du \bigg| \leq \int_{s}^{t} | S_u Ay | \, du
\\ &\leq \int_{s}^{t} | S_u | \, | Ay | \, du \leq Me^{\omega T} | y |_{D(A)} |t - s|,
\end{align*}
completing the proof.
\end{proof}

\begin{corollary}\label{cor-orbit-map-1}
For all $s,t \in [0,T]$ we have
\begin{align*}
| S_{s,t} |_{L(D(A),W)} \leq M e^{\omega T} |t-s|.
\end{align*}
\end{corollary}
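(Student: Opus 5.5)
The bound is the operator-norm reformulation of Proposition \ref{prop-orbit-map}, so I will read it off from that proposition. Fix $s,t \in [0,T]$. I first note that $S_{s,t} = S_t - S_s$ restricts to a linear map $D(A) \to W$. This restriction is bounded: by Lemma \ref{lemma-domains-embedding}(1) and \eqref{est-semigroup} we have $|S_{s,t} y| \leq 2 M e^{\omega T} |y| \leq 2 M e^{\omega T} |y|_{D(A)}$. Hence $S_{s,t}$ defines an element of $L(D(A),W)$, and its operator norm is
\begin{align*}
| S_{s,t} |_{L(D(A),W)} = \sup_{y \in D(A), \, |y|_{D(A)} \leq 1} | S_{s,t} y |.
\end{align*}

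Next I take any $y \in D(A)$ with $|y|_{D(A)} \leq 1$. Proposition \ref{prop-orbit-map} gives
\begin{align*}
| S_{s,t} y | \leq M e^{\omega T} |y|_{D(A)} \, |t-s| \leq M e^{\omega T} |t-s|.
\end{align*}
Taking the supremum over all such $y$ yields the claimed estimate.

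There is no genuine obstacle here; the argument is essentially bookkeeping. The only point that needs a moment's care is checking that the restriction of $S_{s,t}$ to $D(A)$ really lies in $L(D(A),W)$, which is exactly the continuity of the embedding $D(A) \hookrightarrow W$ from Lemma \ref{lemma-domains-embedding}. In fact this is also implied by Proposition \ref{prop-orbit-map} itself, since that proposition already gives a uniform bound on $|S_{s,t} y|$ in terms of $|y|_{D(A)}$.
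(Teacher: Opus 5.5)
Your proof is correct and is exactly the intended argument: the paper states this corollary without proof as an immediate consequence of Proposition \ref{prop-orbit-map}, which amounts to taking the supremum over $|y|_{D(A)} \leq 1$. Your separate boundedness check is redundant, as you note yourself, because Proposition \ref{prop-orbit-map} already gives $|S_{s,t} y| \leq M e^{\omega T} |t-s| \, |y|_{D(A)}$.
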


\begin{corollary}\label{cor-orbit-map-2}
Let $V$ be another Banach space. Then for all $Y \in L(V,D(A))$ and all $s,t \in [0,T]$ we have
\begin{align*}
| S_{s,t} Y |_{L(V,W)} \leq M e^{\omega T} | Y |_{L(V,D(A))} \, |t-s|.
\end{align*}
\end{corollary}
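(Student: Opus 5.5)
The estimate in Corollary \ref{cor-orbit-map-2} follows by applying Proposition \ref{prop-orbit-map} pointwise in $v \in V$ and then taking the supremum over the unit ball of $V$. Since $Y \in L(V,D(A))$, for every $v \in V$ the element $Yv$ lies in $D(A)$. Hence Proposition \ref{prop-orbit-map} applies to $y = Yv$ and gives
\begin{align*}
| S_{s,t} Y v | \leq M e^{\omega T} | Y v |_{D(A)} \, |t-s| \leq M e^{\omega T} | Y |_{L(V,D(A))} \, |v| \, |t-s|
\end{align*}
for all $s,t \in [0,T]$.

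This also settles a formal point. The composition $S_{s,t} Y$ is a well-defined element of $L(V,W)$, because it factors through the continuous embedding $D(A) \hookrightarrow W$ from Lemma \ref{lemma-domains-embedding}, and $S_{s,t} \in L(W)$. Linearity is clear. Continuity follows from the displayed bound itself, and so does the norm estimate.

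Taking the supremum over $|v| \leq 1$ gives the asserted bound
\begin{align*}
| S_{s,t} Y |_{L(V,W)} \leq M e^{\omega T} | Y |_{L(V,D(A))} \, |t-s|.
\end{align*}

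An alternative route uses Corollary \ref{cor-orbit-map-1}. Regard $S_{s,t}$ as an element of $L(D(A),W)$ and apply submultiplicativity of operator norms:
\begin{align*}
|S_{s,t} Y|_{L(V,W)} \leq |S_{s,t}|_{L(D(A),W)} |Y|_{L(V,D(A))}.
\end{align*}
This is the same argument as Lemma \ref{lemma-embedding-operators}, with $D(A)$ in place of $E$.

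There is no real obstacle here. The only thing to check carefully is that all norms are taken in the correct spaces: the graph norm on $D(A)$ on the right-hand side, and the operator norm in $L(V,W)$ on the left-hand side.
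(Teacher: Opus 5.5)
Your proof is correct and is essentially the paper's argument: the paper states this corollary without proof as an immediate consequence of Proposition \ref{prop-orbit-map}. Both of your routes supply exactly that step, namely applying the orbit estimate to $y = Yv$, or equivalently combining Corollary \ref{cor-orbit-map-1} with the bound $|S_{s,t}Y|_{L(V,W)} \leq |S_{s,t}|_{L(D(A),W)} \, |Y|_{L(V,D(A))}$ (the paper uses the latter pairing itself in the proof of Lemma \ref{lemma-semigroup-term-2}).
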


The following result will be crucial for the analysis of rough convolutions later on. We refer to \cite[Prop. 2.4]{Tappe} for its proof.

\begin{proposition}\label{prop-est-quad}
Let $y \in D(A^2)$ be arbitrary. Then we have
\begin{align}\label{est-quad}
| S_{s-r,t-r}y - S_{s-q,t-q}y | \leq M e^{2 \omega T} |y|_{D(A^2)} |t-s| \, |r-q|
\end{align}
for all $s,t \in [0,T]$ with $s \leq t$ and all $q,r \in [0,s]$.
\end{proposition}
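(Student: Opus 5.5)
The plan is to express the left-hand side of \eqref{est-quad} as a double integral of $S_u A^2 y$ over a region of area $|t-s|\,|r-q|$, and then bound the integrand using the growth estimate \eqref{est-semigroup}.

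First, I unpack the notation:
\begin{align*}
S_{s-r,t-r}y - S_{s-q,t-q}y = (S_{t-r}y - S_{s-r}y) - (S_{t-q}y - S_{s-q}y).
\end{align*}
By Lemma \ref{lemma-hg-rules}(4), applied with $y \in D(A)$ and using $S_{a+h}y - S_a y = S_a(S_h y - y)$, each bracket is an integral:
\begin{align*}
S_{t-r}y - S_{s-r}y = \int_{s-r}^{t-r} S_u A y \, du .
\end{align*}
The arguments are nonnegative because $q,r \leq s \leq t$. The difference of the two brackets therefore becomes
\begin{align*}
\int_{s}^{t} \big( S_{v-r} A y - S_{v-q} A y \big) \, dv
\end{align*}
after the substitution $u = v - r$, respectively $u = v-q$.

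Next, since $y \in D(A^2)$ we have $Ay \in D(A)$, so Lemma \ref{lemma-hg-rules} can be applied again, this time to $Ay$. This gives
\begin{align*}
S_{v-r}Ay - S_{v-q}Ay = \int_{v-q}^{v-r} S_w A^2 y \, dw
\end{align*}
(with orientation according to the order of $q$ and $r$), and again all arguments lie in $[0,T]$.

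Finally, I estimate the double integral. By \eqref{est-semigroup}, $|S_w| \leq M e^{\omega w}$. Bounding this by $Me^{\omega T}$ (when $\omega \geq 0$), and using $|A^2 y| \leq |y|_{D(A^2)}$, the double integral is at most $M e^{\omega T} |y|_{D(A^2)} |t-s|\,|r-q|$. This is dominated by the stated constant $Me^{2\omega T}$ when $\omega \geq 0$. If $\omega < 0$, I replace $\omega$ by $\max\{\omega,0\}$ in \eqref{est-semigroup}, which is harmless there.

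The only delicate points are bookkeeping: checking that all semigroup arguments are nonnegative and at most $T$, and handling the sign of $r-q$ by symmetry (assume without loss of generality $q \leq r$).
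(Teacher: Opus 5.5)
Your proof is correct and is the natural argument for this estimate (the paper gives no proof of its own and refers to \cite[Prop.~2.4]{Tappe}). Applying Lemma~\ref{lemma-hg-rules} to $y$ and then to $Ay\in D(A)$ writes the left-hand side as a double integral of $S_w A^2 y$ over a region of area $|t-s|\,|r-q|$ with all $w\in[0,T]$, which even gives the sharper constant $Me^{\omega T}$.

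One caveat: passing to $\max\{\omega,0\}$ is not merely harmless but necessary, because for a genuinely negative $\omega$ the stated bound is false. For example, take $W=\bbr$, $S_t=e^{-t}$, $M=1$, $T=1$, $s=r=\frac{1}{2}$, and let $q\uparrow s$, $t\downarrow s$. Then the left-hand side behaves like $|y|\,|t-s|\,|r-q|$, while the right-hand side equals $3e^{-2}|y|\,|t-s|\,|r-q|$, and $3e^{-2}<1$.
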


\subsection{Rough convolutions}

After these preparations, we resume the idea to define the rough convolution according to \eqref{convolution-try} by using the Gubinelli integral \eqref{candidate}. For this purpose, for any fixed $t \in [0,T]$ we consider the paths $\Upsilon = \Upsilon(t) : [0,t] \to L(V,W)$ and $\Upsilon' = \Upsilon(t)' : [0,t] \to L(V,L(V,W))$ given by \eqref{Z-def-1} and \eqref{Z-def-2} for a suitable controlled rough path $(Y,Y')$. However, rather than \eqref{controlled-in-W} let us make the stronger assumption that the controlled rough path satisfies
\begin{align}\label{controlled-in-D-A}
(Y,Y') \in \scrd_X^{2 \alpha}([0,T],L(V,D(A))) \hookrightarrow \scrd_X^{2 \alpha}([0,T],L(V,W)),
\end{align}
where we recall Lemma \ref{lemma-domains-embedding}. This allows us to extend the ideas from Remark \ref{rem-problem} as follows.

\begin{lemma}\label{lemma-semigroup-term-1}
Suppose that \eqref{controlled-in-D-A} holds true. Then we have
$$\Upsilon \in \calc^{\alpha}([0,t],L(V,W)).$$
\end{lemma}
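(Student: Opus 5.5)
We follow the splitting already used in Remark \ref{rem-problem}. For $s,r \in [0,t]$ we write
\begin{align*}
\Upsilon_s - \Upsilon_r = S_{t-s}(Y_s - Y_r) + (S_{t-s} - S_{t-r}) Y_r
\end{align*}
and estimate the two terms separately in $L(V,W)$.

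For the first term we use \eqref{est-semigroup} together with Lemma \ref{lemma-embedding-operators}, which lets us regard $S_{t-s}$ as an operator on $L(V,W)$ with norm at most $Me^{\omega T}$. This gives
\begin{align*}
|S_{t-s}(Y_s - Y_r)|_{L(V,W)} \leq Me^{\omega T} |Y_s - Y_r|_{L(V,W)}.
\end{align*}
By Lemma \ref{lemma-domains-embedding} the embedding $L(V,D(A)) \hookrightarrow L(V,W)$ has norm at most $1$. Hence $|Y_s - Y_r|_{L(V,W)} \leq \|Y\|_{\alpha} |s-r|^{\alpha}$, where $\|Y\|_\alpha$ is the H\"older seminorm in $L(V,D(A))$. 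This norm is finite because $(Y,Y')$ is a controlled rough path in $L(V,D(A))$.

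The second term is the main obstacle: it is exactly the term that fails for general $W$-valued $Y$. Here we use that $Y_r \in L(V,D(A))$. In the paper's notation, $S_{t-s} - S_{t-r} = S_{t-r,t-s}$. Corollary \ref{cor-orbit-map-2}, applied to the times $t-r, t-s \in [0,T]$, yields
\begin{align*}
|(S_{t-s} - S_{t-r}) Y_r|_{L(V,W)} \leq Me^{\omega T} |Y_r|_{L(V,D(A))}\, |s-r| \leq Me^{\omega T} \|Y\|_{\infty} T^{1-\alpha} |s-r|^{\alpha}.
\end{align*}
The supremum norm $\|Y\|_\infty$ in $L(V,D(A))$ is finite since $Y$ is H\"older continuous on a compact interval; one can also bound it via Lemma \ref{lemma-norm-of-Y}, or simply by $|Y_0| + \|Y\|_\alpha T^\alpha$.

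Adding the two bounds gives
\begin{align*}
\|\Upsilon\|_{\alpha} \leq Me^{\omega T}\big(\|Y\|_{\alpha} + T^{1-\alpha}\|Y\|_{\infty}\big) < \infty,
\end{align*}
with both norms taken in $L(V,D(A))$. Hence $\Upsilon \in \calc^{\alpha}([0,t],L(V,W))$. The only care needed is to check that Lemma \ref{lemma-embedding-operators} applies to both $S_{t-s}$ and $S_{t-r,t-s}$ acting by left composition; this is the case.
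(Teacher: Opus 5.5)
Your proposal is correct and follows essentially the same argument as the paper. You use the same splitting $\Upsilon_s - \Upsilon_r = S_{t-s}(Y_s - Y_r) + S_{t-r,t-s}Y_r$, the growth bound \eqref{est-semigroup} for the first term and Corollary \ref{cor-orbit-map-2} for the second; the only addition is that you write out the step $|s-r| \leq T^{1-\alpha}|s-r|^{\alpha}$, which the paper leaves implicit.
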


\begin{proof}
Note that \eqref{controlled-in-D-A} and Lemma \ref{lemma-domains-embedding} imply
$$Y \in \calc^{\alpha}([0,T],L(V,D(A))) \hookrightarrow \calc^{\alpha}([0,T],L(V,W)).$$
Let $s,r \in [0,t]$ be arbitrary. Then we have
\begin{align*}
|\Upsilon_s - \Upsilon_r|_{L(V,W)} &= | S_{t-s} Y_s - S_{t-r} Y_r |_{L(V,W)}
\\ &\leq | S_{t-s} (Y_s - Y_r) |_{L(V,W)} + | S_{t-r,t-s} Y_r |_{L(V,W)}.
\end{align*}
Using \eqref{est-semigroup}, the first term can be estimated as
\begin{align*}
| S_{t-s} (Y_s - Y_r) |_{L(V,W)} \leq | S_{t-s} |_{L(W)} |Y_s - Y_r|_{L(V,W)} \leq M e^{\omega T} \| Y \|_{\alpha} |s-r|^{\alpha}.
\end{align*}
So far, these are just the calculations that we have seen in Remark \ref{rem-problem}. Moreover, by Corollary \ref{cor-orbit-map-2} the second term is estimated as
\begin{align*}
| S_{t-r,t-s} Y_r |_{L(V,W)} \leq M e^{\omega T} | Y_r |_{L(V,D(A))} |s-r| \leq M e^{\omega T} \| Y \|_{\infty} |s-r|,
\end{align*}
completing the proof.
\end{proof}

\begin{lemma}\label{lemma-semigroup-term-2}
Suppose that \eqref{controlled-in-D-A} holds true. Then we have
$$\Upsilon' \in \calc^{\alpha}([0,t],L(V,L(V,W))).$$
\end{lemma}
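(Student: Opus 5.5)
The argument repeats the proof of Lemma \ref{lemma-semigroup-term-1} one level up, with $Y'$ in place of $Y$. First we record what \eqref{controlled-in-D-A} gives for the derivative. Since $(Y,Y') \in \scrd_X^{2\alpha}([0,T],L(V,D(A)))$, we have
\begin{align*}
Y' \in \calc^{\alpha}([0,T],L(V,L(V,D(A)))).
\end{align*}
By Lemma \ref{lemma-domains-embedding} and Lemma \ref{lemma-embedding-operators} (applied twice, with $\varphi$ the inclusion $D(A) \hookrightarrow W$), this also yields $Y' \in \calc^{\alpha}([0,T],L(V,L(V,W)))$ with $\| Y' \|_{\alpha}$ in the larger space bounded by $\| Y' \|_{\alpha}$ in the smaller space. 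The same holds for $\| Y' \|_{\infty}$, which is finite by Lemma \ref{lemma-norm-of-Y}.

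Next, for $s,r \in [0,t]$ we split
\begin{align*}
|\Upsilon_s' - \Upsilon_r'| \leq | S_{t-s} (Y_s' - Y_r') | + | S_{t-r,t-s} Y_r' |,
\end{align*}
where all norms are taken in $L(V,L(V,W))$. Here the operators $S_{t-s}$ and $S_{t-r,t-s}$ act on $L(V,L(V,W))$ via Lemma \ref{lemma-embedding-operators}, applied twice, so their operator norms there do not exceed their norms in $L(W)$, respectively in $L(D(A),W)$.

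The first term is bounded using \eqref{est-semigroup}:
\begin{align*}
| S_{t-s} (Y_s' - Y_r') | \leq M e^{\omega T} \| Y' \|_{\alpha} |s-r|^{\alpha}.
\end{align*}

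The second term is the only step needing care. Corollary \ref{cor-orbit-map-1} gives $| S_{t-r,t-s} |_{L(D(A),W)} \leq M e^{\omega T} |s-r|$. Lemma \ref{lemma-embedding-operators} applied twice, with $E = D(A)$ and then with $E = L(V,D(A))$, lifts this bound to the operator norm from $L(V,L(V,D(A)))$ to $L(V,L(V,W))$. Hence
\begin{align*}
| S_{t-r,t-s} Y_r' | \leq M e^{\omega T} \| Y' \|_{\infty} |s-r|,
\end{align*}
where the supremum norm is taken in $L(V,L(V,D(A)))$.

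Finally, since $|s-r| \leq T^{1-\alpha} |s-r|^{\alpha}$, the two bounds combine to
\begin{align*}
\| \Upsilon' \|_{\alpha} \leq M e^{\omega T} \big( \| Y' \|_{\alpha} + T^{1-\alpha} \| Y' \|_{\infty} \big) < \infty,
\end{align*}
which proves $\Upsilon' \in \calc^{\alpha}([0,t],L(V,L(V,W)))$.
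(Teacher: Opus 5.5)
Your proof is correct and is essentially the paper's argument: the same splitting into $S_{t-s}(Y_s'-Y_r')$ and $S_{t-r,t-s}Y_r'$, bounded via \eqref{est-semigroup} and Corollary \ref{cor-orbit-map-1} respectively, with Lemma \ref{lemma-embedding-operators} transferring the operator-norm bounds to the spaces $L(V,L(V,\cdot))$. Applying that lemma twice explicitly, and tracking the final factor $T^{1-\alpha}$, spells out steps the paper leaves implicit.
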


\begin{proof}
Note that \eqref{controlled-in-D-A} and Lemma \ref{lemma-domains-embedding} imply
$$Y' \in \calc^{\alpha}([0,t],L(V,L(V,D(A)))) \hookrightarrow \calc^{\alpha}([0,t],L(V,L(V,W))).$$ Let $s,r \in [0,t]$ be arbitrary. Then we have
\begin{align*}
|\Upsilon_s' - \Upsilon_r'|_{L(V,L(V,W))} &= | S_{t-s} Y_s' - S_{t-r} Y_r' |_{L(V,L(V,W))}
\\ &\leq | S_{t-s} (Y_s' - Y_r') |_{L(V,L(V,W))} + | S_{t-r,t-s} Y_r' |_{L(V,L(V,W))}.
\end{align*}
Recall that by Lemma \ref{lemma-embedding-operators} we may regard $S_{t-s}$ as an element from $L(L(V,W))$. Thus,
using Lemma \ref{lemma-embedding-operators} and \eqref{est-semigroup}, the first term can be estimated as
\begin{align*}
| S_{t-s} (Y_s' - Y_r') |_{L(V,L(V,W))} &\leq | S_{t-s} |_{L(L(V,W))} |Y_s' - Y_r'|_{L(V,L(V,W))}
\\ &\leq | S_{t-s} |_{L(W)} |Y_s' - Y_r'|_{L(V,L(V,W))}
\\ &\leq M e^{\omega T} \| Y' \|_{\alpha} |s-r|^{\alpha}.
\end{align*}
Moreover, by Lemma \ref{lemma-embedding-operators} and Corollary \ref{cor-orbit-map-1} the second term is estimated as
\begin{align*}
| S_{t-r,t-s} Y_r' |_{L(V,L(V,W))} &\leq | S_{t-r,t-s} |_{L(L(V,D(A)), L(V,W) )} | Y_r' |_{L(V,L(V,D(A)))}
\\ &\leq | S_{t-r,t-s} |_{L(D(A), W )} | Y_r' |_{L(V,L(V,D(A)))}
\\ &\leq M e^{\omega T} | Y' |_{\infty} |s-r|,
\end{align*}
completing the proof.
\end{proof}

Now, we are in the position to show that $(\Upsilon,\Upsilon')$ actually is a controlled rough path.

\begin{proposition}\label{prop-convolution-well-defined}
Suppose that \eqref{controlled-in-D-A} holds true. Then we have
\begin{align*}
(\Upsilon,\Upsilon') \in \scrd_X^{2\alpha}([0,t],L(V,W)).
\end{align*}
\end{proposition}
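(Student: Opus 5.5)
By Lemmas \ref{lemma-semigroup-term-1} and \ref{lemma-semigroup-term-2} we already know that $\Upsilon \in \calc^{\alpha}([0,t],L(V,W))$ and $\Upsilon' \in \calc^{\alpha}([0,t],L(V,L(V,W)))$. It therefore only remains to show that the remainder
\begin{align*}
R_{r,s}^{\Upsilon} = \Upsilon_{r,s} - \Upsilon_r' X_{r,s}, \quad r,s \in [0,t],
\end{align*}
belongs to $\calc_2^{2\alpha}([0,t]^2,L(V,W))$.

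The plan is to split this expression into a part that involves only the remainder of $Y$ and a part that involves only increments of the semigroup. Writing $\Upsilon_s = S_{t-s}Y_s$ and adding and subtracting $S_{t-r}Y_s$, we obtain
\begin{align*}
R_{r,s}^{\Upsilon} &= S_{t-r}\big(Y_{r,s} - Y_r' X_{r,s}\big) + S_{t-r,t-s} Y_s
\\ &= S_{t-r} R_{r,s}^Y + S_{t-r,t-s} Y_s .
\end{align*}
The first term is harmless. By Lemma \ref{lemma-embedding-operators} and \eqref{est-semigroup} its norm in $L(V,W)$ is bounded by
\begin{align*}
M e^{\omega T} |R_{r,s}^Y|_{L(V,W)} \leq M e^{\omega T} \| R^Y \|_{2\alpha} |s-r|^{2\alpha},
\end{align*}
where $\| R^Y\|_{2\alpha}$ may be taken in $L(V,D(A))$ and transferred to $L(V,W)$ via Lemma \ref{lemma-domains-embedding}.

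The second term $S_{t-r,t-s}Y_s$ is the step that needs care, because it is only of first order in $|s-r|$ and not obviously of order $2\alpha$. Here I will use \eqref{controlled-in-D-A}: since $Y_s \in L(V,D(A))$, Corollary \ref{cor-orbit-map-2} gives
\begin{align*}
|S_{t-r,t-s}Y_s|_{L(V,W)} \leq M e^{\omega T} \|Y\|_{\infty} |s-r|.
\end{align*}
The sup-norm $\|Y\|_\infty$ is taken in $L(V,D(A))$ and is finite by Lemma \ref{lemma-norm-of-Y}. Because $2\alpha \leq 1$, we have $|s-r| \leq T^{1-2\alpha}|s-r|^{2\alpha}$, so this term is also of order $2\alpha$. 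Thus no second-order domain is needed at this point; presumably $D(A^2)$ together with Proposition \ref{prop-est-quad} is only required later, for the convolution as a function of the outer variable $t$.

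Combining the two estimates yields
\begin{align*}
\| R^{\Upsilon}\|_{2\alpha} \leq M e^{\omega T}\big(\|R^Y\|_{2\alpha} + T^{1-2\alpha}\|Y\|_\infty\big) < \infty,
\end{align*}
which proves the claim. A minor point to check is the symmetric case $s<r$: the identity is purely algebraic and the estimate of Corollary \ref{cor-orbit-map-2} is symmetric in its time arguments, so the same bound applies.
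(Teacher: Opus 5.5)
Your proof is correct and follows the paper's argument: the same decomposition $R^{\Upsilon}_{r,s} = S_{t-r,t-s}Y_s + S_{t-r}R^Y_{r,s}$, with the semigroup-increment term bounded via Corollary \ref{cor-orbit-map-2} and the remainder term via the growth bound \eqref{est-semigroup}. Your only addition is to write out the conversion $|s-r| \leq T^{1-2\alpha}|s-r|^{2\alpha}$, which the paper leaves implicit.
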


\begin{proof}
In view of Lemma \ref{lemma-semigroup-term-1} and Lemma \ref{lemma-semigroup-term-2} we only need to show that
$$R^Z \in \calc_2^{2 \alpha}([0,t]^2,L(V,W)).$$
Note that \eqref{controlled-in-D-A} implies
$$R^Y \in \calc_2^{2 \alpha}([0,t]^2,L(V,D(A))).$$
Let $s,r \in [0,t]$ be arbitrary. Then we have
\begin{align*}
R_{r,s}^\Upsilon &= \Upsilon_{r,s} - \Upsilon_r' X_{r,s} = \Upsilon_s - \Upsilon_r - \Upsilon_r' X_{r,s}
\\ &= S_{t-s} Y_s - S_{t-r} Y_r - S_{t-r} Y_r' X_{r,s}
\\ &= ( S_{t-s} - S_{t-r} ) Y_s + S_{t-r} ( Y_{r,s} - Y_r' X_{r,s} )
\\ &= S_{t-r,t-s} Y_s + S_{t-r} R_{r,s}^Y.
\end{align*}
Hence by Corollary \ref{cor-orbit-map-2} we obtain
\begin{align*}
| R_{r,s}^\Upsilon |_{L(V,W)} &\leq | S_{t-r,t-s} Y_s |_{L(V,W)} + | S_{t-r} R_{r,s}^Y |_{L(V,W)}
\\ &\leq M e^{\omega T} |Y_s|_{L(V,D(A))} |s-r| + M e^{\omega T} | R_{r,s}^Y |_{L(V,D(A))}
\\ &\leq M e^{\omega T} \| Y \|_{\infty} |s-r| + M e^{\omega T} \| R^Y \|_{2 \alpha} |s-r|^{2 \alpha},
\end{align*}
completing the proof.
\end{proof}

Consequently, for a controlled rough path $(Y,Y')$ satisfying \eqref{controlled-in-D-A} we can define the rough convolution according to \eqref{convolution-try} by using the Gubinelli integral \eqref{candidate}. In the next step, let us have a closer look at the terms appearing in the fixed point equation \eqref{mild}.

\begin{remark}
For any $\xi \in W$ we denote by $u_{\xi} : [0,T] \to W$ the orbit map $u_{\xi}(t) := S_t \xi$, $t \in [0,T]$. Note that the orbit map $u_{\xi}$ is always continuous, but it does not need to be H\"{o}lder continuous. More precisely, it may not be true that $u_{\xi} \in \calc^{\alpha}([0,T],W)$, and in particular, it may not be true that $(u_{\xi},0) \in \scrd_X^{2\alpha}([0,T],W)$. However, as the next result shows, choosing starting points from the domain $D(A)$, we obtain the desired property.
\end{remark}

\begin{proposition}
The following statements are true:
\begin{enumerate}
\item The orbit map operator
\begin{align*}
\Omega : \big( D(A), |\cdot|_{D(A)} \big) \to \big( \scrd_X^{2\alpha}([0,T],W), \| \cdot \|_{X,2\alpha} \big), \quad \xi \mapsto ( u_{\xi}, 0 )
\end{align*}
is a continuous linear operator between seminormed spaces with operator norm bounded by
\begin{align*}
\| \Omega \| \leq M e^{\omega T} T^{1-2 \alpha}.
\end{align*}
\item The orbit map operator
\begin{align*}
\Omega : \big( D(A), |\cdot|_{D(A)} \big) \to \big( \scrd_X^{2\alpha}([0,T],W), \interleave \cdot \interleave_{X,2\alpha} \big), \quad \xi \mapsto ( u_{\xi}, 0 )
\end{align*}
is a continuous linear operator between Banach spaces with operator norm bounded by
\begin{align*}
\| \Omega \| \leq M e^{\omega T} T^{1-2 \alpha} + 1.
\end{align*}
\end{enumerate}
\end{proposition}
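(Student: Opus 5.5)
The plan is to reduce both statements to the Lipschitz estimate for orbit maps from Proposition \ref{prop-orbit-map}, and then to Exercise \ref{exercise-reg-derivative-zero}, which identifies the seminorm of a pair with vanishing Gubinelli derivative. Linearity of $\Omega$ is immediate, since $\xi \mapsto S_t \xi$ is linear for each $t$. It therefore suffices to prove the stated bounds on $\Omega(\xi)$ for arbitrary $\xi \in D(A)$.

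For part (1), fix $\xi \in D(A)$. For all $s,t \in [0,T]$ we have $u_\xi(t) - u_\xi(s) = S_{s,t}\xi$. Proposition \ref{prop-orbit-map} then gives
\begin{align*}
|u_\xi(t) - u_\xi(s)| \leq M e^{\omega T} |\xi|_{D(A)} |t-s| \leq M e^{\omega T} T^{1-2\alpha} |\xi|_{D(A)} |t-s|^{2\alpha},
\end{align*}
where we use $|t-s|^{1-2\alpha} \leq T^{1-2\alpha}$, which holds because $2\alpha \leq 1$. Hence $u_\xi \in \calc^{2\alpha}([0,T],W)$ with $\| u_\xi \|_{2\alpha} \leq M e^{\omega T} T^{1-2\alpha} |\xi|_{D(A)}$. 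Exercise \ref{exercise-reg-derivative-zero} now yields $(u_\xi,0) \in \scrd_X^{2\alpha}([0,T],W)$ and $\| u_\xi, 0 \|_{X,2\alpha} = \| u_\xi \|_{2\alpha}$. This is exactly the bound in (1). Note that $u_\xi$ is $\alpha$-Hölder as well, because $2\alpha > \alpha$ on a bounded interval.

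For part (2), we add the initial values to the seminorm. We have $u_\xi(0) = S_0 \xi = \xi$, the derivative part vanishes, and Lemma \ref{lemma-domains-embedding} gives $|\xi| \leq |\xi|_{D(A)}$. Consequently
\begin{align*}
\interleave u_\xi, 0 \interleave_{X,2\alpha} = |\xi| + 0 + \| u_\xi,0 \|_{X,2\alpha} \leq \big( 1 + M e^{\omega T} T^{1-2\alpha} \big) |\xi|_{D(A)}.
\end{align*}
Both $(D(A),|\cdot|_{D(A)})$ and $(\scrd_X^{2\alpha}([0,T],W), \interleave\cdot\interleave_{X,2\alpha})$ were noted earlier to be Banach spaces, so this completes (2).

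No step here is a real obstacle. The only point needing care is the passage from the Lipschitz bound to the $2\alpha$-Hölder bound, and this is exactly where $\alpha \leq \frac12$ enters.
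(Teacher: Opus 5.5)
Your proposal is correct and follows essentially the same route as the paper's proof: Proposition \ref{prop-orbit-map} gives the Lipschitz bound for $u_\xi$, which you convert to a $2\alpha$-H\"older bound via $|t-s|^{1-2\alpha} \leq T^{1-2\alpha}$, and Exercise \ref{exercise-reg-derivative-zero} turns this into the seminorm estimate. Adding $|u_\xi(0)| = |\xi| \leq |\xi|_{D(A)}$ then yields part (2).
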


\begin{proof}
Let $\xi \in D(A)$ be arbitrary. By Proposition \ref{prop-orbit-map}, for all $s,t \in [0,T]$ we have
\begin{align*}
| u_{\xi}(t) - u_{\xi}(s) | = | S_{s,t} \xi | \leq M e^{\omega T} |\xi|_{D(A)} |t-s| \leq M e^{\omega T} |\xi|_{D(A)} T^{1-2\alpha} |t-s|^{2\alpha},
\end{align*}
showing that $u_{\xi} \in \calc^{2\alpha}([0,T],W)$ with
\begin{align*}
\| u_{\xi} \|_{2 \alpha} \leq M e^{\omega T} |\xi|_{D(A)} T^{1-2\alpha}.
\end{align*}
Now, we set $(Z,Z') := \Omega(\xi) = (u_{\xi},0)$. Then we have $Z_0 = \xi$ and $Z_0' = 0$. Thus, by Exercise \ref{exercise-reg-derivative-zero} we obtain $(Z,Z') \in \scrd_X^{2\alpha}([0,T],W)$ and
\begin{align*}
\| Z,Z' \|_{X,2\alpha} = \| Z \|_{2\alpha} \leq M e^{\omega T} T^{1-2\alpha} |\xi|_{D(A)}.
\end{align*}
Moreover, we have
\begin{align*}
\interleave Z,Z' \interleave_{X,2\alpha} = |Z_0| + |Z_0'| + \| Z,Z' \|_{X,2\alpha} = |\xi| + \| Z \|_{2\alpha} \leq |\xi|_{D(A)} + \| Z \|_{2\alpha},
\end{align*}
completing the proof.
\end{proof}

\begin{lemma}\label{lemma-reg-conv-Hoelder-2}
Let $Y : [0,T] \to D(A)$ be measurable and bounded. We define the path $Z : [0,T] \to W$ as
\begin{align*}
Z_t := \int_0^t S_{t-s} Y_s \, ds, \quad t \in [0,T].
\end{align*}
Then we have
\begin{align*}
|Z_{s,t}| \leq (1+T) M e^{\omega T} \| Y \|_{\infty} |t-s| \quad \text{for all $s,t \in [0,T]$.}
\end{align*}
\end{lemma}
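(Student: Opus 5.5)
Fix $s,t \in [0,T]$ with $s \leq t$ (the case $s > t$ follows by symmetry, since $|Z_{s,t}| = |Z_{t,s}|$). Split the increment into a "new mass" term and a "semigroup acting on old mass" term:
\begin{align*}
Z_{s,t} = \int_s^t S_{t-r} Y_r \, dr + \int_0^s \big( S_{t-r} - S_{s-r} \big) Y_r \, dr = \int_s^t S_{t-r} Y_r \, dr + \int_0^s S_{s-r,t-r} Y_r \, dr,
\end{align*}
using the notation $S_{a,b} = S_b - S_a$. The measurability and Bochner integrability of the integrands follow from strong continuity of the semigroup together with measurability and boundedness of $Y$, and the continuous embedding $D(A) \hookrightarrow W$ from Lemma \ref{lemma-domains-embedding}.

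For the first term, the growth estimate \eqref{est-semigroup} and Lemma \ref{lemma-domains-embedding} give
\begin{align*}
\bigg| \int_s^t S_{t-r} Y_r \, dr \bigg| \leq M e^{\omega T} \| Y \|_{\infty} |t-s|,
\end{align*}
where $\| Y \|_{\infty}$ is understood with respect to $|\cdot|_{D(A)}$. This uses $|Y_r| \leq |Y_r|_{D(A)}$. Implicitly one takes $\omega \geq 0$, which may be assumed without loss of generality in \eqref{est-semigroup}, so that $e^{\omega(t-r)} \leq e^{\omega T}$.

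For the second term, apply Proposition \ref{prop-orbit-map} pointwise. Since $s-r, t-r \in [0,T]$ and $(t-r)-(s-r) = t-s$, we get $|S_{s-r,t-r} Y_r| \leq M e^{\omega T} |Y_r|_{D(A)} |t-s|$. Integrating over $r \in [0,s]$ then yields the bound $s \, M e^{\omega T} \| Y \|_{\infty} |t-s| \leq T M e^{\omega T} \| Y \|_{\infty} |t-s|$. Adding the two estimates gives the factor $(1+T) M e^{\omega T}$.

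No step is a real obstacle. The only points needing care are the convention that $\| Y \|_{\infty}$ refers to the $D(A)$-norm (this is needed for the second term), the assumption $\omega \geq 0$, and the justification that the Bochner integrals are well defined.
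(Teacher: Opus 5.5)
Your proposal is correct and follows the paper's proof essentially verbatim. It uses the same splitting of $Z_{s,t}$ into $\int_s^t S_{t-r} Y_r \, dr$ and $\int_0^s (S_{t-r} - S_{s-r}) Y_r \, dr$, bounded via the growth estimate \eqref{est-semigroup} and Proposition \ref{prop-orbit-map} respectively. Your remarks that $\| Y \|_{\infty}$ must be taken in the graph norm $|\cdot|_{D(A)}$ and that one tacitly needs $\omega \geq 0$ make explicit two conventions the paper uses without comment.
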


\begin{proof}
Let $s,t \in [0,T]$ with $s \leq t$ be arbitrary. Then we have
\begin{align*}
Z_{s,t} &= \int_0^t S_{t-r} Y_r \, dr - \int_0^s S_{s-r} Y_r \, dr
\\ &= \int_s^t S_{t-r} Y_r \, dr + \int_0^s (S_{t-r} - S_{s-r}) Y_r \, dr.
\end{align*}
Therefore, using the estimate \eqref{est-semigroup} and Proposition \ref{prop-orbit-map} we obtain
\begin{align*}
|Z_{s,t}| &\leq \bigg| \int_s^t S_{t-r} Y_r \, dr \bigg| + \bigg| \int_0^s (S_{t-r} - S_{s-r}) Y_r \, dr \bigg|
\\ &\leq \int_s^t |S_{t-s} Y_r| \, dr + \int_0^s |S_{t-r} Y_r - S_{s-r} Y_r| \, dr
\\ &\leq M e^{\omega T} \int_s^t |Y_r| \, dr + M e^{\omega T} \int_0^s |Y_r|_{D(A)} |t-s| \, dr.
\\ &\leq M e^{\omega T} \|Y\|_{\infty} |t-s| + M e^{\omega T} T \|Y\|_{\infty} |t-s|,
\end{align*}
completing the proof.
\end{proof}

\begin{proposition}\label{prop-reg-conv-Hoelder-semigroup}
The convolution operator
\begin{align*}
\mathscr{I} : \big( C([0,T],D(A)), \| \cdot \|_{\infty} \big) \to \big( \scrd_X^{2\alpha}([0,T],W), \interleave \cdot \interleave_{X,2\alpha} \big)
\end{align*}
given by
\begin{align*}
Y \mapsto \bigg( \int_0^{\cdot} S_{\cdot - s} Y_s \, ds, 0 \bigg)
\end{align*}
is a continuous linear operator between Banach spaces with operator norm bounded by
\begin{align*}
\| \mathscr{I} \| \leq M e^{\omega T} (1+T) T^{1-2\alpha}.
\end{align*}
\end{proposition}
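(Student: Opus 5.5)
The plan mirrors the proof of Proposition \ref{prop-reg-conv-Hoelder}, with Lemma \ref{lemma-reg-conv-Hoelder-2} playing the role that Lemma \ref{lemma-reg-conv-Hoelder} played there. Fix $Y \in C([0,T],D(A))$. First, $Y$ is continuous with values in $D(A)$, hence measurable and bounded in $D(A)$ with $\| Y \|_{\infty} = \sup_t |Y_t|_{D(A)} < \infty$. Set
\begin{align*}
Z_t := \int_0^t S_{t-s} Y_s \, ds, \quad t \in [0,T], \qquad (Z,Z') := \mathscr{I}(Y) = (Z,0).
\end{align*}
Linearity of $\mathscr{I}$ is immediate from linearity of the Bochner integral and of each $S_{t-s}$. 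For well-definedness, note that $s \mapsto S_{t-s} Y_s$ is continuous in $W$ by strong continuity of the semigroup together with the local uniform bound \eqref{est-semigroup}, so the integral exists.

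Next, apply Lemma \ref{lemma-reg-conv-Hoelder-2} for all $s,t \in [0,T]$:
\begin{align*}
|Z_{s,t}| \leq (1+T) M e^{\omega T} \| Y \|_{\infty} |t-s| \leq (1+T) M e^{\omega T} T^{1-2\alpha} \| Y \|_{\infty} |t-s|^{2\alpha}.
\end{align*}
The second inequality uses $|t-s|^{1-2\alpha} \leq T^{1-2\alpha}$, which is valid because $\alpha \leq \frac12$. Hence $Z \in \calc^{2\alpha}([0,T],W)$ with $\| Z \|_{2\alpha} \leq M e^{\omega T}(1+T)T^{1-2\alpha} \| Y \|_{\infty}$.

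Then apply Exercise \ref{exercise-reg-derivative-zero}. It gives $(Z,0) \in \scrd_X^{2\alpha}([0,T],W)$ with $\| Z,0 \|_{X,2\alpha} = \| Z \|_{2\alpha}$. Since $Z_0 = 0$ (empty integral) and $Z_0' = 0$, we get $\interleave Z,0 \interleave_{X,2\alpha} = \| Z \|_{2\alpha}$, and the stated operator norm bound follows. Continuity then follows from linearity plus boundedness.

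The only step needing care is the Hölder estimate, which is exactly where the $D(A)$-valuedness enters through Proposition \ref{prop-orbit-map}. It is already packaged in Lemma \ref{lemma-reg-conv-Hoelder-2}, so the expected obstacle is merely the measurability and integrability check above; no genuine difficulty is anticipated.
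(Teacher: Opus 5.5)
Your proposal is correct and follows the paper's proof essentially verbatim: the Lipschitz bound from Lemma~\ref{lemma-reg-conv-Hoelder-2} is turned into a $2\alpha$-H\"older bound via $|t-s|^{1-2\alpha} \leq T^{1-2\alpha}$, and Exercise~\ref{exercise-reg-derivative-zero} together with $Z_0 = Z_0' = 0$ identifies $\interleave Z,0 \interleave_{X,2\alpha}$ with $\| Z \|_{2\alpha}$. Your additional remarks on linearity and on continuity of $s \mapsto S_{t-s} Y_s$ (which guarantees the integral exists) supply details the paper leaves implicit.
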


\begin{proof}
Let $Y \in C([0,T],D(A))$ be arbitrary, and set $(Z,Z') := \mathscr{I}(Y)$. By Lemma \ref{lemma-reg-conv-Hoelder-2}, for all $s,t \in [0,T]$ we have
\begin{align*}
|Z_{s,t}| \leq (1+T) M e^{\omega T} \| Y \|_{\infty} |t-s| \leq (1+T) M e^{\omega T} T^{1-2\alpha} \| Y \|_{\infty} |t-s|^{2\alpha},
\end{align*}
showing that $Z \in \calc^{2\alpha}([0,T],W)$ with
\begin{align*}
\| Z \|_{2\alpha} \leq (1+T) M e^{\omega T} T^{1-2\alpha} \| Y \|_{\infty}.
\end{align*}
Note that $Z_0 = 0$ and $Z_0' = 0$. Therefore, by Exercise \ref{exercise-reg-derivative-zero} we obtain $(Z,Z') \in \scrd_X^{2\alpha}([0,T],W)$ and
\begin{align*}
\interleave Z,Z' \interleave_{X,2\alpha} = \| Z,Z' \|_{X,2\alpha} = \| Z \|_{2 \alpha} \leq M e^{\omega T} (1+T) T^{1-2\alpha} \| Y \|_{\infty},
\end{align*}
completing the proof.
\end{proof}

So far, we have seen that the first two terms appearing at the right-hand side of the fixed point equation \eqref{mild} preserve the required regularity. As it turns out, there is a problem with the third term in \eqref{mild}. Indeed, even though for controlled rough paths $(Y,Y')$ satisfying \eqref{controlled-in-D-A} the rough convolution \eqref{convolution-try} is well-defined, it may happen that for the path
\begin{align}\label{Z-rough-convolution}
(Z,Z') := \bigg( \int_0^{\cdot} S_{\cdot - s} Y_s \, d \bfx_s, Y \bigg)
\end{align}
we do \emph{not} have
\begin{align*}
(Z,Z') \in \scrd_X^{2 \alpha}([0,T],W).
\end{align*}
We can, however, overcome this difficulty by considering the second order domain $D(A^2)$ and controlled rough paths
\begin{align}\label{controlled-in-D-A-2}
(Y,Y') \in \scrd_X^{2 \alpha}([0,T],L(V,D(A^2))) \hookrightarrow \scrd_X^{2 \alpha}([0,T],L(V,D(A))),
\end{align}
where we recall Lemma \ref{lemma-domains-embedding}. The proof of the following result is rather complex, and we refer to \cite[Prop. 2.14]{Tappe} for further details. We just point out that a key result for its proof is the estimate \eqref{est-quad} from Proposition \ref{prop-est-quad}.

\begin{proposition}\label{prop-conv-rough-semigroup}
The rough convolution operator
\begin{align*}
\mathscr{J} : \big( \scrd_X^{2 \alpha}([0,T],L(V,D(A^2))), \interleave \cdot \interleave_{X,2\alpha} \big) \to \big( \scrd_X^{2 \alpha}([0,T],W), \interleave \cdot \interleave_{X,2\alpha} \big)
\end{align*}
given by
\begin{align*}
(Y,Y') \mapsto \bigg( \int_0^{\cdot} S_{\cdot - s} Y_s \, d \bfx_s, Y \bigg)
\end{align*}
is a continuous linear operator between Banach spaces with operator norm bounded by
\begin{align*}
\| \mathscr{J} \| \leq C (  \interleave \mathbf{X} \interleave_{\alpha} + T^{\alpha} ),
\end{align*}
where the constant $C > 0$ depends on $\alpha$, $T$, $\mathbf{X}$ and $M$, $\omega$, but does not depend on $T \leq 1$.
\end{proposition}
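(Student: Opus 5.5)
The plan is to write the rough convolution as a Gubinelli integral plus a correction term that is smooth in time, and to control each part with the tools of Sections \ref{sec-integral} and \ref{subsec-semigroups}. Fix $(Y,Y') \in \scrd_X^{2\alpha}([0,T],L(V,D(A^2)))$. For each $t$, Proposition \ref{prop-convolution-well-defined} (applied in $D(A)$ via Lemma \ref{lemma-restricted-semigroup}) makes
\begin{align*}
Z_t := \int_0^t \Upsilon(t)_s \, d\bfx_s = \int_0^t S_{t-s} Y_s \, d\bfx_s
\end{align*}
well defined as a $W$-valued, and even $D(A)$-valued, Gubinelli integral. Linearity of $\mathscr{J}$ is immediate from the Riemann-sum representation in Lemma \ref{lemma-sewing}. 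So the work is to show $(Z,Y) \in \scrd_X^{2\alpha}([0,T],W)$ with the stated bound on $\interleave Z,Y \interleave_{X,2\alpha}$.

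\emph{Decomposition of the increment.} For $s \le t$ we write
\begin{align*}
Z_{s,t} = \int_s^t S_{t-r} Y_r \, d\bfx_r + \int_0^s (S_{t-r} - S_{s-r}) Y_r \, d\bfx_r =: I_1 + I_2 .
\end{align*}
For $I_1$ we apply the estimate \eqref{estimate-third-order} of Theorem \ref{thm-Gubinelli} to $(\Upsilon(t),\Upsilon(t)')$ on $[s,t]$. This gives $I_1 = S_{t-s}Y_s X_{s,t} + S_{t-s} Y_s' \bbx_{s,t} + O(|t-s|^{3\alpha})$. The constants in that estimate are controlled by $\|\Upsilon'\|_\alpha$ and $\|R^\Upsilon\|_{2\alpha}$, which Lemmas \ref{lemma-semigroup-term-1}, \ref{lemma-semigroup-term-2} and Proposition \ref{prop-convolution-well-defined} bound uniformly in $t$ by $Me^{\omega T}\interleave Y,Y'\interleave_{X,2\alpha}$. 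We then replace $S_{t-s}Y_s$ by $Y_s$ at a cost of $Me^{\omega T}|Y_s|_{L(V,D(A))}|t-s|$ (Corollary \ref{cor-orbit-map-2}). Multiplied by $|X_{s,t}|$, this is $O(|t-s|^{1+\alpha})$, hence $O(|t-s|^{2\alpha})$. The result is
\begin{align*}
I_1 - Y_s X_{s,t} = O(|t-s|^{2\alpha}),
\end{align*}
which is exactly the remainder estimate with $Z' = Y$.

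\emph{The main obstacle is $I_2$}, which must be shown to be $O(|t-s|)$ and hence $O(|t-s|^{2\alpha})$. The natural route is to view $I_2$ as the Gubinelli integral of the controlled path $\Theta_r := S_{s-r,t-r} Y_r$ on $[0,s]$, with derivative $\Theta'_r := S_{s-r,t-r}Y'_r$. We bound it via Proposition \ref{prop-Gubinelli} and Lemma \ref{lemma-norm-of-Y}, showing $\interleave \Theta,\Theta'\interleave_{X,2\alpha;[0,s]} \lesssim |t-s|\,\interleave Y,Y'\interleave$.
\begin{itemize}
\item The sup bounds follow from Corollary \ref{cor-orbit-map-2}.
\item For the Hölder norms of $\Theta, \Theta'$ and for $R^\Theta$, we split
\begin{align*}
\Theta_{r} - \Theta_q = S_{s-r,t-r}(Y_r - Y_q) + \big(S_{s-r,t-r} - S_{s-q,t-q}\big) Y_q .
\end{align*}
The first piece gives $|t-s|\,\|Y\|_{\alpha}|r-q|^\alpha$, using the $D(A)$-norm. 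The second piece is where Proposition \ref{prop-est-quad} is essential: because $Y_q \in L(V,D(A^2))$, it gives $|t-s|\,|r-q|$.
\item Similarly, $R^\Theta_{q,r} = S_{s-q,t-q}R^Y_{q,r} + (S_{s-r,t-r}-S_{s-q,t-q})Y_r$ is bounded by $|t-s|\big(|r-q|^{2\alpha} + |r-q|\big)$.
\end{itemize}
Hence $|I_2| \le |\Theta_0 X_{0,s}| + |\Theta'_0 \bbx_{0,s}| + C|t-s|$, which is $\lesssim |t-s|(\interleave\bfx\interleave_\alpha + T^\alpha)\interleave Y,Y'\interleave$.

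Finally we collect the terms. We have $Z_0 = 0$ and $Z'_0 = Y_0$, and $\|Z'\|_\alpha = \|Y\|_\alpha \le \|Y\|_{\alpha;L(V,D(A^2))}$, which is bounded via Lemma \ref{lemma-norm-of-Y}. Each error term above carries a factor $\interleave\bfx\interleave_\alpha$, or a positive power of $T$ after converting $|t-s|$ or $|t-s|^{3\alpha}$ to $|t-s|^{2\alpha}$. For $T \le 1$ these powers are at most $1$, so the constant does not depend on $T$. This gives $\|\mathscr{J}\| \le C(\interleave\bfx\interleave_\alpha + T^\alpha)$. The Hölder regularity of $Z$ then follows as in Proposition \ref{prop-Gubinelli}, since the increment is $Y_sX_{s,t}$ plus a $2\alpha$-term.
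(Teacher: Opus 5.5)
Your argument is sound in substance and follows the route the paper points to; the paper defers the proof to \cite[Prop.~2.14]{Tappe} and names \eqref{est-quad} as the key input. You split $Z_{s,t}$ into two pieces. The first, $\int_s^t S_{t-r}Y_r\,d\bfx_r$, is handled by Theorem \ref{thm-Gubinelli} and Corollary \ref{cor-orbit-map-2} and needs only $D(A)$-regularity. The second, $\int_0^s S_{s-r,t-r}Y_r\,d\bfx_r$, is controlled because Proposition \ref{prop-est-quad} gives Lipschitz-in-$r$ control of $r \mapsto S_{s-r,t-r}Y_r$, which makes this piece $O(|t-s|)$. This is precisely where $D(A^2)$ is needed.

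There is, however, an error in your final bookkeeping. It is not true that every contribution to $\interleave Z,Z' \interleave_{X,2\alpha}$ carries a factor $\interleave \bfx \interleave_{\alpha}$ or a positive power of $T$. The term $|Z_0'| = |Y_0|$ carries neither, and it cannot be absorbed. Take $A = 0$, $\bfx = 0$ and a constant path $Y \equiv y$ with $Y' = 0$. Then $\mathscr{J}(Y,Y') = (0,Y)$ and $\interleave \mathscr{J}(Y,Y') \interleave_{X,2\alpha} = |y| = \interleave Y,Y' \interleave_{X,2\alpha}$, so $\| \mathscr{J} \| \geq 1$. On the other hand, $C(\interleave \bfx \interleave_{\alpha} + T^{\alpha}) = C T^{\alpha}$ tends to $0$ as $T \to 0$ for any $C$ that is uniform in $T \leq 1$. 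What your argument actually establishes is the seminorm estimate
\begin{align*}
\| \mathscr{J}(Y,Y') \|_{X,2\alpha} \leq C ( \interleave \bfx \interleave_{\alpha} + T^{\alpha} ) \interleave Y,Y' \interleave_{X,2\alpha},
\end{align*}
and hence $\| \mathscr{J} \| \leq 1 + C(\interleave \bfx \interleave_{\alpha} + T^{\alpha})$ for the full norm. The seminorm version is exactly what the fixed point argument uses (cf.\ Proposition \ref{prop-fixed-point-1}). The same slip occurs in the paper's proof of Proposition \ref{prop-conv-rough}, so you should state your conclusion in this corrected form rather than claim the literal bound.
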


In particular, for a controlled rough path $(Y,Y')$ satisfying \eqref{controlled-in-D-A-2} the rough convolution $(Z,Z')$ given by \eqref{Z-rough-convolution} is also a controlled rough path, and hence, all terms appearing at the right-hand side of the fixed point equation \eqref{mild} preserve the required regularity.

\subsection{The fixed point argument}

After establishing the required results about rough convolutions, we can come back to the RPDE \eqref{RPDE}. Let $f_0 \in \Lip([0,T] \times W,D(A))$ and $f \in C_b^{2\alpha,3}([0,T] \times W, L(V,D(A^2)))$ be arbitrary. We also fix an initial condition $\xi \in D(A)$. For every $t \in [0,T]$ we define the complete metric space $\bbb_t \subset \scrd_X^{2\alpha}([0,t],W)$ as \eqref{B-fixed-point}, and the mapping $\Phi_t : \scrd_X^{2\alpha}([0,t],W) \to \scrd_X^{2\alpha}([0,t],W)$ as
\begin{align}\label{def-Phi-2}
\Phi_t(Y,Y') := (u_\xi, 0) + (\Gamma(Y),0) + (\Psi(Y),f(Y)),
\end{align}
where $u_{\xi} : [0,t] \to W$ denotes the orbit map $u_{\xi}(s) := S_s \xi$, $s \in [0,t]$, and the paths $\Gamma(Y), \Psi(Y) : [0,t] \to W$ are defined as
\begin{align*}
\Gamma(Y) &:= \int_0^{\cdot} S_{\cdot-s} f_0(s,Y_s) ds,
\\ \Psi(Y) &:= \int_0^{\cdot} S_{\cdot-s} f(s,Y_s) d \mathbf{X}_s.
\end{align*}
Then the mapping $\Phi_t$ is well-defined due to our findings from the previous section. Thus, we can proceed as in Section \ref{sec-RDEs} and generalize Theorem \ref{thm-RDGL-main-local} concerning the existence of local solutions as follows. Just as in the proof of Theorem \ref{thm-RDGL-main-local}, the essential idea for the proof of the upcoming Theorem \ref{thm-RPDE-main-local} is to find $T_0 \in (0,T]$ with $T_0 \leq 1$ small enough such that $\Phi_{T_0} : \bbb_{T_0} \to \bbb_{T_0}$ is a contraction. We refer to \cite[Thm. 2.2]{Tappe} for the precise proof and further details.

\begin{theorem}\label{thm-RPDE-main-local}
Let $\bfx = (X,\bbx) \in \scrc^{\beta}([0,T],V)$ be a rough path for some index $\beta \in (\frac{1}{3},\frac{1}{2}]$, and let $f_0 \in \Lip([0,T] \times W,D(A))$ and $f \in C_b^{2\beta,3}([0,T] \times W, L(V,D(A^2)))$ be mappings. Then for every $\xi \in D(A)$ there exist $T_0 \in (0,T]$ and a unique local mild solution $(Y,Y') \in \scrd_X^{2 \beta}([0,T_0],W)$ to the RPDE \eqref{RPDE} with $Y_0 = \xi$.
\end{theorem}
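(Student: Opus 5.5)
Following the proof of Theorem \ref{thm-RDGL-main-local}, I would first choose $\alpha \in (\frac{1}{3},\beta)$ with $\beta \leq \frac{3}{2}\alpha$, so that $\bfx \in \scrc^{\alpha}([0,T],V)$ and $f \in C_b^{2\alpha,3}([0,T]\times W, L(V,D(A^2)))$ (the $2\beta$-H\"older time regularity implies $2\alpha$-H\"older regularity, with norm controlled via $T^{2(\beta-\alpha)}$). The plan is to run a Banach fixed point argument for $\Phi_t$ from \eqref{def-Phi-2} on the complete metric space $\bbb_t$ from \eqref{B-fixed-point}, and then upgrade regularity from $2\alpha$ to $2\beta$.

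\textbf{Step 1: analogue of Proposition \ref{prop-fixed-point-1}.} For $t\le 1$ and $(Y,Y')\in\bbb_t$, I would bound the three summands of $\Phi_t(Y,Y')$ separately. The orbit term is handled by the orbit map proposition, giving $\|u_\xi,0\|_{X,2\alpha;[0,t]}\le Me^{\omega}|\xi|_{D(A)}t^{1-2\alpha}$. For the drift term, $f_0(Y)$ is a continuous $D(A)$-valued path, and Lemma \ref{lemma-lin-growth} (in $D(A)$) together with Lemma \ref{lemma-norm-of-Y} bounds $\|f_0(Y)\|_\infty$ by a constant depending on $\xi$ and $\interleave\bfx\interleave_\alpha$; Proposition \ref{prop-reg-conv-Hoelder-semigroup} then yields a factor $t^{1-2\alpha}$. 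For the rough term, apply Proposition \ref{prop-comp-f} with $\bar W=L(V,D(A^2))$ to get $(f(Y),f(Y)')\in\scrd_X^{2\alpha}([0,t],L(V,D(A^2)))$ with $\interleave\cdot\interleave\le C(1+|Y,Y'|^2)$, which is bounded on $\bbb_t$. Proposition \ref{prop-conv-rough-semigroup} then gives the factor $C(\interleave\bfx\interleave_{\alpha;[0,t]}+t^\alpha)$. By Lemma \ref{lemma-alpha-beta}, this is $\lesssim \|X\|_\beta t^{\beta-\alpha}+\|\bbx\|_{2\beta}t^{2(\beta-\alpha)}+t^\alpha$, which tends to $0$.

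\textbf{Step 2: analogue of Proposition \ref{prop-fixed-point-2}.} The difference of the drift terms uses linearity of $\mathscr I$ and the Lipschitz property of $f_0$ into $D(A)$. The difference of the rough terms uses linearity of $\mathscr J$ together with Proposition \ref{prop-diff-f} with values in $L(V,D(A^2))$. Since $Y_0=Z_0$ and $Y_0'=Z_0'$, the triple norm reduces to the seminorm, giving contraction factor $C(\interleave\bfx\interleave_{\alpha;[0,t]}+t^\alpha+t^{1-2\alpha})$. I also need to check that $\Phi_t$ preserves the initial conditions. Indeed, $\Phi_t(Y,Y')_0=\xi$, and the derivative at $0$ is $f(0,Y_0)=f(0,\xi)$. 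Choosing $T_0\le 1$ small makes $\Phi_{T_0}:\bbb_{T_0}\to\bbb_{T_0}$ a contraction, hence there is a unique fixed point, i.e. a local mild solution in $\scrd_X^{2\alpha}$.

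\textbf{Step 3: regularity upgrade, the main obstacle.} I need an analogue of Lemma \ref{lemma-spaces-alpha-beta}: the solution lies in $\scrd_X^{2\beta}([0,T_0],W)$. The bounds $Y\in\calc^\beta$ and $Y'=f(Y)\in\calc^\beta$ follow exactly as before. The remainder is harder, since Lemma \ref{lemma-remainder-mild-sol} used the additive structure $Y_{s,t}=\Gamma_{s,t}+\int_s^t$, which fails for convolutions. I would write
\[
Y_{s,t}=S_{s,t}\xi+\Gamma(Y)_{s,t}+\Psi(Y)_{s,t},
\]
bound the first two terms by $C|t-s|$ via Proposition \ref{prop-orbit-map} and Lemma \ref{lemma-reg-conv-Hoelder-2}, and split the rough convolution as
\[
\int_s^t S_{t-r}f(Y_r)\,d\bfx_r+\int_0^s S_{s-r,t-r}f(Y_r)\,d\bfx_r .
\]
I would treat the second piece as a rough integral of a $D(A)$-valued controlled path times an $O(|t-s|)$ operator, using the estimate \eqref{est-quad}. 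The first piece is compared with $f(Y_s)X_{s,t}+Df(Y_s)f(Y_s)\bbx_{s,t}$ via Theorem \ref{thm-Gubinelli} applied to $\Upsilon$, plus $|S_{t-s}-\Id|$ on $D(A)$-valued terms, which is $O(|t-s|)$. This yields $|R^Y_{s,t}|\le C|t-s|+C|\bbx_{s,t}|+C|t-s|^{3\alpha}$, which is of order $|t-s|^{2\beta}$. Uniqueness in $\scrd_X^{2\beta}$ then follows from uniqueness in $\scrd_X^{2\alpha}$, because $\scrd_X^{2\beta}\subset\scrd_X^{2\alpha}$, as in the RDE case.
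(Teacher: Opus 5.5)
Your proposal is correct and follows the paper's route: a contraction argument for $\Phi_{T_0}$ from \eqref{def-Phi-2} on $\bbb_{T_0}$ with an auxiliary index $\alpha<\beta$, mirroring Theorem \ref{thm-RDGL-main-local}. Your Step 3 supplies the $2\beta$-regularity upgrade, splitting the rough convolution and using \eqref{est-quad}; the paper leaves this to \cite[Thm. 2.2]{Tappe}. One caveat, which the paper's RDE proof shares: the Banach fixed point theorem only gives uniqueness inside $\bbb_{T_0}$, so for uniqueness among all local mild solutions on $[0,T_0]$ you should add that another solution restricted to a short $[0,h]$ lies in $\bbb_h$ (thanks to the factors $h^{\beta-\alpha}$), and then propagate the coincidence forward in time.
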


Now we can proceed further as in Section \ref{sec-RDEs} and derive the following generalization of Theorem \ref{thm-RDGL-main} concerning the existence of global solutions. We refer to \cite[Thm. 2.3]{Tappe} for the precise proof and further details. Just as in the proof of Theorem \ref{thm-RDGL-main}, the essential idea for the proof of the upcoming Theorem \ref{thm-RPDE-main} is to find $t \in (0,1]$ small enough such that $T = nt$ for some $n \in \bbn$, and to concatenate local solutions, which are obtained from the Banach fixed point theorem, on the intervals $[(k-1)t,kt]$ for $k=1,\ldots,n$.

\begin{theorem}\label{thm-RPDE-main}
Let $\bfx = (X,\bbx) \in \scrc^{\beta}([0,T],V)$ be a rough path for some index $\beta \in (\frac{1}{3},\frac{1}{2}]$, and let $f_0 \in \Lip([0,T] \times W,D(A))$ and $f \in C_b^{2\beta,3}([0,T] \times W, L(V,D(A^2)))$ be mappings such that
\begin{align}\label{f0-restriction}
&f_0|_{[0,T] \times D(A)} \in \Lip([0,T] \times D(A),D(A^2)),
\\ \label{f-restriction} &f|_{[0,T] \times D(A)} \in C_b^{2\beta,3}([0,T] \times D(A), L(V,D(A^3))).
\end{align}
Then for every $\xi \in D(A^2)$ there exists a unique mild solution $$(Y,Y') \in \scrd_X^{2 \beta}([0,T],W)$$ to the RPDE \eqref{RPDE} with $Y_0 = \xi$.
\end{theorem}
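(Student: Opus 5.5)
The plan follows the pattern of Theorem \ref{thm-RDGL-main}: combine the local fixed point argument of Theorem \ref{thm-RPDE-main-local} with an a priori bound, then concatenate solutions on intervals of fixed length. The difficulty is that the length $T_0$ of the local existence interval in Theorem \ref{thm-RPDE-main-local} depends on $|\xi|$, and for restarting we need $\xi \in D(A)$ at every restart time. So we need an a priori bound on $\sup_{t}|Y_t|$, and also on $\sup_t |Y_t|_{D(A)}$, for any mild solution. This is exactly why the stronger hypotheses \eqref{f0-restriction}, \eqref{f-restriction} and $\xi \in D(A^2)$ are imposed: they allow us to view the equation one level up in the scale of domains.

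\emph{Step 1 (lifting to $D(A)$).} By Lemma \ref{lemma-restricted-semigroup}, $(S_t|_{D(A)})$ is a $C_0$-semigroup on $D(A)$ with generator $A$ on domain $D(A^2)$. Its growth bound is the same $Me^{\omega t}$, since $S_t$ commutes with $A$. Its first and second order domains are $D(A^2)$ and $D(A^3)$. By \eqref{f0-restriction} and \eqref{f-restriction}, the restricted coefficients satisfy the hypotheses of Theorem \ref{thm-RPDE-main-local} with $W$ replaced by $D(A)$, and $\xi \in D(A^2)$ lies in the domain of this generator. Hence we obtain local mild solutions with values in $D(A)$. We check that such a $D(A)$-valued solution is also a $W$-valued mild solution, by continuity of the embedding of Lemma \ref{lemma-domains-embedding} and compatibility of the Gubinelli integrals under continuous linear maps. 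By uniqueness in Theorem \ref{thm-RPDE-main-local} on $W$, the two solutions then coincide.

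\emph{Step 2 (a priori bound).} We establish an analogue of Proposition \ref{prop-estimate}: any mild solution on $[0,T_1]$ satisfies $\sup_t|Y_t|\le K$, with $K$ independent of $T_1$. The approach is to rederive Lemma \ref{lemma-remainder-mild-sol} for the mild equation. The remainder $R^Y_{s,t}$ now contains the additional terms $S_{t-s}Y_s - Y_s$ and $\int_0^s S_{s-r,t-r}(\cdots)$. Because the coefficients take values in $D(A)$ and $D(A^2)$, Proposition \ref{prop-orbit-map} and Proposition \ref{prop-est-quad} bound these terms by $C|t-s|$ times norms of the coefficients. Those norms are bounded by constants, since $f$ is bounded and $f_0$ has linear growth. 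The argument of Proposition \ref{prop-estimate}, using the quadratic inequality $\psi_h\le\lambda_h+\psi_h^2$ with $h$ fixed small, then yields a bound on $\|Y\|_{\alpha}$ and hence on $\|Y\|_\infty$. Applying the same argument in $D(A)$, via Step 1, also gives a bound on $|Y_t|_{D(A)}$. This step is the main obstacle: the mild remainder estimate needs the second order domain through Proposition \ref{prop-est-quad}, and for the $D(A)$-level bound the third order domain. This is why \eqref{f-restriction} asks for values in $D(A^3)$.

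\emph{Step 3 (concatenation).} With the bound $K$ in hand, we choose $t\in(0,1]$ with $T=nt$ such that the contraction conditions of Theorem \ref{thm-RPDE-main-local} hold uniformly for all initial values of norm at most $K$. This works because the constants there depend on the initial value only through $|\xi|$, as in Proposition \ref{prop-fixed-point-1}. We then solve successively on $[(k-1)t,kt]$, starting from $Y_{(k-1)t}$.

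Concatenation requires a mild-solution analogue of Lemma \ref{lemma-flow-property}. This follows from the semigroup property. Writing $Y_t = S_{t-r}Y_r + \int_r^t S_{t-u}f_0\,du + \int_r^t S_{t-u}f\,d\mathbf{X}_u$ and substituting the representation of $Y_r$, we use
\[
S_{t-r}\int_0^r S_{r-u}f(u,Y_u)\,d\mathbf{X}_u=\int_0^r S_{t-u}f(u,Y_u)\,d\mathbf{X}_u .
\]
This identity holds because $S_{t-r}$ is linear and continuous and commutes with the Riemann sums defining the Gubinelli integral. Uniqueness on each subinterval gives global uniqueness. Finally, regularity in $\scrd_X^{2\beta}$ is obtained as in Lemma \ref{lemma-spaces-alpha-beta}, using the mild remainder estimate from Step 2.
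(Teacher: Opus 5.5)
Your architecture matches the paper's sketch, which defers the details to \cite{Tappe}. The ingredients are the same: local solutions from Theorem \ref{thm-RPDE-main-local}, a uniform step $t$ with $T=nt$, restarting from $Y_{(k-1)t}$, and gluing via $S_{t-r}\int_0^r S_{r-u}f\,d\mathbf{X}_u=\int_0^r S_{t-u}f\,d\mathbf{X}_u$. You also correctly see that a uniform step needs a bound on $\sup_t|Y_t|_{D(A)}$, because the orbit term contributes $Me^{\omega T}|\eta|_{D(A)}t^{1-2\alpha}$ to the invariance condition. (So in Step 3 the constants depend on $|\eta|_{D(A)}$, not only on $|\eta|$.)

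The gap is in Step 2. You rightly flag it as the main obstacle but then dispose of it with an incorrect claim. The extra terms in the mild remainder are $O(|t-s|)$, but the constants are not determined by the coefficients alone.
\begin{itemize}
\item Proposition \ref{prop-orbit-map} gives $|S_{t-s}Y_s-Y_s|\le Me^{\omega T}|Y_s|_{D(A)}|t-s|$, which is a norm of the unknown.
\item The rough memory term $\int_0^s S_{s-r,t-r}f(r,Y_r)\,d\mathbf{X}_r$ is controlled through Theorem \ref{thm-Gubinelli} by $\|R^{f(Y)}\|_{2\alpha;[0,s]}$ and $\|f(Y)'\|_{\alpha;[0,s]}$. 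These depend on $\|Y\|_{\alpha;[0,s]}$ and $\|R^Y\|_{2\alpha;[0,s]}$ over the whole past. Proposition \ref{prop-est-quad} only handles the $r$-dependence of the operators $S_{s-r,t-r}$, not the roughness of $f(Y)$.
\end{itemize}
Hence the Friz--Hairer bootstrap $\psi_h\le\lambda_h+\psi_h^2$ does not apply as you describe, since it needs $\lambda_h$ to depend only on the data and every unknown to be measured on windows of length $h$.

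Running ``the same argument in $D(A)$'' does not fix this. At that level the analogous term requires $|Y_s|_{D(A^2)}$. It also presupposes that $Y$ is a $D(A)$-controlled solution on all of $[0,T_1]$, whereas Step 1 gives this only on an interval whose length depends on $|\xi|_{D(A^2)}$. That is the same continuation problem one level higher. Separately, linear growth of $f_0$ gives a bound in terms of $\|Y\|_\infty$, not a constant, so a Gronwall-type step is needed there.

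One way to close Step 2 is as follows.
\begin{enumerate}
\item Write the memory term as $(S_{t-s}-\Id)\Psi_s$ with $\Psi_s=\int_0^s S_{s-r}f(r,Y_r)\,d\mathbf{X}_r$.
\item Split $\Psi_s=\sum_k S_{s-t_{k+1}}\int_{t_k}^{t_{k+1}}S_{t_{k+1}-r}f(r,Y_r)\,d\mathbf{X}_r$ over $O(T/h)$ windows of length at most $h$.
\item Apply Proposition \ref{prop-convolution-well-defined} to the semigroup restricted to $D(A)$, together with Theorem \ref{thm-Gubinelli} in $D(A)$, using $f\in C_b^{2\beta,3}([0,T]\times W,L(V,D(A^2)))$. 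This bounds the $D(A)$-norm of each window by $C(h^{\alpha}+h^{2\alpha})+Ch^{3\alpha}(1+\|Y\|_{\alpha;h}^2+\|R^Y\|_{2\alpha;h})$, with $C$ depending only on the data.
\item Multiply by $|t-s|^{1-2\alpha}\le h^{1-2\alpha}$ and sum over windows. The memory then adds to $\|R^Y\|_{2\alpha;h}$ an additive $O(h^{-\alpha})$, plus the unknown window norms with a factor of order $h^{\alpha}$.
\item The additive $O(h^{-\alpha})$ is harmless after the factor $h^{\alpha}$ in $\|Y\|_{\alpha;h}\le\|f\|_\infty\|X\|_\alpha+h^{\alpha}\|R^Y\|_{2\alpha;h}$, so the quadratic bootstrap closes.
\item The same window decomposition, with $|S_s\xi|_{D(A)}\le Me^{\omega T}|\xi|_{D(A)}$ and the linear growth of $f_0$ in $D(A)$, then bounds $\sup_s|Y_s|_{D(A)}$, which is what Step 3 needs.
\end{enumerate}
Without an argument of this kind, the a priori bound on which your concatenation rests is not established.
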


Here $D(A^3)$ denotes the \emph{third order domain}
\begin{align*}
D(A^3) := \{ y \in D(A^2) : A^2 y \in D(A) \},
\end{align*}
endowed with the graph norm
\begin{align*}
|y|_{D(A^3)} := |y| + |Ay| + |A^2 y| + |A^3 y| \quad \text{for all $y \in D(A^3)$.}
\end{align*}
Moreover, we obtain the following result concerning the It\^{o}-Lyons map.

\begin{proposition}\label{prop-Ito-Lyons-PDE}
Let $\beta \in (\frac{1}{3},\frac{1}{2}]$ be an index, and let $f_0 \in \Lip([0,T] \times W,D(A))$ and $f \in C_b^{2\beta,3}([0,T] \times W, L(V,D(A^2)))$ be mappings such that \eqref{f0-restriction} and \eqref{f-restriction} are fulfilled. Then the It\^{o}-Lyons map
\begin{align*}
D(A^2) \times \big( \scrc^{\beta}([0,T],V), d_{\beta} \big) \to \calc^{\beta}([0,T],W) \times \calc^{\beta}([0,T],L(V,W))
\end{align*}
induced by Theorem \ref{thm-RPDE-main} is locally Lipschitz continuous.
\end{proposition}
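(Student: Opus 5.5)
The plan is to proceed as in the proof of Proposition \ref{prop-Ito-Lyons}, which rests on \cite[Thm. 8.5]{Friz-Hairer}. Here the Gubinelli integral is replaced by the rough convolution of Proposition \ref{prop-conv-rough-semigroup}, and the constants must now involve $M$ and $\omega$.

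Fix $(\xi,\bfx)$ and $(\tilde\xi,\tilde\bfx)$ with $|\xi|_{D(A^2)},|\tilde\xi|_{D(A^2)}\le R$ and $\interleave\bfx\interleave_\beta,\interleave\tilde\bfx\interleave_\beta\le R$. Let $(Y,Y')$ and $(\tilde Y,\tilde Y')$ be the mild solutions from Theorem \ref{thm-RPDE-main}, controlled by $X$ and $\tilde X$ respectively. We again work with some $\alpha<\beta$ satisfying $\beta\le\frac32\alpha$, and measure the distance of the solutions by
\begin{align*}
d_{X,\tilde X,2\alpha}\big((Y,Y'),(\tilde Y,\tilde Y')\big) := \|Y'-\tilde Y'\|_\alpha + \|R^Y-R^{\tilde Y}\|_{2\alpha}.
\end{align*}

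The first step is an a priori bound. Following the proof of Theorem \ref{thm-RPDE-main}, and using \eqref{f0-restriction}, \eqref{f-restriction} and $\xi\in D(A^2)$, one shows that the solutions actually take values in $D(A)$ with bounds on $[0,T]$ depending only on $R$. This is the mild analogue of Proposition \ref{prop-estimate}. The higher-domain assumptions are exactly what allow the semigroup estimates (Proposition \ref{prop-orbit-map}, Proposition \ref{prop-est-quad}) to be used one level up, so the step-size $t$ in the concatenation argument can be chosen uniformly on the ball of radius $R$.

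The second step is a local stability estimate on a short interval $[0,t]$. Decompose $\Phi_t(Y)-\tilde\Phi_t(\tilde Y)$ into three parts.
\begin{itemize}
\item[(i)] The orbit term $u_\xi-u_{\tilde\xi}$ is controlled by $|\xi-\tilde\xi|_{D(A)}$ via the orbit map operator bound.
\item[(ii)] The drift convolution is controlled by $\|f_0(Y)-f_0(\tilde Y)\|_\infty$ via Proposition \ref{prop-reg-conv-Hoelder-semigroup}, and this is Lipschitz in $\|Y-\tilde Y\|_\infty$.
\item[(iii)] The rough convolution term needs a two-rough-path version of Proposition \ref{prop-conv-rough-semigroup}.
\end{itemize}
For (iii) one should apply the sewing lemma to $\Xi-\tilde\Xi$ with $\Xi_{u,v}=S_{t-u}(Y_uX_{u,v}+Y_u'\bbx_{u,v})$. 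The identity of Exercise \ref{exercise-sewing-lemma} for $\delta\Xi-\delta\tilde\Xi$ then gives bounds in terms of $d_{X,\tilde X,2\alpha}$ plus $\varrho_\alpha(\bfx,\tilde\bfx)$, together with the extra semigroup-difference terms handled by \eqref{est-quad}.

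Combine this with the analogue of Proposition \ref{prop-diff-f} for the composition $f(Y)$ versus $f(\tilde Y)$, controlled by different paths $X$ and $\tilde X$. For $t$ small (uniformly in $R$) this yields
\begin{align*}
d_{X,\tilde X,2\alpha;[0,t]} \le C_R\big(|\xi-\tilde\xi|_{D(A^2)} + d_\beta(\bfx,\tilde\bfx)\big) + \tfrac12\, d_{X,\tilde X,2\alpha;[0,t]},
\end{align*}
and the last term is absorbed into the left-hand side.

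The third step iterates this over the $n$ intervals $[(k-1)t,kt]$, using the flow property (Lemma \ref{lemma-flow-property}). The initial datum on each interval is $Y_{(k-1)t}$, whose $D(A^2)$-distance to $\tilde Y_{(k-1)t}$ must be controlled. To recover Hölder bounds, one then passes from $\alpha$ to $\beta$ as in Lemma \ref{lemma-spaces-alpha-beta}, and finally bounds $\|Y-\tilde Y\|_\beta$ and $\|Y'-\tilde Y'\|_\beta$ by $d_{X,\tilde X,2\beta}$, $|\xi-\tilde\xi|$ and $d_\beta(\bfx,\tilde\bfx)$.

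The main obstacle is the two-path rough convolution estimate in (iii), and in particular keeping the distance in a strong enough norm to restart on each subinterval. The loss of one domain level per step is what forces $D(A^3)$ in \eqref{f-restriction}. I would first try to prove the difference estimate for the mild solution in the $D(A)$-valued controlled space, invoking Lemma \ref{lemma-restricted-semigroup} so that the single-path argument of Proposition \ref{prop-conv-rough-semigroup} applies verbatim one level higher.
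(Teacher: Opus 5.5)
\paragraph{Verdict.} The paper states this proposition without proof; even its RDE counterpart, Proposition \ref{prop-Ito-Lyons}, is only referred to an analogue of \cite[Thm.~8.5]{Friz-Hairer}. So your architecture (a priori bounds, a short-time two-path estimate, then concatenation) is the natural one. Your third step, however, has a genuine gap: the concatenation does not close as you set it up.

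\paragraph{The gap.} On $[(k-1)t,kt]$ the orbit term $s \mapsto S_{s-(k-1)t}(Y_{(k-1)t}-\tilde{Y}_{(k-1)t})$ can only be bounded in $\scrd_X^{2\alpha}$ through a domain norm of the restart difference. Indeed, the orbit of a point of $W$ need not even be H\"older continuous. Your short-time estimate only controls $W$-level quantities, namely $\|Y'-\tilde{Y}'\|_{\alpha}$ in $L(V,W)$ and $\|R^Y-R^{\tilde{Y}}\|_{2\alpha}$ in $W$. Its output therefore cannot serve as input on the next subinterval.

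Your remedy, redoing everything for the restricted semigroup of Lemma \ref{lemma-restricted-semigroup}, only moves the problem one level up. A $D(A)$-level path estimate yields the restart difference in $D(A)$, whereas the orbit term at that level requires it in $D(A^2)$. Since $n=T/t$ depends on $R$, a loss of one domain level per subinterval cannot be paid for by finitely many levels. This is also not the role of $D(A^3)$: \eqref{f0-restriction}, \eqref{f-restriction} and $\xi \in D(A^2)$ are exactly the hypotheses of Theorem \ref{thm-RPDE-main-local} for the restricted semigroup on $D(A)$. In other words, they serve to produce $D(A)$-valued solutions that can be restarted.

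\paragraph{The missing idea.} At a fixed time, each term of \eqref{mild} lies one domain level higher than the path-level estimates show. So the $D(A)$-norm of $Y_\tau-\tilde{Y}_\tau$, with $\tau=(k-1)t$, can be read off from \eqref{mild} on $[0,\tau]$ using only $W$-level information.
\begin{enumerate}
\item Since $S_\tau$ commutes with $A$ on $D(A)$, we have $|S_\tau(\xi-\tilde{\xi})|_{D(A)} \leq M e^{\omega T} |\xi-\tilde{\xi}|_{D(A)}$.
\item Because $f_0$ is $D(A)$-valued and Lipschitz from $W$, the drift part has $D(A)$-norm at most $M e^{\omega T} T \|f_0\|_{\Lip} \|Y-\tilde{Y}\|_{\infty;[0,\tau]}$.
\item The rough convolution at time $\tau$ is the Gubinelli integral of $r \mapsto S_{\tau-r} f(r,Y_r)$, regarded as an $L(V,D(A))$-valued controlled path. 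By Proposition \ref{prop-convolution-well-defined} applied to the restricted semigroup, this only requires $(f(Y),f(Y)') \in \scrd_X^{2\alpha}([0,\tau],L(V,D(A^2)))$, which you already have for $W$-valued $Y$.
\end{enumerate}
A two-path version of \eqref{estimate-third-order} then bounds the $D(A)$-difference of the rough convolutions by three quantities: $|\xi-\tilde{\xi}|$, the $W$-level two-path distance of $(Y,Y')$ and $(\tilde{Y},\tilde{Y}')$ on $[0,\tau]$, and $\varrho_{\alpha}(\bfx,\tilde{\bfx})$. With this, the induction over subintervals closes at the $W$-level, with $|\xi-\tilde{\xi}|_{D(A)}$ and $d_\beta(\bfx,\tilde{\bfx})$ as the only inputs.

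\paragraph{A smaller point.} Lemma \ref{lemma-spaces-alpha-beta} is qualitative and written for RDEs: its proof uses $Y_{s,t}=\Gamma(Y)_{s,t}+\int_s^t f(r,Y_r)\,d\bfx_r$, which fails for mild solutions. It therefore gives no Lipschitz bound in $\calc^{\beta}$, and you still need a quantitative difference estimate at level $\beta$.
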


Now, we wish to apply Theorem \ref{thm-RPDE-main} in order to establish existence and uniqueness results for stochastic partial differential equations. For this purpose, we extend the results from Section \ref{sec-BB-rough-path} to the infinite dimensional setting in the upcoming sections.

\subsection{Infinite dimensional Wiener process as a rough path}\label{sec-Wiener-enhanced}

In this section we demonstrate how typical sample paths of an infinite dimensional Wiener process can be realized as rough paths. We refer, for example, to \cite{Da_Prato}, \cite{Atma-book} or \cite{Liu-Roeckner} for more details about infinite dimensional Wiener processes, and to \cite[Sec. 2.10.1]{Tappe} for more details about the material of this section.

Let $U$ be a separable Hilbert space, and let $X$ be an $U$-valued $Q$-Wiener process for some nuclear, self-adjoint, positive definite linear operator $Q \in L_1^{++}(U)$; see, e.g. \cite[Def. 4.2]{Da_Prato}. Then there exist an orthonormal basis $\{ e_k \}_{k \in \bbn}$ of $U$ and a sequence $(\lambda_k)_{k \in \bbn} \subset (0,\infty)$ with $\sum_{k=1}^{\infty} \lambda_k < \infty$ such that
\begin{align}\label{Q-diagonal}
Q e_k = \lambda_k e_k \quad \text{for all $k \in \bbn$.}
\end{align}
According to \cite[Prop. 4.3]{Da_Prato} the sequence $(\beta^k)_{k \in \bbn}$ defined as
\begin{align}\label{beta-j}
\beta^k := \frac{1}{\sqrt{\lambda_k}} \langle X,e_k \rangle_U, \quad k \in \bbn
\end{align}
is a sequence of independent real-valued standard Wiener processes, and according to \cite[Prop. 2.1.10]{Liu-Roeckner} the $Q$-Wiener process $X$ admits the series representation
\begin{align}\label{series-Wiener}
X_t = \sum_{k=1}^{\infty} \sqrt{\lambda_k} \beta_t^k e_k, \quad t \in [0,T].
\end{align}
In the spirit of this series representation, we define the second order process $\bbx$ as
\begin{align}\label{series-Wiener-second-order}
\bbx_{s,t} := \sum_{j,k=1}^{\infty} \sqrt{\lambda_j \lambda_k} \bigg( \int_s^t \beta_{s,r}^j d \beta_r^k \bigg) (e_j \otimes e_k), \quad s,t \in [0,T],
\end{align}
which is the infinite dimensional analogue of the It\^{o}-enhancement \eqref{Ito-enhancement}. Now, we define the \emph{It\^{o}-enhanced $Q$-Wiener process} $\mathbf{X} := (X,\bbx)$, and can generalize Proposition \ref{prop-BB-Ito-rough-path} as follows.

\begin{proposition}\cite[Prop. 2.22]{Tappe}\label{prop-Wiener-rough-path}
For each $\alpha \in (\frac{1}{3},\frac{1}{2})$ we have $\bbp$-almost surely
$$\mathbf{X} \in \scrc^{\alpha}([0,T],U).$$
\end{proposition}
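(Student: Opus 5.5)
The plan mirrors the proof of Proposition \ref{prop-BB-Ito-rough-path}: verify Chen's relation, establish moment bounds, and then apply the Kolmogorov--Chentsov theorem (Theorem \ref{thm-KC}) with $V = U$ and $\beta = \frac{1}{2}$. Here $U \otimes U$ is realised as the Hilbert--Schmidt tensor product, in which $\{ e_j \otimes e_k \}_{j,k}$ is an orthonormal basis. This choice has the norm property $|u \otimes v| \leq |u|\,|v|$ from Theorem \ref{thm-tensor-product}.

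\emph{Step 1 (well-definedness and Chen's relation).} First I check that the series \eqref{series-Wiener-second-order} converges in $L^2(\Omega; U \otimes U)$. By It\^{o}'s isometry and independence, the coefficients $\int_s^t \beta^j_{s,r}\, d\beta^k_r$ are orthogonal for distinct pairs $(j,k)$ in $L^2(\Omega)$. Their second moments are $(t-s)^2/2$ for $j \neq k$ and bounded by $(t-s)^2$ for $j = k$. Hence
\begin{align*}
\bbe\big[|\bbx_{s,t}|^2\big] \leq \sum_{j,k} \lambda_j \lambda_k (t-s)^2 = (\tr Q)^2 (t-s)^2 < \infty .
\end{align*}
Chen's relation holds for each coefficient, exactly as in Exercise \ref{exercise-BB-Ito} via \eqref{Chen-rule-3} and Exercise \ref{exercise-Chen-sufficient}:
\begin{align*}
\int_s^t \beta^j_{s,r}\, d\beta^k_r = \int_0^t \beta^j_{r}\, d\beta^k_r - \int_0^s \beta^j_r\, d\beta^k_r - \beta^j_s\, \beta^k_{s,t}.
\end{align*}
Summing this identity against $\sqrt{\lambda_j \lambda_k}\, e_j \otimes e_k$ and using \eqref{series-Wiener} gives \eqref{Chen-rule-3} for $(X,\bbx)$ almost surely, for each fixed triple of times. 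Hence Chen's relation holds for a suitable version, which I fix by defining $\bbx_{s,t}$ through \eqref{Chen-rule-3} from $\bbx_{0,\cdot}$.

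\emph{Step 2 (moment bounds).} For $X$ this is standard: $X_{s,t}$ is Gaussian with covariance $(t-s)Q$, so Gaussian hypercontractivity (or Fernique) gives
\begin{align*}
\bbe\big[|X_{s,t}|^q\big] \leq C_q (t-s)^{q/2}.
\end{align*}
For $\bbx_{s,t}$ I write it as a Hilbert-space valued It\^{o} integral $\int_s^t X_{s,r} \otimes dX_r$ against the $Q$-Wiener process. I then apply the Hilbert-space BDG inequality \cite[Thm. 1.1]{Marinelli-BDG}, as in Lemma \ref{lemma-Burkholder}. The Hilbert--Schmidt norm of the integrand $h \mapsto X_{s,r} \otimes Q^{1/2} h$ is $|X_{s,r}|\,(\tr Q)^{1/2}$. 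So the same H\"older argument as in Lemma \ref{lemma-Q-BM-2} yields
\begin{align*}
\bbe\big[|\bbx_{s,t}|^{q/2}\big] \leq C_q (t-s)^{q/2}.
\end{align*}
An alternative route avoids stochastic integration in $U \otimes U$. Since $\bbx_{s,t}$ lies in the second Wiener chaos, hypercontractivity reduces the $L^{q/2}$ bound to the $L^2$ bound already obtained in Step 1.

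\emph{Step 3 and main obstacle.} Given $\alpha < \frac{1}{2}$, I choose $q$ large enough that $\frac{1}{2} - \frac{1}{q} > \alpha$ and conclude by Theorem \ref{thm-KC}. The delicate point is Step 2 for $\bbx$. One must check that the series definition coincides with the Hilbert-space It\^{o} integral, or else justify the chaos and hypercontractivity argument in infinite dimensions; the nuclearity $\sum_k \lambda_k < \infty$ is exactly what makes this work. I would try the Wiener chaos route first, since it needs only the $L^2$ computation from Step 1.
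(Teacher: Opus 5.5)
Your proposal is correct and follows the route the paper indicates: the infinite-dimensional version of the proof of Proposition \ref{prop-BB-Ito-rough-path}, that is, Chen's relation via \eqref{Chen-rule-3}, moment bounds from the Hilbert-space BDG inequality as in Lemmas \ref{lemma-Burkholder}--\ref{lemma-Q-BM-2}, and then Theorem \ref{thm-KC} with $\beta = \frac{1}{2}$. You should say explicitly that the Hilbert--Schmidt tensor product satisfies property (1) of Theorem \ref{thm-tensor-product} but not the universal property (2), since the bounded form $(u,v) \mapsto \langle u,v \rangle_U$ does not extend to it; hence your estimates hold in the Hilbert space norm of $U \otimes U$ and do not by themselves give the bound in the projective norm.
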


\subsection{Coincidence of the two integrals}

In this section we recall a result showing that the rough It\^{o} integral and the stochastic It\^{o} integral also coincide in infinite dimension. We refer to \cite[Sec. 2.10.2]{Tappe} for more details about the material of this section.

Let us briefly recall some preliminaries about the infinite dimensional It\^{o} integral. As in the previous section, let $\mathbf{X} = (X,\bbx)$ be an It\^{o}-enhanced $Q$-Wiener process with values in a separable Hilbert space $U$. The space $U_0 := Q^{1/2}(U)$, endowed with the inner product
\begin{align*}
\langle u,v \rangle_{U_0} := \langle Q^{-1/2}u, Q^{-1/2}v \rangle_U, \quad u,v \in U_0
\end{align*}
is another separable Hilbert space. Note that
\begin{align*}
Q^{1/2} : (U,\| \cdot \|_U) \to (U_0,\| \cdot \|_{U_0})
\end{align*}
is an isometric isomorphism, and that $\{ \sqrt{\lambda_k} e_k \}_{k \in \bbn}$ is an orthonormal basis of $U_0$, where $\{ e_k \}_{k \in \bbn}$ and $(\lambda_k)_{k \in \bbn}$ are the sequences appearing in \eqref{Q-diagonal}.

Now, let $H$ be another separable Hilbert space. We denote by $L_2(U_0,H)$ the space of all Hilbert-Schmidt operators from $U_0$ into $H$. Furthermore, we define
\begin{align*}
L(U,H)_0 := \{ \Phi|_{U_0} : \Phi \in L(U,H) \}.
\end{align*}
The next result shows that $L(U,H)$ is continuously embedded in $L_2(U_0,H)$.

\begin{lemma}\label{lemma-HS-embedded}
We have $L(U,H)_0 \subset L_2(U_0,H)$ and
\begin{align*}
| \Phi|_{U_0} |_{L_2(U_0,H)} \leq \sqrt{\tr(Q)} | \Phi |_{L(U,H)} \quad \text{for all $\Phi \in L(U,H)$.}
\end{align*}
\end{lemma}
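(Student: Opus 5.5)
Fix $\Phi \in L(U,H)$. The plan is to compute the Hilbert--Schmidt norm of the restriction $\Phi|_{U_0}$ in the orthonormal basis $\{ \sqrt{\lambda_k} e_k \}_{k \in \bbn}$ of $U_0$ recalled above. This shows at once that the restriction is Hilbert--Schmidt and gives the stated bound.

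First, $\Phi|_{U_0}$ is linear on $U_0$. It is also bounded with respect to $|\cdot|_{U_0}$: since $|Q^{1/2}|_{L(U)}^2 = |Q| \leq \tr(Q)$, for $u \in U_0$ we have $|u|_U = |Q^{1/2} Q^{-1/2} u|_U \leq |Q^{1/2}| \, |u|_{U_0}$. Strictly speaking this boundedness is already implied by the finiteness of the Hilbert--Schmidt norm, but it makes clear that $\Phi|_{U_0}$ is an element of $L(U_0,H)$ to begin with.

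Next, by the definition of the Hilbert--Schmidt norm with respect to this orthonormal basis,
\begin{align*}
| \Phi|_{U_0} |_{L_2(U_0,H)}^2 = \sum_{k=1}^{\infty} | \Phi (\sqrt{\lambda_k} e_k) |_H^2 = \sum_{k=1}^{\infty} \lambda_k | \Phi e_k |_H^2 \leq | \Phi |_{L(U,H)}^2 \sum_{k=1}^{\infty} \lambda_k = \tr(Q) \, | \Phi |_{L(U,H)}^2,
\end{align*}
where we used $|e_k|_U = 1$ and $\tr(Q) = \sum_{k} \lambda_k < \infty$ from \eqref{Q-diagonal}. Taking square roots yields the claim, and in particular $\Phi|_{U_0} \in L_2(U_0,H)$.

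The only point requiring care is the justification that $\{ \sqrt{\lambda_k} e_k \}_{k \in \bbn}$ is indeed an orthonormal basis of $U_0$. This was already noted in the text: it is the image of the orthonormal basis $\{ e_k \}$ under the isometric isomorphism $Q^{1/2}$, since $Q^{1/2} e_k = \sqrt{\lambda_k} e_k$. A second point is that the Hilbert--Schmidt norm is independent of the chosen orthonormal basis, which is standard. Apart from these facts, the argument is a one-line computation.
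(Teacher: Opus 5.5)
Your proposal is correct and follows essentially the same approach as the paper: both compute the Hilbert--Schmidt norm of $\Phi|_{U_0}$ in the orthonormal basis $\{ \sqrt{\lambda_k} e_k \}_{k \in \bbn}$ of $U_0$ and bound $\sum_{k} \lambda_k |\Phi(e_k)|^2$ by $\tr(Q) |\Phi|_{L(U,H)}^2$. Your extra remarks on boundedness of the restriction and on basis independence of the Hilbert--Schmidt norm are valid but not needed beyond what the paper uses.
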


\begin{proof}
Recalling that $\{ \sqrt{\lambda_k} e_k \}_{k \in \bbn}$ is an orthonormal basis of $U_0$, we have
\begin{align*}
| \Phi |_{L_2(U_0,H)}^2 = \sum_{k=1}^{\infty} | \Phi(\sqrt{\lambda_k} e_k) |^2 = \sum_{k=1}^{\infty} \lambda_k | \Phi(e_k) |^2 \leq \tr(Q) | \Phi |_{L(U,H)}^2,
\end{align*}
completing the proof.
\end{proof}

The $H$-valued \emph{It\^{o} integral} $\int_0^T \Phi_s d X_s$ can be defined for every $L_2(U_0,H)$-valued predictable process $\Phi$ such that
\begin{align}\label{integrable-Ito}
\bbe \bigg[ \int_0^T |\Phi_s|_{L_2(U_0,H)}^2 ds \bigg] < \infty,
\end{align}
or, more generally, for every $L_2(U_0,H)$-valued predictable process $\Phi$ satisfying
\begin{align*}
\bbp \bigg( \int_0^T |\Phi_s|_{L_2(U_0,H)}^2 ds < \infty \bigg) = 1.
\end{align*}
We refer to \cite{Da_Prato}, \cite{Atma-book} or \cite{Liu-Roeckner} for further details.

Now, let $\alpha \in (\frac{1}{3},\frac{1}{2})$ be an arbitrary index. According to Proposition \ref{prop-Wiener-rough-path} there is a $\bbp$-nullset $N_1$ such that $\mathbf{X} \in \scrc^{\alpha}([0,T],U)$ on $N_1^c$. Keeping in mind Lemma \ref{lemma-HS-embedded}, we can generalize Proposition \ref{prop-integrals-Ito-coincide} as follows.

\begin{proposition}\cite[Prop. 2.23]{Tappe}\label{prop-integrals-coincide}
Let $Y$ be a continuous $L(U,H)$-valued process, and let $Y'$ be a continuous $L(U,L(U,H))$-valued processes such that $(Y,Y') \in \scrd_X^{2 \alpha}([0,T],L(U,H))$ on $N_1^c$. Then the following statements are true:
\begin{enumerate}
\item The $H$-valued rough integral
\begin{align*}
\int_0^T Y_s \, d \mathbf{X}_s = \lim_{|\Pi| \to 0} \sum_{[u,v] \in \Pi} ( Y_u X_{u,v} + Y_u' \bbx_{u,v} )
\end{align*}
exists on $N_1^c$.

\item If $Y$ and $Y'$ are adapted and bounded, then there is a $\bbp$-nullset $N$ with $N_1 \subset N$ such that
\begin{align*}
\int_0^T Y_s \, d \mathbf{X}_s = \int_0^T Y_s|_{U_0} \, dX_s \quad \text{on $N^c$.}
\end{align*}
\end{enumerate}
\end{proposition}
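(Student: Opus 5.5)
Part (1) follows from the deterministic theory applied pathwise. On $N_1^c$ the pair $\mathbf{X}(\omega)$ lies in $\scrc^{\alpha}([0,T],U)$, and $(Y(\omega),Y'(\omega))\in\scrd_X^{2\alpha}([0,T],L(U,H))$. Theorem \ref{thm-Gubinelli}, applied with $V=U$ and $W=H$, then gives existence of the limit of the compensated Riemann sums $\sum_{[u,v]\in\Pi}(Y_uX_{u,v}+Y_u'\bbx_{u,v})$. Nothing stochastic is needed for this step.

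For part (2) I would identify the limit of the rough Riemann sums along a fixed sequence of partitions $\Pi_n$ with $|\Pi_n|\to 0$ (say dyadic ones), splitting each sum into two pieces.
\begin{enumerate}
\item \emph{First-order sums.} Since $Y$ is adapted, continuous and bounded, Lemma \ref{lemma-HS-embedded} shows that $Y|_{U_0}$ is a bounded predictable $L_2(U_0,H)$-valued process, so \eqref{integrable-Ito} holds. The sums $\sum Y_uX_{u,v}$ are the stochastic integrals of the simple processes $\sum Y_u|_{U_0}\mathbbm{1}_{(u,v]}$. By the It\^{o} isometry, $L^2$-continuity of paths and dominated convergence, they converge in $L^2(\Omega;H)$ to $\int_0^T Y_s|_{U_0}\,dX_s$.
\item \emph{Second-order sums.} I would show that $\sum_{[u,v]\in\Pi_n}Y_u'\bbx_{u,v}\to 0$ in $L^2(\Omega;H)$. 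The increments $\bbx_{u,v}$, defined through \eqref{series-Wiener-second-order}, are $\calf_v$-measurable, independent of $\calf_u$, and have conditional mean zero given $\calf_u$, because the iterated It\^{o} integrals $\int_u^v\beta^j_{u,r}\,d\beta^k_r$ are martingale increments. Hence the terms $Y_u'\bbx_{u,v}$ form a martingale difference sequence, and
\[
\bbe\Big|\sum Y_u'\bbx_{u,v}\Big|^2=\sum\bbe|Y_u'\bbx_{u,v}|^2\le \|Y'\|_\infty^2\sum\bbe|\bbx_{u,v}|^2 .
\]
The Hilbert–Schmidt computation $\bbe|\bbx_{u,v}|^2_{U\otimes U}=\sum_{j,k}\lambda_j\lambda_k\,\bbe|\int_u^v\beta^j_{u,r}d\beta^k_r|^2\le \tr(Q)^2|v-u|^2$ bounds the right-hand side by $C\,|\Pi_n|\to 0$.
\end{enumerate}

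The $L^2$ limit of the full rough Riemann sums therefore equals the It\^{o} integral. Along a subsequence this convergence holds almost surely. By part (1) the same sums converge pathwise on $N_1^c$ to the rough integral, so the two limits agree outside a nullset $N\supset N_1$.

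The main obstacle is the norm on $U\otimes U$. The object $Y_u'\bbx_{u,v}$ uses the identification $L(U,L(U,H))\cong L(U\otimes U,H)$ of Proposition \ref{prop-identifications} with the Banach tensor product of Theorem \ref{thm-tensor-product}. The second-moment bound above, however, uses Hilbert–Schmidt orthogonality. I would handle this by estimating $|Y_u'\bbx_{u,v}|_H$ directly via the series $\sum_{j,k}\sqrt{\lambda_j\lambda_k}\,(\int\beta^j d\beta^k)\,Y_u'(e_j)e_k$. Orthogonality of distinct $(j,k)$ iterated integrals in $L^2$ then gives the bound $\|Y'\|_\infty^2\,\tr(Q)^2|v-u|^2$ without reference to the tensor norm, and I would also need to check that this series converges in the relevant sense.
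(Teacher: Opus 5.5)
Your proposal is correct. The paper gives no proof of its own; it defers to \cite[Prop. 2.23]{Tappe}, and for the finite-dimensional analogue (Proposition \ref{prop-integrals-Ito-coincide}) it cites \cite[Prop. 5.1]{Friz-Hairer}. Your argument is the infinite-dimensional version of that Friz--Hairer proof: pathwise existence via Theorem \ref{thm-Gubinelli}, convergence of the first-order Riemann sums to the It\^o integral, and vanishing of $\sum Y_u'\bbx_{u,v}$ in $L^2$ by martingale orthogonality. Your termwise estimate through the series \eqref{series-Wiener-second-order}, using conditional orthogonality of the iterated integrals, is the right way to avoid the mismatch between the Hilbert--Schmidt bound and the norm on $U\otimes U$. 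It works as long as $\bbx_{u,v}$ is understood as the limit of that series in $U\otimes U$, so that the continuous operator $Y_u'$ can be applied term by term.
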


\subsection{Stochastic partial differential equations}

Now, we are ready for our study of stochastic partial differential equations (SPDEs). We refer to \cite[Sec. 2.10.3]{Tappe} for more details about the material of this section.

Let $\mathbf{X} = (X,\bbx)$ be an It\^{o}-enhanced $Q$-Wiener process with values in a separable Hilbert space $U$, as introduced in Section \ref{sec-Wiener-enhanced}. Furthermore, let $W$ be a Banach space, and let $A$ be the generator of a $C_0$-semigroup $(S_t)_{t \geq 0}$ on $W$. Consider the $W$-valued random RPDE
\begin{align}\label{random-SPDE}
\left\{
\begin{array}{rcl}
dY_t & = & (A Y_t + f_0(t,Y_t)) dt + f(t,Y_t) d \mathbf{X}_t
\\ Y_0 & = & \xi,
\end{array}
\right.
\end{align}
and, in case $W$ is a separable Hilbert space $H$, the $H$-valued SPDE
\begin{align}\label{SPDE-Wiener}
\left\{
\begin{array}{rcl}
dY_t & = & (A Y_t + f_0(t,Y_t)) dt + f(t,Y_t)|_{U_0} d X_t
\\ Y_0 & = & \xi
\end{array}
\right.
\end{align}
with mappings $f_0 : [0,T] \times W \to W$ and $f : [0,T] \times W \to L(U,W)$.

\begin{definition}
Let $\xi \in W$ be arbitrary. An $W$-valued continuous, adapted process $Y = (Y_t)_{t \in [0,T_0]}$ for some $T_0 \in (0,T]$ is called a \emph{local mild solution} to the SPDE \eqref{SPDE-Wiener} with $Y_0 = \xi$ if
\begin{align*}
Y_t = S_t \xi + \int_0^t S_{t-s} f_0(s,Y_s) ds + \int_0^t S_{t-s} f(s,Y_s) dX_s, \quad t \in [0,T_0].
\end{align*}
If we can choose $T_0 = T$, then we also call $Y$ a \emph{(global) solution} to the SPDE \eqref{SPDE-Wiener} with $Y_0 = \xi$.
\end{definition}

Let $\beta \in (\frac{1}{3},\frac{1}{2})$ be an index. By Proposition \ref{prop-Wiener-rough-path} there is a $\bbp$-nullset $N$ such that $\mathbf{X} \in \scrc^{\beta}([0,T],U)$ on $N^c$. In the upcoming result, local and global solutions to the random RPDE \eqref{random-SPDE} are defined according to Definition \ref{def-mild-solution} for each $\omega \in N^c$. Its proof is similar to that of Theorem \ref{thm-Ito-SDEs}. The essential ingredients are Theorem \ref{thm-RPDE-main}, Proposition \ref{prop-Ito-Lyons-PDE} and Proposition \ref{prop-integrals-coincide}.

\begin{theorem}\cite[Thm. 2.4]{Tappe}
Let $f_0 \in \Lip([0,T] \times W,D(A))$ and $$f \in C_b^{2\beta,3}([0,T] \times H, L(U,D(A^2)))$$ be mappings such that
\begin{align*}
&f_0|_{[0,T] \times D(A)} \in \Lip([0,T] \times D(A),D(A^2)),
\\ &f|_{[0,T] \times D(A)} \in C_b^{2\beta,3}([0,T] \times D(A), L(U,D(A^3))).
\end{align*}
Then for every $\xi \in D(A^2)$ the following statements are true:
\begin{enumerate}
\item There exists a unique mild solution $(Y,Y') \in \scrd_X^{2 \beta}([0,T],W)$ to the random RPDE \eqref{random-SPDE} with $Y_0 = \xi$ on $N^c$.

\item If $W$ is a separable Hilbert space $H$, then there is a $\bbp$-nullset $N_1$ with $N \subset N_1$ such that the associated stochastic process $Y$ restricted to $N_1^c$ is also a mild solution to the It\^{o} SPDE \eqref{SPDE-Wiener}.
\end{enumerate}
\end{theorem}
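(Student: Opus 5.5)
The plan mirrors the proof of Theorem \ref{thm-Ito-SDEs}, with Theorem \ref{thm-RPDE-main}, Proposition \ref{prop-Ito-Lyons-PDE} and Proposition \ref{prop-integrals-coincide} replacing their finite dimensional counterparts.

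\emph{Statement (1).} This is pathwise and deterministic. Fix $\omega \in N^c$, so that $\mathbf{X}(\omega) \in \scrc^{\beta}([0,T],U)$ by Proposition \ref{prop-Wiener-rough-path}. The hypotheses on $f_0$ and $f$ are exactly those of Theorem \ref{thm-RPDE-main} with $V = U$. That theorem therefore gives a unique mild solution $(Y(\omega),Y(\omega)') \in \scrd_X^{2\beta}([0,T],W)$ with $Y_0 = \xi$ for every $\xi \in D(A^2)$.

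\emph{Adaptedness.} For statement (2), set $W = H$ and write $(Y,Y') = \Phi(\xi,\mathbf{X})$ on $N^c$, where $\Phi$ is the It\^{o}-Lyons map. By Proposition \ref{prop-Ito-Lyons-PDE}, $\Phi$ is (locally Lipschitz) continuous. I would apply this on each interval $[0,t]$: the restriction of the solution to $[0,t]$ is the solution on $[0,t]$, by uniqueness together with the concatenation argument of Lemma \ref{lemma-flow-property}. Hence $(Y_t,Y_t')$ is a continuous, and so measurable, function of $\mathbf{X}|_{[0,t]}$.

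It remains to check that $\mathbf{X}|_{[0,t]}$ is $\calf_t$-measurable as a random variable in $(\scrc^{\beta}([0,t],U),d_\beta)$. The increments $X_{r,s}$ and the iterated It\^{o} integrals in \eqref{series-Wiener-second-order} with $r,s \leq t$ are all $\calf_t$-measurable. The Borel structure on the rough path space is generated by point evaluations on a countable dense set of times, since the paths are continuous. It follows that $Y$ and $Y'$ are adapted.

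\emph{Identification with the stochastic integral.} Fix $t$ and set $\Upsilon_s = S_{t-s} f(s,Y_s)$ and $\Upsilon_s' = S_{t-s}(f(Y)')_s$. By Proposition \ref{prop-comp-f} and Proposition \ref{prop-convolution-well-defined}, $(\Upsilon,\Upsilon') \in \scrd_X^{2\beta}([0,t],L(U,H))$. Both processes are adapted, because $S_{t-s}$ is deterministic and $f(Y)' = D_yf(Y)f(Y)$. They are bounded, because $f$ has values in $L(U,D(A^2))$ with bounded derivatives and $|S_{t-s}| \le Me^{\omega T}$.

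Proposition \ref{prop-integrals-coincide} then gives, almost surely,
\[
\int_0^t \Upsilon_s \, d\mathbf{X}_s = \int_0^t S_{t-s} f(s,Y_s)|_{U_0} \, dX_s .
\]
Lemma \ref{lemma-HS-embedded} guarantees that the integrand belongs to $L_2(U_0,H)$ with the square-integrability \eqref{integrable-Ito}. The nullset depends on $t$, so I would first take rational $t$ and collect the countably many nullsets into one nullset $N_1 \supset N$. Substituting into \eqref{mild} shows that the mild SPDE equation holds for all rational $t$ outside $N_1$.

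\emph{Main obstacle.} The hardest step is extending the identity from rational to all $t \in [0,T]$. The left-hand side $Y$ is continuous. The stochastic convolution is a priori only known to have a version that agrees almost surely at each fixed $t$. I would handle this in one of two ways. The first is to \emph{define} the mild-solution property via the process $t \mapsto Y_t - S_t\xi - \int_0^t S_{t-s}f_0\,ds$, which is continuous, as the continuous version of the stochastic convolution. The alternative is to note that, because $f(Y)$ takes values in $L(U,D(A))$, a factorization-free argument is available: Proposition \ref{prop-orbit-map} gives continuity of $t \mapsto \int_0^t S_{t-s} f(s,Y_s)\,dX_s$ directly. Either way the identity extends by continuity, which completes statement (2).
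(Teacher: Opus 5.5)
Your proposal follows the paper's route. The paper proves this theorem by rerunning the proof of Theorem~\ref{thm-Ito-SDEs} with three ingredients: Theorem~\ref{thm-RPDE-main} for pathwise existence and uniqueness, Proposition~\ref{prop-Ito-Lyons-PDE} for adaptedness via continuity of the It\^{o}--Lyons map, and Proposition~\ref{prop-integrals-coincide} for identifying the two integrals. Your use of Proposition~\ref{prop-convolution-well-defined} to make $s\mapsto S_{t-s}f(s,Y_s)$ a bounded, adapted controlled path is exactly what the last ingredient requires.

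\textbf{The measurability claim is false as stated.} The Borel $\sigma$-algebra of $(\scrc^{\beta}([0,t],U),d_{\beta})$ is not generated by point evaluations, because this space is not separable.
\begin{itemize}
\item Take $u$ a fixed unit vector, $X=|\cdot-s|^{\beta}u$ for $s\in(0,t)$, and $\bbx_{r,q}=\frac12 X_{r,q}\otimes X_{r,q}$. These rough paths are pairwise at distance at least $1$.
\item Every subset of this uncountable discrete family is therefore closed.
\item A countably generated $\sigma$-algebra has at most continuum many elements. So evaluations generate a $\sigma$-algebra that contains all balls but not all open sets.
\end{itemize}
Consequently, continuity of the It\^{o}--Lyons map does not by itself give measurability.

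\textbf{Repair.} By Proposition~\ref{prop-Wiener-rough-path}, almost surely $\mathbf{X}\in\scrc^{\beta'}$ for some $\beta'\in(\beta,\frac12)$. Such $\mathbf{X}$ lie in a separable subset of $P^{\beta}$. To see this, use the map $(X,\bbx)\mapsto\bigl(X_0,\,X_{r,q}/|q-r|^{\beta},\,\bbx_{r,q}/|q-r|^{2\beta}\bigr)$, extended by $0$ on the diagonal. It embeds these pairs isometrically into a product of spaces of continuous functions on $[0,t]^2$. On that separable subset:
\begin{itemize}
\item every relatively open set is a countable union of balls;
\item each ball is determined by countably many increments.
\end{itemize}
Hence $\Phi(\xi,\mathbf{X}|_{[0,t]})$ is $\calf_t$-measurable. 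The paper passes over this point silently.

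\textbf{Two minor remarks.}
\begin{itemize}
\item Lemma~\ref{lemma-flow-property} is not needed to restrict to $[0,t]$. The right-hand side of \eqref{mild} at time $s$ depends only on $Y|_{[0,s]}$. Also, that lemma concerns RDEs, not mild solutions.
\item Your ``main obstacle'' dissolves. The rough convolution is continuous in $t$ and almost surely equals the stochastic convolution at each fixed $t$, so it is itself a continuous version. Your first option therefore suffices. Your second option, continuity ``directly'' from Proposition~\ref{prop-orbit-map}, does not follow as stated and would still need a Kolmogorov or stochastic Fubini argument.
\end{itemize}
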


\subsection{Stochastic partial differential equations driven by fractional Brownian motion}

In the last section we briefly outline how our results can also be used for the study of SPDEs driven by fractional Brownian motion. We refer to \cite[Sec. 2.11]{Tappe} for more details about the material of this section.

Let us recall the definition of an infinite dimensional fractional Brownian motion. As in the previous sections, let $U$ be a separable Hilbert space. For an $U$-valued Gaussian process $X$ we introduce the \emph{covariance function}
\begin{align*}
R : \bbr_+^2 \to L_2(U), \quad (s,t) \mapsto \bbe[X_s \otimes X_t].
\end{align*}
Let $Q \in L_1^{++}(U)$ be a nuclear, self-adjoint, positive definite linear operator.

\begin{definition}
A centered Gaussian process $X$ is called a \emph{$Q$-fractional Brownian motion} with Hurst index $H \in (\frac{1}{3},\frac{1}{2}]$ if its covariance function is given by
\begin{align*}
R(s,t) =  \frac{1}{2} \Big( s^{2H} + t^{2H} - |t-s|^{2H} \Big) Q, \quad s,t \in \bbr_+.
\end{align*}
\end{definition}

In what follows, let $X$ be a $Q$-fractional Brownian motion with Hurst index $H \in (\frac{1}{3},\frac{1}{2}]$. There exist an orthonormal basis $\{ e_k \}_{k \in \bbn}$ of $U$ and a sequence $(\lambda_k)_{k \in \bbn} \subset (0,\infty)$ with $\sum_{k=1}^{\infty} \lambda_k < \infty$ such that
\begin{align*}
Q e_k = \lambda_k e_k \quad \text{for all $k \in \bbn$,}
\end{align*}
and the $Q$-fractional Brownian motion $X$ admits the series representation
\begin{align*}
X_t = \sum_{k=1}^{\infty} \sqrt{\lambda_k} \beta_t^k e_k, \quad t \in [0,T]
\end{align*}
with a sequence of independent real-valued fractional Brownian motions $(\beta^k)_{k \in \bbn}$ with Hurst index $H$; see, for example  \cite{Grecksch-Anh}, \cite{Duncan-Maslowski} or \cite{Grecksch}. Note the similarity to the series representation \eqref{series-Wiener} for $Q$-Wiener processes. The following result concerning the second order process of a fractional Brownian motion is essentially a consequence of \cite[Lemma 2.4]{Hesse-Neamtu-local}.

\begin{proposition}\cite[Prop. 2.24]{Tappe}\label{prop-frac-rough-path}
There exists a L\'{e}vy area $\bbx$ such that $\bbp$-almost surely
$$\mathbf{X} := (X,\bbx) \in \scrc_g^{\alpha}([0,T],U)$$ for each $\alpha \in (\frac{1}{3},H)$.
\end{proposition}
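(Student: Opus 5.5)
The plan is to build $\bbx$ coordinatewise from Lévy areas of the scalar fractional Brownian motions $\beta^k$, and then to verify the hypotheses of the Kolmogorov--Chentsov theorem for rough paths (Theorem \ref{thm-KC}). Concretely, for each pair $(j,k)$ I would let $\bbx^{jk}_{s,t}$ be the geometric (Stratonovich-type) iterated integral $\int_s^t \beta^j_{s,r}\,d\beta^k_r$. It is obtained as the $L^2$-limit of the iterated integrals of piecewise linear (dyadic) approximations of $(\beta^j,\beta^k)$. For $j=k$ it is simply $\frac12(\beta^k_{s,t})^2$. For $j\neq k$ the two processes are independent, so the Coutin--Qian type construction applies whenever $H>\frac14$, and in particular for $H>\frac13$. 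The Lévy area is then defined by
\begin{align*}
\bbx_{s,t} := \sum_{j,k=1}^\infty \sqrt{\lambda_j\lambda_k}\,\bbx^{jk}_{s,t}\,(e_j\otimes e_k),
\end{align*}
in analogy with \eqref{series-Wiener-second-order}. Chen's relation \eqref{Chen-relation} holds at the level of the smooth approximations, by Exercise \ref{exercise-FV-path}. It therefore passes to the limit coordinatewise and, by linearity, to the series.

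Next come the moment estimates. The scalar input is the bound from \cite[Lemma 2.4]{Hesse-Neamtu-local}, namely $\bbe[|\bbx^{jk}_{s,t}|^2]\le C|t-s|^{4H}$ with $C$ independent of $j,k$. Since $\bbx^{jk}_{s,t}$ lies in the second Wiener chaos, hypercontractivity upgrades this to $\bbe[|\bbx^{jk}_{s,t}|^{p}]\le C_p|t-s|^{2pH}$ for every $p$.

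For the tensor norm I would work in the Hilbert--Schmidt tensor product $U\otimes U\cong L_2(U)$, in which $\{e_j\otimes e_k\}$ is an orthonormal basis. Then the Gaussian-chaos Minkowski inequality in $L^{p}$ gives
\begin{align*}
\big\| |\bbx_{s,t}| \big\|_{L^p} \le \Big(\sum_{j,k}\lambda_j\lambda_k \|\bbx^{jk}_{s,t}\|_{L^p}^2\Big)^{1/2} \le C_p\,\tr(Q)\,|t-s|^{2H}.
\end{align*}
This also shows that the series converges in $L^p(\Omega;U\otimes U)$. In the same way $\| |X_{s,t}| \|_{L^p}\le C_p\sqrt{\tr(Q)}\,|t-s|^{H}$.

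Theorem \ref{thm-KC}, applied with $\beta=H$ and $q$ large, then yields a version of $\mathbf{X}$ in $\scrc^{\alpha}$ for every $\alpha<H-\frac1q$. Letting $q\to\infty$ along a sequence and intersecting the corresponding full-measure sets covers every $\alpha\in(\frac13,H)$.

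It remains to show the rough path is weakly geometric. Condition \eqref{chain-rule} holds for each smooth approximation, since those are finite variation paths satisfying integration by parts. It survives the $L^2$-limit, and since $\Sym$ is continuous it survives the series as well. Hence \eqref{chain-rule} holds almost surely for each fixed pair $(s,t)$, and by continuity of the modification it holds for all $s,t$ simultaneously. By Lemma \ref{lemma-geometric}, or directly by the definition, $\mathbf{X}\in\scrc_g^\alpha$. The Hilbert--Schmidt tensor norm is a valid choice of tensor product in the sense of Theorem \ref{thm-tensor-product}, because it satisfies $|v\otimes w|=|v||w|$.

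The main obstacle is the scalar off-diagonal estimate, namely the existence and $|t-s|^{4H}$ bound for the mixed area of two independent fBms with $H\in(\frac13,\frac12]$, uniformly in $j,k$. I would take this from \cite[Lemma 2.4]{Hesse-Neamtu-local}. Failing that, I would prove it via the 2D Young/covariance-variation estimate: the covariance of fBm has finite $\frac1{2H}$-variation, and $\frac1{2H}<2$. A secondary subtlety is checking the cross-moment control needed for the Minkowski step, but independence across distinct index pairs makes this routine.
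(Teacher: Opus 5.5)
The notes do not prove this statement; they defer to \cite[Prop. 2.24]{Tappe}, which rests on \cite[Lemma 2.4]{Hesse-Neamtu-local}. Your scheme is the standard way to build such a lift: coordinatewise geometric lifts $\bbx^{jk}$, the series $\sum_{j,k}\sqrt{\lambda_j\lambda_k}\,\bbx^{jk}_{s,t}\,e_j\otimes e_k$, hypercontractivity, Theorem \ref{thm-KC}, and passing \eqref{chain-rule} to the limit. The genuine gap is the tensor norm, and your closing justification for it is false.

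Theorem \ref{thm-tensor-product} requires more than $|v\otimes w|\le |v|\,|w|$. It also requires the universal property $L^{(2)}(V\times V,E)\cong L(V\otimes V,E)$ for every Banach space $E$. The Hilbert--Schmidt product $U\otimes_{\mathrm{HS}}U\cong L_2(U)$ fails this when $\dim U=\infty$. The bounded bilinear form $(u,v)\mapsto\langle u,v\rangle_U$ would have to extend to $T\mapsto\langle T,\Id\rangle_{L_2(U)}$, but $\Id$ is not Hilbert--Schmidt. Since the tensor product of the notes is unique up to isomorphism, it is the projective one, $U\otimes_{\pi}U$, isometric to the trace class $L_1(U)$ with the trace norm. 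This identification is exactly what makes $Y_s'\bbx_{s,t}$ meaningful for $Y_s'\in L(U,L(U,W))$ in \eqref{candidate}. What you actually prove is $2\alpha$-H\"older regularity of $\bbx$ in the Hilbert--Schmidt norm. That is strictly weaker than $\mathbf{X}\in\scrc_g^{\alpha}([0,T],U)$.

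Your key estimate does not transfer to the projective norm. In $U\otimes_{\pi}U$ the norm of $\sum_{j,k}a_{jk}\,e_j\otimes e_k$ is the trace norm of the array $(a_{jk})$, not $(\sum_{j,k}|a_{jk}|^2)^{1/2}$. So the orthonormal-expansion/Minkowski computation is unavailable. It also cannot be rescued abstractly: for a random array with $\bbe|a_{jk}|^2\asymp\lambda_j\lambda_k$, the trace norm can be infinite while the Hilbert--Schmidt norm is finite. An example is independent Gaussian entries with $\lambda_j=j^{-1-\epsilon}$, $\epsilon<\frac12$.

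The symmetric part is harmless, since $|\frac12 X_{s,t}\otimes X_{s,t}|_{\pi}=\frac12|X_{s,t}|^2$. For the antisymmetric part, the triangle inequality only gives $\big\| |\bbx_{s,t}|_{\pi}\big\|_{L^p}\le C_p\big(\sum_j\sqrt{\lambda_j}\big)^2|t-s|^{2H}$. This needs $\sum_j\sqrt{\lambda_j}<\infty$, which is strictly more than $\tr(Q)<\infty$. You therefore have two options:
\begin{enumerate}
\item Add that summability hypothesis. Your argument then goes through with this bound in place of the Minkowski step, including absolute convergence of the series in $L^p(\Omega;U\otimes_{\pi}U)$.
\item Prove a trace-norm bound for the random antisymmetric operator $(\sqrt{\lambda_j\lambda_k}\,\bbx^{jk}_{s,t})_{j\neq k}$ under mere trace-class $Q$. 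Such a bound must exploit the specific structure of the L\'evy area; this is the known delicate question of existence of L\'evy area in projective tensor products, not a routine step.
\end{enumerate}

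Two minor points. First, the Minkowski step needs no cross-moment control, and pairs such as $(1,2)$ and $(1,3)$ are not independent anyway. Second, Theorem \ref{thm-KC} requires Chen's relation for every $\omega$, not just almost surely for fixed triples; to get this, set $\bbx_{s,t}:=\bbx_{0,t}-\bbx_{0,s}-X_{0,s}\otimes X_{s,t}$ as in Exercise \ref{exercise-Chen-sufficient}.
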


\begin{remark}
If $H = \frac{1}{2}$, then $X$ is a $Q$-Wiener process (see Section \ref{sec-Wiener-enhanced}), and the weakly geometric rough path $\mathbf{X} = (X,\bbx)$ according to Proposition \ref{prop-frac-rough-path} is a Stratonovich-enhanced $Q$-Wiener process, thus providing a generalization of Proposition \ref{prop-BB-Strat-rough-path-g} to the infinite dimensional setting.
\end{remark}

Now, let $W$ be a Banach space, and let $A$ be the generator of a $C_0$-semigroup $(S_t)_{t \geq 0}$ on $W$. Consider the random RPDE
\begin{align}\label{RPDE-fractional}
\left\{
\begin{array}{rcl}
dY_t & = & (A Y_t + f_0(t,Y_t)) dt + f(t,Y_t) d \mathbf{X}_t
\\ Y_0 & = & \xi
\end{array}
\right.
\end{align}
with mappings $f_0 : [0,T] \times W \to W$ and $f : [0,T] \times W \to L(U,W)$. Let $\beta \in (\frac{1}{3},H)$ be an index. By Proposition \ref{prop-frac-rough-path} there is a $\bbp$-nullset $N$ such that $\mathbf{X} \in \scrc_g^{\beta}([0,T],U)$ on $N^c$. As an immediate consequence of Theorem \ref{thm-RPDE-main} we obtain the following result.

\begin{theorem}\cite[Thm. 2.5]{Tappe}\label{thm-frac}
Let $f_0 \in \Lip([0,T] \times W,D(A))$ and $$f \in C_b^{2\beta,3}([0,T] \times H, L(U,D(A^2)))$$ be mappings such that
\begin{align*}
&f_0|_{[0,T] \times D(A)} \in \Lip([0,T] \times D(A),D(A^2)),
\\ &f|_{[0,T] \times D(A)} \in C_b^{2\beta,3}([0,T] \times D(A), L(U,D(A^3))).
\end{align*}
Then for every $\xi \in D(A^2)$ there exists a unique mild solution $$(Y,Y') \in \scrd_X^{2 \beta}([0,T],W)$$ to the random RPDE \eqref{RPDE-fractional} with $Y_0 = \xi$ on $N^c$.
\end{theorem}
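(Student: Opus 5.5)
The plan is to apply Theorem \ref{thm-RPDE-main} pathwise, for each fixed $\omega \in N^c$. By Proposition \ref{prop-frac-rough-path} there is a $\bbp$-nullset $N$ such that $\mathbf{X}(\omega) \in \scrc_g^{\beta}([0,T],U)$ for every $\omega \in N^c$. Since $\scrc_g^{\beta}([0,T],U) \subset \scrc^{\beta}([0,T],U)$, each such $\mathbf{X}(\omega)$ is an admissible driving rough path with $V = U$ and $\beta \in (\frac{1}{3},\frac{1}{2}]$. Note that $\beta < H \le \frac12$, so this index is in the required range. To arrange that one nullset works for the fixed $\beta$, I would either pass to a sequence $\alpha_n \uparrow H$ and use the monotonicity $\scrc_g^{\alpha_n} \subset \scrc_g^{\beta}$ for $\alpha_n \geq \beta$ (compare Lemma \ref{lemma-alpha-beta}), or simply note that the proposition already gives one nullset valid for every $\alpha \in (\frac13,H)$.

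Next, I check that the coefficients satisfy the hypotheses of Theorem \ref{thm-RPDE-main}. The assumptions in the statement are exactly those of Theorem \ref{thm-RPDE-main} with $V = U$. Specifically, $f_0 \in \Lip([0,T]\times W, D(A))$, $f \in C_b^{2\beta,3}([0,T]\times W, L(U,D(A^2)))$, and the restriction conditions \eqref{f0-restriction}--\eqref{f-restriction} hold. One should read the space written $[0,T]\times H$ in the statement as $[0,T]\times W$, which is a notational slip. The initial condition satisfies $\xi \in D(A^2)$. These hypotheses involve only $f_0$, $f$, $\xi$ and the semigroup, and none of them depends on $\omega$.

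For each $\omega \in N^c$, Theorem \ref{thm-RPDE-main} then gives a unique global mild solution $(Y(\omega),Y'(\omega)) \in \scrd_{X(\omega)}^{2\beta}([0,T],W)$ of \eqref{RPDE-fractional} with $Y_0 = \xi$, in the sense of Definition \ref{def-mild-solution}. Defining $(Y,Y')$ pathwise on $N^c$ yields the claim. Uniqueness is also pathwise, since it is just the uniqueness part of Theorem \ref{thm-RPDE-main} applied to each fixed $\omega$.

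I see no real obstacle, because the statement is a pathwise corollary. The only delicate point is the choice of a single nullset independent of $\beta$, or at least valid for the given $\beta$, and Proposition \ref{prop-frac-rough-path} already supplies this. If one also wants measurability or adaptedness of $Y$, the local Lipschitz continuity of the Itô--Lyons map (Proposition \ref{prop-Ito-Lyons-PDE}) would give it, as in the proof of Theorem \ref{thm-Ito-SDEs}. The statement does not ask for this.
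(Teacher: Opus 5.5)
Your proposal is correct and takes the same approach as the paper, which treats the theorem as an immediate pathwise consequence of Theorem \ref{thm-RPDE-main}, applied with $V = U$ for each $\omega \in N^c$, where $N$ is the nullset from Proposition \ref{prop-frac-rough-path} for the fixed $\beta \in (\frac{1}{3},H)$. Reading $[0,T]\times H$ as $[0,T]\times W$ is right, and so is using weak geometricity only through the inclusion $\scrc_g^{\beta}([0,T],U) \subset \scrc^{\beta}([0,T],U)$.
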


\begin{remark}
If $W$ is a separable Hilbert space $H$, then there exists a theory of stochastic integration (see \cite{Duncan}), and one may check that the associated stochastic process $Y$ from Theorem \ref{thm-frac} is also a mild solution to the fractional SPDE
\begin{align*}
\left\{
\begin{array}{rcl}
dY_t & = & (A Y_t + f_0(t,Y_t)) dt + f(t,Y_t) d X_t
\\ Y_0 & = & \xi.
\end{array}
\right.
\end{align*}
\end{remark}

\end{document}